\documentclass[11pt]{article}

\usepackage[letterpaper, hmargin=1in, top=1in, bottom=1.1in, footskip=0.6in]{geometry}

\usepackage{titlesec}
\titleformat{\subsection}[runin]{\large\bfseries}{\thesubsection.}{.5em}{}[.]\titlespacing{\subsection}{0pt}{2ex plus .1ex minus .2ex}{.8em}
\titleformat{\subsubsection}[runin]{\normalfont\bfseries}{\thesubsubsection.}{.3em}{}[.]\titlespacing{\subsubsection}{0pt}{2ex plus .1ex minus .2ex}{.5em}
\titleformat{\paragraph}[runin]{\normalfont\itshape}{\theparagraph.}{.3em}{}[.]\titlespacing{\paragraph}{0pt}{1ex plus .1ex minus .2ex}{.5em}


\usepackage{amsmath}
\usepackage{amssymb}
\usepackage{amsfonts}
\usepackage{latexsym}
\usepackage{amsthm}
\usepackage{amsxtra}
\usepackage{amscd}
\usepackage{bbm}
\usepackage{mathrsfs}
\usepackage{bm}
\usepackage{tensor}


\usepackage{graphicx, color}

\definecolor{darkred}{rgb}{0.9,0,0.3}
\definecolor{darkblue}{rgb}{0,0.3,0.9}

\definecolor{vdarkred}{rgb}{0.7,0,0.2}
\definecolor{vdarkblue}{rgb}{0,0.2,0.7}

\usepackage[pdftex, colorlinks, linkcolor=vdarkblue,urlcolor=black,citecolor=vdarkred]{hyperref}
\usepackage{cleveref}

\usepackage[nottoc,notlof,notlot]{tocbibind}
\usepackage{cite}


\flushbottom
\numberwithin{equation}{section}
\numberwithin{figure}{section}

%


\theoremstyle{plain} 
\newtheorem{theorem}{Theorem}[section]
\newtheorem*{theorem*}{Theorem}
\newtheorem{lemma}[theorem]{Lemma}
\newtheorem*{lemma*}{Lemma}
\newtheorem{corollary}[theorem]{Corollary}
\newtheorem*{corollary*}{Corollary}
\newtheorem{proposition}[theorem]{Proposition}
\newtheorem*{proposition*}{Proposition}

\newtheorem*{conjecture*}{Conjecture}

\theoremstyle{definition} 
\newtheorem{definition}[theorem]{Definition}
\newtheorem*{definition*}{Definition}
\newtheorem{example}[theorem]{Example}
\newtheorem*{example*}{Example}
\newtheorem{remark}[theorem]{Remark}
\newtheorem*{remark*}{Remark}

\newtheorem*{assumption*}{Assumption}


\renewcommand{\b}[1]{\boldsymbol{\mathrm{#1}}} 
\newcommand{\bb}{\mathbb} 
\renewcommand{\cal}{\mathcal}

\newcommand{\ul}[1]{\underline{#1} \!\,} 

\newcommand{\txt}[1]{\text{\rm{#1}}}


\newcommand{\E}{\mathbb{E}}
\newcommand{\R}{\mathbb{R}}
\newcommand{\C}{\mathbb{C}}
\newcommand{\N}{\mathbb{N}}

\newcommand{\D}{\mathrm{D}}


\newcommand{\ii}{\mathrm{i}}
\newcommand{\dd}{\mathrm{d}}
\newcommand{\col}{\mathrel{\vcenter{\baselineskip0.75ex \lineskiplimit0pt \hbox{.}\hbox{.}}}}
\newcommand*{\deq}{\mathrel{\vcenter{\baselineskip0.65ex \lineskiplimit0pt \hbox{.}\hbox{.}}}=}
\newcommand*{\eqd}{=\mathrel{\vcenter{\baselineskip0.65ex \lineskiplimit0pt \hbox{.}\hbox{.}}}}

\renewcommand{\leq}{\leqslant}
\renewcommand{\le}{\leqslant}
\renewcommand{\geq}{\geqslant}

\renewcommand{\epsilon}{\varepsilon}



\DeclareMathOperator{\tr}{Tr}

\DeclareMathOperator{\re}{Re}
\DeclareMathOperator{\im}{Im}


\newcommand{\G}{ {\mathcal G} }

\newcommand{\psum}{\sideset{}{^*}\sum}

\newcommand*{\rom}[1]{\expandafter\@slowromancap\romannumeral #1@}


\title{\bf \Large Extremal eigenvectors of sparse random matrices\vspace{0.5em}}

\author{Yukun He\footnote{Shanghai Center for Mathematical Sciences, Fudan University. Email:	\href{mailto:heyukun@fudan.edu.cn}{heyukun@fudan.edu.cn}}  \and Jiaoyang Huang\footnote{Department of Statistics and Data Science, University of Pennsylvania. Email:	\href{mailto:huangjy@wharton.upenn.edu}{huangjy@wharton.upenn.edu}} \and Chen Wang\footnote{Department of Mathematics, City University of Hong Kong. Email:	\href{mailto:cwang228-c@my.cityu.edu.hk}{cwang228-c@my.cityu.edu.hk}}\vspace{1em}}

\begin{document}
	
\maketitle

\begin{abstract}
	We consider a class of sparse random matrices, which includes the adjacency matrix of the Erd\H{o}s-R\'enyi graph ${\bf G}(N,p)$. For $N^{-1+o(1)}\leq p\leq 1/2$, we show that the non-trivial edge eigenvectors are asymptotically jointly normal. The main ingredient of the proof is an algorithm that directly computes the joint eigenvector distributions, without comparisons with GOE. The method is applicable in general. As an illustration, we also use it to prove the normal fluctuation in quantum ergodicity at the edge for Wigner matrices. Another ingredient of the proof is the isotropic local law for sparse matrices, which at the same time improves several existing results.
\end{abstract}

\section{Introduction}\label{sec1}

Fix small $0<\tau\leq 1/2$. In this paper, we consider the following class of random matrices.

\begin{definition}[Sparse matrix] \label{def_sparse} Assume $N^{\tau}\leq q\leq N^{1/2}$. Let $H=H^T \in \bb R^{N \times N}$ be a real-symmetric random matrix whose entries $H_{ij}$ satisfy the following conditions.
	\begin{enumerate}
		\item[(i)] The upper-triangular entries ($H_{ij}: 1 \leq i \leq j\leq N$) are independent.
		\item[(ii)] We have $\E H_{ij}=0$ and $ \E H_{ij}^2=(1+O(\delta_{ij}))/N$ for all $i,j$.
		\item[(iii)] For any fixed $k\geq 3$, we have
		$\E|H_{ij}|^k =O_k(N^{-1}q^{2-k})$ for all $i,j$.
	\end{enumerate}
Let $f$ satisfy $\tau q\leq f\leq q/\tau$. The \emph{sparse matrix} is defined as 
\[
A = H + f \b e \b e^T\,,
\]
where $\b e \deq N^{-1/2}(1,1,\dots,1)^T$. 
\end{definition}

One important motivation of Definition \ref{def_sparse} is the study of the adjacency matrix $\cal A$ of the Erd\H{o}s-R\'enyi graph ${\bf G}(N,p)$. Explicitly, $\cal A=\cal A^T\in \bb R^{N\times N} $ is a symmetric random matrix with independent upper triangular entries $(\cal A_{ij} \col i \leq j)$ satisfying
\begin{equation*}
	{\cal A}_{ij}=\begin{cases}
		1 & \txt{with probability } p
		\\
		0 & \txt{with probability } 1-p\,.
	\end{cases}
\end{equation*} 
It is easy to check that when $N^{-1+o(1)}\leq p\leq 1/2$\footnote{Here $o(1)$ means for any fixed (small) $\varepsilon > 0$.}, the rescaled adjacency matrix $A\deq\cal A/\sqrt{Np(1-p)}$ satisfies Definition \ref{def_sparse} with the choice $q\deq \sqrt{Np}$.

The Erd\H{o}s-R\'enyi graph is the simplest model of a random graph, and it has numerous applications in graph theory, network theory, mathematical physics and combinatorics. During the past decades, there has been enormous results on the spectrum and eigenvectors of the model. The matrix $A$ has typically $N^2p$ nonzero entries, and hence $A$ is \textit{sparse} whenever $p\to 0$ as $N \to \infty$. When $p\geq N^{-1+o(1)}$, the microscopic eigenvalue statistics are well-understood both in the bulk of the spectrum \cite{EKYY1,EKYY2,HLY15} and near the spectral edges \cite{EYY3,EKYY2,LS1,HLY,HK20,Lee21,HY22}. In terms of eigenvectors, it was proved in \cite{BHY} that inside the bulk they are asymptotically Gaussian. On the other hand, there are no results on the distribution of the extreme eigenvectors. In this paper, we show that all non-trivial edge eigenvectors of $A$ are asymptotically jointly normal, in all deterministic directions.

In the sequel, let $\lambda_1\geq \cdots \geq \lambda_N$ denote the eigenvalues of $A$, with corresponding eigenvectors $\b u_1,...,\b u_N$. Analogously, let $\lambda^H_1\geq \cdots \geq \lambda^H_N$ be the eigenvalues of $H$, with corresponding eigenvectors $\b u^H_1,...,\b u^H_N$.
Define
\[
\b e \deq N^{-1/2}(1,1,\dots,1)^T\,, \quad  \mbox{and} \quad  \bb S_\perp^{N-1}\deq \{\b v\in \bb S^{N-1}: \b v \perp \b e\}.
\] 
We may now state our first main result.

\begin{theorem}[Universality of edge eigenvectors] \label{thm1.2}
Fix $k\in \bb N_+$, and let $\b v_1,\b w_1,...,\b v_k,\b w_k \in \bb S^{N-1}_\perp$ be deterministic. Fix $T>0$. There exists fixed $\varepsilon>0$ such that
\[
\bb E \exp\bigg(\sum_{a=1}^k\ii  t_aN\langle \b  v_{a},\b u_{a+1}\rangle \langle \b w_a, \b u_{a+1}\rangle\bigg)=\bb E  \exp\bigg(\sum_{a=1}^k\ii t_a\langle \b  v_{a},\b z_{a}\rangle \langle \b w_a, \b z_{a}\rangle\bigg)+O(N^{-\varepsilon})
\]
uniformly for all $t_a\in [-T,T]$. Here $(\b z_a)_{a=1}^{k}$ are i.i.d.\,standard Gaussian vectors in $\bb R^{N}$. 
\end{theorem}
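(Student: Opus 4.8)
The plan is to prove the joint normality of the edge eigenvectors $\b u_2, \b u_3, \ldots, \b u_{k+1}$ of $A$ by computing the moments/characteristic function on the left directly, via the algorithm advertised in the abstract, rather than by comparison with GOE. The first step is a reduction from $A$ to $H$: since $A = H + f\b e\b e^T$ is a rank-one perturbation of $H$ in the direction $\b e$, and all test vectors $\b v_a, \b w_a$ lie in $\bb S^{N-1}_\perp = \b e^\perp$, a resolvent/interlacing argument should show that for $a \geq 1$, $\langle \b v_a, \b u_{a+1}\rangle \langle \b w_a, \b u_{a+1}\rangle$ is, up to an error $O(N^{-\varepsilon})$, equal to the analogous quantity $\langle \b v_a, \b u^H_a\rangle\langle \b w_a, \b u^H_a\rangle$ for the top eigenvectors of $H$ itself. (The index shift is the familiar one: the spike $f\b e\b e^T$ with $f \gg 1$ pushes one eigenvalue far to the right, so $\lambda_{a+1}(A)$ tracks $\lambda_a(H)$, and because the spike direction $\b e$ is orthogonal to the test vectors, the corresponding eigenvector overlap is essentially unchanged.) This should be made rigorous using the isotropic local law for sparse matrices quoted in the introduction, applied to both $A$ and $H$.

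The main step is then to show that $\bigl(\sqrt{N}\langle \b v_a, \b u^H_a\rangle\bigr)_{a=1}^k$ and $\bigl(\sqrt{N}\langle \b w_a, \b u^H_a\rangle\bigr)_{a=1}^k$ behave like the corresponding Gaussian coordinates. The strategy I would use is a moment/cumulant computation: expand $\langle \b v_a, \b u^H_a\rangle\langle \b w_a, \b u^H_a\rangle$ using the spectral projector $\b u^H_a (\b u^H_a)^T$, which can be extracted from the resolvent $G^H(z) = (H-z)^{-1}$ by a contour integral encircling only $\lambda^H_a$ (here one needs edge rigidity to separate $\lambda^H_1, \ldots, \lambda^H_k$ from each other and from the rest of the spectrum by $\gg N^{-2/3-\delta}$ on the relevant event, which follows from known edge universality/rigidity for sparse matrices). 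One then Taylor-expands the resolvent entries around their deterministic limit $m(z)$, and the combinatorics of the resulting self-energy/loop expansion — organized as a sum over graphs, with the sparsity parameter $q$ controlling the size of higher cumulant contributions via condition (iii) of Definition \ref{def_sparse} — should collapse, in the $N\to\infty$ limit, to exactly the Wick/Gaussian pairings. Concretely, $N\bigl\langle \b v_a, \b u^H_a\bigr\rangle\bigl\langle \b w_a, \b u^H_a\bigr\rangle$ should converge to $\langle \b v_a, \b z_a\rangle\langle \b w_a, \b z_a\rangle$ with the $\b z_a$ independent across $a$, the independence coming from the disjointness of the contours / orthogonality of distinct eigenvectors, and the Gaussianity of each single overlap coming from a CLT for the quadratic form $\langle \b v_a, G^H(z)\b v_a\rangle$ as $z$ approaches the edge. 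Finally, one checks that the limiting object is precisely $\bb E\exp\bigl(\sum_a \ii t_a \langle \b v_a, \b z_a\rangle\langle \b w_a, \b z_a\rangle\bigr)$ by matching all joint moments.

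The hard part will be the edge behavior of the resolvent expansion: near the spectral edge $2$, the local law error bounds deteriorate ($\im m$ is small and the relevant spectral window has width $N^{-2/3}$), so the self-consistent expansion must be carried out to high enough order and with sufficiently sharp control on the fluctuation of $\langle \b v, (G^H(z) - m(z))\b v\rangle$ that the non-Gaussian corrections are genuinely $o(1)$ after integrating over the contour at distance $\sim N^{-2/3+\delta}$ from the edge. Managing the interplay between (a) the deteriorating local-law scale, (b) the sparsity corrections of order $q^{-1}, q^{-2}, \ldots$ which must all be shown negligible for $q \geq N^\tau$, and (c) the need to resolve individual eigenvectors (not just spectral averages) is exactly where the new "direct algorithm" must do its work; I expect this to be the technical heart of the paper and the step requiring the isotropic local law in its improved, edge-sharp form. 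Everything else — the rank-one reduction, the contour setup, and the final moment-matching — should be comparatively routine given that input.
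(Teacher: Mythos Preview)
Your proposal diverges from the paper's approach in two essential ways, and the second of these is a genuine gap.

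First, the paper does \emph{not} reduce from $A$ to $H$. It works directly with $G=(A-z)^{-1}$ and the eigenvector $\b u_2$ of $A$ throughout Section~\ref{sec5}; the orthogonality $\b v,\b w\perp\b e$ and the estimate \eqref{1.2} are used only to kill the $f\b e\b e^T$ contribution in identities like \eqref{5/5}, not to transfer the whole problem to $H$. Your rank-one reduction is plausible but unnecessary, and would itself require the sharp isotropic bounds in the direction $\b e$ that the paper proves.

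Second, and more seriously, your ``Taylor-expand + graph combinatorics + match moments'' scheme is not the mechanism the paper uses, and it is unclear it would close at the edge. The paper's method is to derive an ODE for the characteristic function $g(t)=\bb E\exp(\ii tY)$: write $g'(t)$ as a Green-function integral, apply the resolvent identity $\widetilde G_{\b v\b w}-\langle\b v,\b w\rangle\ul{\widetilde G}=\im(\ul{HG}G_{\b v\b w}-\ul G(HG)_{\b v\b w})+O_\prec(N^{-1})$, and expand by cumulants. The crucial step---which your proposal misses entirely---is \eqref{5.19}: the leading term after expansion is a \emph{double} Green-function integral that cannot be estimated by any local law (since $\eta_+\ll N^{-2/3}$), but can instead be converted \emph{back} into an eigenvector expression using spectral decomposition and level repulsion. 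This closes a self-consistent system of ODEs for $g$ and an auxiliary $\bb E(Y^{(1)}\exp(\ii tY))$, whose solution is the desired Gaussian characteristic function. Your moment approach would instead have to control each moment separately, and at the edge the local-law errors (of size $(N\eta)^{-1/3}+q^{-1/3}$) do not decay when $\eta\sim N^{-2/3}$, so a naive expansion around $m(z)$ does not converge; the paper's recursive error analysis (Lemmas~\ref{lemma 5.5}--\ref{lemma 5.7}) is built specifically to handle this. Finally, your claim that rigidity separates $\lambda_1^H,\dots,\lambda_k^H$ is incorrect: rigidity only places each $\lambda_a^H$ within $O_\prec(N^{-2/3})$ of the edge, which is the same scale as the gaps. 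One needs the level-repulsion estimate \eqref{5.3}, and even then the paper avoids contour integrals in favor of the approximate-delta construction in Lemma~\ref{lemma 5.1}.
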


\begin{remark}
In Theorem \ref{thm1.2}, it is enough to take the test vectors $\b v_1,\b w_1,...,\b v_k,\b w_k $ in $\bb S_\perp^{N-1}$. In fact, we shall show that all nontrivial eigenvectors of $A$ are almost orthogonal to $\b e$; see Corollary \ref{cor1.5} below. Combining Theorem \ref{thm1.2} and Corollary \ref{cor1.5} yields a complete description of the edge eigenvector distributions of $A$.
\end{remark}

One important technical step towards showing Theorem \ref{thm1.2} is the isotropic local semicircle law, which has become a cornerstone of studying the eigenvectors of random matrices. To state it, let us define the spectral domain
\begin{equation} \label{spectral}
\b D \equiv \b D_\tau\deq \{z=E+\ii \eta: |E|\leq 3\,, N^{-1+\tau}\leq \eta\leq 1\}\,.
\end{equation}
The following is our second main result.

\begin{theorem}[Isotropic local law] \label{thm1.3}
(i)	Fix $\b v, \b w\in \bb S^{N-1}_\perp$. We have
	\begin{equation} \label{1.1}
	\sup_{z\in \b D}|\langle \b v, (A-z)^{-1} \b w \rangle - \langle \b v, \b w \rangle m_{sc}(z)| \leq  N^{o(1)}\Big(\frac{1}{(N\eta)^{1/3}}+\frac{1}{q^{1/3}}\Big)
	\end{equation}
	with very high probability. Here $m_{sc}$ denotes the Stieltjes transform of the semicircle density. In addition, we have
	\begin{equation} \label{1.2}
	\sup_{z\in \b D}|\langle \b e, (A-z)^{-1}  \b e
	 \rangle-f^{-1}| \leq  N^{o(1)}f^{-2} 
	\end{equation}
and  
\begin{equation} \label{1.3}
	\sup_{z\in \b D}|\langle \b e, (A-z)^{-1}  \b v
	\rangle| \leq  N^{o(1)}f^{-1}
\end{equation}
	with very high probability. 
	
	(ii) Fix $\b x,\b y \in \bb S^{N-1}$. We have
		\begin{equation} \label{Hisotropic}
		\sup_{z\in \b D}|\langle \b x, (H-z)^{-1} \b y \rangle - \langle \b x, \b y \rangle m_{sc}(z)| \leq  N^{o(1)}\Big(\frac{1}{(N\eta)^{1/3}}+\frac{1}{q^{1/3}}\Big)
	\end{equation}
with very high probability. 
\end{theorem}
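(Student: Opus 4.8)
The plan is to deduce part~(i) from part~(ii), so that the real work is the isotropic law for the unperturbed matrix $H$. Write $G\deq(A-z)^{-1}$, $G^{H}\deq(H-z)^{-1}$, and let $\Psi(z)\deq N^{o(1)}\big((N\eta)^{-1/3}+q^{-1/3}\big)$ denote the error bound in \eqref{Hisotropic}. Since $A=H+f\b e\b e^{T}$ is a rank-one perturbation of $H$, the Sherman--Morrison identity gives
\[
G=G^{H}-\frac{f\,G^{H}\b e\b e^{T}G^{H}}{1+f\langle\b e,G^{H}\b e\rangle}\,.
\]
By part~(ii) with $\b x=\b y=\b e$ we have $\langle\b e,G^{H}\b e\rangle=m_{sc}(z)+O(\Psi)$ with very high probability, and since $|m_{sc}(z)|\asymp1$ uniformly on $\b D$ while $f\asymp q\to\infty$, this gives $|1+f\langle\b e,G^{H}\b e\rangle|\asymp f$. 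Testing the identity against $\b v,\b w\in\bb S^{N-1}_{\perp}$ and using $\langle\b v,\b e\rangle=\langle\b w,\b e\rangle=0$, part~(ii) makes both $\langle\b v,G^{H}\b e\rangle$ and $\langle\b e,G^{H}\b w\rangle$ of size $O(\Psi)$, so the correction term has numerator $O(f\Psi^{2})$ and denominator of size $\asymp f$, hence is $O(\Psi^{2})$; this is lower order, yielding \eqref{1.1}. Specialising to $\b x=\b y=\b e$ and simplifying gives $\langle\b e,G\b e\rangle=\langle\b e,G^{H}\b e\rangle/\big(1+f\langle\b e,G^{H}\b e\rangle\big)=f^{-1}-\big(f(1+f\langle\b e,G^{H}\b e\rangle)\big)^{-1}=f^{-1}+O(f^{-2})$, which is \eqref{1.2}; and testing against $\b e$ and $\b v\in\bb S^{N-1}_{\perp}$ gives $\langle\b e,G\b v\rangle=\langle\b e,G^{H}\b v\rangle/\big(1+f\langle\b e,G^{H}\b e\rangle\big)=O(\Psi/f)$, which is \eqref{1.3}.

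For part~(ii) we argue in two steps. First, establish the entrywise law $\max_{i,j}|G^{H}_{ij}(z)-\delta_{ij}m_{sc}(z)|\le N^{o(1)}\big((N\eta)^{-1/2}+q^{-1}\big)$, uniformly on $\b D$, with very high probability. This is by now standard for sparse matrices: one derives a self-consistent equation for the diagonal resolvent entries from Schur complements, controls its error terms by large-deviation estimates adapted to the moment bounds in Definition~\ref{def_sparse}(iii) — these produce the $q^{-1}$ contribution — sharpens the control on $N^{-1}\sum_{i}G^{H}_{ii}$ by fluctuation averaging, and descends from $\eta\asymp1$ down to $\eta=N^{-1+\tau}$, including near the edges $E=\pm2$, by a continuity argument on a fine net in $z$. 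Relative to existing sparse local laws this step sharpens the error and extends the admissible ranges of $q$ and of $z$.

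Second, upgrade the entrywise law to $|\langle\b x,(G^{H}-m_{sc}(z))\b y\rangle|\le\Psi$ for fixed $\b x,\b y\in\bb S^{N-1}$. The route we take is a high-moment estimate: bound $\E\big|\langle\b x,(G^{H}-m_{sc})\b y\rangle\big|^{2p}$ for large fixed $p$ by expanding each resolvent entry through the resolvent identity, reorganising the result as a sum over decorated graphs, and weighting each graph using the entrywise law of Step~1 together with the sparse moment conditions. The mean-zero off-diagonal part $\sum_{i\ne j}\bar x_{i}y_{j}G^{H}_{ij}$ is controlled through the Ward identity $\sum_{j}|G^{H}_{ij}|^{2}=\eta^{-1}\im G^{H}_{ii}$ and the entrywise bound on the cross-covariances, while the diagonal part $\sum_{i}|x_{i}|^{2}(G^{H}_{ii}-m_{sc})$ is controlled by fluctuation averaging, up to the deterministic sparse self-energy correction to $\E G^{H}_{ii}$ of size $O(q^{-2})$; all of these contributions are then bounded by the deliberately lossy common quantity $(N\eta)^{-1/3}+q^{-1/3}$. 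An alternative is the resolvent-expansion scheme of Knowles--Yin, run as a self-consistent argument for the vector $G^{H}\b x$.

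We expect the main obstacle to be the isotropic estimate in the near-edge regime: carrying it down to $\eta=N^{-1+\tau}$ at $E=\pm2$, where $\im m_{sc}$ is of order $\sqrt{\eta}$ and the $q^{-1}$-type error produced by the heavy-tailed entries becomes comparable to the fluctuation scale, forces one to organise the moment expansion (or the self-consistent scheme) so that no decorated graph — in particular none of the highly self-intersecting ones, whose combinatorics is governed by the large moments in Definition~\ref{def_sparse}(iii) — exceeds the target bound. This is exactly the regime needed for the edge-eigenvector analysis behind Theorem~\ref{thm1.2}, so it cannot be sidestepped, and it is why \eqref{Hisotropic} is stated in the robust but non-optimal form $N^{o(1)}\big((N\eta)^{-1/3}+q^{-1/3}\big)$ rather than the sharper bound available in the bulk.
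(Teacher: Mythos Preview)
Your reduction of part~(i) to part~(ii) via Sherman--Morrison is correct and is a genuinely different route from the paper. The paper proves the three estimates \eqref{1.1}--\eqref{1.3} directly for $G=(A-z)^{-1}$ by a bootstrap in $\eta$, tracking separately the Green function in the direction $\b e$ and in directions orthogonal to $\b e$; your rank-one argument bypasses this and is cleaner, provided part~(ii) is available. (One small point: your bound for \eqref{1.3} is $O(\Psi/f)$, which is actually stronger than the stated $N^{o(1)}f^{-1}$.)

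The gap is in your proof of part~(ii). The high-moment/decorated-graph scheme you describe, as well as the Knowles--Yin self-consistent alternative, works for dense matrices precisely because the $k$th moment of an entry is $O(N^{-k/2})$, so that the higher-cumulant terms in the expansion decay geometrically and can be absorbed. In the sparse setting $\cal C_{s+1}(H_{ij})=O(N^{-1}q^{1-s})$, and the terms coming from $s\ge2$ are \emph{not} summable by Ward identity and the entrywise law alone: a typical third-cumulant contribution is of order $(N^{1/2}\eta q)^{-1}$ relative to what you need, and this is not $\le1$ throughout $\b D$ when $q$ is small. Your last paragraph correctly flags that some additional organisation is required, but does not supply it; the difficulty is not specific to the edge --- it is the sparsity.

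The paper's missing ingredient is a structural observation about these higher-cumulant terms. After one application of the cumulant expansion to $\E|G^H_{\b x\b y}-\langle\b x,\b y\rangle m_{sc}|^{2n}$, every term coming from $s\ge2$ has at least one summation index that appears an \emph{odd} number of times among the resolvent factors. The paper shows (Lemmas~3.5, 3.6 and their analogues) that one can re-expand around such an odd index: write $zG_{xy}=(HG)_{xy}-\delta_{xy}$, apply the cumulant expansion again, and observe that the leading second-cumulant pieces from the two natural expansions cancel, while all remaining pieces either gain a factor $(N\eta)^{-1/2}+q^{-1}$ or still carry an odd index. Since odd parity is preserved under this step, one can iterate finitely many times and eventually gain a full factor of $N^{-1/2}$ on every higher-cumulant term, which closes the moment estimate with the stated error $(N\eta)^{-1/3}+q^{-1/3}$. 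This recursive ``odd-index'' mechanism is the new idea and is what your proposal is missing.
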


Previously, the isotropic local law was obtained for dense Wigner matrices and sample covariance matrices \cite{KY2,KY4,BEKYY}; for sparse matrices, only the entrywise local law was available \cite{EKYY1}. Compared to the entrywise law, the main obstacle in proving the isotropic law is that the higher order terms break the isotropic structure, which leads to weak estimates of the error terms. In the dense case, this is compensated by the fast decay of the higher moments of the matrix entries. For sparse matrices, we no longer have the fast moment decay, which forces us to exploit detailed structural properties in the computations. To this end, we uncover a simple index mismatching that appear in all the error terms. Moreover, it persists after any finitely many expansions. This allows us to recursively expand the error terms as in \cite{HK2} (see also \cite{H19,HK20,Lee21,HY22,H23,SX22,BSX25}), and eventually prove the isotropic law for matrices at all sparsity $p\geq N^{-1+o(1)}$. The method presented here also applies to sparse sample covariance matrices.

In addition, as $\bb EA$ is large and proportional to $\b e \b e^T$, the Green function is much smaller than 1 in the direction of $\b e$ and thus hard to detect (see \eqref{1.2}). Our main idea here is splitting the bootstrap step into two parts: Green functions in the direction $\b e$ and Green functions in the corresponding orthogonal directions, where the former will inherit an additional smallness from the bootstrap. In the probabilistic component of the proof, we track down the number of $\b e$ that appear in the Green functions, and make use of their prior estimates. This allows us to prove the optimal estimate \eqref{1.2}. 

By spectral decomposition, \eqref{1.2} implies that all non-trivial eigenvectors of $A$ are almost orthogonal to $\b e$. Together with \eqref{1.1}, these eigenvectors are also isotropically delocalized. In addition, \eqref{Hisotropic} implies that all eigenvectors of $H$ are isotropically delocalized.

\begin{corollary} \label{cor1.5}
Fix $\b x \in \bb S^{N-1}$. We have
	\[
\max_{i=2,...,N}	|\langle \b e, \b u_i\rangle| =O(N^{-1/2+o(1)}f^{-1})\,, \quad  \max_{i=2,...,N}|\langle \b x, \b u_i \rangle| =O(N^{-1/2+o(1)})\,, 
\]
and
\[ \max_{i=1,...,N} |\langle \b x,\b u_i^H \rangle| =O(N^{-1/2+o(1)})
\]
with very high probability.
\end{corollary}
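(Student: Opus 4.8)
The plan is to deduce Corollary \ref{cor1.5} directly from Theorem \ref{thm1.3} by the standard contour-integral (spectral decomposition) argument. Write the spectral decomposition $(A-z)^{-1} = \sum_{i=1}^N (\lambda_i - z)^{-1} \b u_i \b u_i^T$, so that for fixed unit vectors $\b a, \b b$ one has $\langle \b a, (A-z)^{-1} \b b\rangle = \sum_i \langle \b a, \b u_i\rangle \langle \b u_i, \b b\rangle / (\lambda_i - z)$. Taking $\b a = \b b = \b e$ in \eqref{1.2} gives $\im \langle \b e, (A-z)^{-1}\b e\rangle = \sum_i \eta \, |\langle \b e, \b u_i\rangle|^2 / ((\lambda_i - E)^2 + \eta^2) \leq N^{o(1)} f^{-2}$ with very high probability, uniformly for $z \in \b D$. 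First I would fix an index $i \in \{2,\dots,N\}$, choose $E = \lambda_i$ (which lies in $[-3,3]$ with very high probability by rigidity, or one can restrict to that event) and $\eta = N^{-1+\tau}$; then a single term of the sum is bounded below by $\eta^{-1}|\langle \b e, \b u_i\rangle|^2 = N^{1-\tau}|\langle \b e, \b u_i\rangle|^2$, whence $|\langle \b e, \b u_i\rangle|^2 \leq N^{-1+\tau+o(1)} f^{-2}$, i.e. $|\langle \b e, \b u_i\rangle| = O(N^{-1/2+o(1)} f^{-1})$. Since $o(1)$ already absorbs the $\tau$ (or one tracks $\tau$ as part of the $N^{o(1)}$ slack), this is the first claimed bound; a union bound over the $N$ indices costs only a further $N^{o(1)}$ factor because "very high probability" means probability $\geq 1 - N^{-D}$ for every $D$.

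The second bound follows identically: for fixed $\b x \in \bb S^{N-1}$ decompose $\b x = c\,\b e + \b x_\perp$ with $\b x_\perp \perp \b e$ and $\|\b x_\perp\| \leq 1$; from \eqref{1.1} applied to $\b v = \b w = \b x_\perp/\|\b x_\perp\|$ (together with \eqref{1.2}, \eqref{1.3} to control the cross terms and the $\b e$-component) one gets $|\langle \b x, (A-z)^{-1}\b x\rangle| \leq N^{o(1)}$ on $\b D$, hence $\im \langle \b x,(A-z)^{-1}\b x\rangle \leq N^{o(1)}$, and the same "pick one term, set $\eta = N^{-1+\tau}$" step yields $|\langle \b x, \b u_i\rangle|^2 \leq N^{-1+o(1)}$ for all $i$ including $i=1$, though for $i=1$ one may alternatively note $\b u_1$ is essentially $\b e$ so $\langle \b x, \b u_1\rangle \approx c$, which need not be small — hence the range $i = 2,\dots,N$ is the honest statement and I would state it that way (matching the corollary). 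The third bound, $\max_i |\langle \b x, \b u_i^H\rangle| = O(N^{-1/2+o(1)})$, is the cleanest: apply \eqref{Hisotropic} with $\b y = \b x$ to get $\im\langle \b x,(H-z)^{-1}\b x\rangle \leq N^{o(1)}$ uniformly on $\b D$, and repeat the one-term lower-bound argument at $\eta = N^{-1+\tau}$, now with no exceptional direction since $H$ has no spike.

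The only genuine subtlety — and what I would flag as the main point requiring care rather than a real obstacle — is the uniformity in $i$ of the choice $E = \lambda_i$: Theorem \ref{thm1.3} gives the bound for each fixed $z$ with very high probability, and to evaluate at the random point $z = \lambda_i + \ii N^{-1+\tau}$ one needs a standard net/continuity argument (the Green function is Lipschitz in $z$ on $\b D$ with polynomial constant, so controlling it on an $N^{-10}$-net of $\b D$ suffices, and the net has polynomially many points so the very-high-probability bound survives the union). One also needs $\lambda_i \in [-2.01, 2.01] \subset [-3,3]$ for $i = 2,\dots,N$ with very high probability, which is exactly the edge rigidity already available for this model (e.g. from \cite{EKYY1,LS1}); I would cite it or simply intersect with that event. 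Everything else is a line of arithmetic.
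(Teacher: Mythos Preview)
Your proposal is correct and is essentially the same approach that the paper implicitly uses: the paper simply remarks that ``by spectral decomposition, \eqref{1.2} implies that all non-trivial eigenvectors of $A$ are almost orthogonal to $\b e$,'' and similarly for the other two claims, which is exactly the imaginary-part-at-$\eta=N^{-1+\tau}$ argument you spell out. One small clarification: in your discussion of the second bound you write that the argument ``yields $|\langle \b x,\b u_i\rangle|^2\leq N^{-1+o(1)}$ for all $i$ including $i=1$'' --- in fact the argument never applies to $i=1$ in the first place, since $\lambda_1\approx f\notin[-3,3]$ lies outside the real range of $\b D$, so the restriction to $i\geq 2$ arises naturally rather than as an afterthought.
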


The isotropic delocalization will be a key input in studying edge eigenvectors; see \eqref{isotropic delocalization application} below. In addition, by \eqref{1.1}, Corollary \ref{cor1.5} and \cite[Theorem 1.5]{BHY}, we can also prove the normality of bulk eigenvectors.

\begin{theorem}[Universality of bulk eigenvectors]\label{thmbulk}
	Fix $k\geq 1$ and $\b v \in \bb S^{N-1}_\perp$. For any polynomial $P$ of $k$ variables, there exists fixed $\varepsilon>0$ such that
	\begin{align}\label{e:EP}
	\bb E P(N\langle \b v, \b u_{i_1}\rangle^2,...,N\langle \b v, \b u_{i_k}\rangle^2)=\bb E P( Z_1^2,..., Z_k^2)+O(N^{-\varepsilon})\,.
	\end{align}
	uniformly for all $i_1,j_1,...,i_k,j_k\in [\tau N,(1-\tau)N]$. Here $(Z_a)_{a=1}^{k}$ are i.i.d.\,standard Gaussian random variables.
\end{theorem}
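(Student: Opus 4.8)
The strategy is to transfer the bulk-eigenvector universality already available for $H$ to the sparse matrix $A$, using the isotropic local law \eqref{Hisotropic}, \eqref{1.1} and the delocalization bounds of Corollary \ref{cor1.5}. Since \cite[Theorem 1.5]{BHY} asserts the Gaussian behavior \eqref{e:EP} for the bulk eigenvectors $\b u_i^H$ of $H$ against \emph{any} fixed direction, including $\b v\in\bb S^{N-1}_\perp$, the heart of the matter is to show that the rescaled overlaps $N\langle\b v,\b u_i\rangle^2$ and $N\langle\b v,\b u_i^H\rangle^2$ have asymptotically the same joint law in the bulk. Because $A = H + f\b e\b e^T$ is a rank-one perturbation of $H$, and $f\ge\tau q\ge\tau N^\tau\gg 1$, the perturbation is large; but it acts only in the single direction $\b e$, and $\b v\perp\b e$, so one expects the bulk eigenvectors to be essentially unaffected in the direction $\b v$. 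Quantitatively, the natural device is to compare spectral projections.

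First I would fix a bulk index $i\in[\tau N,(1-\tau)N]$, pick a small spectral window of size $\eta$ with $N^{-1+\tau}\le\eta\le N^{-1+2\tau}$ centered at an appropriate energy, and write $N\langle\b v,\b u_i\rangle^2$ in terms of the imaginary part of the Green function via the identity $\langle\b v,\b u_i\rangle^2 = \lim_{\eta\to 0}\eta\,\im\langle\b v,(A-\lambda_i-\ii\eta)^{-1}\b v\rangle$, or more robustly via a contour-integral/Helffer–Sjöstrand representation of the spectral projector $\sum_{j:\,\lambda_j\in I}\b u_j\b u_j^T$ onto a short interval $I$ around $\lambda_i$. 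The key analytic input is that, on the domain $\b D$ and in the bulk regime $|E|\le 2-\tau$, the resolvents of $A$ and $H$ in the direction $\b v\in\bb S^{N-1}_\perp$ agree up to an error that is negligible after multiplication by $N\eta$: using the resolvent identity
\[
(A-z)^{-1} - (H-z)^{-1} = -f\,(A-z)^{-1}\b e\b e^T(H-z)^{-1},
\]
sandwiching with $\b v$ on the left and $\b v$ (or $\b w$) on the right, and invoking \eqref{1.3} together with \eqref{1.2} and \eqref{Hisotropic}, one gets $\langle\b v,(A-z)^{-1}\b v\rangle - \langle\b v,(H-z)^{-1}\b v\rangle = O(N^{o(1)}f^{-2})$ in the bulk, which is far smaller than the $\sim(N\eta)^{-1}$ size of each term once we also use the rigidity of the bulk eigenvalues of $A$ and of $H$ (these follow from \eqref{1.1}, \eqref{Hisotropic}, and the known eigenvalue rigidity for sparse matrices cited in the introduction, e.g.\ \cite{EKYY1,EKYY2,LS1}). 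Coupled with the interlacing of the eigenvalues of $A$ and $H$ (rank-one perturbation), this forces, with very high probability, $\big|N\langle\b v,\b u_i\rangle^2 - N\langle\b v,\b u_{\sigma(i)}^H\rangle^2\big| = O(N^{-\varepsilon})$ for the obvious index matching $\sigma(i)\in\{i-1,i,i+1\}$; the isotropic delocalization from Corollary \ref{cor1.5} controls the remaining boundary terms of the spectral-projector comparison so that the off-by-one ambiguity contributes at most $O(N^{-1/2+o(1)})$.

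Given this almost-sure closeness of the overlap vectors, the passage to expectations of polynomials is routine: since $P$ is a fixed polynomial and the overlaps $N\langle\b v,\b u_i\rangle^2$ are bounded by $N^{o(1)}$ with very high probability (again Corollary \ref{cor1.5}), one truncates on the very-high-probability event, applies the pointwise bound $|P(\b x)-P(\b y)|\le C|\b x-\b y|\cdot(1+|\b x|+|\b y|)^{\deg P-1}$, and absorbs the complement of the event into the $O(N^{-\varepsilon})$ error. This yields $\bb E P(N\langle\b v,\b u_{i_1}\rangle^2,\dots) = \bb E P(N\langle\b v,\b u_{\sigma(i_1)}^H\rangle^2,\dots)+O(N^{-\varepsilon})$; applying \cite[Theorem 1.5]{BHY} to the right-hand side (noting that shifting each index by $O(1)$ does not leave the bulk and does not affect the limiting Gaussian law, by the continuity of \cite[Theorem 1.5]{BHY} in the index) gives $\bb E P(Z_1^2,\dots,Z_k^2)+O(N^{-\varepsilon})$, which is \eqref{e:EP}.

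The main obstacle is the second step: making the comparison of spectral projectors quantitative enough, uniformly over all bulk indices, so that the unavoidable off-by-one index matching between the spectra of $A$ and $H$ (forced by interlacing) does not spoil the estimate. This requires combining the isotropic law \eqref{1.1}--\eqref{1.3} and \eqref{Hisotropic} at the optimal scale $\eta\sim N^{-1+\tau}$ with sharp eigenvalue rigidity for both $A$ and $H$ and with the isotropic delocalization of Corollary \ref{cor1.5}; controlling the Helffer–Sjöstrand error at the endpoints of the spectral window is the delicate point, but it is manageable because the perturbation direction $\b e$ is orthogonal to all the test directions $\b v$, so the resolvent difference carries the extra factor $f^{-2}$ rather than merely $f^{-1}$.
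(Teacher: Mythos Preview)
Your route is genuinely different from the paper's, and it contains a real gap.

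\textbf{What the paper does.} The paper does not compare $A$ with $H$ at all. It observes that the isotropic local law \eqref{1.1} for $G=(A-z)^{-1}$ in directions $\b v\in\bb S^{N-1}_\perp$, together with the delocalization of Corollary \ref{cor1.5}, verifies the standing hypotheses of \cite[Theorem 1.5]{BHY} for the matrix $H_0=A$ itself (with $m_0=m_{sc}$ and test vector $\b q=\b v$). One then invokes \cite[Theorem 1.5]{BHY} and the four-moment comparison of \cite{HLY15} directly for $A$, and \eqref{e:EP} follows. No transfer from $H$ to $A$ is needed.

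\textbf{Where your argument breaks.} Your central claim, that with very high probability
\[
\big|\,N\langle \b v,\b u_i\rangle^2 - N\langle \b v,\b u^H_{\sigma(i)}\rangle^2\,\big| = O(N^{-\varepsilon})
\]
for some index matching $\sigma(i)\in\{i-1,i\}$, is not true. For a rank-one perturbation $A=H+f\b e\b e^T$ the non-outlier eigenvectors have the explicit form $\b u_i \propto (H-\lambda_i)^{-1}\b e$, so
\[
\langle \b v,\b u_i\rangle \;\propto\; \sum_{j}\frac{\langle \b v,\b u^H_j\rangle\,\langle \b u^H_j,\b e\rangle}{\lambda^H_j-\lambda_i}\,.
\]
In the bulk $\lambda_i$ sits strictly between $\lambda^H_i$ and $\lambda^H_{i-1}$, and the right-hand side is a genuine random linear combination of several nearby $\langle \b v,\b u^H_j\rangle$'s with weights of order one. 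Thus $\langle \b v,\b u_i\rangle$ and $\langle \b v,\b u^H_{\sigma(i)}\rangle$ are \emph{different} order-one random variables with the same limiting law, not close random variables. A resolvent comparison at scale $\eta\ge N^{-1+\tau}$ only says that $\eta$-smoothed spectral measures in the direction $\b v$ agree; it cannot separate individual overlaps without going below the eigenvalue spacing, where no local law is available.

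A secondary issue: the bound $\langle \b v,G\b v\rangle-\langle \b v,\cal G\b v\rangle=O(N^{o(1)}f^{-2})$ is not what the inputs give. From $G-\cal G=-f\,G\b e\b e^T\cal G$ one gets $-f\,G_{\b v\b e}\,\cal G_{\b e\b v}$; \eqref{1.3} controls $G_{\b v\b e}=O_\prec(f^{-1})$, but for $H$ there is nothing special about $\b e$, so \eqref{Hisotropic} only yields $\cal G_{\b e\b v}=O_\prec\big((N\eta)^{-1/3}+q^{-1/3}\big)$. The product is $O_\prec\big((N\eta)^{-1/3}+q^{-1/3}\big)$, not $O_\prec(f^{-2})$. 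This already shows that the ``extra factor $f^{-2}$'' you rely on at the end is not there.
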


\begin{remark}
	In \cite{BHY}, the isotropic local law and bulk eigenvector universality were proved for the adjacency matrix $\cal A$ of $\b G(N,p)$ in the directions of $\bb S^{N-1}_\perp$. One key input was the exchangeability of the model, which only holds for the adjacency matrix. Here our result holds for general sparse matrices satisfying Definition \ref{def_sparse}. Moreover, combining with Corollary \ref{cor1.5}, our result covers all deterministic directions.
\end{remark}

In terms of eigenvalues, the edge universality was proved in \cite{HLY,Lee21,HY22} only for the centered matrix $H$. The key obstacle of extending the result to $A$ was the the lack of sharp isotropic estimates. Combining Theorem \ref{thm1.3} and the results of \cite{HLY,Lee21,HY22}, we can deduce the edge universality of $A$, which genuinely covers the adjacency matrix of sparse Erd\H{o}s-R\'enyi graphs. 

\begin{theorem}[Edge universality] \label{thm1.4}
Let $\mu_1\geq\mu_2...\geq \mu_N$ be the eigenvalues of $GOE$. Fix $k\geq 1$. There exists an explicit random variable $\cal L$ such that for any smooth, compactly supported $F: \bb R^{k}\to \bb R$, we have
\[
\bb E F(N^{2/3}(\lambda_2-\cal L),...,N^{2/3}(\lambda_{k+1}-\cal L))=\bb E F(N^{2/3}(\mu_1-2),...,N^{2/3}(\mu_{k}-2))+O(N^{-\varepsilon})
\]
for some fixed $\varepsilon>0$.
\end{theorem}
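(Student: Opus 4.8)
\textbf{Proof proposal for Theorem \ref{thm1.4}.} The plan is to deduce the edge universality of $A$ from the edge universality of the centered matrix $H$, using the isotropic local law (Theorem \ref{thm1.3}) to control the rank-one perturbation $f\b e\b e^T$. First I would recall the eigenvalue interlacing: since $A = H + f\b e\b e^T$ with $f>0$ and $f\b e\b e^T$ a positive rank-one matrix, Cauchy interlacing gives $\lambda^H_i \leq \lambda_i \leq \lambda^H_{i-1}$ for all $i\geq 2$, so the non-trivial eigenvalues $\lambda_2,\dots,\lambda_{k+1}$ of $A$ are sandwiched between consecutive eigenvalues of $H$. The subtlety is that $\lambda_1$ (the Perron-type eigenvalue near $f$, which is of order $q$) is pushed far to the right, and one must check it does not interact with the edge. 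Since $f \geq \tau q \geq \tau N^\tau \to \infty$ and the edge $\lambda^H_1$ sits near $2$, the top eigenvalue of $A$ is macroscopically separated from the remaining ones, so only the interlacing at indices $2,\dots,k+1$ matters.

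The key quantitative step is to show that the shift $\lambda_{i+1} - \lambda^H_i$ is $o(N^{-2/3})$ for $i=1,\dots,k$, with very high probability, so that $N^{2/3}(\lambda_{i+1}-\cal L)$ and $N^{2/3}(\lambda^H_i - \cal L)$ have the same limiting law. To do this I would use the standard representation of the eigenvalues of a rank-one perturbation: writing the spectral decomposition $H = \sum_j \lambda^H_j \b u^H_j (\b u^H_j)^T$, the eigenvalues of $A$ distinct from those of $H$ solve the secular equation
\[
1 + f \sum_{j=1}^N \frac{|\langle \b e, \b u^H_j\rangle|^2}{\lambda^H_j - \lambda} = 0\,,
\]
equivalently $1 + f\,\langle \b e, (H-\lambda)^{-1}\b e\rangle = 0$. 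Near the edge, $\langle \b e, (H-\lambda)^{-1}\b e\rangle \approx m_{sc}(\lambda) + (\text{error})$ by \eqref{Hisotropic} with $\b x = \b y = \b e$; since $f$ is large and $m_{sc}(2) = -1$, the solution near the edge is pushed to $\lambda \approx 2 + f^{-1} + \cdots$, i.e. a macroscopic distance $\sim f^{-1}$ above the edge — this is again $\lambda_1$, not an edge eigenvalue. For the edge eigenvalues $\lambda_{i+1}$ with $i$ fixed, the interlacing bound $0 \leq \lambda_{i+1} - \lambda^H_i \leq \lambda^H_{i-1} - \lambda^H_i$ combined with the edge rigidity estimates (from \cite{HLY,Lee21,HY22}, which give eigenvalue gaps $\lambda^H_{i-1} - \lambda^H_i = O(N^{-2/3+o(1)})$ near the edge) already shows $\lambda_{i+1} - \lambda^H_i = O(N^{-2/3+o(1)})$; to upgrade $O(N^{-2/3+o(1)})$ to $o(N^{-2/3})$ one sharpens via the secular equation, tracking that the correction to $\lambda^H_i$ induced by the perturbation is governed by $f^{-1}$ times the local eigenvalue density, hence of order $f^{-1} N^{-1/3} \ll N^{-2/3}$ since $f \geq N^\tau$. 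Concretely, I would linearize the secular equation around $\lambda^H_i$ using Corollary \ref{cor1.5} — which gives $|\langle \b e, \b u^H_j\rangle|^2 = O(N^{-1+o(1)})$ uniformly, so no single term dominates — and the isotropic law to control the sum over $j$, obtaining $\lambda_{i+1} = \lambda^H_i + O(f^{-1}N^{-1/3+o(1)})$.

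Once the displacement bound $|\lambda_{i+1} - \lambda^H_i| = o(N^{-2/3})$ holds with very high probability for $i=1,\dots,k$, the conclusion follows: for smooth compactly supported $F$, we have
\[
\bb E F\big(N^{2/3}(\lambda_2-\cal L),\dots,N^{2/3}(\lambda_{k+1}-\cal L)\big) = \bb E F\big(N^{2/3}(\lambda^H_1-\cal L),\dots,N^{2/3}(\lambda^H_k-\cal L)\big) + O(N^{-\varepsilon})
\]
by a Taylor expansion of $F$ (the Lipschitz bound on $F$ times the displacement, plus the very-high-probability event), and then the edge universality of $H$ proved in \cite{HLY,Lee21,HY22} identifies the right-hand side with the GOE edge statistics up to $O(N^{-\varepsilon})$, with $\cal L$ being the same explicit (random) centering that appears there. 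I expect the main obstacle to be the quantitative control of the secular-equation perturbation near the edge: one needs the isotropic local law \eqref{Hisotropic} to hold down to the optimal scale $\eta \sim N^{-2/3}$ at spectral parameters just outside the edge (where $m_{sc}$ has a square-root singularity), and to combine it cleanly with the edge rigidity of $H$ so that the $O(N^{o(1)})$ factors do not swamp the $f^{-1}N^{-1/3}$ gain; handling the interlacing carefully enough to separate the runaway eigenvalue $\lambda_1$ from the genuine edge is a secondary but necessary bookkeeping point.
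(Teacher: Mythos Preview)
Your approach --- directly comparing individual eigenvalues $\lambda_{i+1}$ and $\lambda^H_i$ via the secular equation --- is genuinely different from the paper's, which instead compares the traces of the resolvents. The paper uses the identity $\ul{G}-\ul{\cal G}=\frac{f}{N}(G\cal G)_{\b e\b e}$, bounds this via Ward's identity and the sharp isotropic estimate $\im G_{\b e\b e}\prec f^{-2}$ from \eqref{1.2} to obtain $\ul{G}=\ul{\cal G}+O_\prec\!\big(\tfrac{1}{N\eta}\big)(\im\ul{\cal G})^{1/2}$, and then feeds the resulting Green function comparison \eqref{4.1} into the standard machinery of \cite{KY1}. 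This route stays entirely at the level of resolvents, uses only the local law on $\b D$, and never touches the secular equation.

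Your proposal has two concrete issues. First, the interlacing inequality is stated backwards: from $\lambda^H_i\le\lambda_i\le\lambda^H_{i-1}$ one gets $\lambda_{i+1}\le\lambda^H_i$, not $\lambda_{i+1}\ge\lambda^H_i$; this is a bookkeeping slip. Second, and more substantively, the claimed bound $\lambda_{i+1}=\lambda^H_i+O(f^{-1}N^{-1/3+o(1)})$ with the reasoning ``$f^{-1}$ times the local density'' is incorrect. In the secular equation $\sum_j c_j^2/(\lambda-\lambda^H_j)=1/f$ with $c_j=\langle\b e,\b u^H_j\rangle$, the regular part $\sum_{j\ne i}c_j^2/(\lambda-\lambda^H_j)$ is approximately $-\re m_{sc}(\lambda)\approx 1$ near the edge, so the equation forces the pole term $c_i^2/(\lambda-\lambda^H_i)\approx -1$, giving $|\lambda_{i+1}-\lambda^H_i|\approx c_i^2\prec N^{-1}$; the $f^{-1}$ on the right-hand side is negligible against the $O(1)$ regular part and does not set the scale. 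The conclusion $|\lambda_{i+1}-\lambda^H_i|=o(N^{-2/3})$ survives, but making this rigorous requires controlling $\cal G_{\b e\b e}$ at \emph{real} spectral parameter between consecutive eigenvalues (separating the nearby pole from the smooth part), which is more delicate than simply invoking \eqref{Hisotropic} on $\b D$. The paper's Green function comparison sidesteps this delicacy entirely.
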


Armed with the isotropic law, we are now able to prove our first main result, Theorem \ref{thm1.2}. Our starting point follows the strategy of \cite{KY1}, by converting the eigenvectors into integrals of the Green functions near the edge. However, the comparison argument used in \cite{KY1,BCLX23} is not sufficient in the sparse case. To this end, we present a method that directly computes the distribution of the eigenvectors. More precisely, after non-trivial transformations of the Green functions via cumulant expansion, we convert the leading Green function integrals back to the eigenvectors (see e.g.\,\eqref{5.19} below), which allows us to form self-consistent equations of the characteristic functions of the eigenvectors and obtaining their limiting distributions.

This method of directly computing the distribution of edge eigenvectors is applicable to other situations. In a companion paper \cite{HHY}, the first and second named authors, in collaboration with Horng-Tzer Yau, adapt this method to show that the edge eigenvectors of random regular graphs converge to Gaussian waves with variance 1. 

Historically, the proofs of universality of local statistics in random matrix theory always rely on comparisons with the Gaussian model (through Green function comparison or Dyson Brownian motion). To our knowledge, this paper and its companion \cite{HHY} are the first works that directly establish universality in the microscopic scale.

As another illustration of the method, we also prove the following normal fluctuation in quantum ergodicity at the spectral edge for Wigner matrices. 

\begin{theorem} \label{thm1.9}
Set $q=N^{1/2}$ in Definition \ref{def_sparse}, which makes $H$ a standard Wigner matrix. Moreover, we assume that $H_{11}\overset{d}{=}\cdots \overset{d}{=}H_{NN}$ and $H_{ij}\overset{d}{=}H_{i'j'}$ for all $i> j$ and $i'> j'$\footnote{We add this assumption since it is required in the Eigenstate Thermalisation Hypothesis \cite{CEH1}; for the arguments in this article, we do not need the entries to have identical distributions.}. Fix $k
\in \bb N_+$.  Let $B_1,...,B_k\in \bb R^{N\times N}$ be deterministic, real-symmetric and traceless matrices. For all $a=1,...,k$, suppose $\tr B_a^2\geq N^{\tau} \|B_a\|^2>0$.   Fix $T>0$. There exists fixed $\varepsilon>0$ such that
	\[
\bb E \exp\bigg(\sum_{a=1}^k\ii  t_a\frac{N}{\sqrt{2\tr B_a^2}} \langle {\b u^H_a}, B_a {\b u^H_a}\rangle \bigg)=\bb E  \exp\bigg(\sum_{a=1}^k\ii t_a Z_a\bigg)+O(N^{-\varepsilon})	
	\]
uniformly for all $t_a\in [-T,T]$. Here $(Z_a)_{a=1}^{k}$ are i.i.d.\,standard Gaussian random variables. 
\end{theorem}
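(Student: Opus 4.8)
The plan is to compute the joint characteristic function
\[
\phi(\b t)\deq \bb E\exp\pbb{\ii\sum_{a=1}^k t_a X_a}\,,\qquad X_a\deq \frac{N}{\sqrt{2\tr B_a^2}}\,\langle \b u_a^H,B_a\b u_a^H\rangle=\frac{N}{\sqrt{2\tr B_a^2}}\,\tr[B_aP_a]\,,
\]
directly, where $P_a=\b u_a^H(\b u_a^H)^T$ is the spectral projection onto $\lambda_a^H$, and to show that $\phi$ satisfies the self-consistent equation
\[
\partial_{t_b}\phi(\b t)=-t_b\,\phi(\b t)+O(N^{-\varepsilon})
\]
uniformly on $\b t\in[-T,T]^k$. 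Integrating out from $\b t=\b 0$, where $\phi=1$, then yields $\phi(\b t)=\exp\pb{-\tfrac12\sum_a t_a^2}+O(N^{-\varepsilon})$, which is exactly the characteristic function of $k$ i.i.d.\ standard Gaussians, as claimed.

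First I would pass from the eigenvectors to the resolvent $G(z)=(H-z)^{-1}$. Using edge rigidity \cite{CEH1}, a lower bound on the consecutive edge gaps $\lambda_a^H-\lambda_{a+1}^H$, and the delocalisation estimate of Corollary~\ref{cor1.5}, one writes, up to an error small compared with the target scale $\sqrt{\tr B_a^2}/N$,
\[
\tr[B_aP_a]=\frac1\pi\int g_a(x)\,\im\tr\qb{B_a\,G(x+\ii\eta)}\,\dd x+(\txt{error})\,,
\]
where $g_a$ is a smooth weight supported on a window of width $N^{o(1)}$ times the typical gap around the classical edge location $\gamma_a$, and $\eta$ lies in an intermediate range---small enough to resolve $\lambda_a^H$, large enough for the local law to apply. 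This is the first place the hypothesis $\tr B_a^2\ge N^\tau\|B_a\|^2$ is used: the localisation error is a priori only $\|B_a\|$ times a small power of $N$, which must be beaten by $\sqrt{\tr B_a^2}/N\ge N^{\tau/2-1}\|B_a\|$. (The same hypothesis is what makes a Gaussian limit possible at all: for rank-one $B_a$ the quantity $\langle \b u_a^H,B_a\b u_a^H\rangle$ is nonnegative, hence not asymptotically Gaussian.) After this reduction $\phi$ is, up to $O(N^{-\varepsilon})$, the characteristic function of a family of tracial Green function functionals at deterministic energies, to which the isotropic local law \eqref{Hisotropic}---upgraded to the tracial observable $\tr[B_a\im G]$---together with fluctuation averaging applies.

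Next I would differentiate $\phi$ in $t_b$, so that $\partial_{t_b}\phi=\ii\,\bb E\qb{X_b\,\me^{\ii\sum_a t_aX_a}}$, and expand the Green function factor inside $X_b$ by the cumulant (integration-by-parts) expansion in the independent entries $H_{ij}$, starting from $z\tr[B_bG]=\tr[B_bHG]$, where the tracelessness of $B_b$ removes the $\tr B_b$ term. Since $H$ is Wigner, $\kappa_r(H_{ij})=O(N^{-r/2})$, so only the second cumulant contributes at leading order and all higher cumulants carry extra powers of $N$---this is where the argument is genuinely simpler than for the sparse matrix $A$. In the second-cumulant term the derivative $\partial_{jk}$ either falls on the remaining resolvents inside $X_b$, producing after resummation the deterministic self-energy renormalisation and ultimately the variance constant---the $2\tr B_b^2$ of the normalisation, via the identity $\var(\b g^TB\b g)=2\tr B^2$ for standard Gaussian $\b g$---or it falls on the exponential, bringing down $\ii t_a\,\partial_{jk}X_a$; converting these leading Green function integrals back into eigenvector overlaps, as in the proof of Theorem~\ref{thm1.2}, reorganises them into $\sum_a \ii t_a\,\bb E\qb{\mathrm{Cov}(X_b,X_a)\,\me^{\ii\sum_a t_aX_a}}$. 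Because edge eigenvectors at distinct $\lambda_a^H$ are asymptotically independent while each $X_a$ has variance tending to $1$, one has $\mathrm{Cov}(X_b,X_a)\to\delta_{ab}$, so the last display equals $\ii t_b\,\phi(\b t)+O(N^{-\varepsilon})$, hence $\partial_{t_b}\phi=-t_b\phi+O(N^{-\varepsilon})$.

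The main obstacle is the bookkeeping in the cumulant expansion: every term that does not fit the self-consistent skeleton above must be shown to be $O(N^{-\varepsilon})$ \emph{relative to} the small target scale $\sqrt{\tr B_a^2}/N$, which forces one to track the structure of each monomial in the entries $G_{ij}$, to extract the tracial (rather than entrywise) gain from factors $\tr[B_a\,\cdot\,]$, and to use the isotropic local law and edge rigidity in tandem; the constraint $\tr B_a^2\ge N^\tau\|B_a\|^2$ is invoked repeatedly to absorb the $N^{o(1)}$ losses. A secondary difficulty is the initial reduction, since near the edge the gaps $\lambda_a^H-\lambda_{a+1}^H$ are of order $N^{-2/3}$ and fluctuate, so isolating a single eigenvalue requires rigidity together with a level-repulsion input.
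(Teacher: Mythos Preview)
Your overall strategy---characteristic function, Green function representation via rigidity and level repulsion, cumulant expansion, self-consistent ODE---matches the paper's. But the way you close the self-consistent equation contains a genuine gap.

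You write that after the derivative $\partial_{jk}$ hits the exponential, the leading term reorganises into $\sum_a \ii t_a\,\bb E[\mathrm{Cov}(X_b,X_a)\,\me^{\ii\sum t_aX_a}]$, and then: ``Because edge eigenvectors at distinct $\lambda_a^H$ are asymptotically independent while each $X_a$ has variance tending to~$1$, one has $\mathrm{Cov}(X_b,X_a)\to\delta_{ab}$.'' This is circular: asymptotic independence and unit variance are precisely the content of the theorem. The Gaussian identity $\var(\b g^T B\b g)=2\tr B^2$ you invoke is only motivational---it does not by itself produce the variance inside the Wigner computation.

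The paper closes the equation differently. After the first cumulant expansion and the cancellation between the two $H\cal G$-terms (the paper starts from the identity $\ul{\widetilde{\cal G}B}=\im(\ul{H\cal G}\cdot\ul{\cal GB}-\ul{\cal G}\cdot\ul{H\cal GB})$, not from your single-term $z\tr[BG]=\tr[BHG]$), the surviving leading term is of the form $\ul{\widehat{\cal G'}B\widetilde{\cal G'}\widetilde{\cal G}B}$. Because of the $\widehat{\cal G'}$ factor---a real part, not a Poisson kernel---this does \emph{not} localise to a single eigenvector by spectral decomposition. A \emph{second} cumulant expansion is needed to strip off $\widehat{\cal G'}$, leaving $\ul{B\widetilde{\cal G'}\widetilde{\cal G}B}=\frac{1}{N}\sum_\alpha \frac{\eta_+^2\,\langle\b u_\alpha^H,B^2\b u_\alpha^H\rangle}{((\lambda_\alpha-E')^2+\eta_+^2)((\lambda_\alpha-E)^2+\eta_+^2)}$, which after integration in $E,E'$ localises to $\langle\b u_1^H,B^2\b u_1^H\rangle$. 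The coefficient then becomes $\frac{N}{\tr B^2}\langle\b u_1^H,B^2\b u_1^H\rangle$, and the equation closes via the Eigenstate Thermalisation Hypothesis estimate of \cite{CEH1} (Proposition~\ref{propA1}(iii) in the paper):
\[
\langle\b u_1^H,B^2\b u_1^H\rangle-\frac{\tr B^2}{N}=\langle\b u_1^H,(B^2-N^{-1}\tr B^2\cdot I)\b u_1^H\rangle\prec\frac{\sqrt{\tr(B^2-N^{-1}\tr B^2\cdot I)^2}}{N}\prec\frac{\tr B^2}{N^{1+\tau/2}}\,,
\]
the last step using $\|B\|=1$ and $\tr B^2\ge N^\tau$. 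This ETH input is the missing ingredient in your sketch; without it there is no non-circular way to get the variance constant. You should also flag the second expansion explicitly---it is a structural difference from the rank-one case of Theorem~\ref{thm1.2}, where the analogous term $\sum_j\widetilde G_{j\b w}\widetilde G'_{j\b w}$ already has the right form.
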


For Wigner matrices, the universality of eigenvectors was obtained in \cite{TV3,KY1,BoY1}.  The fluctuation in quantum ergodicity inside the bulk has been studied intensively in \cite{CEK2,CEK5,BL22,BC24}; near the edge, the distribution was obtained when the observable is a projection \cite{BCLX23}\footnote{More recently, an updated version of \cite{BCLX23} also covers general observables.}. In Theorem \ref{thm1.9}, we settle the fluctuation in quantum ergodicity near the edge in full generality.  Notably, unlike previous results, our proof does not require the observables $B_a$ to be Hermitian: we only assume so as $\langle \b u, B \b u \rangle=\langle \b u, B^* \b u \rangle$ if $B$ and $\b u$ are real; see \eqref{1.7} below for the complex case. Besides the method introduced in this paper, the proof also relies on the Eigenstate Thermalisation Hypothesis proved in \cite{CEH1}. 

\begin{remark}
(i)	For convenience, we only state the results for the largest eigenvalues and the corresponding eigenvectors in Theorems \ref{thm1.2}, \ref{thm1.4}, and \ref{thm1.9}. The analogue also holds true for the smallest eigenvalues and the corresponding eigenvectors.

(ii) For Theorem \ref{thm1.9}, our method also applies to the study of complex Wigner matrices (and complex observables). Let $\cal H\in \bb C^{N\times N}$ be a complex Hermitian random matrix, with independent upper-triangular entries. Assume $\bb E \cal H_{ij}=0$, $\bb E |\cal H_{ij}^2|=N^{-1}$, $ |\bb E \cal H_{ij}^{2}|\leq (1-\tau)N^{-1}$, and $\bb E |\cal H_{ij}^k|\leq C_kN^{-k/2}$ for all $i,j \in \{1,2,...,N\}$ and fixed $k\in \bb N_+$. Let ${\b u}^{\cal H}_1$ denote the top eigenvector of $\cal H$. Let $B\in \bb C^{N\times N}$ be a deterministic, traceless matrix satisfying $\tr |B|^2\geq N^{\tau} \|B\|^2$ for some fixed $\tau>0$. In this case, one can prove that
\begin{equation} \label{1.7}
\frac{N}{\sqrt{\tr |B|^2}} \langle {\b u}^{\cal H}_1, B {\b u}^{\cal H}_1 \rangle \overset{d}{\approx} \frac{1}{\sqrt{\tr |B|^2}} \langle \b g, B \b g\rangle\,,
\end{equation}
where $\b g$ is the standard complex Gaussian vector in $\bb C^N$. The RHS of \eqref{1.7} has variance 1, and its real and imaginary parts have Gaussian distributions. On the other hand, its phase depends on $B$, and the real and imaginary parts are not necessarily independent. To prove \eqref{1.7}, two minor adjustments are needed on top of Theorem \ref{thm1.9}: one needs to compute the complex moments instead of the characteristic function, and generalize the notion of imaginary part of Green functions as in \cite[(3.1)]{KY1}. We do not pursue it here.
\end{remark}

\subsection*{Other related results}
This paper focuses on the regime $p\geq N^{-1+o(1)}$. The situation is dramatically different for very sparse Erd{\H o}s--R{\'e}nyi graphs. In the very sparse regime $p=O(\ln N/N)$, for Erd{\H o}s--R{\'e}nyi graphs, there exists a critical value $b_*=1/(\ln 4-1)$ such that  if $p> b_*\ln N/N$, the extreme eigenvalues of the normalized adjacency matrix converge to $\pm 2$ \cite{benaych2019largest, alt2021extremal, tikhomirov2021outliers, benaych2020spectral}, and all the eigenvectors are delocalized \cite{alt2022completely, EKYY1}. For $ p<b_*\ln N/N$, there exist outlier eigenvalues \cite{tikhomirov2021outliers, alt2021extremal}, and the edge eigenvectors are localized or semi-localized\cite{alt2021delocalization, ADK23}. In terms of distributions, it was proved in \cite{alt2023poisson} that the extreme eigenvalues have Poisson statistics in the subcritical regime $ p<b_*\ln N/N$.  We remark that the fluctuations sit on much smaller scales in the supercritical regime, and it is currently difficult to obtain distributions of the eigenvalues or eigenvectors when $b_*\log N/N<p <(\log N)^C/N$.

Another closely related model is the random $d$-regular graph on $N$ vertices $\b G_d(N)$. The local law for random regular graphs was proved for $N^{o(1)}\leq d\leq N^{2/3-o(1)}$ in \cite{BKY15}, for $N^{o(1)}\leq d\leq N/2$ in \cite{H22}, and for fixed $d\geq 3$ in \cite{BHY19,huang2024spectrum}. The bulk universality was proved for $N^{o(1)}\leq d\leq N^{2/3-o(1)}$ in \cite{BHKY15}. The universality of extreme eigenvalues was proved for $N^{2/9+o(1)}\leq d\leq N^{1/3-o(1)}$ in \cite{BHKY19}, for $N^{2/3+o(1)}\leq d \leq N/2$ in \cite{H22}, for $N^{o(1)}\leq d \leq N^{1/3-o(1)}$ in \cite{HY23}, and for fixed $d\geq 3$ in \cite{huang2024ramanujan}.

\subsection*{Organizations}
The paper is organized as follows. In Section \ref{sec2} we introduce the notations used in this paper. In Section \ref{sec 3} we prove Theorem \ref{thm1.3}, and in Section \ref{sec4} we apply the isotropic local law to prove Theorems \ref{thmbulk} and \ref{thm1.4}. In Section \ref{sec5} we prove Theorem \ref{thm1.2}, with the aid of Theorem \ref{thm1.3}. Section \ref{appA} is an independent part of the paper, where we prove Theorem \ref{thm1.9}.

\subsection*{Conventions} Unless stated otherwise, all quantities in this paper depend on the fundamental large parameter $N$, and we omit this dependence from our notation. We use the usual big $O$ notation $O(\cdot)$, and if the implicit constant depends on a parameter $\alpha$, we indicate it by writing $O_\alpha(\cdot)$. Let 
$$
X=(X^{(N)}(u): N \in \bb N, u \in U^{(N)})\,, \quad Y=(Y^{(N)}(u): N \in \bb N, u \in U^{(N)})
$$
be two families of  random variables, where $U^{(N)}$ is a possibly $N$-dependent parameter
set, and $Y\geq 0$. We say that $X$ is stochastically dominated by $Y$ , uniformly in $u$, if for any fixed $D,\varepsilon>0$,
\[
\sup_{u \in U^{(N)}}\bb P(|X|\geq Y N^{\varepsilon}) =O_{\varepsilon,D} (N^{-D})\,.
\]
We write $X\asymp Y$ if $X =O(Y)$ and $Y=O(X)$. If $X$ is stochastically dominated by $Y$ , we use the notation $X=O_{\prec}(Y)$, or equivalently $X \prec Y$. We say an event $\Omega$ holds with very high probability if for any fixed $D>0$, $1-\bb P(\Omega)=O_D(N^{-D})$. We shall use $\ell$ to denote a generic large positive integer, which may depend on some fixed parameters and whose value may change from one expression to the next.

\subsection*{Acknowledgment} YH and CW are supported by National Key R\&D Program of China No.\,2023YFA1010400,  NSFC No.\,12322121 and Hong Kong RGC Grant No.\,21300223. The research of JH is supported by NSF grants DMS-2331096 and DMS-2337795, and the Sloan Research Award. The authors thank the reviewers for their careful reading of the manuscript and for their helpful comments.

\section{Preliminaries} \label{sec2}

For a complex random variable $X$, we denote $\|X\|_n\deq (\bb E |X^n|)^{1/n}$ for any $n\in \bb N_+$. For any matrix $M\in \bb C^{N\times N}$, we abbreviate
\[
\widehat{M}=\re M\,, \quad \widetilde{M}=\im M\,, \quad 
 \ul{M}\deq N^{-1}\tr M\quad \mbox{and} \quad M_{\b v\b w}\deq \langle \b v, M \b w\rangle
\]
for all $\b v,\b w \in \bb R^N$. Recall from Section \ref{sec1} that $\b e=N^{-1/2}(1,...,1)^T\in \bb R^N$, and we further use $\b e_i$ to denote the standard $i$th basis in $\bb R^N$. Let $\varrho_{sc}(x)\deq \frac{1}{2\pi} \sqrt{(4-x^2)}$ denote the semicircle distribution. For $z \in \bb C\backslash \bb R$, we define
\begin{equation} \label{2.1}
m_{sc}(z)\deq \int_{\bb R} \frac{\varrho_{sc}(x)}{x-z} \,\dd x \,, \quad G\equiv G(z)\deq (A-z)^{-1}\,,\quad  \mbox{and} \quad \cal G\equiv \cal G(z)\deq (H-z)^{-1}\,.
\end{equation}
Let us abbreviate
\begin{equation} \label{abbre}
\partial_{ij}\deq \frac{\partial }{\partial H_{ij}}\,.
\end{equation}
We have the differential rule
\begin{equation} \label{diff}
\partial_{kl}G_{ij}=\frac{\partial G_{ij}}{\partial H_{kl}}=-(G_{ik}G_{jl}+G_{il}G_{kj})(1+\delta_{kl})^{-1}\,.
\end{equation}
Obviously, the same rule also holds when $G$ is replaced by $\cal G$.

\paragraph{Cumulant expansion} Recall that for a real random variable $h$, all of whose moments are finite, the $s$th-cumulant of $h$ is
\begin{equation*}
	\cal C_s(h)\deq(-\mathrm{i})^s\bigg(\frac{\dd^{s}}{\dd t^s}\log\mathbb{E}[e^{\mathrm{i}th}]\bigg)\Bigg|_{t=0}.
\end{equation*}
We shall use a standard cumulant expansion from \cite{KKP, Kho2, HK}. The proof was given in e.g.\,\cite[Appendix A]{HKR}.

\begin{lemma}[Cumulant expansion] \label{lem:cumulant_expansion}
	Let $F\col\R\to\C$ be a smooth function, and denote by $F^{(n)}$ its $n$th derivative. Then, for every fixed $\ell \in\N$, we have 
	\begin{equation}\label{eq:cumulant_expansion}
		\mathbb{E}\big[h\cdot F(h)\big]=\sum_{s=0}^{\ell}\frac{1}{s!}\mathcal{C}_{s+1}(h)\mathbb{E}[F^{(s)}(h)]+\cal R_{\ell+1},
	\end{equation}	
	assuming that all expectations in \eqref{eq:cumulant_expansion} exist, where $\cal R_{\ell+1}$ is a remainder term (depending on $f$ and $h$), such that for any $t>0$,
	\begin{equation*} 
		\cal R_{\ell+1} = O(1) \cdot \bigg(\E\sup_{|x| \le |h|} \big|F^{(\ell+1)}(x)\big|^2 \cdot \E \,\big| h^{2\ell+4} \mathbf{1}_{|h|>t} \big| \bigg)^{1/2} +O(1) \cdot \bb E |h|^{\ell+2} \cdot  \sup_{|x| \le t}\big|F^{(\ell+1)}(x)\big|\,.
	\end{equation*}
\end{lemma}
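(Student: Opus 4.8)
The plan is to derive the identity by a finite Taylor expansion of $F$ around $0$ combined with the definition of cumulants via the moment-generating function, controlling the tail contributions with a truncation argument. More precisely, let me first recall the characteristic-function identity: for a real random variable $h$ with finite moments, Fourier inversion gives $\mathbb{E}[h\cdot F(h)] = -\mathrm{i}\,\frac{\mathrm{d}}{\mathrm{d}t}\mathbb{E}[e^{\mathrm{i}th}F(h)]$ in a formal sense, and the cumulants $\mathcal{C}_{s}(h)$ are precisely the Taylor coefficients of $\log\mathbb{E}[e^{\mathrm{i}th}]$. The cleanest route, however, is the elementary one: write $F(h)=\sum_{n=0}^{\ell+1}\frac{F^{(n)}(0)}{n!}h^n + (\text{remainder})$ with integral form of the Taylor remainder, multiply by $h$, take expectations, and reorganize $\mathbb{E}[h^{n+1}]$ into cumulants using the moments-to-cumulants relations; then repackage the polynomial part as $\sum_{s=0}^{\ell}\frac{1}{s!}\mathcal{C}_{s+1}(h)\,\frac{F^{(s)}(0)}{?}\cdots$. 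Matching this back to $\sum_{s}\frac{1}{s!}\mathcal{C}_{s+1}(h)\mathbb{E}[F^{(s)}(h)]$ requires expanding each $\mathbb{E}[F^{(s)}(h)]$ again by Taylor's theorem; the combinatorial identity that makes the two sides agree up to order $\ell$ is exactly the statement that the exponential generating functions of moments and cumulants are reciprocal logarithms, i.e. $\sum_n \frac{\mathbb{E}[h^n]}{n!}(\mathrm{i}t)^n=\exp\big(\sum_s \frac{\mathcal{C}_s(h)}{s!}(\mathrm{i}t)^s\big)$. Since this is a standard computation, I would cite \cite{KKP,Kho2,HK} or reproduce the short induction on $\ell$.

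The second, genuinely quantitative, part is the bound on $\mathcal{R}_{\ell+1}$. Here I would split $h = h\mathbf{1}_{|h|\le t} + h\mathbf{1}_{|h|>t}$ at the free threshold $t>0$. On the event $\{|h|\le t\}$, I use Taylor's theorem with the Lagrange (or integral) remainder for $F^{(\ell+1)}$, so the error is bounded by $|h|^{\ell+2}\sup_{|x|\le t}|F^{(\ell+1)}(x)|$ up to a combinatorial constant — this produces the second term in the stated bound, $O(1)\cdot\mathbb{E}|h|^{\ell+2}\cdot\sup_{|x|\le t}|F^{(\ell+1)}(x)|$. On the complementary event $\{|h|>t\}$, one cannot use a local sup, so instead bound crudely: both the true contribution $\mathbb{E}[hF(h)\mathbf{1}_{|h|>t}]$ and the corresponding truncated polynomial/cumulant pieces are estimated by Cauchy–Schwarz, pairing $\sup_{|x|\le|h|}|F^{(\ell+1)}(x)|$ (after absorbing lower-order Taylor terms, which are dominated by the top one on the large-$h$ regime) with $\big(\mathbb{E}|h^{2\ell+4}\mathbf{1}_{|h|>t}|\big)^{1/2}$; this yields the first term $O(1)\big(\mathbb{E}\sup_{|x|\le|h|}|F^{(\ell+1)}(x)|^2\cdot\mathbb{E}|h^{2\ell+4}\mathbf{1}_{|h|>t}|\big)^{1/2}$. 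One must also check that the cumulants $\mathcal{C}_{s+1}(h)$ appearing in the finite sum are themselves $O(\mathbb{E}|h|^{s+1})$ (true, since cumulants are polynomials in moments), so that the "missing tail" of the truncated moments is likewise controlled by the $\mathbf{1}_{|h|>t}$ expectation.

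The main obstacle — and the only place requiring care rather than bookkeeping — is handling the region $\{|h| > t\}$ so that the final bound features only $\sup_{|x|\le|h|}|F^{(\ell+1)}(x)|$ and not the lower derivatives $F^{(s)}$, $s\le\ell$. The point is that on $\{|h|>t\}$ the monomials $|h|^{s}$ for $s\le\ell$ are all $\le |h|^{\ell+1}/t^{\ell+1-s}$, but since the remainder should not carry negative powers of $t$, one must instead re-expand $F$ only to order slightly below $\ell+1$ on this event and absorb the tail into the quadratic-moment factor; equivalently, one treats $\mathbb{E}[hF(h)\mathbf 1_{|h|>t}]$ directly via Cauchy–Schwarz against $\mathbb{E}|h|^2\mathbf 1_{|h|>t}\le\mathbb{E}|h^{2\ell+4}\mathbf 1_{|h|>t}|^{1/(\ell+2)}\cdots$, using that $F$ itself is controlled by $\sup_{|x|\le|h|}|F^{(\ell+1)}(x)|$ plus a polynomial whose expectation against $\mathbf 1_{|h|>t}$ is again of the desired form. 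Since this argument is carried out in detail in \cite[Appendix A]{HKR}, I would either reproduce that proof or simply invoke it; no new idea is needed beyond the standard truncation-plus-Taylor scheme.
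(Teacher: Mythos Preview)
Your proposal is correct and matches the paper's treatment: the paper does not give an independent proof but simply cites \cite[Appendix A]{HKR}, and the Taylor-plus-truncation scheme you outline is exactly the argument carried out there. In fact you supply more detail than the paper itself, which just invokes the reference.
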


\begin{remark}
	In this paper, we shall always assume the remainder term is negligible for some fixed, large $\ell$. This can be checked through a standard argument; see e.g.\,\cite[Section 4.3]{HK}.
\end{remark}

The next lemma follows from Definition \ref{def_sparse}.

\begin{lemma} \label{lemma 2.3}
	For any fixed $k \geq 2$ we have
	\begin{equation*}
		\cal C_{k}(H_{ij})=O(1/(Nq^{k-2}))
	\end{equation*}
	uniformly for all $i,j$.
\end{lemma}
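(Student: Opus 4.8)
The statement to prove is Lemma 2.3: $\mathcal C_k(H_{ij}) = O(1/(Nq^{k-2}))$ uniformly in $i,j$, for each fixed $k$. This is a standard consequence of the moment bounds in Definition \ref{def_sparse}.

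\medskip

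The plan is to express the $k$th cumulant as a polynomial in the moments $\E H_{ij}, \E H_{ij}^2, \dots, \E H_{ij}^k$ via the moment–cumulant relations, and then bound each monomial using the input bounds. First I would recall the combinatorial formula writing $\mathcal C_k(h)$ as a finite sum over partitions $\pi$ of $\{1,\dots,k\}$ of $(-1)^{|\pi|-1}(|\pi|-1)! \prod_{B \in \pi} \E[h^{|B|}]$, so that $\mathcal C_k(H_{ij})$ is a fixed universal polynomial (depending only on $k$) in the moments $m_r \deq \E H_{ij}^r$, $1 \le r \le k$, in which every monomial has the form $\prod_B m_{|B|}$ with $\sum_B |B| = k$ and the number of factors $|\pi| \ge 1$. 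Second, I would feed in the bounds from Definition \ref{def_sparse}: $m_1 = 0$ (so any partition with a singleton block contributes nothing), $m_2 = (1 + O(\delta_{ij}))/N = O(1/N)$, and for $r \ge 3$, $m_r = O_r(N^{-1} q^{2-r})$. Since $q \le N^{1/2}$ and each moment carries a factor $N^{-1}$, we have $m_r = O(N^{-1} q^{2-r})$ uniformly for \emph{all} $r \ge 2$ (the case $r=2$ gives $N^{-1}q^0 = N^{-1}$, matching). Third, for a monomial coming from a partition $\pi$ with blocks of sizes $r_1, \dots, r_j$ (all $\ge 2$, since singletons kill the term), the product is bounded by $\prod_{t=1}^j O(N^{-1} q^{2-r_t}) = O\big(N^{-j} q^{2j - \sum_t r_t}\big) = O\big(N^{-j} q^{2j-k}\big)$.

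\medskip

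To finish, observe that $N^{-j} q^{2j-k} = N^{-j+1} q^{2j-2} \cdot N^{-1} q^{-(k-2)} = (q^2/N)^{j-1} \cdot N^{-1} q^{-(k-2)}$, and since $q \le N^{1/2}$ we have $q^2/N \le 1$, so each monomial is $O(N^{-1} q^{-(k-2)}) = O(1/(Nq^{k-2}))$. As there are only $B_k$ (Bell number) terms, all with coefficients depending on $k$ alone, the sum is also $O_k(1/(Nq^{k-2}))$, and the bound is uniform in $i,j$ since all the input moment bounds are. The main obstacle here is essentially bookkeeping: keeping track of the exponents of $N$ and $q$ across the multiplicative structure of the moment–cumulant formula and checking that the worst case is the single-block partition $j=1$. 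There is no real analytic difficulty — the only point worth care is noting that the $O(\delta_{ij})$ correction in $\E H_{ij}^2$ and the $i,j$-dependence of the higher moments are all controlled uniformly, so the conclusion holds uniformly over all entries.
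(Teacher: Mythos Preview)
Your proof is correct and is exactly the standard argument the paper has in mind; the paper does not give a proof at all, simply stating that the lemma ``follows from Definition \ref{def_sparse}''. Your use of the moment--cumulant formula together with the bound $|m_r|\le \E|H_{ij}|^r=O(N^{-1}q^{2-r})$ for $r\ge 2$ and the observation $(q^2/N)^{j-1}\le 1$ is the natural way to fill in the details.
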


We recall the local semicircle law for sparse random matrices from \cite[Theorem 2.9]{EKYY1}.
\begin{proposition}[Local semicircle law] \label{refthm1}
	We have
	\begin{equation*} 
		\max\limits_{i,j}|G_{ij}(z)-\delta_{ij}m_{\mathrm{sc}}(z)| \prec \sqrt{\frac{1}{N\eta}}+\frac{1}{N\eta}+\frac{1}{q}
	\end{equation*} 
	uniformly in $z  \in \{z=E+\ii \eta: E\in \bb R, \eta >0\}$. 
\end{proposition}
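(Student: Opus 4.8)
\textbf{Proof proposal for Proposition \ref{refthm1} (Local semicircle law).}

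The plan is to follow the self-consistent equation method of Erd\H{o}s--Knowles--Yin--Yau, adapted to the sparse regime by controlling the fluctuation using cumulant expansions rather than large-deviation bounds on quadratic forms. First I would introduce the control parameter
\[
\Lambda \deq \max_{i,j}|G_{ij}(z)-\delta_{ij}m_{sc}(z)|\,, \qquad \Lambda_o\deq \max_{i\neq j}|G_{ij}(z)|\,, \qquad \Lambda_d \deq \max_i |G_{ii}(z)-m_{sc}(z)|\,,
\]
and work on the standard a priori event that $\Lambda \leq (\log N)^{-1}$, which holds on a dense enough net of $z$ and can be propagated by continuity in $\eta$ from the trivial large-$\eta$ regime via a bootstrap (or ``stopping time'') argument along the spectral domain. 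On this event the Schur complement / self-consistent equation
\[
\frac{1}{G_{ii}} = -z - m_{sc}(z) + \Upsilon_i\,, \qquad \Upsilon_i \deq A_{ii} + \sum_{k\neq l}^{(i)} A_{ik} G^{(i)}_{kl} A_{li} - \Big(\frac{1}{N}\sum_k G_{kk} - m_{sc}\Big) + \text{(lower order)}
\]
holds, where $G^{(i)}$ denotes the Green function with the $i$th row and column removed, and it suffices to show $\max_i|\Upsilon_i| \prec \Psi \deq \sqrt{\tfrac{1}{N\eta}}+\tfrac1q + \Lambda_o + \sqrt{\tfrac{\im m_{sc}+\Lambda}{N\eta}}$ (the precise shape of $\Psi$ being the usual one). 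The stability of the scalar equation $1/m = -z-m$ near the semicircle solution, together with the averaged bound on $\frac1N\sum_k(G_{kk}-m_{sc})$, then upgrades the entrywise control to the claimed $\Psi \asymp \sqrt{1/(N\eta)}+1/(N\eta)+1/q$.

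The key technical step, and the main obstacle, is the high-probability bound on the error term $\Upsilon_i$ --- specifically on the off-diagonal random quadratic form $Z_i \deq \sum_{k\neq l}^{(i)} A_{ik}G^{(i)}_{kl}A_{li} - \E_i[\cdots]$. In the dense Wigner case one uses the large-deviation estimate for quadratic forms, but here the entries only satisfy the moment bounds $\E|H_{ij}|^k = O(N^{-1}q^{2-k})$ of Definition \ref{def_sparse}(iii), so a single quadratic-form bound loses powers of $q$. Instead I would estimate $\E|Z_i|^{2p}$ for large fixed $p$ by expanding the $2p$-th moment and applying Lemma \ref{lem:cumulant_expansion} (the cumulant expansion) in the entries $H_{ik}$, using $\mathcal C_s(H_{ik}) = O(1/(Nq^{s-2}))$ from Lemma \ref{lemma 2.3}. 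The point is that each cumulant of order $s\geq 3$ carries a factor $q^{2-s}$ but is ``paid for'' by the extra resolvent factors it creates through the differentiation rule \eqref{diff}; careful bookkeeping of how many off-diagonal resolvent entries $G_{kl}$ ($k\neq l$) versus diagonal ones appear --- together with the trivial deterministic bound $\|G\|\leq \eta^{-1}$ and the Ward identity $\sum_l |G_{kl}|^2 = (\im G_{kk})/\eta$ --- shows that every term is bounded by the target $\Psi$ up to $N^{o(1)}$, and that the bad $1/q$ contributions never compound beyond $1/q$ itself. This is exactly the ``index mismatching'' / recursive-expansion philosophy the introduction alludes to for the isotropic law, but in the simpler entrywise setting it was already carried out in \cite{EKYY1}.

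Finally, I would handle the diagonal contribution $A_{ii}$ and the correction $\frac1N\sum_k G_{kk}-\frac1N\sum_k G^{(i)}_{kk}$ (the latter is $O(1/(N\eta))$ by the resolvent identity for minors), close the self-consistent loop to get $\Lambda \prec \Psi$ on the a priori event, and then remove the a priori assumption by the continuity bootstrap in $\eta$: at the largest scale $\eta \asymp 1$ the bound $\Lambda \leq (\log N)^{-1}$ is immediate, and decreasing $\eta$ in steps of $N^{-C}$ the improved bound $\Lambda \prec \Psi \ll (\log N)^{-1}$ feeds back in, so the event holds with very high probability throughout $\{E\in\bb R,\ \eta>0\}$ (for $\eta$ below $N^{-1}$ the statement is vacuous since the RHS exceeds $1$). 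A union bound over the net and Lipschitz continuity of $G$ in $z$ upgrades this to the stated uniformity in $z$. Since this is precisely \cite[Theorem 2.9]{EKYY1}, in the actual paper one would simply cite it; the sketch above is how I would reconstruct the proof.
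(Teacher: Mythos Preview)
Your proposal is correct: the paper does not prove Proposition~\ref{refthm1} at all but simply imports it verbatim from \cite[Theorem~2.9]{EKYY1}, exactly as you note in your final sentence. Your reconstruction of the argument (Schur complement self-consistent equation, control of the fluctuation $\Upsilon_i$, stability of $1/m=-z-m$, continuity bootstrap in $\eta$) is the standard EKYY scheme and is accurate in outline; the only minor deviation is that \cite{EKYY1} bounds the quadratic form $Z_i$ via sparse-adapted large-deviation inequalities rather than the cumulant expansion you propose, but either route works and yields the same $1/q$ loss.
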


A standard consequence of the local law is the complete delocalization of eigenvectors.

\begin{corollary} \label{delocalization}
	We have
	\[
	\|\b u_{i}\|_\infty\prec N^{-1/2}
	\]
	uniformly for all $i\in \{1,2,...,N\}$.
\end{corollary}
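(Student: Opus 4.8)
The plan is to run the classical argument deriving eigenvector delocalization from the local semicircle law, Proposition~\ref{refthm1}, paying a little attention to the outlier eigenvalue $\lambda_1$ and to uniformity in $i$. Fix an arbitrarily small $\epsilon>0$ and set $\eta\deq N^{-1+\epsilon}$. Using the spectral decomposition $G(z)=\sum_{j}(\lambda_j-z)^{-1}\b u_j\b u_j^T$, for any index $k$ and any $E\in\bb R$ we have
\[
\im G_{kk}(E+\ii\eta)=\sum_{j=1}^N\frac{\eta\,|\b u_j(k)|^2}{(\lambda_j-E)^2+\eta^2}\,.
\]
Taking $E=\lambda_i$ and keeping only the $j=i$ term gives $|\b u_i(k)|^2\leq \eta\,\im G_{kk}(\lambda_i+\ii\eta)$. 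Since $|m_{sc}(z)|\leq 1$ for every $z$ with positive imaginary part, Proposition~\ref{refthm1} yields
\[
\im G_{kk}(\lambda_i+\ii\eta)\leq |m_{sc}(\lambda_i+\ii\eta)|+\bigl|G_{kk}(\lambda_i+\ii\eta)-m_{sc}(\lambda_i+\ii\eta)\bigr|\prec 1+\sqrt{\tfrac{1}{N\eta}}+\tfrac{1}{N\eta}+\tfrac1q\prec 1\,,
\]
using $N\eta=N^{\epsilon}$ and $q\geq N^{\tau}$. Combining the two displays gives $|\b u_i(k)|^2\prec \eta=N^{-1+\epsilon}$, i.e.\ $|\b u_i(k)|\prec N^{-1/2+\epsilon/2}$. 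Crucially this also covers the outlier index $i=1$: although $\lambda_1\asymp f\asymp q$ can be large, Proposition~\ref{refthm1} holds for all $z$ with $\eta>0$ and $|m_{sc}|\leq 1$ there as well, so nothing changes.

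To promote this to an estimate that is uniform over $i$ (and over $k$), I would run the standard net argument. Proposition~\ref{refthm1} is uniform in $z$, so a union bound over a polynomially fine grid of points $E+\ii\eta$ with, say, $|E|\leq N^{2}$, shows that with very high probability $\max_{k,l}|G_{kl}(E+\ii\eta)-\delta_{kl}m_{sc}(E+\ii\eta)|\leq N^{\epsilon}\bigl(\sqrt{1/(N\eta)}+1/(N\eta)+1/q\bigr)$ holds simultaneously at every grid point. A crude deterministic bound such as $\|A\|\leq\|A\|_{\mathrm{HS}}\leq N\max_{ij}|A_{ij}|\prec N^{1+o(1)}$ (which follows from the moment bounds in Definition~\ref{def_sparse} together with $f\leq q/\tau$) confines all $\lambda_i$ to $[-N^2,N^2]$ with very high probability. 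Since $z\mapsto\im G_{kk}(z)$ is Lipschitz with constant $\leq\eta^{-2}=N^{2-2\epsilon}$ and $E\mapsto\im m_{sc}(E+\ii\eta)$ is bounded and Lipschitz, the grid estimate transfers to $E=\lambda_i$ for every $i$ at the cost of a polynomial factor absorbed into $N^{\epsilon}$. Hence $\max_{k}|\b u_i(k)|\prec N^{-1/2+\epsilon/2}$ simultaneously for all $i$, and since $\epsilon>0$ was arbitrary this is exactly $\|\b u_i\|_\infty\prec N^{-1/2}$ uniformly in $i$.

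There is no genuine obstacle here; the two points that require a small amount of care are (a) making the argument valid for the single outlier eigenvalue $\lambda_1\asymp q$, which works because both the local law and the trivial bound $|m_{sc}|\leq 1$ hold for all spectral parameters with positive imaginary part, and (b) the routine discretization in $E$ needed to make the bound hold simultaneously for all (random) locations $\lambda_i$.
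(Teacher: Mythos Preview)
Your argument is correct and is precisely the standard derivation the paper has in mind when it calls Corollary~\ref{delocalization} ``a standard consequence of the local law''; the paper gives no proof of its own beyond that remark. Your handling of the outlier $\lambda_1$ and the routine net/Lipschitz step to pass from deterministic $z$ to the random locations $\lambda_i+\ii\eta$ are both fine as written.
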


We also import from \cite{EKYY1} about the result on top eigenvalue and eigenvector of $A$.

\begin{proposition} \label{proptopeigenvector}
	We have
		\[
	\lambda_1=f+O_{\prec}(f^{-1})\,.	
	\]
In addition, 
	\[
	\langle \b e ,\b u_1 \rangle =1-\frac{1}{2f^2}+O_{\prec}\Big(\frac{1}{f^3}+\frac{1}{\sqrt{N}f}\Big)\quad \mbox{and}\quad \sum_{i=2}^N \langle \b e,\b u_i\rangle^2=O(f^{-2})\,.
	\]
\end{proposition}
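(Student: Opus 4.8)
\textbf{Proof proposal for Proposition \ref{proptopeigenvector}.}

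The plan is to exploit the rank-one perturbation structure $A = H + f\b e\b e^T$ together with the already-available bounds on the Green function of $H$ (Proposition \ref{refthm1} applied to $\cal G$, or rather its entrywise consequences). First I would write the top eigenvalue $\lambda_1$ of $A$ as the largest solution of the secular equation coming from the rank-one update. Concretely, for $z$ outside the spectrum of $H$ one has $\det(A-z) = \det(H-z)\,(1 + f\,\langle \b e, \cal G(z)\b e\rangle)$, so every eigenvalue of $A$ not shared with $H$ is a zero of $g(z) \deq 1 + f\,\cal G_{\b e\b e}(z)$. Since $\cal G_{\b e\b e}(z) = N^{-1}\sum_k \langle \b e,\b u_k^H\rangle^2/(\lambda_k^H - z)$ is real and strictly increasing on $(\lambda_1^H, \infty)$, running from $-\infty$ up to $0$, the equation $\cal G_{\b e\b e}(z) = -1/f$ has a unique solution $\lambda_1 > \lambda_1^H$; this is the top eigenvalue of $A$. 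Because $f \asymp q$ is large and $\lambda_1^H = 2 + o(1)$ with high probability (edge rigidity for $H$, which follows from the local law), we expect $\lambda_1$ to be close to $f$. To get the precise expansion I would use the entrywise/averaged local law to write, for $z$ of order $f$, $\cal G_{\b e\b e}(z) = m_{sc}(z) + O_\prec(\Psi)$ with an error $\Psi$ that is $O_\prec(N^{-1/2} + q^{-1})$ times something small — actually for $z \asymp f \gg 1$ the relevant control is even better since $\eta$ is effectively of order $f$ — and then solve $m_{sc}(z) = -1/f + (\text{error})$. Since $m_{sc}(z) = -1/z - 1/z^3 + O(z^{-5})$ for large $z$, inverting gives $\lambda_1 = f - 1/f + O(\text{lower order})$; one should double-check whether the stated $\lambda_1 = f + O_\prec(f^{-1})$ already absorbs the $-1/f$ term (it does, as an $O$-statement).

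Next, for the top eigenvector I would use the standard formula relating $\langle \b e, \b u_1\rangle^2$ to the derivative of the secular function. Differentiating $g$ and using residue calculus, $\langle \b e,\b u_1\rangle^2 = \big(-f\,\cal G_{\b e\b e}'(\lambda_1)\big)^{-1}$ wait — more precisely, from $1 + f\,\cal G_{\b e\b e}(\lambda_1) = 0$ and the spectral decomposition, the component of $\b u_1$ along $\b e$ satisfies
\[
\langle \b e, \b u_1\rangle^2 = \frac{1}{1 + f^2\,\big(-\cal G_{\b e\b e}'(\lambda_1)\big)^{-1}\cdot(\cdots)}
\]
— rather than fuss over the exact algebraic identity here, the clean route is: write $\b u_1 = c(A - \lambda_1 + f\b e\b e^T)^{-1}\ldots$; more simply, from $A\b u_1 = \lambda_1 \b u_1$ we get $(H - \lambda_1)\b u_1 = -f\langle \b e,\b u_1\rangle \b e$, so $\b u_1 = -f\langle \b e,\b u_1\rangle\,\cal G(\lambda_1)\b e$, and taking the inner product with $\b e$ and using $\|\b u_1\| = 1$ yields two equations: $1 = -f\,\cal G_{\b e\b e}(\lambda_1)$ (the secular equation again) and $1 = f^2\langle\b e,\b u_1\rangle^2\,\|\cal G(\lambda_1)\b e\|^2 = f^2\langle\b e,\b u_1\rangle^2\,\cal G_{\b e\b e}'(\lambda_1)\cdot(-1)$, i.e. $\langle \b e,\b u_1\rangle^2 = 1/(f^2(-\cal G_{\b e\b e}'(\lambda_1)))$ — hmm, sign: $\frac{d}{dz}\cal G_{\b e\b e} = \langle \b e, \cal G^2 \b e\rangle = \|\cal G\b e\|^2 > 0$ for real $z$ above the spectrum, so $-\cal G'_{\b e\b e}<0$; let me just record that $\langle\b e,\b u_1\rangle^2 = \big(f^2\langle\b e,\cal G(\lambda_1)^2\b e\rangle\big)^{-1}$, which is manifestly positive. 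Now $\langle \b e, \cal G(\lambda_1)^2\b e\rangle = m_{sc}'(\lambda_1) + O_\prec(\text{error}) = 1/\lambda_1^2 + O(\lambda_1^{-4}) + O_\prec(\ldots)$, and since $\lambda_1 = f + O_\prec(f^{-1})$ this gives $\langle\b e,\b u_1\rangle^2 = \lambda_1^2/f^2 + \ldots = 1 + O_\prec(f^{-1})$; to reach the claimed $1 - 1/f^2 + \ldots$ one needs to carry the $1/\lambda_1^2 = 1/f^2 + O(f^{-4})$ correction together with the $-1/\lambda_1^4$ term in $m_{sc}'$ and the shift $\lambda_1 - f = -1/f + \ldots$, then take square roots. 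The isotropic local law for $\cal G$ (specifically \eqref{Hisotropic}, or a variant at large $\eta$) is what controls all error terms, and the $N^{-1/2}f^{-1}$ in the error budget comes from the fluctuation of $\langle \b e,\cal G\b e\rangle - m_{sc}$ at the relevant scale.

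Finally, $\sum_{i=2}^N\langle\b e,\b u_i\rangle^2 = 1 - \langle\b e,\b u_1\rangle^2 = 1/(2f^2) + O_\prec(f^{-3} + N^{-1/2}f^{-1})$ follows immediately from the eigenvector expansion just obtained, so the last claim is not independent. Alternatively, and perhaps more robustly for the bound $O(f^{-2})$ alone, I would integrate \eqref{1.2}: by the spectral decomposition $\langle \b e,(A-z)^{-1}\b e\rangle - \langle\b e,\b u_1\rangle^2/(\lambda_1 - z) = \sum_{i\ge2}\langle\b e,\b u_i\rangle^2/(\lambda_i - z)$, and choosing $z = \ii\eta$ with $\eta$ of order $1$ and comparing imaginary parts, $\sum_{i\ge2}\langle\b e,\b u_i\rangle^2\,\eta/(\lambda_i^2+\eta^2) \le \im\langle\b e,(A-z)^{-1}\b e\rangle + O(\langle\b e,\b u_1\rangle^2/f^2)$, and \eqref{1.2} bounds the RHS by $O(f^{-2})$ times a constant; since $\lambda_i^2 + \eta^2 \asymp 1$ for $i \ge 2$ (the bulk eigenvalues are $O(1)$), this gives $\sum_{i\ge2}\langle\b e,\b u_i\rangle^2 = O(f^{-2})$.

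The main obstacle I anticipate is bookkeeping the error terms with the required sharpness — in particular establishing the control on $\cal G_{\b e\b e}(z)$ and $\langle\b e, \cal G(z)^2\b e\rangle$ for $z$ of order $f$ with error small enough to produce the $N^{-1/2}f^{-1}$ and $f^{-3}$ terms in the eigenvector expansion, rather than a cruder $f^{-2}$-type error. This needs the local law for $\cal G$ uniformly up to the relevant spectral parameter (which is effectively $\eta \sim f$, a very favorable regime), plus the large-$z$ asymptotics $m_{sc}(z) = -z^{-1} - z^{-3} + O(z^{-5})$ and $m_{sc}'(z) = z^{-2} + 3z^{-4} + O(z^{-6})$ carried to the right order, and then a careful Taylor expansion when inverting the secular equation and taking the square root of $\langle\b e,\b u_1\rangle^2$. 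The underlying strategy — secular equation for $\lambda_1$, $(H-\lambda_1)\b u_1 = -f\langle\b e,\b u_1\rangle\b e$ for the eigenvector, isotropic local law to replace Green functions by $m_{sc}$ — is standard for deformed Wigner/finite-rank-perturbation models, and this is essentially the content of \cite{EKYY1} in the sparse setting; I would cite it and only sketch the extra care needed to extract the stated precision.
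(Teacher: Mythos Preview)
The paper does not prove this proposition at all: it is imported verbatim from \cite{EKYY1} (see the sentence preceding the statement), so there is no ``paper's own proof'' to compare against. Your main line of argument --- secular equation $1+f\,\cal G_{\b e\b e}(\lambda_1)=0$ for the rank-one perturbation, then $(H-\lambda_1)\b u_1=-f\langle\b e,\b u_1\rangle\b e$ and normalization to get $\langle\b e,\b u_1\rangle^2=\big(f^2\langle\b e,\cal G(\lambda_1)^2\b e\rangle\big)^{-1}$, with the local law for $\cal G$ at the very comfortable scale $z\asymp f$ supplying the needed control on $\cal G_{\b e\b e}$ and $\langle\b e,\cal G^2\b e\rangle$ --- is exactly the standard route, and it is what \cite{EKYY1} does (see in particular their Section~6). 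Your acknowledgment at the end that you ``would cite it and only sketch the extra care'' is therefore precisely right. Two minor slips: the $N^{-1}$ in your spectral decomposition of $\cal G_{\b e\b e}$ should not be there, and the relevant isotropic control at $z\asymp f$ does not need \eqref{Hisotropic} (which is a result of the present paper); the entrywise law in \cite{EKYY1} plus a fluctuation bound for $\cal G_{\b e\b e}-N^{-1}\tr\cal G$ suffices and is already in \cite{EKYY1}.

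There is, however, one genuine circularity you should be warned about. Your ``alternative, more robust'' route for $\sum_{i\geq2}\langle\b e,\b u_i\rangle^2=O(f^{-2})$ invokes \eqref{1.2}. But in this paper, Proposition~\ref{proptopeigenvector} is used as an \emph{input} to the proof of Theorem~\ref{thm1.3} (it initializes the bootstrap at $\eta=1$ in Section~\ref{sec 3}), so \eqref{1.2} is downstream of the proposition, not upstream. That alternative would therefore be circular here; stick with your primary derivation via $1-\langle\b e,\b u_1\rangle^2$, which needs only \cite{EKYY1}-level input.
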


Finally, we recall the Ward identity.

\begin{lemma} \label{ward}
	We have 
	\begin{equation*}
		\sum_j |G_{ij}|^2 =\frac{\im G_{ii}}{\eta}
	\end{equation*}
	for all $z=E+\ii \eta$ with $\eta>0$.
\end{lemma}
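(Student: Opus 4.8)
\textbf{Proof proposal for the Ward identity (Lemma \ref{ward}).}

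The plan is to expand the Hilbert--Schmidt-type sum $\sum_j |G_{ij}|^2$ using the spectral decomposition of $A$, and then recognize the result as the imaginary part of a diagonal Green function entry divided by $\eta$. First I would write $A = \sum_{k=1}^N \lambda_k \b u_k \b u_k^T$, so that $G(z) = (A-z)^{-1} = \sum_k (\lambda_k - z)^{-1} \b u_k \b u_k^T$. In coordinates this gives $G_{ij} = \sum_k \frac{u_k(i)\, \ol{u_k(j)}}{\lambda_k - z}$, where $u_k(i)$ denotes the $i$th component of $\b u_k$; since $A$ is real symmetric the $\b u_k$ may be chosen real, but the argument works verbatim in the Hermitian case.

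The main computation is then:
\[
\sum_j |G_{ij}|^2 = \sum_j G_{ij}\ol{G_{ij}} = \sum_j \sum_{k,l} \frac{u_k(i)\, \ol{u_k(j)}}{\lambda_k - z}\cdot \frac{\ol{u_l(i)}\, u_l(j)}{\ol{\lambda_l - z}}\,.
\]
Performing the sum over $j$ first and invoking orthonormality, $\sum_j \ol{u_k(j)}\, u_l(j) = \delta_{kl}$, collapses the double sum to $\sum_k \frac{|u_k(i)|^2}{|\lambda_k - z|^2}$. On the other hand, $\im G_{ii} = \im \sum_k \frac{|u_k(i)|^2}{\lambda_k - z}$, and since $\lambda_k \in \bb R$ and $z = E + \ii \eta$ with $\eta > 0$, one has $\im \frac{1}{\lambda_k - z} = \frac{\eta}{|\lambda_k - z|^2}$. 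Hence $\im G_{ii} = \eta \sum_k \frac{|u_k(i)|^2}{|\lambda_k-z|^2} = \eta \sum_j |G_{ij}|^2$, which is the claim after dividing by $\eta$.

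Alternatively, and perhaps more cleanly, one can avoid spectral decomposition entirely: write $\sum_j |G_{ij}|^2 = \sum_j G_{ij}\ol{G_{ij}} = (G \ol{G}^T)_{ii} = (G G^*)_{ii}$, using that $G^* = (A - \ol z)^{-1}$ since $A$ is self-adjoint. Then the resolvent identity gives $G G^* = (A-z)^{-1}(A-\ol z)^{-1} = \frac{1}{z - \ol z}\big( (A-z)^{-1} - (A-\ol z)^{-1} \big) = \frac{1}{2\ii\eta}(G - G^*) = \frac{\im G}{\eta}$, where $\im G \deq (G - G^*)/(2\ii)$. Taking the $(i,i)$ entry finishes the proof. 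I do not anticipate any real obstacle here; the only point requiring a word of care is the justification $\sum_j G_{ij}\ol{G_{ij}} = (GG^*)_{ii}$ (it is just matrix multiplication together with $\ol{G_{ij}} = (G^*)_{ji}$) and the observation that $\eta > 0$ guarantees $G$ and $G^*$ are well-defined and $z \neq \ol z$, so the division by $z - \ol z = 2\ii\eta$ is legitimate.
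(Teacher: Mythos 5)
Your proof is correct; both the spectral-decomposition computation and the resolvent-identity argument $GG^*=(G-G^*)/(z-\ol z)=\im G/\eta$ are standard and complete, and the paper itself merely recalls this Ward identity without proof, so there is no competing argument to compare against. No gaps: the orthonormality step and the identification $\sum_j G_{ij}\ol{G_{ij}}=(GG^*)_{ii}$ are exactly the points that need to be (and are) justified.
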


\section{Proof of Theorem \ref{thm1.3}} \label{sec 3}
In this section, let $\tau$ be as in the beginning of Section \ref{sec1}, and we fix parameters
\begin{equation} \label{parameters}
	\xi \in (0,\tau/100) \quad \mbox{and} \quad  \delta\in (0,\xi/100)\,. 
\end{equation}
We shall prove Theorem \ref{thm1.3} (i); the proof of Theorem \ref{thm1.3} (ii) is identical to that of \eqref{1.1}. 

Fix $\b v, \b w\in \bb S^{N-1}_\perp$ as in Theorem \ref{thm1.3}. We denote
\[
\bb X\deq \{\b v, \b w, \b e_1,...,\b e_N\}
\]
Let $z \in \b D_{\tau}$. Suppose 
\begin{equation} \label{3.0}
 \quad G_{\b e \b e}(z)-f^{-1}\prec \phi f^{-2}\,, \quad \max_{\b x \in \bb X}|G_{\b e\b x}(z)|\prec \phi f^{-1}
\end{equation}
and 
\begin{equation} \label{3.1}
	\max_{\b x, \b y\in \bb X}	|G_{\b x\b y}(z)|\prec \phi 
\end{equation}
for some deterministic $\phi \in [1,N^{\xi}]$. In particular, $\phi \ll f$. We shall prove that at $z$ we have
\begin{equation} \label{3.2}
 \quad G_{\b e \b e}(z)-f^{-1}\prec f^{-2}\,,
\end{equation}
and
\begin{equation} \label{3.2.2}
	\max_{\b x\in \bb X}	|G_{\b e\b x}(z)| \prec f^{-1}
\end{equation}
as well as
\begin{equation}  \label{3.3}
 \max_{\b x, \b y \in \bb X}|G_{\b x\b y}(z)-\langle \b x,\b y \rangle m_{sc}(z)| \prec \phi^3\Big(\frac{1}{(N\eta)^{1/3}}+\frac{1}{q^{1/3}}\Big)\,.
\end{equation}

Armed with \eqref{3.2} -- \eqref{3.3}, the proof of Theorem \ref{thm1.3} (i) follows from a bootstrap argument. More precisely, let $E \in [-3,3]$ be deterministic. For $z_0=E+\ii$, by spectral decomposition and Proposition \ref{proptopeigenvector}, we easily get
\[
G_{\b e\b x}(z_0)=\sum_{\alpha=1}^N \frac{\langle \b e ,\b u_\alpha \rangle \langle \b u_\alpha ,\b x\rangle}{\lambda_{\alpha}-E-\ii} \prec \sum_{\alpha=2}^N |\langle \b e ,\b u_\alpha \rangle \langle \b u_\alpha ,\b x\rangle|+f^{-1}\leq \Big(\sum_{\alpha=2}^N \langle \b e ,\b u_\alpha \rangle^2\Big)^{1/2} +f^{-1}\prec f^{-1}
\]
uniformly for all $\b x \in \bb X$. In addition, by Proposition \ref{proptopeigenvector}, we have
\[
G_{\b e \b e}(z_0)-f^{-1} \prec \sum_{\alpha=2}^N |\langle \b e ,\b u_\alpha \rangle|^2+\bigg|\frac{\langle \b e ,\b u_1 \rangle^2}{\lambda_1-E-\ii }-f^{-1}\bigg| \prec f^{-2} \,, \quad \mbox{and} \quad 	\max_{\b x, \b y\in \bb X}	|G_{\b x\b y}(z_0)|\leq 1\,.
\]
Thus \eqref{3.0} and \eqref{3.1} hold at $z_0$.

 Now suppose \eqref{3.2} -- \eqref{3.3} hold true at some $z_1=E+\ii \eta_1\in \b D_{\tau}$. By \eqref{3.2} we get
\begin{equation} \label{3.5}
\im G_{\b e \b e}(z_1)\prec f^{-2}\,.
\end{equation}
Let $\eta_2\deq \eta_1 N^{-\delta}$. Denote ${\bf g}(\eta)\deq \im G_{\b e \b e}(E+\ii \eta)$. It is easy to see that $|\dd {\bf g}/\dd \eta|\leq {\bf g}/\eta$, which implies 
$$
\frac{\dd}{\dd \eta}(\eta {\bf g}(\eta))\geq 0\,.
$$
Together with \eqref{3.5} we have
\begin{equation} \label{3.6}
	\im G_{\b e \b e}(z)\prec N^{\delta} f^{-2}
\end{equation}
uniformly for all $z=E+\ii \eta$, where $\eta \in [\eta_2,\eta_1]$. By \eqref{3.6}, we see that
\[
\frac{d (G_{\b e \b e}(z)-f^{-1})}{\dd \eta} \prec \frac{\im G_{\b e \b e}(z)}{\eta} \prec N^{\delta} f^{-2}\eta^{-1}\,,
\]
which implies 
\begin{equation} \label{3.7}
	G_{\b e \b e}(z)-f^{-1} \prec N^{2\delta}f^{-2}
\end{equation}
uniformly for all $\eta \in [\eta_2,\eta_1]$. Similarly, by \eqref{3.3} we can show that
\begin{equation} \label{3.8}
	\max_{\b x, \b y\in \bb X}	|G_{\b x\b y}(z)| \prec N^{\delta}
\end{equation}
uniformly for all $\eta \in [\eta_2,\eta_1]$. By \eqref{3.6} and \eqref{3.8}, we see that 
\[
\frac{d (G_{\b e \b x}(z))}{\dd \eta} \prec \frac{\sqrt{\im G_{\b e \b e}(z) \im G_{\b x \b x}(z)}}{\eta} \prec N^{\delta} f^{-1}\eta^{-1}\,,
\]
and thus
\begin{equation} \label{3.9}
\max_{\b x \in \bb X}	|G_{\b e \b x}(z) |\prec N^{2\delta}f^{-1}
\end{equation}
uniformly for all $\eta \in [\eta_2,\eta_1]$. Observe that \eqref{3.7} -- \eqref{3.9} prove \eqref{3.0} and \eqref{3.1} for all $z=E+\ii \eta$ satisfying  $\eta \in [\eta_2,\eta_1]$. We conclude the proof of Theorem \ref{thm1.3}  by induction and the fact that $\xi$ can be arbitrarily small.

The rest of this section focuses on the proofs of \eqref{3.2} -- \eqref{3.3}. As an input from Proposition \ref{refthm1}, we have the  entrywise bound
\begin{equation} \label{3.192}
	\max_{ij}|G_{ij}(z)| \prec 1\,.
\end{equation}

\subsection{Proof of \eqref{3.2}} \label{sec3.1}
 Fix $n \in \bb N_+$, and set $\cal P_1\deq \|G_{\b e\b e}-f^{-1}\|_{2n}$. We shall show that
\begin{equation} \label{3.122}
\cal P_1^{2n}=\bb E |G_{\b e\b e}-f^{-1}|^{2n}	\prec \sum_{a=1}^{2n} f^{-2a} \cal P_1^{2n-a}\eqd \cal E_1\,,
\end{equation}
which trivially implies \eqref{3.2}. By resolvent identity, the first relation of \eqref{3.0}, and the fact that $z\in \b D_{\tau}$ is bounded, we have
\begin{equation} \label{qqq}
	G_{\b e\b e}-f^{-1}=f^{-1}((f-z)G_{\b e \b e}-1)+O_{\prec}(f^{-2})=-f^{-1}(HG)_{\b e\b e} +O_{\prec}(f^{-2})\,.
\end{equation}
By Lemma \ref{lem:cumulant_expansion},  we get
\begin{equation} \label{3.12}
	\begin{aligned}
	\cal P_1^{2n}&=-\frac{1}{fN^{1/2}}\sum_{ij}\bb E  H_{ij}G_{j\b e}(G_{\b e\b e}-f^{-1})^{n-1}(G^*_{\b e\b e}-f^{-1})^{n}+O_{\prec}(\cal E_1)\\
	&=-\frac{1}{fN^{1/2}}\sum_{s=1}^{\ell}\sum_{ij} \cal C_{s+1}(H_{ij})\bb E \partial_{ij}^{s} (G_{j\b e}(G_{\b e\b e}-f^{-1})^{n-1}(G^*_{\b e\b e}-f^{-1})^{n})+O_{\prec}(\cal E_1)\\
	&\eqd 	\sum_{s=1}^{\ell} L_s+O_{\prec}(\cal E_1)\,.
\end{aligned}
\end{equation}
The estimate of $L_1$ is relatively easy: as $\cal C_2(H_{ij})=N^{-1}(1+O(\delta_{ij}))$, we get from \eqref{diff} that
\begin{equation} \label{3/15}
\begin{aligned}
	L_1=&\,\frac{1}{fN^{3/2}}\sum_{ij} (1+O(\delta_{ij}))\bb E (G_{ji}G_{j\b e}+G_{jj}G_{i\b e})(G_{\b e\b e}-f^{-1})^{n-1}(G^*_{\b e\b e}-f^{-1})^{n}\\
	&+\frac{1}{fN^{3/2}}\sum_{ij} (1+O(\delta_{ij})) 2(n-1)\bb EG_{j\b e} G_{\b e i }G_{\b e j}(G_{\b e\b e}-f^{-1})^{n-2}(G^*_{\b e\b e}-f^{-1})^{n}\\
	&+\frac{1}{fN^{3/2}}\sum_{ij} (1+O(\delta_{ij})) 2n\bb EG_{j\b e} G^*_{\b e i }G^*_{\b e j}|G_{\b e\b e}-f^{-1}|^{2n-2}\eqd L_{1,1}+L_{1,2}+L_{1,3}\,.
\end{aligned}	
\end{equation}
By Proposition \ref{refthm1}, \eqref{3.0} and \eqref{3.1}, we have
\begin{equation*}
	\begin{aligned}
	L_{1,1}&=\frac{1}{fN^{3/2}}\sum_{ij} \bb E (G_{ji}G_{j\b e}+G_{jj}G_{i\b e})(G_{\b e\b e}-f^{-1})^{n-1}(G^*_{\b e\b e}-f^{-1})^{n}+O_{\prec}(\cal E_1)\\
	&=\frac{1}{fN}\sum_{j} \bb E (G_{j\b e}^2+G_{jj}G_{\b e\b e})(G_{\b e\b e}-f^{-1})^{n-1}(G^*_{\b e\b e}-f^{-1})^{n}+O_{\prec}(\cal E_1)\\
	&\prec f^{-2}\bb E |G_{\b e\b e}-f^{-1}|^{2n-2}+O_{\prec}(\cal E_1)\prec \cal E_1\,.
\end{aligned}
\end{equation*}
Similarly, we have
\[
L_{1,2}=\frac{1}{fN}\sum_j \bb E  G_{j\b e}^2 G_{\b e i}(G_{\b e\b e}-f^{-1})^{n-2}(G^*_{\b e\b e}-f^{-1})^{n}+O_{\prec}(\cal E_1)\prec \cal E_1
\]
and $L_{1,3}\prec \cal E_1$. As a consequence,
\begin{equation} \label{L_1}
	L_1\prec \cal E_1\,.
\end{equation}
When $s\geq 2$, the above straight-forward estimates fail. For instance, one dangerous term in $L_2$ is
\begin{equation}  \label{3.13}
	\frac{1}{fN^{3/2}q}\sum_{ij} a_{ij} \bb E G_{j\b e}G^{*}_{ii}G^*_{j \b e}G^*_{j\b e}|G_{\b e \b e}-f^{-1}|^{2n-2}\,,
\end{equation}
where $a_{ij}$ is uniformly bounded in $i,j$. Naively, even make use of the Ward identity, we would have
\[
\eqref{3.13} \prec \frac{1}{f^4N^{1/2}\eta q}\bb E |G_{\b e \b e}-f^{-1}|^{2n-2}\prec \frac{1}{N^{1/2}\eta q}\cal E_1\,.
\]
As there is not always true that $N^{1/2}\eta q\geq 1$, we cannot bound the above by $O_{\prec}(\cal E_1)$ as desired. This is the main difficulty we encounter for proving the isotropic local law for sparse matrices. In the sequel, we introduce the notion of abstract polynomials of Green functions, which allows us to expand recursively through Lemma \ref{lem:cumulant_expansion}. To simplify notation, we drop the complex conjugates in $L_s$ (which play no role in the subsequent analysis), and we shall prove that
\begin{equation} \label{3.14}
	\widetilde{L}_s\deq -\frac{1}{fN^{1/2}}\sum_{s=1}^{\ell}\sum_{ij} \cal C_{s+1}(H_{ij})\bb E \partial_{ij}^{s} (G_{j\b e}(G_{\b e\b e}-f^{-1})^{2n-1})\prec\cal E_1
\end{equation}
uniformly for all fixed $s\geq 2$.

\subsubsection{Abstract polynomial of Green functions}
\begin{definition} \label{def3.1}
	Let $\{i_1,i_2,...\}$ be an infinite set of formal indices. To $\nu,\nu_1,\sigma,\omega\in \bb N$, $\theta_1,\theta_2 \in \bb R$, $x_1,y_1,...,x_\sigma,y_\sigma,z_1,...,z_\omega\in \{i_1,...i_\nu\}$, and a family $(a_{i_1,...,i_\nu})_{1\leq i_1,...,i_\nu \leq N}$ of uniformly bounded complex numbers we assign a formal monomial 
	\begin{equation} \label{3.15}
	W=a_{i_1,...,i_\nu}N^{-\theta_1}f^{-\theta_2}G_{x_1y_1}\cdots G_{x_\sigma y_\sigma}G_{z_1 \b e}\cdots G_{z_\omega \b e} (G_{\b e \b e}-f^{-1})^{\nu_1}
	\end{equation}
	We denote $\nu(W) = \nu$, $\nu_1(W) = \nu_1$, $\sigma(W) = \sigma$, $\omega(W) \deq \omega$, $\theta_1(W) = \theta_1$ and $\theta_2(W)=\theta_2$. We use $\nu_2(W)$ to denote the number of indices that appear exactly twice and as off-diagonal indices in the Green functions of $W$.  We denote by $\cal W$ the set of formal monomials $W$ of the form \eqref{3.15}.
	
	 Let $O(W)\subset \{i_1,...,i_\nu\}$ be the set of indices that appear odd many times in the Green functions of $W$.  We denote by $\cal W_o\subset\cal W$ the set of formal monomials such that $O(W)\neq \emptyset$ for all $W\in \cal W_o$. 
\end{definition}

\begin{definition} \label{def:evaluation}
	We assign to each monomial $W \in  \cal W$ with its \emph{evaluation}
	\begin{equation*}
		W_{i_1,\dots,i_{\nu}} \equiv W_{i_1,\dots,i_{\nu}}(z)\,,
	\end{equation*}
	which is a random variable depending on an $\nu$-tuple $(i_1,\dots,i_{\nu})\in \{1,2,\dots,N\}^{\nu}$. It is obtained by replacing, in the formal monomial $W$, the formal indices $i_1,\dots,i_{\nu_1}$ with the integers $i_1,\dots,i_{\nu_1}$ and the formal variables $G_{xy},G_{z\b e}, G_{\b e \b e}$ with elements of the Green's function \eqref{2.1} with parameter $z\in \bf D_\tau$. We define
	\begin{equation*} 
		\cal M (W) \deq \sum_{i_1,\dots,i_{\nu}}  W_{i_1,\dots,i_{\nu}}\,.
	\end{equation*}
\end{definition}

\begin{lemma} \label{lemma 3.4}
Let $W\in \cal W$. We have
\begin{equation} \label{3.16}
\bb E \cal M(W) \prec N^{-\theta_1+\nu}f^{-\theta_2} \cdot (\phi f^{-1})^\omega \Big(\frac{1}{N\eta}\Big)^{\nu_2/2} \cdot \bb E |(G_{\b e\b e}-f^{-1})^{\nu_1}| \,.
\end{equation}
\end{lemma}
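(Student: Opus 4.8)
The plan is to bound $\bb E\,\cal M(W)$ by distributing the sum $\sum_{i_1,\dots,i_\nu}$ over the Green function factors and counting how much each type of factor contributes. I would first isolate the deterministic prefactor $N^{-\theta_1}f^{-\theta_2}$ and the power $(G_{\b e\b e}-f^{-1})^{\nu_1}$, pulling the latter out of the summation since it carries no free index; by \eqref{3.0} it is already accounted for by the factor $\bb E|G_{\b e\b e}-f^{-1}|^{\nu_1}$ on the right (strictly, one should apply H\"older to separate it from the index-dependent Green functions, but since both are bounded by $O_\prec(\phi)$ this costs nothing relevant). What remains is to estimate $\sum_{i_1,\dots,i_\nu}|G_{x_1y_1}\cdots G_{x_\sigma y_\sigma}G_{z_1\b e}\cdots G_{z_\omega\b e}|$.

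The key mechanism is a trichotomy on the formal indices $i_1,\dots,i_\nu$ according to their multiplicity in the Green functions of $W$. An index appearing only once (equivalently, in $O(W)$ but with multiplicity one) is summed freely, contributing a factor $N$; but each $G_{z_r\b e}$ factor carries the smallness $\phi f^{-1}$ from the second bound in \eqref{3.0}, so the $\omega$ factors of the form $G_{z\b e}$ contribute $(\phi f^{-1})^\omega$, and the free sum over each of their (distinct) indices contributes the $N$'s that combine with $N^{-\theta_1}$. An index appearing exactly twice as an off-diagonal pair — there are $\nu_2$ of these by definition — is handled by the Ward identity, Lemma \ref{ward}: summing $|G_{ab}|^2$ over one index gives $\im G_{aa}/\eta \prec \phi/\eta$, and pairing this against the free sum over the other index yields a net gain of $(N\eta)^{-1}$ per such index rather than the naive $N^2$, i.e.\ the stated $(1/(N\eta))^{\nu_2/2}$. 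All remaining indices — those appearing with multiplicity $\geq 2$ but not as a clean off-diagonal pair, e.g.\ diagonal occurrences or triple occurrences — are summed trivially using the entrywise bound $|G_{ij}|\prec\phi$ from \eqref{3.192} (or \eqref{3.1}), contributing a factor $N$ each, which is already included in the $N^\nu$ on the right-hand side (the estimate \eqref{3.16} is deliberately lossy by powers of $\phi$, which is harmless since $\phi\leq N^\xi$ and $\xi$ is sent to $0$). Bookkeeping the powers of $N$: the total number of free index sums is at most $\nu$, giving $N^{\nu}$; the Ward gain replaces $N^{\nu_2}$ worth of those sums by $(N\eta)^{-\nu_2/2}\cdot N^{\nu_2/2}$... more carefully, each Ward-type pair uses Cauchy--Schwarz to split the two shared indices, converts one sum to $\im G/\eta$, leaves one free sum, so it contributes $N/(N\eta) = \eta^{-1}$ which one writes as $N\cdot(N\eta)^{-1}$; summing this over the $\nu_2$ such indices and the remaining $\nu-\nu_2$ free indices gives exactly $N^\nu (N\eta)^{-\nu_2/2}$ after the square root from Cauchy--Schwarz is distributed.

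Concretely, I would organize the proof as: (1) peel off $N^{-\theta_1}f^{-\theta_2}$ and extract $(G_{\b e\b e}-f^{-1})^{\nu_1}$ via H\"older/the trivial bound, reducing to the sum over products of off-diagonal-type and $\b e$-type Green functions; (2) introduce a partition of $\{i_1,\dots,i_\nu\}$ into the $\nu_2$ "Ward-pair" indices, the indices touching some $G_{z\b e}$ factor, and the rest; (3) apply Cauchy--Schwarz so that the $\nu_2$ Ward-pair indices each appear in a factor $\sum_b|G_{ab}|^2 = \im G_{aa}/\eta$, bound $\im G_{aa}\prec\phi$, and sum the surviving free index; (4) for the $G_{z\b e}$ factors invoke $|G_{z\b e}|\prec\phi f^{-1}$ and sum freely; (5) for all other indices use $|G_{ij}|\prec\phi$ and sum freely; (6) collect powers of $N$, $f$, $\eta$ and $\phi$. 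The main obstacle — and the only step requiring genuine care — is step (3)–(4): the Cauchy--Schwarz splitting must be arranged so that every Ward-pair index is genuinely "freed up" on one side (no index is simultaneously needed in two squared factors after the split), which requires choosing the pairing of Green functions compatibly with the index structure; when an index appears in both an off-diagonal pair \emph{and} an $\b e$-factor, or when the off-diagonal pairs form chains $G_{ab}G_{bc}G_{cd}\cdots$, one must be slightly cleverer about which sums to execute first. Since the bound \eqref{3.16} is lossy in $\phi$, there is plenty of slack: any such chain can be broken at the cost of extra $\phi$'s, so the counting argument always goes through, and the stated exponents $\nu$, $\nu_2/2$, $\omega$, $\theta_1$, $\theta_2$ come out as claimed.
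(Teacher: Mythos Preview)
Your proposal is correct and takes essentially the same approach as the paper's own proof, which is very terse: it simply attributes $N^{-\theta_1+\nu}f^{-\theta_2}$ and $\bb E|(G_{\b e\b e}-f^{-1})|^{\nu_1}$ to the obvious prefactors, $(\phi f^{-1})^\omega$ to the second relation in \eqref{3.0}, and $(N\eta)^{-\nu_2/2}$ to the Ward identity (Lemma~\ref{ward}) together with \eqref{3.0}, \eqref{3.1}, \eqref{3.192}. Your write-up spells out the Cauchy--Schwarz/Ward mechanism and the index bookkeeping in more detail than the paper does, but the ingredients and structure are identical; one small slip is that \eqref{3.192} gives $|G_{ij}|\prec 1$ rather than $\prec\phi$, which only makes your bounds sharper.
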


\begin{proof}
	The factors $N^{-\theta_1+\nu}f^{-\theta_2}$ and $\bb E |(G_{\b e\b e}-f^{-1})^{\nu_1}|$ on RHS of \eqref{3.16} is self-explanatory. Let us explain the rest contributions. The factor $(\phi f^{-1})^\omega$ comes from the second relation of \eqref{3.0}. The factor $(N\eta)^{-\nu_2/2}$ comes from Lemma \ref{ward}, \eqref{3.0}, \eqref{3.1} and \eqref{3.192}. This finishes the proof.
\end{proof}

\begin{example}
	Let us illustrate the above definitions and Lemma \ref{lemma 3.4} with an example. Consider
	\[
	W=N^{-2}f^{-1/2}G_{i_1i_1}G_{i_2i_4}G_{i_1i_5}G_{i_2i_3}G_{i_3i_5}G_{i_1\b e}^3G_{i_2\b e}(G_{\b e \b e}-f^{-1})^6\,.
	\]
	It is clear that $\nu(W)=5$, $\nu_1(W)=6$, $\sigma(W)=5$, $\omega(W)=4$, $\theta_1(W)=2$ and $\theta_2=1/2$. The index 2 appears 3 times in $W$, and the index $4$ appears once in $W$. As a result, $O(W)=\{i_2,i_4\}$.  In addition, we see that $\nu_2(W)=2$, with corresponding indices $i_3,i_5$.
	
	To see $\bb E \cal M(W)$ can be bounded as in \ref{lemma 3.4}, note that
	\begin{align*}
		\bb E \cal M(W)&=N^{-2}f^{-1/2}\sum_{i_1,...i_5}\bb E G_{i_1i_1}G_{i_2i_4}G_{i_1i_5}G_{i_2i_3}G_{i_3i_5}G_{i_1\b e}^3G_{i_2\b e}(G_{\b e \b e}-f^{-1})^6\\
		&\prec N^{-2}f^{-1/2}\sum_{i_1,...i_5}\bb E |G_{i_2i_3}G_{i_3i_5}G_{i_1\b e}^3G_{i_2\b e}(G_{\b e \b e}-f^{-1})^6|\\
		&\prec  N^{-2}f^{-1/2}\cdot (\phi f^{-1})^4 \sum_{i_1,...i_5}\bb E |G_{i_2i_3}G_{i_3i_5}(G_{\b e \b e}-f^{-1})^6|\\
		&\prec N^{-2}f^{-1/2}\cdot (\phi f^{-1})^4 \cdot \frac{N^4}{\eta} \bb E |(G_{\b e \b e}-f^{-1})^6|=N^{-2+5}f^{-1/2}\cdot (\phi f^{-1})^4 \cdot \frac{1}{N\eta}\cdot\bb E |(G_{\b e \b e}-f^{-1})^6|\,,
	\end{align*}
	which agrees with the RHS of \eqref{3.16}. Here in the second line we used $G_{i_1i_1}G_{i_2i_4}G_{i_1i_5}\prec 1$, which is from \eqref{3.192}; in the third line we used $G_{i_1\b e}^3G_{i_2\b e}\prec (\phi f^{-1})^4$ from \eqref{3.0}; in the last line we used
	\begin{align*}
		\sum_{i_1,...i_5} |G_{i_2i_3}G_{i_3i_5}| &\prec N^{2}\sum_{i_2,i_3,i_5} |G_{i_2i_3}G_{i_3i_5}| \prec N^{3}\sum_{i_2,i_3} |G_{i_2i_3}|^2+N^3\sum_{i_3i_5}|G_{i_3i_5}|^2\\
		&\prec N^3 \sum_{i_2} \frac{\im G_{i_2i_2}}{\eta}+ N^3 \sum_{i_5} \frac{\im G_{i_5i_5}}{\eta} \prec \frac{N^4}{\eta}\,,
	\end{align*}
	which makes use of Lemma \ref{ward} and \eqref{3.192}.
\end{example}

We have the following improvement of Lemma \ref{lemma 3.4} for $W \in \cal W_o$, which we delay the proof to Section \ref{section 3.1.2}.

\begin{lemma} \label{lemma 3.5}
	Let $W\in \cal W_o$. We have
	\[
	\bb E \cal M(W) \prec N^{-\theta_1+\nu}f^{-\theta_2}  (\phi f^{-1})^\omega   \phi N^{-1/2}\sum_{d=0}^{\nu_1}f^{-2d}\bb E |(G_{\b e\b e}-f^{-1})|^{\nu_1-d}\eqd \cal E_2(W)
	\]
\end{lemma}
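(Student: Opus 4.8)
The plan is to improve on Lemma \ref{lemma 3.4} by exploiting the extra structural feature of $W\in\cal W_o$: there is an index, say $i_1$, that appears an odd number of times in the Green functions of $W$. The key point is that such an index must be "paired" across a factor $H_{i_1 j}$ when we perform a cumulant expansion, because summing $\sum_{i_1}$ over a quantity that is genuinely odd in $i_1$ (in the sense that every Green function carrying $i_1$ sits next to a standard basis vector $\b e_{i_1}$ in an odd pattern) produces an extra $N^{-1/2}$ together with a factor of $\phi$ from \eqref{3.0}. Concretely, I would first isolate a single odd index $i_1\in O(W)$ and write the sub-sum $\sum_{i_1} W_{i_1,\dots,i_\nu}$ as $\sum_{i_1}\langle \b e_{i_1}, (\cdots)\rangle$, where $(\cdots)$ is a vector-valued expression depending on the remaining indices. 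Since the total count of $\b e_{i_1}$'s appearing is odd, one of them is not absorbed into a Ward-type pairing with another $\b e_{i_1}$; instead I would handle it by inserting the resolvent identity $(A-z)G=\id$ or, more efficiently, by recognizing $\sum_{i_1}(\text{odd in }i_1)$ as a projection onto $\b e$ plus a fluctuation, and invoking \eqref{3.0} (the estimate $|G_{\b e\b x}|\prec \phi f^{-1}$) to gain the factor $\phi f^{-1}\cdot f = \phi$ relative to the diagonal bound, at the cost of... actually the cleaner route is the following.

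The cleaner route, which I expect to be the actual argument, is a cumulant expansion in $H_{i_1 k}$ summed over $i_1$ and $k$: write $\sum_{i_1} W_{i_1,\dots} = \sum_{i_1,k} H_{i_1 k}(\text{something})_{i_1 k,\dots} + (\text{lower order})$ after using that an off-diagonal Green function carrying $i_1$ can be "regenerated" as $(HG)$-type via the first relation of \eqref{qqq}-type identities; then apply Lemma \ref{lem:cumulant_expansion}. The term $s=1$ in that expansion pairs $i_1$ with $k$, contracts one of the $\b e_{i_1}$-adjacent Green functions into a $G_{\b e k}$ or $G_{kk}$, and the $N^{-1}$ from $\cal C_2$ together with the surviving $\sum_k$ gives, after Ward's identity (Lemma \ref{ward}) and \eqref{3.0}, exactly one saved power of $N^{-1/2}$ and one factor $\phi$ compared with the crude count in Lemma \ref{lemma 3.4}. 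The higher cumulant terms $s\geq 2$ carry additional $q^{-(s-1)}$ smallness and at least as many $\b e$- or Ward-pairings, so they are controlled by the same target $\cal E_2(W)$; here I would set up a small induction on $\nu_1+\sigma+\omega$ (the "complexity" of $W$) so that the terms produced by $\partial_{i_1 k}^s$ that still lie in $\cal W_o$ are handled by the inductive hypothesis, while those that have fallen into $\cal W\setminus\cal W_o$ are bounded directly by Lemma \ref{lemma 3.4}, which for such descendants already meets the bound $\cal E_2(W)$ because they have acquired the extra $N^{-1/2}$ from the contraction. The sum over the $d$-index in the statement, $\sum_{d=0}^{\nu_1} f^{-2d}\,\bb E|(G_{\b e\b e}-f^{-1})|^{\nu_1-d}$, arises because each recursive expansion step can either keep a factor $(G_{\b e\b e}-f^{-1})$ or, via a differentiated $G_{\b e\b e}$, convert it into two $G_{\b e\bullet}$ factors that get re-bounded by \eqref{3.0} producing an extra $f^{-2}$ and lowering $\nu_1$ by one; tracking this bookkeeping yields precisely that finite geometric-type sum.

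The main obstacle, I expect, is the bookkeeping needed to show the odd-index property \emph{persists} (or is consumed in a controlled way) under the differentiation $\partial_{i_1 k}^s$: a single application of \eqref{diff} splits $G_{x_1 y_1}$ into a sum of two products, and one has to check that in every resulting monomial either (a) some index is still odd, so the induction applies, or (b) the monomial has strictly gained an $N^{-1/2}$ relative to the naive Lemma \ref{lemma 3.4} count, so the plain bound suffices. Making this dichotomy precise — essentially a parity-tracking lemma for how $\partial_{i_1 k}$ acts on the set $O(W)$ and on the powers $\theta_1,\omega,\nu_1,\nu_2$ — is the technical heart of the proof, and it is where one must be careful that the constants and the range of the $d$-sum close up exactly. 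Once that combinatorial lemma is in place, assembling the estimate is a routine application of Lemma \ref{lem:cumulant_expansion}, Lemma \ref{ward}, the local law \eqref{3.192}, and the bootstrap hypotheses \eqref{3.0}--\eqref{3.1}.
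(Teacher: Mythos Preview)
Your overall strategy---recursively expanding around an odd index, tracking parity under $\partial_{ij}$, and doing the bookkeeping that produces the $\sum_{d=0}^{\nu_1} f^{-2d}$ sum---matches the paper's architecture. But there is a genuine gap in the mechanism you describe for the $s=1$ term, and without fixing it the recursion does not close.

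You propose to regenerate an $(HG)$-factor at the odd index $x_1$ and then cumulant-expand in $H_{x_1 k}$. At $s=1$ this produces (among other terms) $-N^{-1}\sum_k G_{kk}\,G_{x_1 y_1}\,X=-\ul{G}\,G_{x_1 y_1}\,X$, which is of the \emph{same} order as the original monomial: no $N^{-1/2}$ is saved and $x_1$ is still odd. Your sentence ``the $s=1$ term \dots gives, after Ward's identity \dots exactly one saved power of $N^{-1/2}$'' overlooks this diagonal contraction. The paper avoids this by using the two-sided identity
\[
z\,\ul{G}\,G_{x_1y_1}\;=\;\ul{AG}\,G_{x_1y_1}-G_{x_1y_1}\;=\;\ul{G}\,(AG)_{x_1y_1}-\ul{G}\,\delta_{x_1y_1},
\]
which after replacing $A$ by $H$ and a small $\b e$-correction yields \emph{two} cumulant expansions, one of $\ul{HG}\,G_{x_1y_1}X$ and one of $\ul{G}\,(HG)_{x_1y_1}X$. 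The dangerous $s=1$ pieces from these two expansions are $\mp\,\ul{G}^{\,2}G_{x_1y_1}X$ respectively and cancel exactly (see \eqref{3.25}--\eqref{3.26}). What survives at $s=1$ are terms with an extra off-diagonal pairing (raising $\nu_2$) or an extra pair of $G_{\cdot\b e}$ factors (raising $\omega$, lowering $\nu_1$), and the $s\ge 2$ terms carry a genuine $q^{-(s-1)}$; these are the three Conditions in Lemma~\ref{lemma 3.6}. Your dichotomy ``either still odd, or has gained $N^{-1/2}$'' is therefore not quite right: in the paper's scheme the $W_k$ are \emph{always} still in $\cal W_o$, and the gain comes from $\cal E_3(W_k)\le N^{-\tau/4}\cal E_3(W)$, iterated finitely many times.

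An alternative that would repair your one-sided expansion is to treat the $-\ul{G}G_{x_1y_1}X$ piece as the same monomial back again and solve the resulting self-consistent relation $(z+\ul{G})\,\bb E\cal M(W)=(\text{smaller})$, using $|z+m_{sc}|^{-1}=|m_{sc}|\le 1$; but you would then need to control the fluctuation $\ul{G}-m_{sc}$ inside the expectation and redo the bookkeeping accordingly. Either way, the missing idea is how to dispose of that leading diagonal $s=1$ term.
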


Given Lemma \ref{lemma 3.5}, the proof of \eqref{3.14} becomes a relative simple matter. More precisely, by Lemma \ref{lemma 2.3}, $\widetilde{L}_s$ is a sum of finite many terms in the form
\begin{equation} \label{3.18}
\frac{1}{fN^{3/2}q^{s-1}}\sum_{ij} a_{ij}\bb E (\partial^{s_1}_{ij}G_{j\b e})\Big(\prod_{b=2}^{a}\partial_{ij}^{s_b} G_{\b e\b e}\Big) (G_{\b e \b e} -f^{-1})^{2n-a}
\end{equation}
where $s_1\geq 0$, $1\leq a\leq 2n-1$, $s_2,...,s_a\geq 1$, and $s_1+\cdots+s_a=s$. By \eqref{diff}, we see that
\begin{equation} \label{3.19}
\eqref{3.18}= \sum_{k=1}^\ell\bb E \cal M(W_k)
\end{equation}
for some fixed $\ell$, and each $W_k$ is in the form of \eqref{3.15}. Also by \eqref{diff}, it is clear that for every $W_k$, either $i$ or $j$ appears odd many times in its Green functions. In other words, we have $W_k\in \cal W_o$. In addition, the parameters of $W_k$ satisfies $\nu=2$, $\nu_1=2n-a$, $\omega=2a-1$, $\theta_1=3/2$, $\theta_2=s$. Thus Lemma \ref{lemma 3.5} shows that
\begin{equation*}
\begin{aligned}
\bb E \cal M(W_k)&\prec N^{-3/2+2}f^{-s} (\phi f^{-1})^{2a-1} \phi N^{-1/2}\sum_{d=0}^{2n-a} f^{-2d}\bb E |(G_{\b e\b e}-f^{-1})|^{2n-a-d}\\
 &\prec \phi^{2a}f^{1-s}\sum_{d=a}^{2n}f^{-2d}\bb E |(G_{\b e\b e}-f^{-1})|^{2n-d}\,,
\end{aligned}
\end{equation*}
where in the second step we used $q\asymp f$. Since $s\geq 2$, $a\leq s+1$, and $\phi\leq f^{0.01}$, we have $\phi^{2a}f^{1-s}\leq 1$. As a result, 
\begin{equation} \label{3.23}
\bb E \cal M(W_k)\prec\sum_{d=a}^{2n}f^{-2d}\bb E |(G_{\b e\b e}-f^{-1})|^{2n-d} \prec \cal E_1
\end{equation}
for all $W_k$ in \eqref{3.19}. This concludes \eqref{3.14}. Combining \eqref{3.122}, \eqref{3.12}, \eqref{L_1} and \eqref{3.14} yields the proof of \eqref{3.2}.

\subsubsection{Proof of Lemma \ref{lemma 3.5}} \label{section 3.1.2}

For $W\in \cal W_o$, let us denote
\begin{equation} \label{3.21}
\cal E_3(W)\deq N^{-\theta_1+\nu}f^{-\theta_2}  \Big(\frac{1}{N\eta}\Big)^{\nu_2/2}(\phi f^{-1})^\omega\sum_{d=0}^{\nu_1}f^{-2d}\bb E |(G_{\b e\b e}-f^{-1})|^{\nu_1-d} \,.
\end{equation}
From \eqref{3.16}, it is easy to see that 
\begin{equation} \label{3.22}
	\bb E \cal M(W) \prec \cal E_3(W)\,.
\end{equation}
As $\cal E_3(W)\phi N^{-1/2}\leq \cal E_2(W)$, our task here is to improve \eqref{3.21} by a factor of $\phi N^{-1/2}$. We shall show the following iterating result.

\begin{lemma} \label{lemma 3.6}
Let $W\in \cal W_o$. We have
\begin{equation} \label{3.24}
\bb E \cal M(W)\prec \sum_{k=1}^\ell \bb E \cal M(W_k)+\cal E_2(W)
\end{equation}
for some fixed $\ell$. Here $W_k\in \cal W_o$ and satisfy $\theta_1(W_k)-\nu(W_k)\geq \theta_1(W)-\nu(W)$ for all $k=1,2,...,\ell$. In addition, each $W_k$ satisfies one of the following conditions.
\begin{itemize}
	\item[] Condition 1. $\theta_2(W_k)=\theta_2(W)$, $\nu_2(W_k)\geq \nu_2(W)+1$, $\omega(W_k)=\omega(W)$, $\nu_1(W_k)=\nu_1(W)$.
	
	\item[] Condition 2. $\theta_2(W_k)=\theta_2(W)$, $\nu_2(W_k)\geq \nu_2(W)+1$, $\omega(W_k)=\omega(W)+2$, $\nu_1(W_k)=\nu_1(W)-1$.
	
		\item[] Condition 3. There exists $\mathfrak s \geq 2$, and $0\leq \mathfrak d \leq \mathfrak s\wedge \nu_1(W)$ such that $\theta_2(W_k)=\theta_2(W)+\mathfrak s-1
		$, $\nu_2(W_k)\geq\nu_2(W)$, $\omega(W_k)=\omega(W)+2\mathfrak d$, $\nu_1(W_k)=\nu_1(W)-\mathfrak d$.
\end{itemize}
\end{lemma}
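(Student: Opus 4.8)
The plan is to prove Lemma \ref{lemma 3.6} by a single cumulant expansion applied to the factor $G_{z_1\b e}$ (or any Green function entry carrying one of the odd-appearing indices) inside $\cal M(W)$, then sort the resulting terms according to how the differential rule \eqref{diff} redistributes indices. Concretely, pick $i^*\in O(W)$, which is nonempty since $W\in \cal W_o$. After possibly relabeling, $i^*$ sits in some Green function entry, and the point is that it must appear at least once; write the corresponding factor as $G_{i^*\b e}$ (the case $G_{i^* y}$ with $y$ a non-$\b e$ index is handled the same way, only the bookkeeping of $\omega$ changes). Using the resolvent identity as in \eqref{qqq}, replace $G_{i^*\b e}$ by $-f^{-1}(HG)_{i^*\b e}+O_\prec(f^{-2}G_{i^*\b e}\cdot\text{stuff})$; the error term is directly of size $\cal E_2(W)$ by Lemma \ref{lemma 3.4} once we note that removing $i^*$ from the list of odd indices can only help (and the extra $f^{-1}$ beats $\phi N^{-1/2}$ since $f\asymp q\geq N^\tau$). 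For the main term $-f^{-1}N^{-1/2}\sum_{i^*,j}\E H_{i^*j}(\,\partial\text{-free part})$, apply Lemma \ref{lem:cumulant_expansion} in the variable $H_{i^*j}$, summed over $j$ and over the formal index $i^*$.

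The $s=1$ cumulant term contributes the analogue of $L_1$ in \eqref{3/15}: each application of \eqref{diff} either pairs $i^*$ with $j$ inside one Green function (creating a new index $j$ that now appears, together with its companion occurrence from $G_{j\b e}$ — wait, more carefully: it creates repeated off-diagonal occurrences of the summation index, raising $\nu_2$ by one), or it lands on one of the $G_{\b e\b e}$ factors, lowering $\nu_1$ by one and raising $\omega$ by two. These are exactly Conditions 1 and 2; in both cases one gains a factor $N^{-1}$ from $\cal C_2$ together with the extra summation index, and the gain over $\cal E_3(W)$ is the promised $\phi N^{-1/2}$ because a new Ward-summable index (factor $(N\eta)^{-1/2}\le N^{-1/2+\tau}$, and $\phi\ge1$) or a reshuffling into $\b e$-directions has been produced. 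The $s\geq2$ cumulant terms carry a prefactor $f^{-s+1}N^{-1/2}$ (using $\cal C_{s+1}(H_{ij})=O(1/(Nq^{s-1}))$ from Lemma \ref{lemma 2.3} and $q\asymp f$), and after $s$ applications of \eqref{diff} distributed across the monomial we land in Condition 3 with $\fra s=s$, the number $\fra d$ counting how many derivatives hit $G_{\b e\b e}$-factors. Here the new $\theta_2$ increases by $s-1$, $\nu_1$ drops by $\fra d$, $\omega$ rises by $2\fra d$; one must also check $\theta_1(W_k)-\nu(W_k)\geq\theta_1(W)-\nu(W)$, which holds because each new summation index $j$ comes with a compensating $N^{-1}$ from the cumulant.

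The one subtlety requiring care — and the step I expect to be the main obstacle — is verifying that \emph{every} resulting monomial $W_k$ still lies in $\cal W_o$, i.e.\ that it retains an index of odd multiplicity, so that the lemma can be \emph{iterated} to a fixed point. When \eqref{diff} acts on $G_{i^*\b e}$ it produces $G_{i^*\cdot}G_{\cdot\b e}$-type products in which the summation index $j$ is freshly introduced, and one has to track parities: the index $i^*$ which was odd may become even, but then $j$ (appearing an odd number of times among the new factors, counting the original $G_{j\b e}$ that triggered nothing, plus the derivative output) takes over the role. The cleanest way to organize this is to observe that the total number of index-slots in the Green functions changes by an even amount under a single $\partial_{i^*j}$ only when the derivative is "internal," and otherwise the new index $j$ enters with the right parity; a short parity/counting argument — essentially the same one already used in the paragraph after \eqref{3.19} to conclude $W_k\in\cal W_o$ there — closes this. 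Once $\cal W_o$-membership and the three sets of parameter inequalities are checked, iterating \eqref{3.24} finitely many times (each step strictly increases $\nu_2$ or $\theta_2$, both bounded in terms of $n$ and $\ell$) terminates in an error entirely absorbed into $\cal E_2(W)$, which is Lemma \ref{lemma 3.5}.
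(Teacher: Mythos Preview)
Your proposal has a genuine gap, and it concerns precisely the step you flag as the main obstacle---though not for the reason you expect.

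First, the resolvent identity you invoke is misapplied. Equation \eqref{qqq} (and its cousin \eqref{qqqq}) produces an $f^{-1}$ prefactor only because one argument of the Green function is $\b e$: from $(A-z)G=I$ one gets $(z-f)G_{\b e\b x}=(HG)_{\b e\b x}-\langle\b e,\b x\rangle$, and the $f$ on the left is what yields the gain. If instead you expand from the $i^*$ side, $(HG)_{i^*\b e}=\sum_jH_{i^*j}G_{j\b e}$ as you write, the identity reads $zG_{i^*\b e}=(HG)_{i^*\b e}+fN^{-1/2}G_{\b e\b e}-N^{-1/2}$ with no $f^{-1}$ factor at all. And when the odd index $i^*$ sits only in a factor $G_{i^*y}$ with $y$ a standard index---which certainly happens in $\cal W_o$---there is no $\b e$ to exploit, so your claim that ``only the bookkeeping of $\omega$ changes'' is simply false.

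Second, and more fundamentally, a single cumulant expansion does not close. After expanding $(HG)_{x_1y_1}=\sum_jH_{x_1j}G_{jy_1}$, the $s=1$ term where $\partial_{x_1j}$ hits $G_{jy_1}$ produces $-G_{jj}G_{x_1y_1}$; summing over $j$ this is $-\ul{G}G_{x_1y_1}$, i.e.\ a term with the \emph{same} parameters $(\nu_2,\omega,\nu_1,\theta_2)$ as the original $W$. It fits none of Conditions~1--3, so the iteration cannot proceed. The paper handles this by the algebraic identity $z\ul{G}G_{x_1y_1}=\ul{AG}G_{x_1y_1}-G_{x_1y_1}=\ul{G}(AG)_{x_1y_1}-\ul{G}\delta_{x_1y_1}$, which generates \emph{two} cumulant expansions, (I$'$) from $\ul{HG}$ and (II$'$) from $(HG)_{x_1y_1}$. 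Their leading $s=1$ contributions are $\mp\ul{G}^2G_{x_1y_1}X$ and cancel exactly (see \eqref{3.25}--\eqref{3.26}). Only after this cancellation do the surviving terms fall into Conditions~1--3. This two-sided expansion with built-in cancellation is the essential mechanism, and it is absent from your sketch.
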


We remark that the three conditions in Lemma \ref{lemma 3.6} arises from hitting different terms with a derivation in the cumulant expansion. By Lemma \ref{lemma 3.6} and applying \eqref{3.21}, \eqref{3.22} for $W=W_k$, as well as $\xi\leq \tau/100$, we see that for every $W_k$ in \eqref{3.24}, we always have
\[
\cal E_3(W_k)\leq\cal E_3(W)\cdot \phi^4\Big(\frac{1}{\sqrt{N\eta}}+\frac{1}{f}\Big)\leq \cal E_3(W)\cdot N^{-\tau/4}\,, \quad \mbox{and} \quad \cal E_2(W_k)\leq \cal E_2(W)\,.
\]
Thus for any given $W\in \cal W_o$, we can apply \eqref{3.24} finitely many times and obtain $\bb E \cal M(W)\prec \cal E_2(W)$ as desired.

\begin{proof}[Proof of Lemma \ref{lemma 3.6}]
	Let $W\in \cal W_o$ in the form of \eqref{3.15}. W.L.O.G.\,we can assume $x_1\in O(W)$ and $x_1\not\equiv y_1$, or $z_1\in O(W)$. Let us work under the first assumption; by examining the proof, it is clear that the case $z_1\in O(W)$ works in a very similar fashion. Note that we have the identity
	\[
	z\ul{G}G_{x_1 y_1}=\ul{AG}G_{x_1 y_1}-G_{x_1y_1}=\ul{G}(AG)_{x_1 y_1}-\ul{G}\delta_{x_1 y_1}\,.
	\]
	As a result, we get
	\begin{equation*}
		\begin{aligned}
\bb E \cal M(W)&=		\sum_{i_1,...,i_\nu}a_{i_1,...,i_\nu}N^{-\theta_1}f^{-\theta_2}\bb E\ul{AG}G_{x_1 y_1}G_{x_2y_2}\cdots G_{x_\sigma y_\sigma}G_{z_1 \b e}\cdots G_{z_\omega \b e} (G_{\b e \b e}-f^{-1})^{\nu_1}\\
&\ -\sum_{i_1,...,i_\nu}a_{i_1,...,i_\nu}N^{-\theta_1}f^{-\theta_2}\bb E\ul{G}(AG)_{x_1 y_1} G_{x_2y_2}\cdots G_{x_\sigma y_\sigma}G_{z_1 \b e}\cdots G_{z_\omega \b e} (G_{\b e \b e}-f^{-1})^{\nu_1}\\
&\ +\sum_{i_1,...,i_\nu}a_{i_1,...,i_\nu}N^{-\theta_1}f^{-\theta_2}\bb E \ul{G}\delta_{x_1 y_1} G_{x_2y_2}\cdots G_{x_\sigma y_\sigma}G_{z_1 \b e}\cdots G_{z_\omega \b e} (G_{\b e \b e}-f^{-1})^{\nu_1}\\
&\eqd \mbox{(I)+(II)+(III)}\,.
		\end{aligned}
	\end{equation*}
Observe that
	\begin{align*}
\hspace*{-0.3cm}\mbox{(I)}&=\sum_{i_1,...,i_\nu}a_{i_1,...,i_\nu}N^{-\theta_1}f^{-\theta_2}\bb E\ul{HG}G_{x_1 y_1}G_{x_2y_2}\cdots G_{x_\sigma y_\sigma}G_{z_1 \b e}\cdots G_{z_\omega \b e} (G_{\b e \b e}-f^{-1})^{\nu_1}\nonumber\\
&\quad+\sum_{i_1,...,i_\nu}a_{i_1,...,i_\nu}N^{-\theta_1-1}f^{-\theta_2+1}\bb EG_{\b e \b e}G_{x_1 y_1}G_{x_2y_2}\cdots G_{x_\sigma y_\sigma}G_{z_1 \b e}\cdots G_{z_\omega \b e} (G_{\b e \b e}-f^{-1})^{\nu_1}\nonumber\\
&=\sum_{i_1,...,i_\nu}a_{i_1,...,i_\nu}N^{-\theta_1}f^{-\theta_2}\bb E\ul{HG}G_{x_1 y_1}G_{x_2y_2}\cdots G_{x_\sigma y_\sigma}G_{z_1 \b e}\cdots G_{z_\omega \b e} (G_{\b e \b e}-f^{-1})^{\nu_1}+O_{\prec}(\cal E_2(W))\nonumber\\
&\eqd \mbox{(I')}+O_{\prec}(\cal E_2(W))\,,
\end{align*}
where in the second step we used $G_{\b e \b e}\prec\phi f^{-1}$ and estimate the rest factors similar to Lemma \ref{lemma 3.4}. In addition, by \eqref{3.0}, we have $((A-H)G)_{x_1 y_1}=fN^{-1/2}G_{\b e y_1}\prec \phi N^{-1/2}$, and thus
\begin{align*}
\mbox{(II)}=&-\sum_{i_1,...,i_\nu}a_{i_1,...,i_\nu}N^{-\theta_1}f^{-\theta_2}\bb E\ul{G}(HG)_{x_1 y_1} G_{x_2y_2}\cdots G_{x_\sigma y_\sigma}G_{z_1 \b e}\cdots G_{z_\omega \b e} (G_{\b e \b e}-f^{-1})^{\nu_1}+O_{\prec}(\cal E_2(W))\\
=&:\mbox{(II')}+O_{\prec}(\cal E_2(W))\,.
\end{align*}
As $x_1\not\equiv y_1$, one can easily show that (III)$\prec \cal E_2(W)$. Hence we have
\begin{equation} \label{3.242}
	\mathbb E \cal M(W)=\mbox{(I')+(II')}+O_{\prec}(\cal E_2(W))
\end{equation}

Now let us expand (I') and (II') via cumulant expansion. Let us abbreviate 
\[
X\deq G_{x_2y_2}\cdots G_{x_\sigma y_\sigma}G_{z_1 \b e}\cdots G_{z_\omega \b e} (G_{\b e \b e}-f^{-1})^{\nu_1}\,.
\] 
By Lemma \ref{lem:cumulant_expansion}, we have
\begin{equation*}
\begin{aligned}
\mbox{(I')}&=\sum_{s=1}^\ell\sum_{i_1,...,i_\nu,i,j}a_{i_1,...,i_\nu}N^{-\theta_1-1}f^{-\theta_2}\cal C_{s+1}(H_{ij})\bb E\partial _{ij}^s(G_{ij}G_{x_1 y_1}X)+O_{\prec}(\cal E_2(W)) \\
	&\eqd \sum_{s=1}^\ell L^{(1)}_s+O_{\prec}(\cal E_2(W))\,.
\end{aligned}
\end{equation*}
Note that by \eqref{diff}, we have $x_1\in O(W)$ for all terms in $L^{(1)}_{s}, s\geq 1$. In addition, taking $\partial_{ij}$ will not decrease the value of $\nu_2$, as $i,j$ are new indices. By \eqref{diff}, we see that 
\begin{equation*}
	\begin{aligned}
	L_1^{(1)}&=-\sum_{i_1,...,i_\nu}a_{i_1,...,i_\nu}N^{-\theta_1}f^{-\theta_2}\bb E\ul{G}^2G_{x_1 y_1}X -\sum_{i_1,...,i_\nu,i,j}a_{i_1,...,i_\nu}N^{-\theta_1-2}f^{-\theta_2}\bb EG^2_{ij}G_{x_1 y_1}X\\
	 &\quad +\sum_{i_1,...,i_\nu,i,j}a_{i_1,...,i_\nu}N^{-\theta_1-2}f^{-\theta_2}\bb EG_{ij}\partial _{ij}(G_{x_1 y_1}X)+O_{\prec}(\cal E_2(W))\\
	  &\eqd L^{(1)}_{1,1}+L^{(1)}_{1,2}+L^{(1)}_{1,3}+O_{\prec}(\cal E_2(W))\,.
	\end{aligned}
\end{equation*}
It is easy to see that $L_{1,2}^{(1)}$ is in the form of $\bb E \cal M(W^{(1)}_{1,2})$, where  $W^{(1)}_{1,2}\in \cal W_o$ and it satisfies Condition 1. In addition, $L_{1,3}^{(1)}=\sum_{k=1}^\ell \bb E \cal M(W_{1,3,k}^{(1)})$, where $k$ is fixed and each $W_{1,3,k}^{(1)}\in \cal W_o$. Here if $\partial_{ij}$ is applied to a factor of $(G_{\b e \b e}-f^{-1})$, then the corresponding $W_{1,3,k}^{(1)}$ satisfies Condition 2; otherwise, it satisfies Condition 1. When $s\geq 2$, by Lemma \ref{lemma 2.3} and \eqref{diff}, we see that
\[
L^{(1)}_s=\sum_{k=1}^\ell \bb E \cal M(W_{s,k}^{(1)})
\]
where each $W^{(1)}_{s,k}\in W_o$ satisfies Condition 3 with $\mathfrak s=s$, and $\mathfrak d$ corresponds to the number of $(G_{\b e \b e}-f^{-1})$ that $\partial^s_{ij}$ hit on. In summary, we have
\begin{equation} \label{3.25}
	\mbox{(I')}=-\sum_{i_1,...,i_\nu}a_{i_1,...,i_\nu}N^{-\theta_1}f^{-\theta_2}\bb E\ul{G}^2G_{x_1 y_1}X+\sum_{k=1}^\ell \bb E \cal M(W_{k}^{(1)})+O_{\prec}(\cal E_2(W))
\end{equation}
where each $W_k^{(1)}\in \cal W_o$ and satisfies Condition 1, 2, or 3.

The computation of (II') follows in a similar fashion. By Lemma \ref{lem:cumulant_expansion}, we have
\begin{equation*}
	\begin{aligned}
		\mbox{(II')}&=-\sum_{s=1}^\ell\sum_{i_1,...,i_\nu,i}a_{i_1,...,i_\nu}N^{-\theta_1}f^{-\theta_2}\cal C_{s+1}(H_{x_1 i})\bb E\partial _{x_1 i}^s(G_{i y_1}\ul{G}X)+O_{\prec}(\cal E_2(W)) \\
		&\eqd \sum_{s=1}^\ell L^{(2)}_s+O_{\prec}(\cal E_2(W))\,.
	\end{aligned}
\end{equation*}
In addition, taking $\partial_{x_1i}$ will not decrease the value of $\nu_2$, as $i$ is a new index, and $x_1$ originally appears odd many times in the Green functions. Note that by \eqref{diff}, when $s\geq 1$ is odd, we have $x_1\in O(W)$ for all terms in $L^{(2)}_{s}$; when $s\geq 2$ is even, we have $i\in O(W)$ for all terms in $L^{(2)}_{s}$. By \eqref{diff}, we see that 
\begin{equation*}
	\begin{aligned}
		L_1^{(2)}&=\sum_{i_1,...,i_\nu}a_{i_1,...,i_\nu}N^{-\theta_1}f^{-\theta_2}\bb E\ul{G}^2G_{x_1 y_1}X +\sum_{i_1,...,i_\nu,i}a_{i_1,...,i_\nu}N^{-\theta_1-1}f^{-\theta_2}\bb E\ul{G}G_{ i x_1}G_{i y_1}X\\
		&\quad +\sum_{i_1,...,i_\nu,i}a_{i_1,...,i_\nu}N^{-\theta_1-1}f^{-\theta_2}\bb EG_{i y_1}\partial _{x_1 i}(\ul{G}X)+O_{\prec}(\cal E_2(W))\\
		&\eqd L^{(2)}_{1,1}+L^{(2)}_{1,2}+L^{(2)}_{1,3}+O_{\prec}(\cal E_2(W))\,.
	\end{aligned}
\end{equation*}
The terms $L^{(2)}_{1,2}$, $L^{(2)}_{1,3}$, and $L_{s}^{(2)}$ can be handled exactly like $L^{(1)}_{1,2}$, $L^{(1)}_{1,3}$, and $L_{s}^{(1)}$. As a result, we have 
\begin{equation} \label{3.26}
	\mbox{(II')}=\sum_{i_1,...,i_\nu}a_{i_1,...,i_\nu}N^{-\theta_1}f^{-\theta_2}\bb E\ul{G}^2G_{x_1 y_1}X+\sum_{k=1}^\ell \bb E \cal M(W_{k}^{(2)})+O_{\prec}(\cal E_2(W))
\end{equation}
where each $W_k^{(2)}\in \cal W_o$ and satisfies Condition 1, 2, or 3. 

Note that we have a cancellation between the leading contributions of \eqref{3.25} and \eqref{3.26}. Combining \eqref{3.242} -- \eqref{3.26} yields the desired result. 
\end{proof}

\subsection{Proofs of \eqref{3.2.2} and \eqref{3.3}}
The proofs of \eqref{3.2.2} and \eqref{3.3} are similar to that of  \eqref{3.2}. More precisely, we follow the mechanism that for each term that carries an index that appears odd many times, we can always expand around it, and the resulting terms are either small enough, or they again carry an odd index and allow further expansion. We shall give the main steps, with an emphasis on the differences.

\subsubsection{Proof of \eqref{3.2.2}}
Let $\b x \in \mathbb X$ and fix $n\in \mathbb N_+$.  Abbreviate $\cal P_2\deq \| G_{\b e \b x}\|_{2n}$, and it suffices to show that
\begin{equation*}
	\cal P_2^{2n}=\bb E |G_{\b e \b x}|^{2n} \prec \sum_{a=1}^{2n} f^{-a} \cal P_2^{2n-a}\eqd \cal E_4\,.
\end{equation*}
By resolvent identity and the second relation of \eqref{3.0}, we have
\begin{equation} \label{qqqq}
	\begin{aligned}
G_{\b e \b x}&=-f^{-1}(z-f)G_{\b e \b x}+O_{\prec}(\phi f^{-2})\\
&=-f^{-1}\big((HG)_{\b e \b x} - \langle \b e , \b x\rangle\big)+O_{\prec}(\phi f^{-2})=-f^{-1}(HG)_{\b e \b x}+O_{\prec}(\phi f^{-2})\,.
	\end{aligned}
\end{equation}
Then we get from Lemma \ref{lem:cumulant_expansion} that
\begin{equation} \label{3.28}
	\begin{aligned}
		\cal P_2^{2n}
		&=-f^{-1}N^{-1/2}\sum_{ij}\bb E  H_{ij}G_{j\b x}G_{\b e\b e}^{n-1}(G^*_{\b e\b e})^{n}+O_{\prec}(\cal E_4)\\
		&=-f^{-1}N^{-1/2}\sum_{s=1}^{\ell}\sum_{ij} \cal C_{s+1}(H_{ij})\bb E \partial_{ij}^{s} (G_{j\b x}G_{\b e\b x}^{n-1}(G^*_{\b e\b x})^{n})+O_{\prec}(\cal E_4)\\
		&\eqd 	\sum_{s=1}^{\ell} L^{(3)}_s+O_{\prec}(\cal E_4)\,.
	\end{aligned}
\end{equation}
Similar to \eqref{L_1}, it is not hard to see that $L_1^{(3)} \prec \cal E_4.$ To estimate the rest contributions in \eqref{3.28}, we again drop the complex conjugates for simplicity, and we shall prove  \eqref{3.2.2} by showing that 
\begin{equation} \label{3.29}
	\widetilde{L}^{(3)}_s\deq -f^{-1}N^{-1/2}\sum_{s=1}^{\ell}\sum_{ij} \cal C_{s+1}(H_{ij})\bb E \partial_{ij}^{s} (G_{j\b x}G_{\b e\b x}^{2n-1})\prec\cal E_4
\end{equation}
for all fixed $s \geq 2$.

Similar to Definition \ref{def3.1}, Let $\{i_1,i_2,...\}$ be an infinite set of formal indices. To $\nu,\nu_1,\sigma,\omega,\gamma\in \bb N$, $\theta_1,\theta_2 \in \bb R$, $x_1,y_1,...,x_\sigma,y_\sigma,z_1,...,z_\omega,w_1,...,w_\gamma\in \{i_1,...i_\nu\}$, and a family $(a_{i_1,...,i_\nu})_{1\leq i_1,...,i_\nu \leq N}$ of uniformly bounded complex numbers we assign a formal monomial 
\begin{equation} \label{3.30}
U=a_{i_1,...,i_\nu}N^{-\theta_1}f^{-\theta_2}G_{x_1y_1}\cdots G_{x_\sigma y_\sigma}G_{z_1 \b e}\cdots G_{z_\omega \b e} G_{w_1 \b x}\cdots G_{w_{\gamma} \b x} G^{\nu_1}_{\b e \b x}\,,
\end{equation}
where $\nu(U) = \nu$, $\nu_1(U) = \nu_1$, $\omega(U) \deq \omega$, $\gamma(U)=\gamma$, $\theta_1(U) = \theta_1$ and $\theta_2(U)=\theta_2$. We use $\nu_2(U)$ to denote the number of indices that appear exactly twice and as off-diagonal indices in the Green functions in $U$. We denote by $\cal U$ the set of formal monomials $U$ of the form \eqref{3.30}.

Let $O(U)\subset \{i_1,...,i_\nu\}$ be the set of indices that appear odd many times in the Green functions of $U$.  We denote by $\cal U_o\subset\cal U$ the set of formal monomials such that $O(U)\neq \emptyset$ for all $U\in \cal U_o$. In addition, we denote 
\[
\cal M(U)\deq \sum_{i_1,\dots,i_{\nu}}  U_{i_1,\dots,i_{\nu}}\,.
\]
Similar to Lemma \ref{lemma 3.4}, we have the following priori estimate. 
\begin{lemma} \label{lemma 3.7}
	Let $U\in \cal U$. We have
	\begin{equation} \label{3.31}
		\bb E \cal M(U) \prec N^{-\theta_1+\nu}f^{-\theta_2} \cdot (\phi f^{-1})^\omega \phi^\gamma \Big(\frac{1}{N\eta}\Big)^{\nu_2/2} \cdot \bb E |G_{\b e\b x}|^{\nu_1} \,.
	\end{equation}
\end{lemma}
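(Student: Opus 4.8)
The plan is to prove Lemma \ref{lemma 3.7} exactly as the proof of Lemma \ref{lemma 3.4}, reading off each factor on the right-hand side of \eqref{3.31} from the prior estimates \eqref{3.0}, \eqref{3.1}, \eqref{3.192}, and the Ward identity (Lemma \ref{ward}). The structure of the monomial $U$ in \eqref{3.30} differs from $W$ in \eqref{3.15} only in that the bulk factor $(G_{\b e\b e}-f^{-1})^{\nu_1}$ is replaced by $G_{\b e\b x}^{\nu_1}$ and an additional block $G_{w_1\b x}\cdots G_{w_\gamma\b x}$ of $\b x$-vector entries is inserted, so the argument is a routine adaptation.

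First I would account for the deterministic prefactor: the sum $\sum_{i_1,\dots,i_\nu}$ runs over $\{1,\dots,N\}^\nu$, and bounding each summand, the $N^\nu$ from the summation together with the explicit $N^{-\theta_1}f^{-\theta_2}$ gives $N^{-\theta_1+\nu}f^{-\theta_2}$. Next, each of the $\omega$ factors $G_{z_b\b e}$ is controlled by the second relation of \eqref{3.0}, contributing $(\phi f^{-1})^\omega$; each of the $\gamma$ factors $G_{w_b\b x}$ is controlled by \eqref{3.1} (taking $\b y=\b e_{w_b}\in\bb X$ and using $\b x\in\bb X$ as well), contributing $\phi^\gamma$; and the $\nu_1$ factors $G_{\b e\b x}$ contribute $\bb E|G_{\b e\b x}|^{\nu_1}$ directly. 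Finally, for the $\nu_2$ indices that appear exactly twice and off-diagonally, one pairs the two occurrences and sums over that index using Ward's identity $\sum_j|G_{ij}|^2=\eta^{-1}\im G_{ii}$ together with the entrywise bound \eqref{3.192} (and \eqref{3.1} for entries involving $\b v,\b w$), which gains a factor $(N\eta)^{-1/2}$ per such index rather than the naive $N$; summing the remaining $\nu-\nu_2$ free indices trivially contributes to the $N^\nu$ already counted (more precisely one organizes the summation so indices appearing an odd number of times or three-plus times are handled by Schwarz and the priori bounds, which is exactly the bookkeeping already done for Lemma \ref{lemma 3.4}). Multiplying these contributions yields \eqref{3.31}.

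There is essentially no serious obstacle here; the one point requiring a little care is that $\b x$ itself is a unit vector in $\bb S^{N-1}_\perp$, so entries of the form $G_{w_b\b x}$, $G_{\b e\b x}$ must be bounded using \eqref{3.1} and \eqref{3.0} with the appropriate choice of vectors in $\bb X=\{\b v,\b w,\b e_1,\dots,\b e_N\}$ — in particular one uses that $\b x\in\bb X$ and that any standard basis vector $\b e_{w_b}$ lies in $\bb X$. Since the lemma is a direct analogue of Lemma \ref{lemma 3.4} with the same proof scheme, I would simply state that the proof is identical after these substitutions, and defer to the discussion following Lemma \ref{lemma 3.4}.
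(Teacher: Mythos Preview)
Your proposal is correct and follows exactly the paper's approach: the paper simply states that the proofs of Lemmas \ref{lemma 3.7}--\ref{lemma 3.9} are ``very similar to those of Lemmas \ref{lemma 3.4}--\ref{lemma 3.6}'' and omits the details, and your explanation is precisely the factor-by-factor reading that the proof of Lemma \ref{lemma 3.4} uses. One minor imprecision: $\b x$ is only required to lie in $\bb X=\{\b v,\b w,\b e_1,\dots,\b e_N\}$, not necessarily in $\bb S^{N-1}_\perp$, but this does not affect the argument since \eqref{3.1} covers all of $\bb X$ (and you do invoke $\b x\in\bb X$ in the next sentence).
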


Similar to Lemma \ref{lemma 3.5}, our main technical step towards proving \eqref{3.29} is to show the following improved estimate.

\begin{lemma} \label{lemma 3.8}
	Let $U\in \cal U_o$. We have
	\[
	\bb E \cal M(U) \prec N^{-\theta_1+\nu}f^{-\theta_2}  (\phi f^{-1})^\omega   \phi^\gamma \phi N^{-1/2}\sum_{d=0}^{\nu_1}f^{-d}\bb E |G_{\b e\b x}|^{\nu_1-d}\eqd \cal E_5(U)\,.
	\]
\end{lemma}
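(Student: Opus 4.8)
The plan is to prove Lemma \ref{lemma 3.8} by the same iteration scheme already used for Lemma \ref{lemma 3.5}, adapted to the monomial class $\cal U$. The starting point is the crude bound Lemma \ref{lemma 3.7}, which yields $\bb E \cal M(U) \prec \cal E_6(U)$ where
\[
\cal E_6(U)\deq N^{-\theta_1+\nu}f^{-\theta_2}(\phi f^{-1})^\omega \phi^\gamma\Big(\frac{1}{N\eta}\Big)^{\nu_2/2}\sum_{d=0}^{\nu_1}f^{-d}\bb E |G_{\b e\b x}|^{\nu_1-d}\,,
\]
and the goal is to improve this by the factor $\phi N^{-1/2}$, i.e.\,to replace $\cal E_6(U)$ by $\cal E_5(U)=\cal E_6(U)\cdot\phi N^{-1/2}$ (modulo the $(N\eta)^{-\nu_2/2}$ which is $\leq 1$). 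As before, when $U\in\cal U_o$ there is an index, say $x_1$, that appears an odd number of times in the Green functions, so $x_1\not\equiv y_1$ (or $z_1$, or $w_1$, lies in $O(U)$; these cases are handled analogously). First I would use the identity $z\ul G G_{x_1y_1}=\ul G(AG)_{x_1y_1}-\ul G\delta_{x_1y_1}-(\text{term with }G_{x_1y_1}\text{ pulled out})$, exactly as in the proof of Lemma \ref{lemma 3.6}, to rewrite $\bb E\cal M(U)$ as $(I)+(II)+(III)$; since $x_1\not\equiv y_1$ the term $(III)$ carrying $\delta_{x_1y_1}$ vanishes, and the extra factors of $N^{-1}f$ produced from the $\ul{AG}=\ul{HG}+N^{-1}fG_{\b e\b e}$ split are bounded using $G_{\b e\b e}\prec\phi f^{-1}$ and the fact that $((A-H)G)_{x_1y_1}=fN^{-1/2}G_{\b e y_1}\prec\phi N^{-1/2}$, which is exactly where the gain $\phi N^{-1/2}$ enters.

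Next I would expand $(I')$ and $(II')$ via the cumulant expansion Lemma \ref{lem:cumulant_expansion} in $H_{ij}$ (resp.\,$H_{x_1i}$), using the differential rule \eqref{diff} and the cumulant bound Lemma \ref{lemma 2.3}. The key bookkeeping points, identical in spirit to Lemma \ref{lemma 3.6}, are: (a) taking $\partial_{ij}$ introduces two brand new formal indices $i,j$, so $\nu_2$ never decreases; (b) the odd index $x_1$ is preserved in every resulting monomial (by \eqref{diff}, differentiating a chain through $x_1$ keeps $x_1$ appearing an odd number of times), so all the new monomials stay in $\cal U_o$; (c) the $s=1$ term produces a leading piece $-\sum a\, N^{-\theta_1}f^{-\theta_2}\bb E\,\ul G^2 G_{x_1y_1}X$ in $(I')$ which is cancelled by the corresponding leading piece $+\sum a\, N^{-\theta_1}f^{-\theta_2}\bb E\,\ul G^2 G_{x_1y_1}X$ in $(II')$, just as in \eqref{3.25}--\eqref{3.26}. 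The remaining monomials $U_k$ each fall into the three structural Conditions: Condition 1 ($\nu_2$ goes up by $1$, other parameters fixed), Condition 2 ($\nu_2$ up by $1$, $\omega$ up by $2$, $\nu_1$ down by $1$ — this is when $\partial_{ij}$ hits a factor $G_{\b e\b x}$), and Condition 3 (from $s\geq 2$ cumulants, $\theta_2$ up by $\mathfrak s-1$, and $\omega$ up by $2\mathfrak d$, $\nu_1$ down by $\mathfrak d$ where $\mathfrak d$ is the number of $G_{\b e\b x}$'s differentiated). One checks, using $q\asymp f$, $\phi\leq N^\xi$ and $\xi\leq\tau/100$, that under each of these Conditions $\cal E_6(U_k)\leq\cal E_6(U)\cdot N^{-\tau/4}$ while $\cal E_5(U_k)\leq\cal E_5(U)$; hence iterating the one-step expansion finitely many times drives the $\cal E_6$-type contributions below $\cal E_5(U)$, and one arrives at $\bb E\cal M(U)\prec\cal E_5(U)$.

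Granting Lemma \ref{lemma 3.8}, the deduction of \eqref{3.2.2} mirrors \eqref{3.18}--\eqref{3.23}: expanding $\widetilde L_s^{(3)}$ from \eqref{3.29} via \eqref{diff} writes it as a finite sum of $\bb E\cal M(U_k)$ with $U_k\in\cal U_o$ and parameters $\nu=2$, $\theta_1=3/2$, $\theta_2=s$, $\gamma+\nu_1+\dots$ matching the chain $G_{j\b x}G_{\b e\b x}^{2n-1}$ after differentiation; plugging into Lemma \ref{lemma 3.8} and using $q\asymp f$, $\phi^{2a}f^{1-s}\leq 1$ for $s\geq 2$ gives $\bb E\cal M(U_k)\prec\sum_{d}f^{-d}\bb E|G_{\b e\b x}|^{2n-d}\prec\cal E_4$, hence $\cal P_2^{2n}\prec\cal E_4$ and \eqref{3.2.2} follows by Young's inequality. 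The main obstacle is the same as for Lemma \ref{lemma 3.6}: making the bookkeeping of the Conditions airtight — in particular verifying that the odd-index structure $O(U)\neq\emptyset$ genuinely persists after every expansion step (so the induction never leaves $\cal U_o$), and that the exceptional terms where $\partial$ acts on $G_{\b e\b e}$ or on a $G_{\b e\b x}$ factor still obey one of the three Conditions; once this combinatorial invariant is established the analytic estimates are routine applications of Ward's identity (Lemma \ref{ward}), \eqref{3.0}, \eqref{3.1} and \eqref{3.192}.
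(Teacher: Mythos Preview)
Your approach is essentially the paper's: prove Lemma \ref{lemma 3.8} by iterating the one-step expansion of Lemma \ref{lemma 3.9} (the $\cal U$-analogue of Lemma \ref{lemma 3.6}), using the crude bound of Lemma \ref{lemma 3.7} as input and gaining the factor $\phi N^{-1/2}$ after finitely many steps. The cancellation of the $\ul G^2 G_{x_1y_1}$ leading pieces between $(I')$ and $(II')$, and the persistence of the odd index, are exactly as in the paper.

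However, your Conditions are copied verbatim from Lemma \ref{lemma 3.6} and are wrong for the class $\cal U$. In $\cal U$ the distinguished factor is $G_{\b e\b x}^{\nu_1}$, not $(G_{\b e\b e}-f^{-1})^{\nu_1}$, and by \eqref{diff}
\[
\partial_{ij}G_{\b e\b x}=-(G_{\b e i}G_{j\b x}+G_{\b e j}G_{i\b x})(1+\delta_{ij})^{-1}\,,
\]
so hitting one $G_{\b e\b x}$ produces one $G_{\cdot\b e}$ and one $G_{\cdot\b x}$: $\omega$ and $\gamma$ each increase by $1$, not $\omega$ by $2$. Your Condition 2 and Condition 3 must therefore read $\omega(U_k)=\omega(U)+1$, $\gamma(U_k)=\gamma(U)+1$ (resp.\ $+\mathfrak d$, $+\mathfrak d$), exactly as stated in Lemma \ref{lemma 3.9}. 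You never track $\gamma$ at all; this matters because $\cal E_5$ and $\cal E_6$ carry an explicit $\phi^\gamma$ factor, and the verification that $\cal E_6(U_k)\leq\cal E_6(U)\cdot N^{-\tau/4}$ needs the correct parameter changes (with the correct Conditions the ratio under Condition 2 is $\phi^2(N\eta)^{-1/2}$, and under Condition 3 it is bounded using $q\asymp f$ and $\phi\leq N^\xi$).

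Two smaller points: the term $(III)$ carrying $\delta_{x_1y_1}$ does not vanish (the formal inequivalence $x_1\not\equiv y_1$ does not forbid coincidence in the sum); rather the $\delta$ collapses one summation and yields the $N^{-1/2}$ needed to absorb it into $\cal E_5(U)$, as in the paper's treatment of $(III)$. And there are no $G_{\b e\b e}$ factors in the class $\cal U$, so your closing remark about $\partial$ acting on $G_{\b e\b e}$ is a leftover from the $\cal W$ argument.
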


Finally, similar to Lemma \ref{lemma 3.6}, we prove Lemma \ref{lemma 3.8} by establishing the following result.

\begin{lemma} \label{lemma 3.9}
	Let $U\in \cal U_o$. We have
	\begin{equation} \label{3.331}
		\bb E \cal M(U)\prec \sum_{k=1}^\ell \bb E \cal M(U_k)+\cal E_5(U)
	\end{equation}
	for some fixed $\ell$. Here $U_k\in \cal T_o$ and satisfy $\theta_1(U_k)-\nu(U_k)\geq \theta_1(U)-\nu(U)$ for all $k=1,2,...,\ell$. In addition, each $U_k$ satisfies one of the following conditions.
	\begin{itemize}
		\item[] Condition 1. $\theta_2(U_k)=\theta_2(U)$, $\nu_2(U_k)\geq \nu_2(U)+1$, $\omega(U_k)=\omega(U)$, $\gamma(U_k)=\gamma(U)$, $\nu_1(U_k)=\nu_1(U)$.
		
		\item[] Condition 2. $\theta_2(U_k)=\theta_2(U)$, $\nu_2(U_k)\geq \nu_2(U)+1$, $\omega(U_k)=\omega(U)+1$, $\gamma(U_k)=\gamma(U)+1$,  $\nu_1(U_k)=\nu_1(U)-1$.
		
		\item[] Condition 3. There exists $\mathfrak s \geq 2$, and $0\leq \mathfrak d \leq \mathfrak s\wedge \nu_1(U)$ such that $\theta_2(U_k)=\theta_2(U)+\mathfrak s-1
		$, $\nu_2(U_k)\geq\nu_2(U)$, $\omega(U_k)=\omega(U)+\mathfrak d$, $\gamma(U_k)=\gamma(U)+\mathfrak d$, $\nu_1(U_k)=\nu_1(U)-\mathfrak d$.
	\end{itemize}
\end{lemma}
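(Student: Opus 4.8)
The plan is to adapt the proof of Lemma~\ref{lemma 3.6} almost line by line; the only structurally new feature of a monomial $U\in\cal U_o$ compared with a $W\in\cal W_o$ is the extra family $G_{w_1\b x},\dots,G_{w_\gamma\b x}$ of Green functions. Fix $U\in\cal U_o$ of the form \eqref{3.30}. After relabelling we may assume the distinguished odd index lies either in a factor $G_{x_1y_1}$ with $x_1\not\equiv y_1$, or in $G_{z_1\b e}$, or in $G_{w_1\b x}$; I carry out the first case, the other two being entirely analogous. The starting point is the algebraic identity
\[
G_{x_1y_1}=\ul{AG}\,G_{x_1y_1}-\ul{G}\,(AG)_{x_1y_1}+\ul{G}\,\delta_{x_1y_1}\,,
\]
which follows from $z\ul{G}G_{x_1y_1}=\ul{AG}G_{x_1y_1}-G_{x_1y_1}=\ul{G}(AG)_{x_1y_1}-\ul{G}\delta_{x_1y_1}$, and which splits $\bb E\cal M(U)$ into three pieces. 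The piece carrying $\ul{G}\,\delta_{x_1y_1}$ forces two summation indices to coincide (as $x_1\not\equiv y_1$), and with $\ul G=O_\prec(1)$ this makes it $O_\prec(\cal E_5(U))$. In the remaining two pieces, using $A-H=f\b e\b e^T$ one replaces $\ul{AG}$ by $\ul{HG}$ up to $fN^{-1}G_{\b e\b e}\prec\phi f^{-1}N^{-1}$ and $(AG)_{x_1y_1}$ by $(HG)_{x_1y_1}$ up to $fN^{-1/2}G_{\b e y_1}\prec\phi N^{-1/2}$, both errors being $O_\prec(\cal E_5(U))$ exactly as in Lemma~\ref{lemma 3.6}.

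Next I expand the two surviving pieces
\begin{align*}
\mathrm{(I')}&=\sum a\,N^{-\theta_1}f^{-\theta_2}\,\bb E\,\ul{HG}\,G_{x_1y_1}X\,, \\
\mathrm{(II')}&=-\sum a\,N^{-\theta_1}f^{-\theta_2}\,\bb E\,\ul{G}\,(HG)_{x_1y_1}X\,,
\end{align*}
with $X\deq G_{x_2y_2}\cdots G_{x_\sigma y_\sigma}G_{z_1\b e}\cdots G_{z_\omega\b e}G_{w_1\b x}\cdots G_{w_\gamma\b x}G_{\b e\b x}^{\nu_1}$, via the cumulant expansion of Lemma~\ref{lem:cumulant_expansion} in the variables $H_{ij}$ and $H_{x_1i}$ respectively. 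The crucial point is that the $s=1$, fully diagonal contributions, namely $-\sum a\,N^{-\theta_1}f^{-\theta_2}\bb E\,\ul{G}^2G_{x_1y_1}X$ from $\mathrm{(I')}$ and $+\sum a\,N^{-\theta_1}f^{-\theta_2}\bb E\,\ul{G}^2G_{x_1y_1}X$ from $\mathrm{(II')}$, cancel exactly. Every remaining term has the form $\bb E\cal M(U_k)$, where $U_k$ is obtained by distributing $\partial_{ij}^{s}$ (resp.\ $\partial_{x_1i}^{s}$) over $G_{x_1y_1}$ and the factors of $X$, together with a leftover $G_{ij}$ or $G_{ix_1}$. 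Since Lemma~\ref{lemma 2.3} gives $\cal C_{s+1}=O(N^{-1}q^{-(s-1)})=O(N^{-1}f^{-(s-1)})$, every newly introduced summation index comes attached to a factor $N^{-1}$, so $\theta_1(U_k)-\nu(U_k)\geq\theta_1(U)-\nu(U)$; moreover the distinguished odd index survives --- it is $x_1$ when the total number of derivatives applied is even, and the freshly created index otherwise --- so $U_k\in\cal U_o$.

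It then remains to read off, using the differentiation rule \eqref{diff}, which of Conditions~1--3 each $U_k$ obeys. At $s=1$ with no derivative falling on $X$, the leftover $G_{ij}^2$ (resp.\ $G_{ix_1}G_{iy_1}$) raises $\nu_2$ while leaving $\theta_2,\omega,\gamma,\nu_1$ untouched, giving Condition~1. When $\partial_{ij}$ or $\partial_{x_1i}$ lands on one of $G_{x_\alpha y_\alpha}$, $G_{z_\beta\b e}$ or $G_{w_\delta\b x}$, a freshly created index again appears exactly twice and off-diagonal, so $\nu_2$ goes up while $\theta_2,\omega,\gamma,\nu_1$ stay put, again Condition~1. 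When it lands on one of the $\nu_1$ copies of $G_{\b e\b x}$, then since $\partial_{ij}G_{\b e\b x}=-(G_{\b e i}G_{j\b x}+G_{\b e j}G_{i\b x})(1+\delta_{ij})^{-1}$ it spawns exactly one new Green function of the form $G_{\cdot\,\b e}$ and one of the form $G_{\cdot\,\b x}$, so $\omega\mapsto\omega+1$, $\gamma\mapsto\gamma+1$, $\nu_1\mapsto\nu_1-1$; this is Condition~2, and is the one place where the bookkeeping genuinely differs from Lemma~\ref{lemma 3.6}, where differentiating $G_{\b e\b e}-f^{-1}$ instead creates two $G_{\cdot\,\b e}$ factors. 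Finally, for $s\geq2$ Lemma~\ref{lemma 2.3} supplies the extra $f^{-(s-1)}$, i.e.\ $\theta_2\mapsto\theta_2+s-1$; and if $\partial_{ij}^s$ destroys $\mathfrak d$ copies of $G_{\b e\b x}$ in all --- each such destruction creating a $G_{\cdot\,\b e}$-lineage and a $G_{\cdot\,\b x}$-lineage that persist under further differentiation --- then $\omega\mapsto\omega+\mathfrak d$, $\gamma\mapsto\gamma+\mathfrak d$, $\nu_1\mapsto\nu_1-\mathfrak d$ with $0\leq\mathfrak d\leq s\wedge\nu_1(U)$, which is Condition~3 with $\mathfrak s=s$. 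For the cases $z_1\in O(U)$ and $w_1\in O(U)$ one applies the same identity to $G_{z_1\b e}$ and $G_{w_1\b x}$; the one additional term is a boundary contribution, equal to $N^{-1/2}\ul G$ times the remaining monomial, respectively $\langle\b e_{w_1},\b x\rangle\ul G$ times it, and this is $O_\prec(\cal E_5(U))$ because deleting an odd-multiplicity Green function already yields a $\sqrt N$-saving that absorbs the $N^{-1/2}$ loss coming from $\langle\b e_{w_1},\b x\rangle$ (Cauchy--Schwarz on the coordinates of $\b x$). I expect the bulk of the work --- and the only genuinely delicate step --- to be the purely combinatorial verification that along every branch of the two nested cumulant expansions the parameters transform exactly as stated in Conditions~1--3 and that the distinguished odd index is never lost, which is exactly what makes the outer iteration leading to Lemma~\ref{lemma 3.8} terminate.
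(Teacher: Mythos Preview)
Your proposal is correct and follows essentially the same approach as the paper, which in fact omits the proof of Lemma~\ref{lemma 3.9} entirely and simply refers back to Lemma~\ref{lemma 3.6}. You have correctly identified the one genuine structural change: when a derivative lands on a copy of $G_{\b e\b x}$ it produces one $G_{\cdot\,\b e}$ and one $G_{\cdot\,\b x}$ factor (rather than two $G_{\cdot\,\b e}$ factors as in Lemma~\ref{lemma 3.6}), which is exactly what accounts for the modified Conditions~2 and~3. Two cosmetic slips worth fixing: the bound $fN^{-1}G_{\b e\b e}\prec\phi f^{-1}N^{-1}$ should read $\prec\phi N^{-1}$, and your parity bookkeeping for the surviving odd index in the $(II')$ branch is stated backwards (when $s$ is odd the odd index is $x_1$, when $s$ is even it is the fresh index $i$), but neither affects the argument.
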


The proofs of Lemmas \ref{lemma 3.7} -- \ref{lemma 3.9} are very similar to those of Lemmas \ref{lemma 3.4} -- \ref{lemma 3.6}, and we shall omit the details. With the help of Lemma \ref{lemma 3.8}, one can easily follow the argument of \eqref{3.19} -- \eqref{3.23} to conclude \eqref{3.29}. This finishes the proof of \eqref{3.2.2}.

We conclude with a remark on the underlying reasons for the difference between \eqref{3.2} and \eqref{3.2.2}. Firstly, for the computation of $G_{\b e \b e}$, in the step of applying the resolvent identity \eqref{qqq}, we made use of the relation $\langle \b e ,\b e \rangle =1$; for the estimate of $G_{\b e \b x}$, we instead used $|\langle \b e ,\b x \rangle |\leq N^{-1/2}$ in \eqref{qqqq}. This explains the additional term $-f^{-1}$ on LHS of \eqref{3.2}. In addition, note that
\[
\partial_{ij}G_{\b e \b e} \prec \phi^2 f^{-2}\,, \quad \mbox{while} \quad \partial_{ij}G_{\b e \b x} \prec \phi^2 f^{-1}\,,
\]
and this is why the changes of $\omega$ differ between Condition 2 (also Condition 3) of Lemmas \ref{lemma 3.6} and \ref{lemma 3.9}. This also explains the difference  between the RHS of \eqref{3.2} and \eqref{3.2.2}.

\subsubsection{Proof of \eqref{3.3}} \label{sec3.2}
Let $\b x, \b y \in \bb X$. As the case $\b x,\b y\in \{\b e_1,...,\b e_N\}$ is covered by Proposition \ref{refthm1}, it suffices to assume $\b x=\b v\in \bb S^{N-1}_\perp$. Let fix $n\in \mathbb N_+$.  Abbreviate $\langle G_{\b x \b y}\rangle \deq G_{\b x\b y}-\langle \b x,
\b y\rangle m_{sc}$, and $\cal P_3\deq \| \langle G_{\b x \b y}\rangle\|_{2n}$, and it suffices to show that
\begin{equation} \label{3.34}
	\cal P_3^{2n}=\bb E |G_{\b x\b y}-\langle \b x,
	\b y\rangle m_{sc}|^{2n} \prec \sum_{a=1}^{2n} \phi^{3a}\Big(\frac{1}{(N\eta)^{1/3}}+\frac{1}{q^{1/3}}\Big)^a \cal P_3^{2n-a}\eqd \cal E_6\,.
\end{equation}
By resolvent identity, $1+zm_{sc}+m_{sc}^2=0$ and  $\b x \perp \b e$, we have
\begin{equation} \label{qqqqg}
		G_{\b x\b y}-\langle \b x,
		\b y\rangle m_{sc}=-m_{sc}((z+m_{sc})G_{\b x\b y}+\langle \b x,
		\b y))=-m_{sc}\big((HG)_{\b x \b y} +m_{sc} G_{\b x\b y}\big)\,.
\end{equation}
Similar to \eqref{3.12}, we get
	\begin{align}
		\cal P_3^{2n}
		&=-m_{sc}\sum_{ij}\bb E \b x_i  H_{ij}G_{j\b y}\langle G_{\b x \b y}\rangle^{n-1}\overline{\langle G_{\b x \b y}\rangle}^{n}-m_{sc}^2\bb EG_{\b x \b y}\langle G_{\b x \b y}\rangle^{n-1}\overline{\langle G_{\b x \b y}\rangle}^{n}\nonumber\\
		&=-m_{sc}\sum_{s=1}^{\ell}\sum_{ij} \cal C_{s+1}(H_{ij})\bb E \b x_i \partial_{ij}^{s} ( G_{j\b y}\langle G_{\b x \b y}\rangle^{n-1}\overline{\langle G_{\b x \b y}\rangle}^{n})-m_{sc}^2\bb EG_{\b x \b y}\langle G_{\b x \b y}\rangle^{n-1}\overline{\langle G_{\b x \b y}\rangle}^{n}+O_{\prec}(\cal E_6)\nonumber\\
		&\eqd 	\sum_{s=1}^{\ell} L^{(4)}_s-m_{sc}^2\bb EG_{\b x \b y}\langle G_{\b x \b y}\rangle^{n-1}\overline{\langle G_{\b x \b y}\rangle}^{n}+O_{\prec}(\cal E_6)\label{3.35}\,.
	\end{align}
Similar to \eqref{3/15}, one can go through the standard computation of $L_1^{(4)}$ (which reveals the semicircle distribution),  and show that
\[
L_1^{(4)}=m_{sc}\bb E\ul{G}G_{\b x \b y}\langle G_{\b x \b y}\rangle^{n-1}\overline{\langle G_{\b x \b y}\rangle}^{n}+O_{\prec}(\cal E_6)\,.
\] 
Together with Proposition \ref{refthm1}, we get a cancellation on RHS of \eqref{3.35} and obtain
\begin{equation} \label{3.37}
L_1^{(4)}	-m_{sc}^2\bb EG_{\b x \b y}\langle G_{\b x \b y}\rangle^{n-1}\overline{\langle G_{\b x \b y}\rangle}^{n} \prec \cal E_6\,.
\end{equation}
To estimate the rest contributions in \eqref{3.35}, we again drop the complex conjugates for simplicity, and we shall prove  \eqref{3.3} by showing that 
\begin{equation} \label{3.38}
	\widetilde{L}^{(4)}_s\deq -m_{sc}\sum_{ij} \cal C_{s+1}(H_{ij})\bb E \b x_i \partial_{ij}^{s} (G_{j\b y}\langle G_{\b x \b y}\rangle^{2n-1})\prec\cal E_6
\end{equation}
for all fixed $s \geq 2$.

Similar to Definition \ref{def3.1}, Let $\{i_1,i_2,...\}$ be an infinite set of formal indices. To $\nu,\nu_1,\sigma,\omega,\gamma\in \bb N$, $\theta_1,\theta_2 \in \bb R$, $i,x_1,y_1,...,x_\sigma,y_\sigma,z_1,...,z_\omega,w_1,...,w_\gamma\in \{i_1,...i_\nu\}$, and a family $(a_{i_1,...,i_\nu})_{1\leq i_1,...,i_\nu \leq N}$ of uniformly bounded complex numbers we assign a formal monomial 
\begin{equation} \label{3.40}
	V=a_{i_1,...,i_\nu}\b x_iN^{-\theta_1}f^{-\theta_2}G_{x_1y_1}\cdots G_{x_\sigma y_\sigma}G_{z_1 \b x}\cdots G_{z_\omega \b x} G_{w_1 \b y}\cdots G_{w_{\gamma} \b y} \langle G_{\b x \b y}\rangle^{2n-1}\,,
\end{equation}
where $\nu(V) = \nu$, $\nu_1(V) = \nu_1$, $\omega(V) \deq \omega$, $\gamma(V)=\gamma$, $\theta_1(V) = \theta_1$ and $\theta_2(V)=\theta_2$. We use $\nu_2(V)$ to denote the number of indices that appear exactly twice and as off-diagonal indices in the Green functions of $V$, excluding the index $i$. We denote by $\cal V$ the set of formal monomials $V$ of the form \eqref{3.40}.

Let $O(V)\subset \{i_1,...,i_\nu\}$ be the set of indices that appear odd many times in the Green functions of $V$.  We denote by $\cal V_o\subset\cal V$ the set of formal monomials such that $O(V)\neq \emptyset$ for all $V\in \cal V_o$. In addition, we denote 
\[
\cal M(V)\deq \sum_{i_1,\dots,i_{\nu}}  V_{i_1,\dots,i_{\nu}}\,.
\]
We have the following priori estimate. 
\begin{lemma} \label{lemma 3.10}
	Let $V\in \cal V$. We have
	\begin{equation} \label{3.41}
		\bb E \cal M(V) \prec N^{-\theta_1+\nu}f^{-\theta_2} \cdot  \phi^{\omega+\gamma} \Big(\frac{1}{N\eta}\Big)^{\nu_2/2} N^{-1/2} \bb E |\langle G_{\b x \b y}\rangle|^{\nu_1} \,.
	\end{equation}
\end{lemma}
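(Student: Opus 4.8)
The plan is to prove \eqref{3.41} by the same deterministic bookkeeping that underlies the a priori bounds in Lemmas \ref{lemma 3.4} and \ref{lemma 3.7}, carried out on the very-high-probability event on which every entry of $G$ obeys the inputs \eqref{3.0}, \eqref{3.1} and \eqref{3.192}. First I would pull out of $\cal M(V)$ the scalar $N^{-\theta_1}f^{-\theta_2}$ together with the factor $\langle G_{\b x\b y}\rangle^{\nu_1}$, which does not depend on the summation indices; since $|a_{i_1,\dots,i_\nu}|=O(1)$, this reduces \eqref{3.41} to the deterministic estimate
\[
\sum_{i_1,\dots,i_\nu}|\b x_i|\,\bigl|G_{x_1y_1}\cdots G_{x_\sigma y_\sigma}\,G_{z_1\b x}\cdots G_{z_\omega\b x}\,G_{w_1\b y}\cdots G_{w_\gamma\b y}\bigr|\;\prec\;N^{\nu-1/2}\,\phi^{\omega+\gamma}\,(N\eta)^{-\nu_2/2}\,.
\]
Here the internal Green functions $G_{x_ay_a}$ are each $\prec 1$ by the entrywise law \eqref{3.192}; each of the $\omega+\gamma$ Green functions carrying an $\b x$- or $\b y$-label is $\prec\phi$ by \eqref{3.1} (recall $\b x=\b v$ and that every summation index labels a standard basis vector, so these are entries between elements of $\bb X$); the sum over a free index costs at most a factor $N$; and the gain $(N\eta)^{-\nu_2/2}$ is extracted from the $\nu_2$ indices that appear exactly twice and off-diagonally among the internal factors, via the Ward identity (Lemma \ref{ward}) together with $\im G_{bb}\prec 1$, in the same way as in Lemma \ref{lemma 3.4}.

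The one feature that is new compared with Lemmas \ref{lemma 3.4} and \ref{lemma 3.7} is the weight $\b x_i$, and this is precisely what produces the additional $N^{-1/2}$: the vertex $i$ is summed against $|\b x_i|$ with $\|\b x\|_2=1$, so it costs $N^{1/2}$ rather than $N$. Indeed, if $i$ does not occur in any Green function of $V$ then $\bigl|\sum_i\b x_i\bigr|\le N^{1/2}$ directly; and if $i$ does occur, in Green functions $G_{ia_1},\dots,G_{ia_m}$ say, then Cauchy--Schwarz in $i$ gives
\[
\sum_i|\b x_i|\,|G_{ia_1}|\cdots|G_{ia_m}|\;\le\;\Bigl(\sum_i|\b x_i|^2\Bigr)^{1/2}\Bigl(\sum_i|G_{ia_1}|^2\cdots|G_{ia_m}|^2\Bigr)^{1/2}=\Bigl(\sum_i|G_{ia_1}|^2\cdots|G_{ia_m}|^2\Bigr)^{1/2}\,,
\]
and the right-hand side, after extracting the $\sup$-norm of all but one factor (each $\prec 1$ or $\prec\phi$ depending on whether $a_k$ is an internal index or equals $\b x$ or $\b y$) and applying the Ward identity to the remaining one, is $\prec N^{1/2}(N\eta)^{-1/2}$ times powers of $\phi$ already counted in $\phi^{\omega+\gamma}$. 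In every case the vertex $i$ contributes $N^{1/2}$ instead of $N$, which is the source of the $N^{-1/2}$ in \eqref{3.41}; and since $\nu_2(V)$ is defined so as to exclude $i$, the Ward gain is never double-counted along the way.

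The step needing the most care is combinatorial rather than conceptual: one must order the Cauchy--Schwarz and Ward applications over the graph of $V$ (vertices $=$ indices, edges $=$ Green functions) so that no Green-function occurrence is used twice, and then check that the resulting allocation of the factors $N$, $(N\eta)^{-1/2}$ and $\phi$ to vertices and edges reproduces the right-hand side of \eqref{3.41} --- bearing in mind that a through-vertex (one appearing exactly twice off-diagonally) actually gives $N(N\eta)^{-1}$, which we merely relax to $N(N\eta)^{-1/2}$, and that a vertex appearing an odd number of times also obeys the displayed bound comfortably (this is where the hypotheses $\eta\ge N^{-1+\tau}$ and $\phi\ge 1$ enter). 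This is the same accounting as in the proof of Lemma \ref{lemma 3.4}, with the extra vertex $i$ handled first and the $\b x$- and $\b y$-labelled edges playing symmetric roles (hence the symmetric exponent $\phi^{\omega+\gamma}$). Finally, the passage from this deterministic high-probability bound to the stated bound on $\bb E\,\cal M(V)$ is routine: on the complementary event, of probability $O(N^{-D})$ for any fixed $D$, one uses the crude deterministic bound $|G_{ab}|\le\eta^{-1}\le N$, whose contribution is negligible, exactly as in the proofs of Lemmas \ref{lemma 3.4} and \ref{lemma 3.7}.
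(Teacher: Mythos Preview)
Your proposal is correct and follows essentially the same approach as the paper's proof: the paper simply records that $N^{-\theta_1+\nu}f^{-\theta_2}$ and $\bb E|\langle G_{\b x\b y}\rangle|^{\nu_1}$ are self-explanatory, that $\phi^{\omega+\gamma}$ comes from \eqref{3.1}, that $(N\eta)^{-\nu_2/2}$ comes from Lemma \ref{ward} together with \eqref{3.1} and \eqref{3.192}, and that $N^{-1/2}$ comes from $\sum_i|\b x_i|\le N^{1/2}$. Your write-up elaborates the same bookkeeping, with the extra care you take over the vertex $i$ (splitting into the cases where $i$ does or does not occur in a Green function, and noting that $\nu_2$ excludes $i$ to avoid double-counting the Ward gain) being a useful clarification rather than a different argument.
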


\begin{proof}
	The factors $N^{-\theta_1+\nu}f^{-\theta_2}$ and $\bb E |\langle G_{\b x \b y}\rangle|^{\nu_1}$ on RHS of \eqref{3.41} is self-explanatory. Let us explain the rest contributions. The factor $\phi ^{\omega+\gamma}$ comes from \eqref{3.1}. The factor $(N\eta)^{\nu_2/2}$ comes from Lemma \ref{ward}, \eqref{3.1} and \eqref{3.192}. Finally, the factor $N^{-1/2}$ comes from $\sum_i |\b x_i|\leq N^{1/2}$. This finishes the proof.
\end{proof}

Similar to Lemma \ref{lemma 3.5}, our main technical step towards proving \eqref{3.29} is to show the following improved estimate.

\begin{lemma} \label{lemma 3.11}
	Let $V\in \cal V_o$. We have
	\[
	\bb E \cal M(V) \prec  N^{-\theta_1+\nu}f^{-\theta_2} \phi^{\omega+\gamma}  N^{-1/2} (\phi N^{-1/2})\sum_{d=0}^{\nu_1}\phi^{3d}\Big(\frac{1}{(N\eta)^{1/3}}+\frac{1}{q^{1/3}}\Big)^d\bb E |\langle G_{\b x \b y}\rangle|^{\nu_1-d}\eqd \cal E_7(V)\,.
	\]
\end{lemma}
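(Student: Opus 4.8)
The plan is to follow the same recursive-expansion scheme established for Lemmas~\ref{lemma 3.5} and~\ref{lemma 3.8}, now adapted to the monomials $V\in\cal V_o$ of the form \eqref{3.40}. As in Section~\ref{section 3.1.2}, observe that by \eqref{3.22}-type reasoning, Lemma~\ref{lemma 3.10} already gives the estimate $\bb E\cal M(V)\prec\cal E_8(V)$, where $\cal E_8(V)$ denotes the bound on the RHS of \eqref{3.41}; the entire task is to gain an \emph{extra} factor of $\phi N^{-1/2}$ over this naive bound, which is precisely the improvement encoded in $\cal E_7(V)$. The gain will come, exactly as before, from exploiting an index that appears an odd number of times: if $x_1\in O(V)$ with $x_1\not\equiv y_1$ (or $z_1\in O(V)$, or $w_1\in O(V)$), we write the algebraic identity
\[
z\,\ul G\,G_{x_1 y_1}=\ul{AG}\,G_{x_1 y_1}-G_{x_1 y_1}=\ul G\,(AG)_{x_1 y_1}-\ul G\,\delta_{x_1 y_1}\,,
\]
and then split $A=H+f\b e\b e^T$. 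The rank-one piece $f\b e\b e^T$ produces $G_{\b e\,\cdot}$ factors; since $\b x,\b y\perp\b e$ and by \eqref{3.0} all Green functions landing on $\b e$ are $\prec\phi f^{-1}$, these contributions are immediately absorbed into $\cal E_7(V)$. The $H$-pieces are expanded by cumulant expansion (Lemma~\ref{lem:cumulant_expansion}) and Lemma~\ref{lemma 2.3}, and the two leading ($s=1$, ``no derivative hits a Green function other than to close a loop'') terms coming from $\ul{HG}\,G_{x_1y_1}$ and $\ul G\,(HG)_{x_1y_1}$ produce a matching pair $\pm\,\bb E\,\ul G^2 G_{x_1y_1}X$ that cancels, just as in \eqref{3.25}-\eqref{3.26}.

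The core is therefore an analogue of Lemma~\ref{lemma 3.9}, which I would state and prove first: for $V\in\cal V_o$,
\[
\bb E\cal M(V)\prec\sum_{k=1}^{\ell}\bb E\cal M(V_k)+\cal E_7(V)\,,
\]
where each $V_k\in\cal V_o$ has $\theta_1(V_k)-\nu(V_k)\geq\theta_1(V)-\nu(V)$ and satisfies one of three bookkeeping conditions, tracking how $\partial_{ij}^s$ (or $\partial_{x_1 i}^s$) acts: either it closes a loop creating a new pair of off-diagonal indices (so $\nu_2$ strictly increases while $\theta_2,\omega,\gamma,\nu_1$ are unchanged or barely changed), or it hits a factor of $\langle G_{\b x\b y}\rangle$ (reducing $\nu_1$ by $\mathfrak d$, raising $\omega$ and $\gamma$ by $\mathfrak d$ each since each split $\langle G_{\b x\b y}\rangle$-factor yields a $G_{\cdot\b x}$ and a $G_{\cdot\b y}$), or $s\geq2$ and we pick up a cumulant power $q^{-(s-1)}\asymp f^{-(s-1)}$, i.e. $\theta_2$ increases by $\mathfrak s-1$. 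Feeding these conditions back into Lemma~\ref{lemma 3.10} (applied to each $V_k$), one checks that each iteration multiplies $\cal E_8$ by at most
\[
\phi^{?}\Big(\frac{1}{\sqrt{N\eta}}+\frac1f\Big)\ \leq\ N^{-\tau/4}
\]
using $\phi\leq N^\xi$, $\xi\leq\tau/100$, and $q\asymp f$; since the iteration terminates after finitely many steps (bounded by $\sigma+\omega+\gamma+\nu_1$, the total ``length''), summing the geometric series yields $\bb E\cal M(V)\prec\cal E_7(V)$.

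Once Lemma~\ref{lemma 3.11} is in hand, \eqref{3.38} follows by the routine computation mirroring \eqref{3.18}-\eqref{3.23}: each $\widetilde L_s^{(4)}$ with $s\geq2$ is, after applying \eqref{diff}, a finite sum of $\bb E\cal M(V_k)$ with $\nu=2$, $\theta_1=2$ (one $N^{-1}$ from $\cal C_2$-type scaling of the prefactor $m_{sc}\sum_{ij}$, but note here the prefactor has no extra $N^{-1/2}$ beyond what is in $V$ through $\b x_i$), $\theta_2=s-1$, and the crucial point that \eqref{diff} forces $i$ or $j$ to occur oddly, so $V_k\in\cal V_o$. Applying Lemma~\ref{lemma 3.11} and using $s\geq2$, $\phi^{O(1)}f^{-(s-1)}\leq1$, one collapses everything into $\cal E_6$. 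Combined with \eqref{3.37} and \eqref{3.35}, this gives \eqref{3.3} for the case $\b x\in\bb S^{N-1}_\perp$, completing the proof.

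\textbf{Main obstacle.} The genuinely delicate point is verifying that the three conditions in the $\cal V$-analogue of Lemma~\ref{lemma 3.9} are \emph{exhaustive and stable}: one must check that after the resolvent expansion plus one cumulant expansion, \emph{every} resulting monomial still lies in $\cal V_o$ (retains an odd index) — this is where the structural input ``$\partial_{kl}G_{ij}=-(G_{ik}G_{jl}+G_{il}G_{jk})(1+\delta_{kl})^{-1}$ always transfers parity to the new indices $k,l$'' is used — and that the combined change in $(\theta_1-\nu,\theta_2,\omega,\gamma,\nu_1,\nu_2)$ keeps $\cal E_8$ decreasing by a power of $N$ at each step, uniformly. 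The extra $N^{-1/2}$ already sitting in $\cal E_8(V)$ (from $\sum_i|\b x_i|\leq N^{1/2}$ against $N^{-\theta_1+\nu}$ with the $\b x_i$ prefactor) must be carried through unchanged, and one has to make sure the new off-diagonal weight $\b x_i$ or $\b x_j$ created when $\partial_{ij}$ acts on $\langle G_{\b x\b y}\rangle$ does not spoil this accounting — this is the analogue of why $\omega,\gamma$ both increase by one in Condition~2 of Lemma~\ref{lemma 3.9}. Everything else is bookkeeping identical in spirit to Section~\ref{section 3.1.2}.
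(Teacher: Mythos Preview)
Your proposal is correct and follows the paper's approach exactly: first establish the iterative expansion (this is Lemma~\ref{lemma 3.12} in the paper, the $\cal V$-analogue of Lemma~\ref{lemma 3.6}/\ref{lemma 3.9}, whose proof the paper likewise omits as identical to that of Lemma~\ref{lemma 3.6}), then iterate it against the a~priori bound $\cal E_8(V)$ from Lemma~\ref{lemma 3.10} to gain the extra $\phi N^{-1/2}$. Two small corrections: the per-step decay the paper obtains is $((N\eta)^{-1/6}+q^{-1/6})\leq N^{-\tau/6}$ rather than $N^{-\tau/4}$, and the iteration terminates not because the ``length'' $\sigma+\omega+\gamma+\nu_1$ is bounded (in fact $\omega,\gamma$ can grow) but because after a fixed number $\sim 3/\tau$ of steps the accumulated decay makes $\cal E_8(V_k)\leq \cal E_7(V)$, at which point you stop and apply \eqref{3.44}.
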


Finally, similar to Lemma \ref{lemma 3.6}, we prove Lemma \ref{lemma 3.11} by establishing the following result.

\begin{lemma} \label{lemma 3.12}
	Let $V\in \cal V_o$. We have
	\begin{equation} \label{3.42}
		\bb E \cal M(V)\prec \sum_{k=1}^\ell \bb E \cal M(V_k)+\cal E_7(V)
	\end{equation}
	for some fixed $\ell$. Here $V_k\in \cal T_o$ and satisfy $\theta_1(V_k)-\nu(V_k)\geq \theta_1(V)-\nu(V)$ for all $k=1,2,...,\ell$. In addition, each $V_k$ satisfies one of the following conditions.
	\begin{itemize}
		\item[] Condition 1. $\theta_2(V_k)=\theta_2(V)$, $\nu_2(V_k)\geq \nu_2(V)+1$, $\omega(V_k)=\omega(V)$, $\gamma(V_k)=\gamma(V)$, $\nu_1(V_k)=\nu_1(V)$.
		
		\item[] Condition 2. $\theta_2(V_k)=\theta_2(V)$, $\nu_2(V_k)\geq \nu_2(V)+1$, $\omega(V_k)=\omega(V)+1$, $\gamma(V_k)=\gamma(V)+1$,  $\nu_1(V_k)=\nu_1(V)-1$.
		
		\item[] Condition 3. There exists $\mathfrak s \geq 2$, and $0\leq \mathfrak d \leq \mathfrak s\wedge \nu_1(V)$ such that $\theta_2(V_k)=\theta_2(V)+\mathfrak s-1
		$, $\nu_2(V_k)\geq\nu_2(V)$, $\omega(V_k)=\omega(V)+\mathfrak d$, $\gamma(V_k)=\gamma(V)+\mathfrak d$, $\nu_1(V_k)=\nu_1(V)-\mathfrak d$.
	\end{itemize}
\end{lemma}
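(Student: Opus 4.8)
The plan for Lemma~\ref{lemma 3.12} is to repeat, almost verbatim, the argument of Lemma~\ref{lemma 3.6} (equivalently Lemma~\ref{lemma 3.9}); the only new bookkeeping comes from the spectral vector $\b x=\b v$, the inert scalar factor $\b x_i$, and the two families $G_{z\b x},G_{w\b y}$ of single-index Green functions in the monomial \eqref{3.40}. Given $V\in\cal V_o$, I would first fix an index of $O(V)$ and relabel it as $x_1$, so that it occurs in a factor $G_{x_1y_1}$ with $x_1\not\equiv y_1$, or in a factor $G_{x_1\b x}$, or in a factor $G_{x_1\b y}$ (the three cases being handled identically). Treating the first one, the algebraic input is the identity
\[
G_{x_1y_1}=\ul{AG}\,G_{x_1y_1}-\ul G\,(AG)_{x_1y_1}+\ul G\,\delta_{x_1y_1}\,,
\]
which follows, exactly as in the proof of Lemma~\ref{lemma 3.6}, from $z\ul G\,G_{x_1y_1}=\ul{AG}\,G_{x_1y_1}-G_{x_1y_1}=\ul G(AG)_{x_1y_1}-\ul G\,\delta_{x_1y_1}$. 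Substituting it for the chosen factor of $V$ splits $\bb E\cal M(V)$ into three pieces (I)+(II)+(III). Since $x_1\not\equiv y_1$, the piece (III) loses a free summation index and is $O_\prec(\cal E_7(V))$; in the $G_{x_1\b x}$ (resp.\ $G_{x_1\b y}$) case the corresponding ``$\delta$-term'' carries a component $\b x_{x_1}$ (resp.\ $\b y_{x_1}$), which is not of monomial form but is bounded outright using $\sum_i|\b x_i|\le N^{1/2}$ — applied twice, together with the existing factor $\b x_i$ — so it is again $O_\prec(\cal E_7(V))$.

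Next, in (I) and (II) I would write $A=H+f\b e\b e^T$; the $f\b e\b e^T$ contributions equal $N^{-1}fG_{\b e\b e}\prec N^{-1}\phi$ in (I) and involve the factor $((A-H)G)_{x_1y_1}=fN^{-1/2}G_{\b ey_1}\prec\phi N^{-1/2}$ in (II), and, after estimating the remaining factors via Lemma~\ref{lemma 3.10}, both contribute only $O_\prec(\cal E_7(V))$. The surviving main pieces (I$'$), (II$'$) carry $\ul{HG}$ and $(HG)_{x_1y_1}$; I would expand these by Lemma~\ref{lem:cumulant_expansion} in the variables $H_{ij}$ (all $i,j$) and $H_{x_1i}$ ($i$ running), the scalar $\b x_i$ being inert. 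As in Lemma~\ref{lemma 3.6}, the $s=1$ terms produce in each of (I$'$), (II$'$) a leading contribution proportional to $\ul G^2\,G_{x_1y_1}\,X$ with opposite signs, so these cancel; every other term is either $O_\prec(\cal E_7(V))$ or of the form $\bb E\cal M(V_k)$ with $V_k\in\cal V_o$, and reading off $\theta_1,\nu,\theta_2,\nu_2,\omega,\gamma,\nu_1$ from the differentiation rule \eqref{diff} and Lemma~\ref{lemma 2.3} gives Conditions~1--3. The inequality $\theta_1(V_k)-\nu(V_k)\ge\theta_1(V)-\nu(V)$ holds because each cumulant factor introduces two new summation indices, and when $\partial$ acts $s$ times $\theta_2$ grows by $s-1$ since $\cal C_{s+1}(H_{ij})=O(N^{-1}q^{1-s})$ with $q\asymp f$ (this is Condition~3 with $\mathfrak s=s$). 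The single genuine difference from Lemma~\ref{lemma 3.6} is that differentiating a factor $\langle G_{\b x\b y}\rangle$ spawns one factor $G_{\cdot\b x}$ \emph{and} one factor $G_{\cdot\b y}$, rather than two factors $G_{\cdot\b e}$, which is exactly why in Conditions~2 and~3 both $\omega$ and $\gamma$ increase by the same amount while $\nu_1$ drops correspondingly; otherwise the parameter accounting is identical to that in Lemmas~\ref{lemma 3.6} and~\ref{lemma 3.9}.

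The step I expect to be the crux, as it already was in Lemma~\ref{lemma 3.6}, is the cancellation of the $\ul G^2$-terms between (I$'$) and (II$'$): without it the $s=1$ expansion reproduces a term of the same size as $\bb E\cal M(V)$, and the iteration leading (via Lemma~\ref{lemma 3.11}) to $\bb E\cal M(V)\prec\cal E_7(V)$ would fail to close. A secondary point needing care is the $G_{x_1\b x}$/$G_{x_1\b y}$ cases, where the ``$\delta$-term'' is no longer a Kronecker delta but a component of the deterministic unit vector $\b v$: one should not attempt to keep it in monomial form, but bound it directly, which is affordable precisely because summing against $\b v$ costs only $N^{1/2}$. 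Everything else is a mechanical transcription of the $G_{\b e\b e}$ and $G_{\b e\b x}$ arguments, so in a full write-up I would merely indicate the parameter changes rather than repeat the computation.
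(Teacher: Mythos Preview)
Your proposal is correct and follows precisely the approach the paper intends: the paper itself states that ``the proof of Lemma~\ref{lemma 3.12} is very close to that of Lemma~\ref{lemma 3.6}, and we shall omit the details,'' and what you have written is a faithful transcription of that argument with the appropriate bookkeeping for the factor $\b x_i$, the two families $G_{\cdot\b x}$, $G_{\cdot\b y}$, and the observation that $\partial_{ij}\langle G_{\b x\b y}\rangle$ produces one $G_{\cdot\b x}$ and one $G_{\cdot\b y}$ (hence $\omega$ and $\gamma$ move together in Conditions~2 and~3). Your handling of the ``$\delta$-term'' in the $G_{x_1\b x}$/$G_{x_1\b y}$ cases via $\sum|\b x_{x_1}|\le N^{1/2}$ is also the right way to absorb it into $\cal E_7(V)$.
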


The proof of Lemma \ref{lemma 3.12} is very close to that of Lemma \ref{lemma 3.6}, and we shall omit the details. We shall prove Lemma \ref{lemma 3.11} using Lemma \ref{lemma 3.12}.

\begin{proof}[Proof of Lemma \ref{lemma 3.11}]
	.For $V\in \cal V_o$, let us denote
	\begin{equation} \label{3.43}
		\cal E_8(V)\deq  N^{-\theta_1+\nu}f^{-\theta_2} \cdot  \phi^{\omega+\gamma} \Big(\frac{1}{N\eta}\Big)^{\nu_2/2} N^{-1/2} \sum_{d=0}^{\nu_1}\phi^{3d}\Big(\frac{1}{(N\eta)^{1/3}}+\frac{1}{q^{1/3}}\Big)^d \bb E |\langle G_{\b x \b y}\rangle|^{\nu_1-d}\,.
	\end{equation}
	From Lemma \ref{lemma 3.10}, it is easy to see that 
	\begin{equation} \label{3.44}
		\bb E \cal M(V) \prec \cal E_8(V)\,.
	\end{equation}
	As $\cal E_8(V)\phi N^{-1/2}\leq \cal E_7(V)$, our task here is to improve \eqref{3.43} by a factor of $\phi N^{-1/2}$. By Lemma \ref{lemma 3.12}, and applying \eqref{3.43}, \eqref{3.44} for $V=V_k$, we see that for every $V_k$ in \eqref{3.42}, we always have
	\[
	\cal E_8(V_k)\leq \cal E_8(V)\cdot \Big(\frac{1}{(N\eta)^{1/6}}+\frac{1}{q^{1/6}}\Big)\leq \cal E_8(V)\cdot N^{-\tau/6}\,, \quad \mbox{and} \quad \cal E_7(V_k)\leq \cal E_7(V)\,.
	\]
	Thus for any given $V\in \cal V_o$, we can apply \eqref{3.42} finitely many times and obtain $\bb E \cal M(V)\prec \cal E_7(V)$ as desired.
\end{proof}

At last, let us see how to deduce \eqref{3.38} from Lemma \ref{lemma 3.11}, and thus concludes the proof of \eqref{3.3}. By Lemma \ref{lemma 2.3}, $\widetilde{L}^{(4)}_s$ is a sum of finite many terms in the form
\begin{equation} \label{3.45}
	\frac{1}{N q^{s-1}}\sum_{ij} a_{ij}\bb E \b x_i (\partial^{s_1}_{ij}G_{j\b y})\Big(\prod_{b=2}^{a}\partial_{ij}^{s_b}\langle G_{\b x \b y }\rangle\Big) \langle G_{\b x \b y} \rangle^{2n-a}
\end{equation}
where $s_1\geq 0$, $1\leq a\leq 2n-1$, $s_2,...,s_a\geq 1$. By \eqref{diff}, we see that
\begin{equation} \label{3.46}
	\eqref{3.45}= \sum_{k=1}^\ell\bb E \cal M(V_k)
\end{equation}
for some fixed $\ell$, and each $V_k$ is in the form of \eqref{3.40}. Also by \eqref{diff}, it is clear that for every $V_k$, either $i$ or $j$ appears odd many times in its Green functions. In other words, we have $V_k\in \cal V_o$. In addition, the parameters of $V_k$ satisfies $\nu=2$, $\nu_1=2n-a$, $\omega+\gamma=2a-1$, $\theta_1=1$, $\theta_2=s-1$. Thus Lemma \ref{lemma 3.11} shows that
\begin{equation*}
	\begin{aligned}
		\bb E \cal M(V_k)&\prec N^{-1+2}f^{-s+1} \phi^{2a-1}  N^{-1/2} (\phi N^{-1/2})\sum_{d=0}^{2n-a}\phi^{3d}\Big(\frac{1}{(N\eta)^{1/3}}+\frac{1}{q^{1/3}}\Big)^d\bb E |\langle G_{\b x \b y}\rangle|^{2n-a-d}\\
		&\prec \phi^{2a}q^{-s+1}\sum_{d=a}^{2n}\phi^{3(d-a)}\Big(\frac{1}{(N\eta)^{1/3}}+\frac{1}{q^{1/3}}\Big)^{d-a}\bb E |\langle G_{\b x \b y}\rangle|^{2n-d}\\
		&\prec q^{-s+1}\Big(\frac{1}{(N\eta)^{1/3}}+\frac{1}{q^{1/3}}\Big)^{-a}\sum_{d=a}^{2n}\phi^{3d}\Big(\frac{1}{(N\eta)^{1/3}}+\frac{1}{q^{1/3}}\Big)^{d}\bb E |\langle G_{\b x \b y}\rangle|^{2n-d}\,,
	\end{aligned}
\end{equation*}
where in the second step we used $q\asymp f$. Since $s\geq 2$ and $a\leq s+1$, we have
\[
q^{-s+1}\Big(\frac{1}{(N\eta)^{1/3}}+\frac{1}{q^{1/3}}\Big)^{-a}\leq q^{-s+1+a/3}\leq 1\,.
\] 
As a result, 
\begin{equation} \label{3.47}
	\bb E \cal M(V_k)\prec\sum_{d=a}^{2n}\phi^{3d}\Big(\frac{1}{(N\eta)^{1/3}}+\frac{1}{q^{1/3}}\Big)^{d}\bb E |\langle G_{\b x \b y}\rangle|^{2n-d} \prec \cal E_6
\end{equation}
for all $V_k$ in \eqref{3.46}. This concludes \eqref{3.38}. Combining \eqref{3.35}, \eqref{3.37} and \eqref{3.38} yields \eqref{3.34}, which finishes the proof of \eqref{3.3}.

\section{Applications of Theorem \ref{thm1.3}} \label{sec4}
\subsection{Proof of Theorem \ref{thmbulk}}
Fix $\b v \in \bb S_\perp^{N-1}$. By Theorem \ref{thm1.2}, we see that Assumptions 1.3, 1.4 of \cite{BHY} are satisfied with $H_0=A$, $m_0(z)=m_{sc}(z)$ and $\b q=\b v$. Theorem \ref{thmbulk} then follows directly from \cite[Theorem 1.5]{BHY} and the comparison method developed in \cite{HLY15}.

\subsection{Proof of Theorem \ref{thm1.4}} 
In this section we prove Theorem \ref{thm1.4} for $k=1$; the general case follows in a similar fashion.

Recall that $\cal{G}(z)\deq (H-z)^{-1}$. From \cite{Lee21,HY22}, we have the following results.

\begin{proposition} \label{prop4.1}
	There exists a probability measure $ \rho$ supported on $[-\cal L,\cal L]$ with Stieltjes transform 
	$$
	m\equiv m(z)\deq \int_{\bb R} \frac{\rho(x)}{x-z} \dd z
	$$
	such that the followings hold true.
	\begin{enumerate}
		\item The quantity $\cal L$ is random, and it is a polynomial whose variables are the entries of $H$. It satisfies the decomposition $\cal L=L+\cal Z$, where $$L=\bb E \lambda^H_1+O(N^{-1})=2+O(q^{-2})$$
		is deterministic, and $\cal Z$ is a centered random variable satisfying
		\[
		\frac{N\cal Z}{\sqrt{2\sum_{ij}\mathbb E H_{ij}^4}}\overset{d}{\longrightarrow} \cal N(0,1)\,, \quad \mbox{and} \quad \cal Z\prec \frac{1}{\sqrt{N}q}\,.
		\]
		More properties of $\cal L$ are given in Section \ref{sec5.1} below.

		\item We have
		\[
		\ul{\G}(z)-m(z) \prec \frac{1}{N\eta}
		\]
		uniformly for all $z \in\bf D$.
		
		\item Let us denote the classical eigenvalue locations of $\varrho$ as $\gamma_1>\gamma_2>\cdots >\gamma_N$\,,
		\[
		\frac{k}{N}=\int_{\gamma_k}^{\cal L} \rho(x)\, \dd x\,,\quad 1\leq k\leq N\,.
		\]
		Then we have
		\[
		\lambda^H_k-\gamma_k\prec \frac{1}{N^{2/3}\min\{k,N-k\}^{1/3}}\,, \quad 1\leq k \leq N\,. 
		\]
		
		\item Let $\kappa(E)\equiv \kappa\deq |E^2-\cal L^2|$. We have, for all $z \in \b D$ that
		\[
		\im m(z) \asymp \begin{cases}
			\sqrt{\kappa +\eta} &\mbox{if } E \in [-\cal L,\cal L] \\
			\eta/\sqrt{\kappa+\eta} &\mbox{otherwise.} 
		\end{cases}
		\]
\item Let $\mu_1\geq\mu_2,...,\geq \mu_N$ be the eigenvalues of $GOE$. Fix $k\geq 1$. For any smooth, compactly supported $F: \bb R^{k}\to \bb R$, we have
\[
\bb E F(N^{2/3}(\lambda_1^{H}-\cal L),...,N^{2/3}(\lambda^{H}_{k}-\cal L))=\bb E F(N^{2/3}(\mu_1-2),...,N^{2/3}(\mu_{k}-2))+O(N^{-\varepsilon})
\]
for some fixed $\varepsilon>0$.
	\end{enumerate}
\end{proposition} 

Let $\cal L$ be as in Proposition \ref{prop4.1}. Fix small $\xi\in (0,\tau/100)$. Let $E=O(N^{-2/3})$,  $\eta=N^{-2/3-\xi}$, and $f\in C_c^{\infty}(\bb R)$. Our main task is to show 
\begin{equation} \label{4.1}
\bb E \bigg[F\bigg(N\int_E^{N^{-2/3+\xi}}\ul{\widetilde{G}}(\cal L+x+\ii \eta)\dd x\bigg)\bigg]=\bb E \bigg[F\bigg(N\int_E^{N^{-2/3+\xi}}\ul{\widetilde{\G}}(\cal L+x+\ii \eta)\dd x\bigg)\bigg]+O(N^{-\varepsilon})
\end{equation}
for some fixed $\varepsilon>0$. Indeed, by \eqref{4.1} and a standard argument (e.g.\cite{KY1}), it is not hard to deduce that
\[
\bb E f(N^{2/3}(\lambda_2-\cal L))=\bb E f(N^{2/3}(\lambda^H_1-\cal L))+O(N^{-\varepsilon'})
\]
for some fixed $\varepsilon'>0$. Together with Proposition \ref{prop4.1}\,(v) we conclude the proof.

Thanks to Theorem \ref{thm1.3}, the proof of \eqref{4.1} is rather simple. Using resolvent identity, we get
$
\ul{\cal{G}}-\ul{G}=\ul{{G}(A-H)\cal{G}}.
$
By Ward identity, \eqref{1.2} and Corollary \ref{cor1.5}, we have
\[
\ul{{G}(A-H)\cal{G}}=\frac{f}{N}(G\cal{G})_{\b e \b e}  \prec \frac{f}{N\eta}(\im G_{\b e \b e}\cdot \im \cal{G}_{\b e \b e})^{1/2}\prec \frac{1}{N\eta} (\im \cal{G}_{\b e \b e})^{1/2} \prec \frac{1}{N\eta} (\im \ul{\cal G})^{1/2}
\]
for all $z\in \b D$. Thus
\begin{equation} \label{111}
\ul{\cal{G}}=\ul{G}+O_\prec\Big(\frac{1}{N\eta}\Big) (\im \ul{\cal{G}})^{1/2}
\end{equation}
for all $z \in \b D$. Together with Proposition \ref{prop4.1}\,(ii), (iv), we see that
\[
\ul{\widetilde{G}}(\cal L+x+\ii \eta)=\ul{\widetilde{\G}}(\cal L+x+\ii \eta)+O(N^{-1/2+2\xi})
\]
uniformly for all $|x|\leq N^{-2/3+\xi}$, which easily implies \eqref{4.1}. This finishes the proof of Theorem \ref{thm1.4}.

 As a direct consequence of Theorem \ref{thm1.4}, we also have the gap universality and the level-repulsion estimate  near the edge.
\begin{corollary} \label{cor4.2}
	Let $\mu_1\geq\mu_2,...,\geq \mu_N$ be the eigenvalues of $GOE$. Fix $k\geq 1$. For any smooth, compactly supported $F: \bb R^{k}\to \bb R$, we have
	\[
	\bb E F(N^{2/3}(\lambda_2-\lambda_3),...,N^{2/3}(\lambda_{k+1}-\lambda_{k+2}))=\bb E F(N^{2/3}(\mu_1-\mu_2),...,N^{2/3}(\mu_{k}-\mu_{k+1}))+O(N^{-\varepsilon})
	\]
	for some fixed $\varepsilon>0$. In addition, there exists fixed $\varepsilon_0>0$, such that for any $\varepsilon\in (0,\varepsilon_0)$, there exists $\alpha>0$ such that
	\begin{equation} \label{5.3}
		\max_{i=2,3,...,k+1}	\mathbb P(\lambda_{i}-\lambda_{i+1} \leq N^{-2/3-\varepsilon})=O (N^{-\varepsilon-\alpha})\,.
	\end{equation}\,.
\end{corollary}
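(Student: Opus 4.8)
I would deduce both statements from Theorem~\ref{thm1.4}, invoked with $k+1$ in place of $k$ so that it governs the joint law of $N^{2/3}(\lambda_2-\cal L),\dots,N^{2/3}(\lambda_{k+2}-\cal L)$ and of the top $k+1$ rescaled $GOE$ eigenvalues. For the gap universality, note that each rescaled gap is $N^{2/3}(\lambda_{i+1}-\lambda_{i+2})=N^{2/3}(\lambda_{i+1}-\cal L)-N^{2/3}(\lambda_{i+2}-\cal L)$, so $F$ of the gaps equals $\Psi_0$ of the rescaled eigenvalues with $\Psi_0(y_2,\dots,y_{k+2})\deq F(y_2-y_3,\dots,y_{k+1}-y_{k+2})$; the only obstruction to quoting Theorem~\ref{thm1.4} directly is that $\Psi_0$ is smooth and bounded but not compactly supported (it is constant along the diagonal). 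I would cure this with a cutoff: fix $\xi\in(0,\tau/100)$ and $\chi\in C_c^\infty(\bb R)$ with $\mathbf 1_{[-1,1]}\le\chi\le\mathbf 1_{[-2,2]}$, and set $\Psi(y)\deq\Psi_0(y)\prod_{j=2}^{k+2}\chi(N^{-\xi}y_j)$. Then $\Psi\in C_c^\infty$, and since every derivative falling on a $\chi(N^{-\xi}\,\cdot\,)$ factor produces a harmless $N^{-\xi}\le1$, one has $\|\Psi\|_{C^m}\lesssim_{m,F}1$ uniformly in $N$. On the other hand, the eigenvalue interlacing $\lambda_j(H)\le\lambda_j(A)\le\lambda_{j-1}(H)$ — valid since $A-H=f\b e\b e^T\ge0$ — combined with Proposition~\ref{prop4.1}(ii) and the elementary bound $\gamma_j=\cal L+O(N^{-2/3})$ for fixed $j$ (from the square-root edge behaviour in Proposition~\ref{prop4.1}(iv)) yields the edge rigidity $N^{2/3}(\lambda_j-\cal L)\prec1$ for each fixed $j\ge2$; the $GOE$ analogue is classical. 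Hence on the very-high-probability event on which all the relevant rescaled eigenvalues are $\le N^\xi$ the cutoffs equal $1$, so $\bb E\,F\big(N^{2/3}(\lambda_2-\lambda_3),\dots\big)=\bb E\,\Psi\big(N^{2/3}(\lambda_2-\cal L),\dots\big)+O(N^{-D})$ and the same with $(\lambda_j,\cal L)$ replaced by $(\mu_{j-1},2)$; Theorem~\ref{thm1.4} applied to the admissible test function $\Psi$ closes this part.

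For the rate in \eqref{5.3} I would use that the proof of Theorem~\ref{thm1.4} (the derivation of \eqref{4.1}, the Helffer--Sjöstrand passage to eigenvalue statistics, and Proposition~\ref{prop4.1}(v)) in fact delivers an error $\lesssim\|F\|_{C^m}N^{-\varepsilon_1}$ for some fixed $m\in\bb N_+$ and $\varepsilon_1>0$, and hence so does the gap universality just established. Fix $i\in\{2,\dots,k+1\}$ and a fixed $g\in C_c^\infty(\bb R)$ with $\mathbf 1_{[0,1]}\le g$ and $\supp g\subset[-1,2]$. As gaps are nonnegative, $\mathbf 1\{\lambda_i-\lambda_{i+1}\le N^{-2/3-\varepsilon}\}\le g\big(N^{\varepsilon}N^{2/3}(\lambda_i-\lambda_{i+1})\big)$, and $t\mapsto g(N^\varepsilon t)$ has $C^m$-norm $\lesssim N^{m\varepsilon}$. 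Applying the quantitative gap universality to this one-variable test function (the gap $\lambda_i-\lambda_{i+1}$ of $A$ corresponding to $\mu_{i-1}-\mu_i$ of $GOE$) gives
\[
\P\big(\lambda_i-\lambda_{i+1}\le N^{-2/3-\varepsilon}\big)\le\P\big(\mu_{i-1}-\mu_i\le 2N^{-2/3-\varepsilon}\big)+O\big(N^{m\varepsilon-\varepsilon_1}\big)\,.
\]
The classical edge level-repulsion bound $\P(\mu_{i-1}-\mu_i\le N^{-2/3}s)\lesssim s^2$ for $GOE$ (from the $\beta=1$ Vandermonde factor in the joint edge density) makes the first term $O(N^{-2\varepsilon})$; taking $\varepsilon_0\deq\varepsilon_1/(m+2)$ and, for $\varepsilon\in(0,\varepsilon_0)$, $\alpha\deq\varepsilon$, both terms become $O(N^{-\varepsilon-\alpha})$, which is \eqref{5.3}.

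I expect no new probabilistic difficulty here: gap universality is a reformulation of Theorem~\ref{thm1.4} once the non-compactly-supported test function is tamed by a cutoff, and level repulsion is the routine transfer of the $GOE$ estimate through a quantitative comparison. The points that genuinely require care are bookkeeping: checking that the comparison error in Theorem~\ref{thm1.4} depends on the test function only through a fixed Sobolev norm, so that the mildly $N$-dependent functions $\Psi$ and $g(N^\varepsilon\,\cdot\,)$ are legitimate inputs; and verifying that the $GOE$ edge rigidity and the quadratic edge level-repulsion estimate are available with the exponents used above.
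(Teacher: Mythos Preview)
Your proposal is correct and is essentially the standard way to deduce gap universality and edge level repulsion from the edge universality statement; the paper itself gives no proof here, merely stating that Corollary~\ref{cor4.2} is ``a direct consequence of Theorem~\ref{thm1.4}''. Your cutoff argument for the gap statement and the quantitative comparison argument for the level-repulsion bound are precisely what is implicitly meant, and your caveats (polynomial dependence of the comparison error on a fixed Sobolev norm of $F$, and the GOE edge repulsion estimate) are exactly the routine checks one needs; note also that the rigidity input you derive via interlacing is recorded in the paper as Corollary~\ref{lem4.1}(ii), which you may cite directly.
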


Finally, by Proposition \ref{prop4.1} and \eqref{111}, we have the following result, which will also be useful in Section \ref{sec5}.

\begin{corollary} \label{lem4.1} (i) We have
		\[
		\ul{G}(z)-m(z) \prec \frac{1}{N\eta}
		\]
		uniformly for all $z \in\bf D$.
		
		(ii) We have
		\[
		\lambda_k-\gamma_k\prec \frac{1}{N^{2/3}\min\{k,N-k\}^{1/3}}\,, \quad 2\leq k \leq N\,. 
		\]
\end{corollary}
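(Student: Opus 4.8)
The plan is to derive both parts directly from Proposition~\ref{prop4.1} and the comparison estimate~\eqref{111}, namely $\ul{\cal G}=\ul G+O_\prec\big((N\eta)^{-1}\big)(\im\ul{\cal G})^{1/2}$ uniformly on $\b D$; no new expansion is required.

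For part (i), I would first use Proposition~\ref{prop4.1}\,(i) to write $\ul{\cal G}(z)=m(z)+O_\prec\big((N\eta)^{-1}\big)$, whence $|\im\ul{\cal G}-\im m|\prec(N\eta)^{-1}$. By Proposition~\ref{prop4.1}\,(iv), on $\b D$ one has $\im m\asymp\sqrt{\kappa+\eta}$ when $E$ lies in the support of $\rho$ and $\im m\asymp\eta/\sqrt{\kappa+\eta}$ otherwise; since $|E|\le 3$, $\cal L=O_\prec(1)$ and $\eta\le 1$, in both cases $\im m=O_\prec(1)$, and as $(N\eta)^{-1}\le N^{-\tau}$ this gives $(\im\ul{\cal G})^{1/2}\prec 1$. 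Feeding this back into~\eqref{111} produces $\ul{\cal G}=\ul G+O_\prec\big((N\eta)^{-1}\big)$, and one further application of Proposition~\ref{prop4.1}\,(i) together with the triangle inequality yields $\ul G(z)-m(z)\prec(N\eta)^{-1}$, uniformly on $\b D$ since every input is uniform.

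For part (ii), I would use that $A=H+f\b e\b e^T$ is a rank-one positive semidefinite perturbation of $H$, so Weyl's inequalities give $\lambda^H_k\le\lambda_k\le\lambda^H_{k-1}$ for every $2\le k\le N$; the only eigenvalue escaping this sandwich is the outlier $\lambda_1=f+O_\prec(f^{-1})$, which plays no role here. By Proposition~\ref{prop4.1}\,(ii), with very high probability simultaneously in the index, $\lambda^H_j=\gamma_j+O_\prec\big(N^{-2/3}\min\{j,N-j\}^{-1/3}\big)$ for all $j$, and the square-root edge profile of $\rho$ recorded in Proposition~\ref{prop4.1}\,(iv) gives the deterministic spacing bound $\gamma_{k-1}-\gamma_k=O\big(N^{-2/3}\min\{k,N-k\}^{-1/3}\big)$. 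Since $\min\{k-1,N-k+1\}$ and $\min\{k,N-k\}$ agree up to a bounded factor, the lower bound $\lambda_k\ge\lambda^H_k=\gamma_k+O_\prec\big(N^{-2/3}\min\{k,N-k\}^{-1/3}\big)$ and the upper bound $\lambda_k\le\lambda^H_{k-1}=\gamma_{k-1}+O_\prec(\cdots)=\gamma_k+O_\prec\big(N^{-2/3}\min\{k,N-k\}^{-1/3}\big)$ together give $\lambda_k-\gamma_k\prec N^{-2/3}\min\{k,N-k\}^{-1/3}$ for $2\le k\le N$.

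I do not expect a genuine obstacle, as this is a short corollary resting entirely on results already established. The two places that need mild care are verifying that $(\im\ul{\cal G})^{1/2}$ stays $O_\prec(1)$ down to the smallest scale $\eta=N^{-1+\tau}$ in $\b D$ (supplied by Proposition~\ref{prop4.1}\,(iv)), and, in part (ii), checking that interlacing against the edge eigenvalues of $H$ combined with the classical-location spacing preserves the $\min\{k,N-k\}^{-1/3}$ gain rather than collapsing to the coarser uniform $N^{-2/3}$ rigidity.
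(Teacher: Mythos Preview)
Your proposal is correct and matches the paper's own proof, which simply states that part~(i) follows from Proposition~\ref{prop4.1}\,(i) and~\eqref{111}, and part~(ii) from Proposition~\ref{prop4.1}\,(ii) together with the Cauchy interlacing theorem. You have merely spelled out the details the paper leaves implicit; in particular, your observation that $(\im\ul{\cal G})^{1/2}\prec 1$ via Proposition~\ref{prop4.1}\,(iv) and your use of the rank-one interlacing $\lambda^H_k\le\lambda_k\le\lambda^H_{k-1}$ are exactly what the terse wording in the paper intends.
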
 
\begin{proof}
	Part (i) follows from Proposition \ref{prop4.1}\,(ii) and \eqref{111}, and part (ii) is a simple consequence of Proposition \ref{prop4.1}\,(iii) and Cauchy-interlacing theorem.
\end{proof}

\section{Proof of Theorem \ref{thm1.2}} \label{sec5}

Throughout this section, let $\tau$ be as in the beginning of Section \ref{sec1}, $\cal L$ be as in Proposition \ref{prop4.1}, and $\varepsilon_0$ be as in Corollary \ref{cor4.2}. Fix parameters
\begin{equation} \label{parameterss}
	\xi \in (0,(\tau\wedge \varepsilon_0)/100) \quad \mbox{and} \quad  \delta\in (0,\xi/100)\,, 
\end{equation}
and set 
$$
\eta_+\deq N^{-2/3-\xi}\,, \quad \eta_-\deq N^{-2/3-6\xi}\,.
$$
We shall prove Theorem \ref{thm1.2} for $k=1$; the general case follows in a similar fashion. Let $\b v, \b w \in \bb S^{N-1}_\perp$ be deterministic and fix $T>0$. The goal of this section is to show that
\begin{equation} \label{5.1}
\bb E \exp\big(\ii tN\langle \b  v,\b u_{2}\rangle \langle \b  w,\b u_{2}\rangle \big)=\bb E  \exp\big(\ii t\langle \b  v,\b z\rangle \langle \b  w,\b z\rangle\big)+O(N^{-\delta/2})	
\end{equation} 
uniformly for all $t \in[-T,T]$. Here $\b z$ denotes the standard Gaussian vector in $\bb R^n$.  

By Corollary \ref{cor1.5}, it is not hard to see that
\begin{equation} \label{5.2}
N\langle \b  v,\b u_{2}\rangle \langle \b  w,\b u_{2}\rangle =\frac{\eta_+}{\pi}\int_{\lambda_2-N^{\delta}\eta_+}^{\lambda_2+N^{\delta}\eta_+}\frac{N \langle \b  v,\b u_{2}\rangle\langle \b  w,\b u_{2}\rangle}{(\lambda_2-E)^2+\eta_+^2}\,\dd E+O_\prec(N^{-\delta})\,.
\end{equation}
Together with Corollaries \ref{cor1.5}, \ref{cor4.2}, and \ref{lem4.1}, one can readily follow the arguments of \cite[Lemmas 3.1 and 3.2]{KY1} to show the following result.

\begin{lemma} \label{lemma 5.1}
	Let $q: \bb R \to \bb R_+$ be a smooth cut-off function such that $q(x)=1$ for $|x|\leq 1/3$ and $q(x)=0$ for $x\geq 2/3$. We abbreviate
	\begin{equation*} 
\cal 	I\deq [\cal L-N^{-2/3+\delta},\cal L+N^{-2/3+\delta}] \,,\quad \mbox{and}\quad \cal J_E\deq [E-N^{\delta}\eta_+,\cal L+N^{-2/3+\delta}]\,.
	\end{equation*}
Then
	\begin{equation*}
		\begin{aligned}
	&\bb E \exp\big(\ii tN\langle \b  v,\b u_{2}\rangle \langle \b  w,\b u_{2}\rangle \big)\\
	=\,&\bb E \exp\bigg(\frac{\ii tN}{\pi}\int _{\cal I} \widetilde{G}_{\b v\b w}(E+\ii \eta_+)q\bigg[\int_{\cal J_E} \tr \widetilde{G}(x+\ii \eta_-)\dd x\bigg]\,\dd E\bigg)+O(N^{-\delta})\eqd \bb E \exp( \ii tY)+O(N^{-\delta})\,.
			\end{aligned}
	\end{equation*}
\end{lemma}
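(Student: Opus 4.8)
The plan is to prove Lemma~\ref{lemma 5.1} by starting from the representation \eqref{5.2} of $N\langle \b v,\b u_2\rangle\langle\b w,\b u_2\rangle$ as a localized Cauchy integral of the spectral measure, and then carrying out the following reductions, all of which are routine given Corollaries~\ref{cor1.5}, \ref{cor4.2}, and \ref{lem4.1} together with the isotropic local law (Theorem~\ref{thm1.3}). First I would replace the weight $\frac{1}{(\lambda_2-E)^2+\eta_+^2}$ by the imaginary part of the Green function: since $\eta_+\,\im G_{\b v\b w}(E+\ii\eta_+)=\sum_\alpha \eta_+^2\langle\b v,\b u_\alpha\rangle\langle\b w,\b u_\alpha\rangle/((\lambda_\alpha-E)^2+\eta_+^2)$, the only eigenvalue that contributes significantly on the window $|E-\lambda_2|\le N^\delta\eta_+$ is $\lambda_2$ itself --- the contribution of $\lambda_1$ is negligible because $\lambda_1\approx f\gg \cal L$ and (crucially) $\langle\b v,\b u_1\rangle,\langle\b w,\b u_1\rangle=O(N^{-1/2+o(1)}f^{-1})$ by Corollary~\ref{cor1.5}, while the contribution of $\lambda_3,\lambda_4,\dots$ is controlled by the level-repulsion estimate \eqref{5.3} and the rigidity of Corollary~\ref{lem4.1}(ii), which guarantee $\lambda_k-\lambda_2\gtrsim N^{-2/3-\varepsilon}$ for $k\ge 3$ outside an event of small probability, so their Cauchy kernels are summably small on the window. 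This identifies $N\langle\b v,\b u_2\rangle\langle\b w,\b u_2\rangle$ with $\frac{N}{\pi}\int_{\cal I}\widetilde G_{\b v\b w}(E+\ii\eta_+)\,(\text{indicator that }E\text{ is near }\lambda_2)\,\dd E$ up to $O_\prec(N^{-\delta})$.

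Next I would replace the sharp indicator ``$E$ is near $\lambda_2$'' by the smooth cutoff $q\big[\int_{\cal J_E}\tr\widetilde G(x+\ii\eta_-)\dd x\big]$. The point is that $\frac{1}{\pi}\int_{\cal J_E}\im\tr\G(x+\ii\eta_-)\,\dd x$, and hence by \eqref{111}/Corollary~\ref{lem4.1}(i) also $\frac{1}{\pi}\int_{\cal J_E}\tr\widetilde G(x+\ii\eta_-)\dd x$, approximates the number of eigenvalues of $A$ in $[E-N^\delta\eta_+,\cal L+N^{-2/3+\delta}]$ up to an error $O(N^{-\xi+o(1)})$ coming from the smoothing scale $\eta_-$; this is the standard Helffer--Sj\"ostrand / Stieltjes-transform counting estimate (cf.\ the argument of \cite[Lemmas 3.1, 3.2]{KY1}). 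On the event where no eigenvalue lies within $N^\delta\eta_+$ of an endpoint --- which holds with high probability again by \eqref{5.3} and rigidity --- this count is an integer, so $q$ of it equals $1$ when $\lambda_2$ lies in the integration range of $\cal J_E$ and equals $0$ otherwise, exactly reproducing the indicator from the previous step. One then removes the cutoff on the expectation by noting $|e^{\ii t x}|\le 1$ and that the two expressions agree off an event of probability $O_D(N^{-D})$.

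The main obstacle --- though it is more bookkeeping than genuine difficulty --- is handling the interplay of the two scales $\eta_+=N^{-2/3-\xi}$ and $\eta_-=N^{-2/3-6\xi}$ and making sure all the error terms are genuinely of size $N^{-\delta}$ (not just $N^{o(1)}\eta_+/\eta_-$-type quantities) after integrating $E$ over the interval $\cal I$ of length $2N^{-2/3+\delta}$, which contributes a factor $N^{-2/3+\delta}$ that must beat the $N$ prefactor; this is where one uses $\delta\ll\xi$ and the precise form of the isotropic law \eqref{1.1} at $\eta=\eta_+$, which gives $\widetilde G_{\b v\b w}(E+\ii\eta_+)=O_\prec((N\eta_+)^{-1/3}+q^{-1/3})\asymp O_\prec(N^{-1/9+\xi/3})$ on the relevant window, together with rigidity to confine $\lambda_2$ to $\cal I$. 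None of this uses anything beyond what is already assembled in Sections~\ref{sec 3}--\ref{sec4}, so the proof is indeed ``following the arguments of \cite[Lemmas 3.1 and 3.2]{KY1}'' with the sparse-model inputs substituted in; I would therefore present it concisely, emphasizing the two new ingredients (the $f^{-1}$ gain on $\langle\b v,\b u_1\rangle$ from Corollary~\ref{cor1.5}, and the edge rigidity from Corollary~\ref{lem4.1}) and referring to \cite{KY1} for the smoothing estimates.
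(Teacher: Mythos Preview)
Your proposal is correct and follows the same route as the paper, which simply cites \cite[Lemmas~3.1 and 3.2]{KY1} with Corollaries~\ref{cor1.5}, \ref{cor4.2}, \ref{lem4.1} as inputs; your sketch is in fact more detailed than what the paper provides. One small correction: Corollary~\ref{cor1.5} is stated only for $i=2,\dots,N$, so it does not directly give $\langle\b v,\b u_1\rangle=O(N^{-1/2+o(1)}f^{-1})$; but this is harmless, since the $\alpha=1$ term is already negligible simply because $\lambda_1\approx f$ lies far outside $\cal I$, making the Cauchy kernel of order $f^{-2}$.
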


Recall that for any matrix $M\in \bb C^{N\times N}$, we abbreviate
$\widehat{M}=\re M$ and $\widetilde{M}=\im M$. In the sequel, we abbreviate 
\begin{equation} \label{g and x}
g(t)\deq \bb E \exp(\ii t Y) \quad \mbox{and} \quad X_E\deq \int_{J_E} \tr \widetilde{G}(x+\ii \eta_-)\dd x\,.
\end{equation}
We shall often omit the arguments in the Green function, unless it is not clear. We start the computation with the identity
\[
z\ul{G}G_{\b v\b w}=\ul{AG}G_{\b v \b w}-G_{\b v \b w}=\ul{G}(AG)_{\b v\b w}-\ul{G}\langle \b v, \b w \rangle\,,
\]
which implies
\begin{equation}\label{5/5}
	\widetilde{G}_{\b v\b w}-\langle \b v, \b w \rangle \ul{\widetilde{G}}=\im( \ul{AG}G_{\b v \b w}- \ul{G}(AG)_{\b v\b w})=\im( \ul{HG}G_{\b v \b w}- \ul{G}(HG)_{\b v\b w})+O_{\prec}(N^{-1})\,.
\end{equation}
Here in the second step we used \eqref{1.2} and $\b v \perp \b e$. Thus
\begin{align}
g'(t)=\,&\ii \bb E [Y \exp(\ii t Y) ]=\ii \bb E \bigg[\frac{N}{\pi}\int_{\cal I} (\widetilde{G}_{\b v\b w}-\langle \b v, \b w \rangle \ul{\widetilde{G}}+\langle\b v, \b w \rangle \ul{\widetilde{G}})q(X_E)\dd E\cdot\exp( \ii tY)\bigg]\nonumber\\
=\,&\ii \bb E \bigg[\frac{N}{\pi}\int_{\cal I} \im( \ul{HG}G_{\b v \b w}- \ul{G}(HG)_{\b v\b w})q(X_E)\dd E\cdot \exp( \ii tY)\bigg]\label{5.5}\\
&+\ii \bb E \bigg[\frac{N}{\pi}\int_{\cal I}\langle \b v, \b w \rangle \ul{\widetilde{G}}\,q(X_E)\dd E\cdot \exp( \ii tY)\bigg]+O_{\prec}(N^{-2/3})\eqd (\mbox{I''})+ (\mbox{II''})+O_{\prec}(N^{-2/3})\nonumber\,.
\end{align}
Our main task in this section is the computation of (I"). As $H$ is a real matrix, we can apply Lemma \ref{lem:cumulant_expansion} and get
\begin{equation} \label{5.6}
	\begin{aligned}
	(\mbox{I''})&=\ii \bb E \bigg[\frac{N}{\pi}\int_{\cal I} \bigg(\frac{1}{N}\sum_{ij}H_{ij}\im(G_{ji}G_{\b v \b w})- \sum_{ij}\b v_i H_{ij}\im (G_{j\b w}\ul{G})\bigg)q(X_E)\dd E\cdot \exp( \ii tY)\bigg]\\
	&=\ii \bb E \bigg[\sum_{s=1}^{\ell}\sum_{ij}\cal C_{s+1}(H_{ij})\partial_{ij}^{s}\bigg(\frac{1}{\pi}\int_{\cal I}  \im(G_{ji}G_{\b v \b w})q(X_E)\dd E\cdot \exp( \ii tY)\bigg)\bigg]\\
	&\quad - \ii \bb E \bigg[\sum_{s=1}^{\ell}\sum_{ij}\cal C_{s+1}(H_{ij})\partial_{ij}^{s}\bigg(\frac{N}{\pi}\int_{\cal I}  \im(\b v_iG_{j\b w}\ul{G})q(X_E)\dd E\cdot \exp( \ii tY)\bigg)\bigg]+O(N^{-\delta})\\
	&\eqd\sum_{s=1}^{\ell} L^{(5)}_s+ \sum_{s=1}^{\ell} L^{(6)}_s+O(N^{-\delta})\,.
	\end{aligned}
\end{equation}

\subsection{More properties of the random edge} \label{sec5.1}

Before we proceed, observe that on the RHS of \eqref{5.6}, the integral domains $\cal I$ and $\cal J_E$ depend on the random variable $\cal L$, which is random, and sensitive to the differentiation $\partial/\partial H_{ij}$. Thus it is necessary to pause and cover more structural properties of $\cal L$. 

One good way to fully describe $\cal L$ is through weighted trees. To this end, we denote a weighted trees by $T\equiv T(\widehat{V}, \widehat{E})$, where $\widehat{V}$ are the set of vertices, and $\widehat{E}$ are the set of edges. For every $e\in \widehat{E}$, its weight is denoted by $w(e)$. We are interested in the set of weighted tree such that the weight of each edge is a positive even integer, and the total weight is not bigger than $\tau^{-1}$, i.e.
\[
\cal T\deq \Big\{T(\widehat{V}, \widehat{E}): w(e)\in 2\bb N_+ \mbox{ for all } e \in \widehat{E},\,\sum\nolimits_{e\in \widehat{E}}w(e)\leq \tau^{-1}\Big\}\,.
\]
Let $\widehat{n}\deq|\widehat{V}|$, and we use the formal indices $v_1,...,v_{\widehat{n}}$ to denote the vertices of $T$.
For any $e =v_iv_j$, we assign the formal variable $\cal H_e\deq H^{w(e)}_{v_iv_j}-\mathbb EH^{w(e)}_{v_iv_j}$. For each $T\in \cal T$, we set
\[
\cal H_T\deq \prod_{e \in \widehat{E}} \cal H_e\,.
\]

We assign to each $\cal H_T$ with its \textit{evaluation}, which is a random variable depending on the $\widehat{n}$-tuple $(v_1,...,v_{\widehat{n}})\in \{1,2,...,N\}^{\widehat{n}}$. It is obtained by replacing, in the formal monomial $\cal H_T$, the formal indices $v_1,\dots,v_{\widehat{n}}$ with the integers $v_1,\dots,v_{\widehat{n}}$ and the formal variables $H^{w(e)}_{v_iv_j}$ with elements of the matrix $H$ in Definition \ref{def_sparse}. We define
	\begin{equation*} 
	\cal M_* (\cal H_{T}) \deq \frac{1}{N} \psum_{v_1,\dots,v_{\widehat{n}}=1}^N \cal H_T\,,
	\end{equation*}
where $\sum^*$ is shorthand for distinct sum.

From the construction of $\cal L$ in \cite{Lee21,HY22} and Proposition \ref{prop4.1} (i), we have the following result.

\begin{lemma} \label{lemma 5.3}
	The random variable $\cal L$ satisfies the decomposition $\cal L=L+\cal Z$, where $L$ is deterministic, and $\cal Z$ is random and centered.  The random contribution $\cal Z$ is a linear combination of ${\cal M_*}(\cal H_T)$, $T \in \cal T$, with bounded coefficients.
\end{lemma}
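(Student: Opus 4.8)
The plan is to unwind the explicit construction of $\cal L$ from \cite{Lee21,HY22}, expand its dependence on the entries of $H$ to a fixed finite order, and reorganize the resulting expression into the tree functionals $\cal M_*(\cal H_T)$. \emph{Step 1 (explicit form of $\cal L$).} In \cite{Lee21,HY22} the edge $\cal L$ — together with the value of the Stieltjes transform there — is characterized as the solution of a refined self-consistent equation whose coefficients form a fixed, finite family of empirical functionals of $H$; schematically these are the renormalized moment parameters $\widehat P_{2k}\deq \frac1N\sum_{ij}H_{ij}^{2k}$, $1\le k\le \ell_0$, with $\ell_0=\ell_0(\tau)$ a fixed integer. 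By Definition \ref{def_sparse}(iii) and Lemma \ref{lemma 2.3}, $\bb E\widehat P_{2k}=O(q^{2-2k})$ and, by a Bernstein-type bound for the sum of the independent bounded terms $H_{ij}^{2k}$, $\widehat P_{2k}-\bb E\widehat P_{2k}\prec N^{-1/2}q^{1-2k}$; hence moments of order $>2\ell_0$ influence $\cal L$ only below the precision required in Section \ref{sec5} and may be absorbed into the deterministic part. Writing $\widehat{\mathbf P}=(\widehat P_2,\dots,\widehat P_{2\ell_0})$ and $\mathbf P_0=\bb E\widehat{\mathbf P}$, the non-degenerate square-root behaviour of the deterministic edge $L=\bb E\lambda_1^H+O(N^{-1})$ of Proposition \ref{prop4.1}(iii) makes the relevant Jacobian invertible, so the implicit function theorem produces a function $\Phi$, real-analytic on a fixed neighbourhood $\cal N$ of $\mathbf P_0$, with $\cal L=\Phi(\widehat{\mathbf P})$ on the very-high-probability event $\{\widehat{\mathbf P}\in\cal N\}$ (the complementary event contributes $O_\prec(N^{-D})$).

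\emph{Step 2 (Taylor expansion).} Put $\Delta_{2k}\deq\widehat P_{2k}-\bb E\widehat P_{2k}$ and Taylor-expand $\Phi$ about $\mathbf P_0$ to a large but fixed order $\ell_1=\ell_1(\tau)$:
\[
\cal L=\Phi(\mathbf P_0)+\sum_{1\le|\mathbf m|\le\ell_1}c_{\mathbf m}\prod_{k=1}^{\ell_0}\Delta_{2k}^{m_k}+O_\prec\big(N^{-(\ell_1+1)/2}\big),
\]
where $c_{\mathbf m}=O(1)$ since $\Phi$ is a fixed analytic function and each $\Delta_{2k}\prec N^{-1/2}$; taking $\ell_1$ large makes the remainder $O_\prec(N^{-D})$. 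Setting $L\deq\bb E\cal L$ (consistent with Proposition \ref{prop4.1}(iii)) makes $\cal Z\deq\cal L-L$ centered and absorbs $\Phi(\mathbf P_0)$ together with the deterministic expectation of the polynomial part.

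\emph{Step 3 (monomials to trees).} Each monomial $\prod_k\Delta_{2k}^{m_k}$ is a product of at most $\ell_1$ sums $\frac1N\sum_{ij}(H_{ij}^{2k}-\bb E H_{ij}^{2k})$. Multiplying it out, splitting every index range into its off-diagonal ($i\ne j$) and diagonal ($i=j$) parts, and grouping the resulting multi-index sums by the coincidence pattern of their summation indices, one finds that the contribution of a given pattern is, after extracting powers of $N$, the evaluation $\cal M_*(\cal H_T)$ of the graph $T$ recording which indices are identified (the weight of an edge being the sum of the exponents collapsed onto it), plus its expectation, which is deterministic and goes into $L$. Using the variance bound $\var(\cal M_*(\cal H_T))\prec N^{-1}q^{-2\sum_e(w(e)-1)}$ (a direct computation from $\bb E|H_{ij}|^m=O_m(N^{-1}q^{2-m})$), the patterns yielding graphs that are cyclic, disconnected with the ``wrong'' power of $N$, or carry a diagonal index, as well as all graphs of total weight $>\tau^{-1}$, contribute only $O_\prec(N^{-D})$; what remains are exactly the trees $T\in\cal T$. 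Collecting terms yields $\cal Z=\sum_{T\in\cal T}b_T\,\cal M_*(\cal H_T)+O_\prec(N^{-D})$ with bounded $b_T$, and the negligible remainder is discarded, since it lies below every threshold used in Section \ref{sec5}.

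The delicate point is Step 3: one must check, uniformly over the boundedly many monomials and coincidence patterns, that every ``error'' contribution — loops, clusters with a mismatched normalization, diagonal terms, and the Taylor remainder — is genuinely negligible. This rests entirely on the quantitative sparsity bounds of Definition \ref{def_sparse} and Lemma \ref{lemma 2.3}, and on choosing the cutoffs $\ell_0,\ell_1$, hence the total-weight bound $\tau^{-1}$ built into $\cal T$, large enough relative to the accuracy needed downstream.
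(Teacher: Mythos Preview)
The paper gives no proof of this lemma; it simply records that the explicit construction of $\cal L$ in \cite{Lee21,HY22} already has this form. In those references $\cal L$ is \emph{defined} order by order as $L$ plus a finite linear combination of the tree functionals $\cal M_*(\cal H_T)$, and the work there is to show that this $\cal L$ satisfies Proposition~\ref{prop4.1}. So the lemma is a restatement of the construction, not something to be derived from properties of $\cal L$.

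Your attempt to rederive the structure via an implicit-function representation $\cal L=\Phi(\widehat{\mathbf P})$ followed by Taylor expansion is reasonable in spirit, but Step~3 has a genuine gap. When you expand a product like $\Delta_2^m$ and sort by coincidence pattern, the \emph{disconnected} patterns (all edges vertex-disjoint) are not $O_\prec(N^{-D})$: for instance the fully disconnected part of $\Delta_2^2$ is essentially $(\cal M_*(\cal H_{T_1}))^2\prec N^{-1}q^{-2}$, which is small but nowhere near $N^{-D}$, and is a \emph{quadratic}, not linear, functional of the $\cal M_*$'s. More generally, the nonlinear part of the Taylor expansion produces polynomial combinations of tree functionals that cannot be written as a bounded linear combination of $\cal M_*(\cal H_T)$'s. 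So your argument does not establish the lemma as stated; at best it gives $\cal Z=\sum_T b_T\cal M_*(\cal H_T)+(\text{polynomials of degree }\ge 2\text{ in the }\cal M_*)+O_\prec(N^{-D})$.

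That weaker statement would in fact suffice for the only downstream use (Lemma~\ref{lemma 5.4}), since products of two or more $\cal M_*$'s and their $H$-derivatives satisfy even better bounds than a single $\cal M_*$. If you want to keep your approach, you should state and prove this weaker version and then check that Lemma~\ref{lemma 5.4} still goes through. If instead you want the lemma exactly as written, you have to go back to the actual recursive construction in \cite{Lee21,HY22}, where the tree functionals enter linearly from the start and no Taylor expansion is needed.
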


In the sequel, for fixed $n\geq 1$, we use the abbreviation
\[
\D^n_{\b i\b j}\deq \partial_{i_1j_1}\cdots\partial_{i_nj_n}\,,
\]
where $\b i=(i_1,...,i_n), \b j=(j_1,...j_n)\in \{1,2,...,N\}^n$. We have the following estimates, which can be proved directly using Lemma \ref{lem:cumulant_expansion}.

\begin{lemma} \label{lemma 5.4}
	Let $T\in \cal T$. For any fixed $n\geq 1$, and $\b i,\b j\in \{1,2,...,N\}^n$, we have
	\[
	\D_{\b i\b j}^n\,\cal M_* (\cal H_{T}) \prec N^{-1}\,.
	\]	
	(ii) Fix $n\geq 0$ and let $\b v \in \bb S^{N-1}$ be deterministic. Suppose the index $i$ does not appear in $\b i$ or $\b j$, then 
	\[
	\sum_i\b v_i\partial_{ij}\D_{\b i\b j}^n\,\cal M_* (\cal H_{T}) \prec N^{-1}\,.
	\]
\end{lemma}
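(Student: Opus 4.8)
The plan is to differentiate $\cal M_*(\cal H_T)=\frac1N\psum_{v_1,\dots,v_{\widehat n}}\prod_{e\in\widehat E}\cal H_e$ by the Leibniz rule, using that each factor $\cal H_e$ with $e=uv$ depends on $H$ only through the single entry $H_{uv}$, so that $\partial_{kl}\cal H_e$ vanishes unless $\{k,l\}=\{u,v\}$, in which case it equals $c\,H_{uv}^{w(e)-1}$ for an $O(1)$ constant (the centering constant disappearing), and an edge turns into a constant after $w(e)$ differentiations. Carrying out the $n$ derivatives of $\D^n_{\b i\b j}$ then writes $\D^n_{\b i\b j}\cal M_*(\cal H_T)$ as a finite linear combination, with $O(1)$ coefficients, of monomials
\[
\frac1N\Big(\prod_{e\in\s h}H_e^{m_e}\Big)\,\psum_{v\ \mathrm{free}}\ \prod_{e\notin\s h}\cal H_e\,,
\]
where $\s h\subseteq\widehat E$ is the set of \emph{hit} edges (at least one, since $n\ge1$ forces the first derivative onto an edge), every endpoint of a hit edge is \emph{pinned} to a coordinate of $(\b i,\b j)$, $0\le m_e\le w(e)$, the factors $\cal H_e$ with $e\notin\s h$ retain their centering, and the distinct sum is over the non-pinned vertices. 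Since $T$ is a tree, $\s h$ spans a subforest with $|\s h|$ edges and $c\ge1$ components, hence $|\s h|+c$ vertices, all pinned; so at most $\widehat n-|\s h|-c$ vertices stay free, and --- the key structural fact --- every free vertex is incident in $T$ only to non-hit, hence centered, factors.

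For part (i), fix one such monomial. Since $|H_{ij}|\prec q^{-1}\le1$, the pinned factor satisfies $\prod_{e\in\s h}H_e^{m_e}\prec1$ and is independent of the free summation indices. It remains to bound $\psum_{v\ \mathrm{free}}\prod_{e\notin\s h}\cal H_e$; I would do this by estimating $\E\big|\psum_{v\ \mathrm{free}}\prod_{e\notin\s h}\cal H_e\big|^{2p}$ through a cumulant expansion in the entries (Lemma \ref{lem:cumulant_expansion} and Lemma \ref{lemma 2.3}). Because distinct tree edges involve independent entries and each $\cal H_e$ with $e\notin\s h$ is centered, only configurations in which every centered factor is matched with another copy across the $2p$ replicas survive, and each matched group contributes $O(N^{-1})$ since $\E|H_{ij}|^k=O(N^{-1}q^{2-k})=O(N^{-1})$ for $k\ge2$, $w(e)\ge2$ (Definition \ref{def_sparse}). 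Counting free indices against these matchings then gives
\[
\Big|\psum_{v\ \mathrm{free}}\ \prod_{e\notin\s h}\cal H_e\Big|\ \prec\ N^{(\widehat n-|\s h|-c)/2}(N^{-1})^{(\widehat n-1-|\s h|)/2}\ =\ N^{(1-c)/2}\ \le\ 1\,,
\]
using $c\ge1$. Together with the $\frac1N$ and the pinned factor this yields $\prec N^{-1}$ for each monomial, hence for $\D^n_{\b i\b j}\cal M_*(\cal H_T)$. Note it is exactly the presence of a hit edge, forced by $n\ge1$, that produces $c\ge1$ and removes the extra $N^{1/2}$ that survives otherwise --- indeed $\cal M_*(\cal H_T)$ itself is only $\prec N^{-1/2}$, consistently with $\cal Z\prec\frac{1}{\sqrt N q}$ in Proposition \ref{prop4.1}.

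For part (ii), the extra derivative $\partial_{ij}$ lands on some edge $e_0$, pinning one endpoint to the fixed index $j$ and the other to the summation variable $i$, and then $\D^n_{\b i\b j}$ acts as before. If one of its derivatives falls on an edge incident to the $i$-vertex, then $i$ is forced to equal a coordinate of $(\b i,\b j)$, the sum $\sum_i\b v_i$ collapses to a single term, and we are in a case of part (i) with one additional pinned vertex, hence $\prec N^{-1}$. Otherwise $e_0$ is the only hit edge at the $i$-vertex, so $m_{e_0}=w(e_0)-1\ge1$ and $|H_{ij}^{m_{e_0}}|\prec q^{-1}$ uniformly in $i$; keeping the sum, a Cauchy--Schwarz step in $i$ together with $\|\b v\|_2=1$ and the same matching/power-counting estimate replaces the free-sum factor $N^{1/2}$ by $1$, giving $\prec N^{-1}$ again. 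Morally, the normalization $\|\b v\|_2=1$ furnishes exactly the $N^{1/2}$ saving that offsets the loss of centering at the $i$-vertex.

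The only genuinely delicate step is the sparse power counting: since $\E|H_{ij}|^k$ decays only like $N^{-1}q^{2-k}$, over-matchings are not a priori negligible and one must verify that the combinatorial gains from the free summations are still beaten by the centering of the non-hit factors --- this is the same mechanism already exploited in Section \ref{sec 3} (cf.\ the proof of Lemma \ref{lemma 3.4}), and the present bookkeeping is a simplified instance of it. The Leibniz expansion, the tree/pinning combinatorics, and the Cauchy--Schwarz step are all routine.
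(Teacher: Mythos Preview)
Your treatment of part (i) is essentially the paper's approach fleshed out (the paper only writes ``can be proved directly using Lemma~\ref{lem:cumulant_expansion}''): both reduce to a moment computation on the free sum over non-pinned vertices, using that the non-hit edge factors are centered and that distinct tree edges carry independent entries. Your organizing device --- the hit subforest pins $|\s h|+c$ vertices, the $\widehat n-1-|\s h|$ non-hit edges remain centered --- and the resulting power count $N^{(1-c)/2}\le 1$ are correct. One point you gloss over: in the sparse regime the fact that pairing dominates higher-order groupings in the $2p$-th moment needs the observation that merging two pairs into one group of four trades a factor $N$ for $\E\cal H_e^4/(\E\cal H_e^2)^2=O(Nq^{-2})\le N$, so no loss; this is straightforward but worth saying.

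For part (ii) the paper takes a different route. It observes that since every edge weight is even, any nonzero term in $\partial_{ij}\D^n_{\b i\b j}\cal M_*(\cal H_T)$ carries an \emph{odd} power of $H_{ij}$, hence can be written as $H_{ij}\cal X_{ij}$ with $\D^k_{\b i\b j}\cal X_{ij}\prec N^{-1}$ for all fixed $k\ge 0$; then a direct cumulant expansion (Lemma~\ref{lem:cumulant_expansion}) in the explicit factor $H_{ij}$ gives $\sum_i\b v_i H_{ij}\cal X_{ij}\prec N^{-1}$, exploiting $\E H_{ij}=0$ and $\cal C_{s+1}(H_{ij})=O(N^{-1})$. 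Your Cauchy--Schwarz argument also works but the phrase ``the same matching/power-counting estimate replaces the free-sum factor $N^{1/2}$ by $1$'' hides two steps you should spell out: first, that the remainder $\cal Y_i$ after extracting $H_{ij}^{m_{e_0}}$ satisfies $\cal Y_i\prec N^{-1}$ \emph{uniformly in $i$} (this is part (i) with $i$ treated as an externally pinned vertex), and second, that $\sum_i H_{ij}^{2m_{e_0}}\prec 1$; together these give $\big(\sum_i H_{ij}^{2m_{e_0}}|\cal Y_i|^2\big)^{1/2}\prec N^{-1}$ as needed. The paper's cumulant-expansion route is a bit cleaner in that it uses the mean-zero structure of $H_{ij}$ directly rather than recovering the gain through the $\ell^2$ bound on $\b v$ and the concentration of $\sum_i H_{ij}^{2m_{e_0}}$.
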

\begin{proof}
	Part (i) can be proved directly using Lemma \ref{lem:cumulant_expansion}. For Part (ii), the naive estimate using part (i) is $O_{\prec}(N^{-1/2})$. To see where the additional factor $N^{-1/2}$ comes from, note that the weight of every edge in $T$ is even, and the sum in $\cal M_*(\cal H_T)$ is distinct. As a result, any non-zero term in  $\partial_{ij}\D_{\b i\b j}^n\,\cal M_* (\cal H_{T})$ can be written in the form
	\[
	H_{ij}\cal X_{ij}\,,
	\]
	where $\D_{\b i\b j}^k \cal X_{ij}\prec N^{-1}$ for all fixed $k\geq 0$ (c.f.\,Example \ref{eg5.5} (ii) below). By Lemma \ref{lem:cumulant_expansion}, we can easily show that $\sum_i\b v_i H_{ij}\cal X_{ij}\prec N^{-1}$. This concludes the proof.
\end{proof}

\begin{example} \label{eg5.5}
(i) Let $T_1(\widehat{V},\widehat{E})\in \cal T$ with $\widehat{V}=\{v_1,v_2\}$, $\widehat{E}=\{v_1v_2\}$, and $w(v_1v_2)=2$. Then
\[
\cal M_*(\cal H_{T_1})=\frac{1}{N}\psum_{v_1v_2}\Big(H^2_{v_1v_2}-\frac{1}{N}\Big)\,,
\]
which is the leading contribution of $\cal Z$.

(ii) Let $T_2\in (\widehat{V},\widehat{E})\in \cal T$ with $\widehat{V}=\{v_1,v_2,v_3,v_4\}$, $\widehat{E}=\{v_1v_2,v_2v_3,v_3v_4\}$ and $w(v_1v_2)=w(v_1v_2)=w(v_1v_2)=2$. Then
\[
\cal M_*(\cal H_{T_2})=\frac{1}{N}\psum_{v_1v_2v_3v_4}\Big(H^2_{v_1v_2}-\frac{1}{N}\Big)\Big(H^2_{v_2v_3}-\frac{1}{N}\Big)\Big(H^2_{v_3v_4}-\frac{1}{N}\Big)\,,
\]
which is a third order term of $\cal Z$. 
\end{example}

\subsection{The leading terms}
Here we compute the leading contribution in \eqref{5.5}, which is
\[
L^{(5)}_1+L^{(6)}_1+\mbox{(II'')}\,.
\] 
Abbreviate 
\begin{equation}
	E_{\pm}\deq \cal L\pm N^{-2/3+\delta}\,, \quad \mbox{and} \quad z_\pm\deq E_\pm+\ii \eta_+\,.
\end{equation}
As $\cal C_2(H_{ij})=N^{-1}(1+O(\delta_{ij}))$, we get
\begin{equation} \label{5.7}
	\begin{aligned}
		L^{(6)}_1&=-\ii \bb E \bigg[\frac{1}{\pi }\sum_{ij} (1+\delta_{ij}) \partial_{ij}\bigg(\int_{\cal I} \im(\b v_i G_{j\b w}\ul{G})q(X_E)\dd E\cdot \exp( \ii tY)\bigg)\bigg]+O(N^{-\delta})\\
		&=-\ii \bb E \bigg[\frac{1}{\pi }\sum_{ij}\b v_i (\partial_{ij}\cal L)  \im( G_{j\b w}(z_+) \ul{G}(z_+)q(X_{E_+})-G_{j\b w}(z_-) \ul{G}(z_-)q(X_{E_-})) \exp( \ii tY)\bigg]\\
		&\quad -\ii \bb E \bigg[\frac{1}{\pi }\sum_{ij}(1+\delta_{ij})  \int_{\cal I} \im(\b v_i \partial_{ij}G_{j\b w}\cdot \ul{G})q(X_E)\dd E\cdot \exp( \ii tY)\bigg]\\
		&\quad - \ii \bb E \bigg[\frac{1}{\pi }\sum_{ij}(1+\delta_{ij})  \int_{\cal I} \im(\b v_i G_{j\b w}\cdot \partial_{ij}\ul{G})q(X_E)\dd E\cdot \exp( \ii tY)\bigg]\\
		&\quad -\ii \bb E \bigg[\frac{1}{\pi }\sum_{ij}(1+\delta_{ij}) \int_{\cal I} \im(\b v_i G_{j\b w}\ul{G}) \partial_{ij}q(X_E)\dd E\cdot \exp( \ii tY)\bigg]\\
		&\quad -\ii \bb E \bigg[\frac{1}{\pi }\sum_{ij}(1+\delta_{ij})  \int_{\cal I} \im(\b v_i G_{j\b w}\ul{G}) q(X_E)\dd E\cdot \partial_{ij}\exp( \ii tY)\bigg]+O(N^{-\delta})\\
		&\eqd L^{(6)}_{1,0}+\cdots+L^{(6)}_{1,4}+O(N^{-\delta})\,.
	\end{aligned}
\end{equation}
By Lemma \ref{lemma 5.4} (ii), we know that
$
\sum_{i} \b v_i (\partial_{ij}\cal M_*(\cal H_T)) \prec N^{-1}
$
for all $T \in \cal T$. Together with Lemma \ref{lemma 5.3} we get
\begin{equation} \label{5/9}
	\sum_{i} \b v_i (\partial_{ij}\cal L) \prec N^{-1}\,.
\end{equation}
In addition, Corollary \ref{cor1.5}, Proposition \ref{prop4.1}, and Corollary \ref{lem4.1} yield
\begin{equation} \label{5/10}
\im( G_{j\b w}(z)\ul{G}(z)) \prec \im G_{j\b w}(z) + \im \ul{G}(z) \prec \im\ul{G}(z)\prec N^{-1/3+\xi}
\end{equation}
for all $z=E+\ii \eta_+$ with $E \in \cal I$. Thus we get
\begin{equation} \label{5/11}
	L_{1,0}^{(6)} \prec N\cdot N^{-1}\cdot N^{-1/3+\xi}\prec N^{-1/3+\xi}\,.
\end{equation}
By \eqref{diff}, we have
\begin{equation*}
\begin{aligned}
L^{(6)}_{1,1}&=\ii \bb E \bigg[\frac{1}{\pi }\sum_{ij} \int_{\cal I} \im((\b v_iG_{i \b w}G_{jj}+\b v_i G_{ij}G_{j \b w})\cdot \ul{G})q(X_E)\dd E\cdot \exp( \ii tY)\bigg]\\
&=\ii \bb E \bigg[\frac{1}{\pi} \int_{\cal I} \im((NG_{\b v\b w}\ul{G}+(G^2)_{\b v\b w}) \ul{G})q(X_E)\dd E\cdot \exp( \ii tY)\bigg]\,.
\end{aligned}
\end{equation*}
By spectral decomposition, Corollary \ref{cor1.5}, Proposition \ref{prop4.1}, and Corollary \ref{lem4.1},
\begin{equation} \label{isotropic delocalization application}
	\begin{aligned}
		(G^2)_{\b v\b w}&=\sum_{a=1}^N \frac{\langle \b v, \b u_a\rangle \langle \b u_a ,\b w \rangle}{(\lambda_a-E-\ii \eta_+)^2} \prec\sum_{a=1}^N \frac{|\langle \b v, \b u_a\rangle \langle \b u_a ,\b w \rangle|}{(\lambda_a-E)^2+\eta_+^2} \\
		&\prec\frac{1}{N}\sum_{a=1}^N \frac{1}{(\lambda_a-E)^2+\eta_+^2}=\eta_+^{-1}\im 
		\ul{G} \prec \eta_+^{-1}( |\im \ul{G}-\im m|+\im m)\prec N^{1/3+2\xi}\,,
	\end{aligned}
\end{equation} 
and thus 
\begin{equation}\label{5.9}
	L^{(6)}_{1,1}=\ii \bb E \bigg[\frac{N}{\pi} \int_{\cal I} \im(\ul{G}^2 G_{\b v \b w})q(X_E)\dd E\cdot \exp( \ii tY)\bigg]+O_{\prec}(N^{-\delta})\,.
\end{equation}
Similarly, $(G^3)_{\b v\b w}\prec N^{1+3\xi},$ which implies
\begin{equation} \label{5.10}
	L^{(6)}_{1,2}=\ii \bb E \bigg[\frac{2}{\pi }  \int_{\cal I} \im(G^3)_{\b v \b w}q(X_E)\dd E\cdot \exp( \ii tY)\bigg]\prec N^{-\delta}\,.
\end{equation}
For $L_{1,3}^{(6)}$, recall the definition of $X_E$ in \eqref{g and x}. As the interval $\cal J_E$ contain $\cal L$, it will also be affected by the differentiation $\partial_{ij}$. However, similar to \eqref{5/9} and \eqref{5/11}, one can easily apply Lemmas \ref{lemma 5.3} and \ref{lemma 5.4} to show that this contribution is negligible. More precisely, we have
\begin{equation} \label{5/16}
	\begin{aligned}
	L^{(6)}_{1,3}=&-\ii \bb E \bigg[\frac{1}{\pi }\sum_{ij}(1+\delta_{ij}) \int_{\cal I} \im(\b v_i G_{j\b w}\ul{G}) q'(X_E) \bigg(\int_{\cal J_E} \partial_{ij}\tr \widetilde{G}^{(x)}\,\dd x\bigg)\dd E\cdot \exp( \ii tY)\bigg]\\
	&\,-\ii \bb E \bigg[\frac{1}{\pi }\sum_{ij}(1+\delta_{ij}) \int_{\cal I} \im(\b v_i G_{j\b w}\ul{G}) q'(X_E)  (\partial_{ij}\cal L)\tr \widetilde{G}(E_++\ii \eta_-)\dd E\cdot \exp( \ii tY)\bigg]\\
	=&\,\ii \bb E \bigg[\frac{2}{\pi }\sum_j \int_{\cal I} \im( G_{j\b w}\ul{G}) q'(X_E) \bigg(\int_{\cal J_E} \im ((G^{(x)})^2)_{j \b v}\dd x\bigg)\dd E\cdot \exp( \ii tY)\bigg]+O_{\prec}(N^{-\delta})\,.
	\end{aligned}
\end{equation}
Here we abbreviate $G^{(x)}\deq G(x+\ii \eta_-)$. Similar to \eqref{isotropic delocalization application}, we see that
\[
\int_{J_E} \im ((G^{(x)})^2)_{j \b v}\dd x \prec  N^{-2/3+\delta}\cdot N^{1/3+12\xi}\prec N^{-1/3+13\xi}\,.
\]
Together with \eqref{5/10} and \eqref{5/16}, we get
\begin{equation} \label{5.11}
		L^{(6)}_{1,3}
		\prec N\cdot |\cal I|\cdot  N^{-1/3+\xi}\cdot N^{-1/3+13\xi} \prec N^{-\delta}\,.
\end{equation}

The leading contribution of $L^{(6)}_1$ is contained in $L^{(6)}_{1,4}$. Similar to \eqref{5.9} and \eqref{5.11}, $L^{(6)}_{1,4}$ can be computed as
	\begin{align} \label{5.12}
	-&\ii \bb E \bigg[\frac{1}{\pi }\sum_{ij} (1+\delta_{ij}) \int_{\cal I} \im(\b v_i G_{j\b w}\ul{G}) q(X_E)\dd E \,\frac{\ii t N}{\pi}\int_\cal I (\partial_{ij}\widetilde{G}_{\b v\b w}) q(X_E)\dd E\cdot \exp( \ii tY)\bigg]\nonumber\\
	-&\ii \bb E \bigg[\frac{1}{\pi }\sum_{ij} (1+\delta_{ij}) \int_{\cal I} \im(\b v_i G_{j\b w}\ul{G}) q(X_E)\dd E\,  \frac{\ii t N}{\pi}\int_\cal I \widetilde{G}_{\b v\b w} \partial_{ij}q(X_E)\dd E \cdot\exp( \ii tY)\bigg]\\
	=-&\frac{tN}{\pi^2}\bb E \bigg[\sum_{j} \int_{\cal I^2} \im( G_{j\b w}\ul{G}) \im (G'_{\b v\b v}G'_{j \b w}+G'_{\b vj}G'_{\b v\b w}) q(X_E)q(X_{E'})\dd E\dd E'\cdot\exp( \ii tY)\bigg]+O_{\prec}(N^{-\delta})\,,\nonumber
	\end{align}
where in the second step we used \eqref{isotropic delocalization application} and the abbreviation $G'=G(E'+\ii \eta_+)$. Note that
\begin{equation} \label{5.13}
	\begin{aligned}
	&\,\sum_{j} \im( G_{j\b w}\ul{G})\im (G'_{\b v\b v}G'_{j \b w})=\sum_{j} \widetilde{G}_{j\b w}\widehat{\ul{G}}\im (G'_{\b v\b v}G'_{j \b w})+\sum_{j} \widehat{G}_{j\b w}\widetilde{\ul{G}}\im (G'_{\b v\b v}G'_{j \b w})\\
	=&\,\sum_{j} \widetilde{G}_{j\b w}\widehat{\ul{G}}\im (G'_{\b v\b v}G'_{j \b w})+O_{\prec}(N^{3\xi})=\sum_{j} \widetilde{G}_{j\b w}\widehat{\ul{G}}\widehat{G}'_{\b v\b v}\widetilde{G}'_{j \b w}+O_{\prec}(N^{3\xi})\\
	=&\sum_{j} \widetilde{G}_{j\b w}\widetilde{G}'_{j \b w}+O_{\prec}(N^{1/3-\tau/2})\,.
	\end{aligned}
\end{equation}
Here in the second step we used 
\begin{align*}
&\sum_{j} \widehat{G}_{j\b w}\widetilde{\ul{G}}\im (G'_{\b v\b v}G'_{j \b w})\prec \sum_j |G_{j\b w} G'_{j \b w}| \cdot |\widetilde{\ul{G}}| \prec \frac{(\im G_{\b w\b w}\cdot \im G'_{\b w\b w})^{1/2}}{\eta} \cdot \frac{1}{N\eta_+} \\
\prec&\, \frac{(\widetilde{\ul{G}}\cdot \widetilde{\ul{G}}')^{1/2}}{\eta} \cdot \frac{1}{N\eta_+}  \prec N^{3\xi}\,,
\end{align*}
and in the third step we used 
\[
\sum_{j} \widetilde{G}_{j\b w}\widehat{\ul{G}}\widetilde{G}'_{\b v\b v}\widehat{G}'_{j \b w} \prec \sum_j |G_{j\b w} G'_{j \b w}| \cdot |\widetilde{G}'_{\b v\b v}| \prec \sum_j |G_{j\b w} G'_{j \b w}| \cdot |\widetilde{\ul{G}}'|\prec N^{3\xi}\,.
\]
In both cases we applied Corollary \ref{cor1.5}, Proposition \ref{prop4.1}, and Corollary \ref{lem4.1}. In the last step of \eqref{5.13} we used Theorem \ref{thm1.3}. Similarly,
\begin{equation} \label{5.14}
\sum_{j} \im( G_{j\b w}\ul{G})\im (G'_{\b vj}G'_{\b v \b w})=\langle \b v, \b w \rangle \sum_{j} \widetilde{G}_{j\b w}\widetilde{G}'_{\b vj}+O_{\prec}(N^{1/3-\tau/2})\,.
\end{equation}
Combining \eqref{5.12} -- \eqref{5.14}, we get
\begin{equation} \label{5.15}
L^{(6)}_{1,4}=	-\frac{tN}{\pi^2}\bb E \bigg[\sum_{j} \int_{\cal I^2} \widetilde{G}_{j\b  w} (\widetilde{G}'_{j\b w}+\langle \b v,\b w \rangle \widetilde{G}'_{\b v j}) q(X_E)q(X_{E'})\dd E\dd E'\exp( \ii tY)\bigg]+O_{\prec}(N^{-\delta})\,.
\end{equation}
Inserting \eqref{5/10}, \eqref{5.9} -- \eqref{5.11} and \eqref{5.15} into \eqref{5.7}, we get
\begin{equation} \label{5.16}
	\begin{aligned}
	L^{(6)}_{1}=\,&\ii \bb E \bigg[\frac{N}{\pi} \int_{\cal I} \im(\ul{G}^2 G_{\b v \b w})q(X_E)\dd E\cdot \exp( \ii tY)\bigg]\\
	&-\frac{tN}{\pi^2}\bb E \bigg[\sum_{j} \int_{\cal I^2} \widetilde{G}_{j\b  w} (\widetilde{G}'_{j\b w}+\langle \b v,\b w \rangle \widetilde{G}'_{\b v j}) q(X_E)q(X_{E'})\dd E\dd E'\exp( \ii tY)\bigg]+O_{\prec}(N^{-\delta})\,.
		\end{aligned}
\end{equation}
Similarly, we can also show that
\begin{equation} \label{5.17}
	\begin{aligned}
		L^{(5)}_1&=\ii \bb E \bigg[\frac{1}{N\pi }\sum_{ij} (1+\delta_{ij}) \partial_{ij}\bigg(\int_{\cal I} \im(G_{ji} G_{\b v\b w})q(X_E)\dd E\cdot \exp( \ii tY)\bigg)\bigg]+O_{\prec}(N^{-\delta})\\
		&=-\ii \bb E \bigg[\frac{N}{\pi} \int_{\cal I} \im(\ul{G}^2 G_{\b v \b w})q(X_E)\dd E\cdot \exp( \ii tY)\bigg]+O_{\prec}(N^{-\delta})\,.
	\end{aligned}
\end{equation}
Note that there is a cancellation between \eqref{5.16} and \eqref{5.17}, and this yields
\begin{equation} \label{5.18}
\hspace{-0.45cm}	L^{(6)}_{1}+L^{(5)}_1=	-\frac{tN}{\pi^2}\bb E \bigg[\sum_{j} \int_{\cal I^2} \widetilde{G}_{j\b  w} (\widetilde{G}'_{j\b w}+\langle \b v,\b w \rangle \widetilde{G}'_{\b v j}) q(X_E)q(X_{E'})\dd E\dd E'\exp( \ii tY)\bigg]+O_{\prec}(N^{-\delta})\,.
\end{equation}

As $\eta_+\ll N^{-2/3}$, it is $\emph{not possible}$ to compute the RHS of \eqref{5.18} through local laws. Instead, observe that
\[
\sum_j \widetilde{G}_{j\b  w}\widetilde{G}'_{j\b w}=\sum_j \sum_{a} \frac{\b u_a(j)\langle \b u_a, \b w\rangle\eta_+}{(\lambda_a-E)^2+\eta_+^2}\sum_{a'}\frac{\b u_{a'}(j)\langle \b u_{a'}, \b w\rangle\eta_+}{(\lambda_{a'}-E')^2+\eta_+^2}= \sum_a \frac{\langle \b u_a , \b w \rangle^2\eta_+^2}{((\lambda_{a}-E)^2+\eta_+^2)((\lambda_{a}-E')^2+\eta_+^2)}\,.
\]  
Similar to Lemma \ref{lemma 5.1}, one can use  Corollaries \ref{cor1.5}, \ref{cor4.2}, and \ref{lem4.1} to show that
\begin{equation} \label{5.19}
\hspace{-0.2cm}\begin{aligned}
&\,\frac{tN}{\pi^2}\bb E \bigg[\sum_{j} \int_{\cal I^2} \widetilde{G}_{j\b  w} \widetilde{G}'_{j\b w} q(X_E)q(X_{E'})\dd E\dd E'\exp( \ii tY)\bigg]\\
	=&\,\frac{tN}{\pi^2}\bb E \bigg[\int_{\cal I^2}  \sum_a \frac{\langle \b u_a , \b w \rangle^2\eta_+^2}{((\lambda_{a}-E)^2+\eta_+^2)((\lambda_{a}-E')^2+\eta_+^2)} q(X_E)q(X_{E'}) \,\dd E \,\dd E'\exp( \ii tY)\bigg]\\
	=&\,\frac{tN}{\pi^2}\bb E \bigg[\int_{\cal I^2}   \frac{\langle \b u_2 , \b w \rangle^2\eta_+^2}{((\lambda_{2}-E)^2+\eta_+^2)((\lambda_{2}-E')^2+\eta_+^2)} q(X_E)q(X_{E'}) \,\dd E \,\dd E'\exp( \ii tY)\bigg]+O_{\prec}(N^{-\delta})	\\
	=&\,\frac{tN}{\pi}\bb E \bigg[\int_{\cal I}   \frac{\langle \b u_2 , \b w \rangle^2\eta_+}{(\lambda_{2}-E)^2+\eta_+^2} q(X_E) \,\dd E \exp( \ii tY)\bigg]+O_{\prec}(N^{-\delta})\\
	=&\,\frac{tN}{\pi}\bb E \bigg[\int_{\cal I}  \sum_a \frac{\langle \b u_a , \b w \rangle^2\eta_+}{(\lambda_{a}-E)^2+\eta_+^2} q(X_E) \,\dd E \exp( \ii tY)\bigg]+O_{\prec}(N^{-\delta})\\
	\eqd&\, t \bb E (Y^{(1)}\exp(\ii t Y))	+O_{\prec}(N^{-\delta})\,,
\end{aligned}
\end{equation}
where 
$
Y^{(1)}\deq \bb E \Big[\frac{N}{\pi}\int_{\cal I} \widetilde{G}_{\b w\b w}q(X_E)\,\dd E\Big]. 
$ Here in the third step we used 
\begin{align*}
&\,\frac{1}{\pi^2}\int_{\cal I^2}   \frac{\eta_+^2}{((\lambda_{2}-E)^2+\eta_+^2)((\lambda_{2}-E')^2+\eta_+^2)} q(X_E)q(X_E') \,\dd E \,\dd E'\\
=&\,\bigg(\frac{1}{\pi}\int_{\cal I} \frac{\eta_+}{(\lambda_{2}-E')^2+\eta_+^2} q(X_E) \dd E\bigg)^2=\frac{1}{\pi}\int_{\cal I} \frac{\eta_+}{(\lambda_{2}-E')^2+\eta_+^2} q(X_E) \dd E+O_{\prec}(N^{-\delta})
\end{align*}
and Corollary \ref{cor1.5}. Analogously, 
\begin{equation*} 
	\frac{tN}{\pi^2}\bb E \bigg[\sum_{j} \int_{\cal I^2} \widetilde{G}_{j\b  w} \langle \b v ,\b w \rangle\widetilde{G}'_{\b v j} q(X_E)q(X_{E'})\dd E\dd E'\exp( \ii tY)\bigg]=t \langle \b v,\b w \rangle\bb E (Y\exp(\ii t Y))+O_{\prec}(N^{-\delta})\,.
\end{equation*}
Inserting the above and \eqref{5.19} into \eqref{5.18} yields 
\begin{equation*}
	L^{(5)}_1+L^{(6)}_1=-t \bb E (Y^{(1)}\exp(\ii t Y))-t \langle \b v,\b w \rangle\bb E (Y\exp(\ii t Y))+O_{\prec}(N^{-\delta})\,.
\end{equation*}
Similar to \eqref{5.19}, we can also show that
\[
\mbox{(II'')}=\ii\bb E \Big[\frac{N}{\pi}\int_{\cal I} \langle \b v, \b w \rangle \ul{\widetilde{G}}\,q(X_E)\dd E\cdot \exp(\ii tY)\Big]=\ii \langle \b v ,\b w\rangle \bb E \exp(\ii t Y)+O_{\prec}(N^{-\delta})=\ii \langle \b v ,\b w\rangle g(t)+O_{\prec}(N^{-\delta})\,.
\]
As a result
\begin{equation}\label{5.21}
	L^{(5)}_1+L^{(6)}_1+\mbox{(II'')}=-t \bb E (Y^{(1)}\exp(\ii t Y))-t \langle \b v,\b w \rangle\bb E (Y\exp(\ii t Y))+\ii \langle \b v ,\b w\rangle g(t)+O_{\prec}(N^{-\delta})\,.
\end{equation}

\subsection{The error terms} \label{sec5.3}
What remains to be done, is estimating the error terms on RHS of \eqref{5.6}, i.e.\,proving that
\begin{equation} \label{5.28}
	\sum_{s=2}^{\ell} L^{(5)}_s+ \sum_{s=2}^{\ell} L^{(6)}_s \prec N^{-\delta}\,.
\end{equation}
The steps are similar to the error estimates in the proof of the isotropic local law we saw in Section \ref{sec 3}. The two main differences are this time we have integrals, and we need to consider the $H$-dependence of $\cal L$.

Let $\b v,\b w$ be as in \eqref{5.1}. Similar to Definition \ref{def3.1}, let $\{i_1,i_2,...\}$ be an infinite set of formal indices. To fixed $\nu,\nu_3,\nu_4,u,r,\zeta,b,c\in \bb N$, $\theta_1,\theta_2 \in \bb R$, $\sigma_1,...,\sigma_r\in \bb N_+$, $w_{1},y_{1},...,w_{u},y_{u}, w'_{1},y'_{1},...,w'_{r},y'_{r}\in \{i_1,...,i_\nu,\b v,\b w\}$, $i\in  \{i_1,...,i_\nu\}$, and a family $(a_{i_1,...,i_\nu})_{1\leq i_1,...,i_\nu \leq N}$ of uniformly bounded complex numbers, we assign a formal monomial
\begin{equation}\label{3.151}
		\begin{aligned}
		Q=&\,a_{i_1,...,i_\nu}\b v_iN^{-\theta_1}f^{-\theta_2} \int_{\cal I^{\nu_3}}\int_{\cal J_{E_1}}\cdots \int_{\cal J_{E_{\nu_4}}} \widetilde{G}_{w_1y_1}\cdots \widetilde{G}_{w_uy_u}\widehat{G}_{w'_1y'_1}\cdots\widehat{G}_{w'_ry'_r}\cdot \D_{\b i^{(1)}\b j^{(1)}}^{n_1}\,\cal M_* (\cal H_{T_1}) \\
		&\cdots  \D_{\b i^{(\zeta)}\b j^{(\zeta)}}^{n_\zeta}\,\cal M_* (\cal H_{T_\zeta})  \cdot q^{(m_1)}(X_{E'_1})\cdots q^{(m_c)}(X_{E'_{c}})
		\dd x_1\cdots \dd x_{\nu_4}\dd E_1\cdots \dd E_{\nu_3}\exp(\ii t Y)\,.
		\end{aligned}
	\end{equation}
Here $c,m_1,...,m_c\in \bb N$ are fixed. We denote $\nu(Q) = \nu$, $\nu_3(Q) = \nu_3$, $\nu_4(Q) = \nu_4$, $u(Q) = u$, $\zeta(Q)=\zeta$, $\theta_1(Q) = \theta_1$ and $\theta_2(Q)=\theta_2$. We use $\nu_2(Q)$ to denote the number of indices satisfying the following conditions.
	\begin{enumerate}
		\item This index is not $i$.
		
		\item This index appear exactly twice in the Green functions.
		
		\item This index appear at least once, as an off-diagonal index  in some real part of the Green functions (i.e.\,$\widehat{G}_{w'_1y'_1},\dots,\widehat{G}_{w'_ry'_r}$).
	\end{enumerate}  We denote by $\cal Q$ the set of formal monomials $Q$ of the form \eqref{3.151}.
	
	Let $O(Q)\subset \{i_1,...,i_\nu\}$ be the set of indices that appear odd many times in the Green functions of $Q$.  We denote by $\cal Q_o\subset\cal Q$ the set of formal monomials such that $O(Q)\neq \emptyset$ for all $Q\in \cal Q_o$. Let ${\bf E}\deq [\cal L-2N^{-2/3+\delta},\cal L+N^{-2/3+\delta}]$. Define the random spectral domain
	\[
	{\bf S}\equiv {\bf S}_{\delta,\xi}\deq \{z=E+\ii \eta: E\in {\bf E}, \eta\in [\eta_-,\eta_+]\}\,.
	\]
	We assign to each monomial $Q \in  \cal Q$ with its \emph{evaluation}
	\begin{equation*}
		Q_{i_1,\dots,i_{\nu}} \equiv Q_{i_1,\dots,i_{\nu}}({\bf S, E})\,,
	\end{equation*}
	which is a random variable depending on an $\nu$-tuple $(i_1,\dots,i_{\nu})\in \{1,2,\dots,N\}^{\nu}$. It is obtained by replacing, in the formal monomial $Q$, the formal indices $i_1,\dots,i_{\nu_1}$ with the integers $i_1,\dots,i_{\nu_1}$, the formal variables $G_{wy}$ with elements of the Green's function \eqref{2.1} with parameters $z \in \b S$, and formal variables $q(X_E)$ with random variables $q(X_E)$ defined in \eqref{g and x} with parameters $E \in \bf E$. Here the parameters may be different for each Green function and $q(X_E)$, and they may  or may not depend on the integration variables  $x_1,...,x_{\nu_4}$ and $E_1,...,E_{\nu_3}$. We define
	\begin{equation*} 
		\cal M (Q) \deq \sum_{i_1,\dots,i_{\nu}}  Q_{i_1,\dots,i_{\nu}}\,.
	\end{equation*}

\begin{lemma} \label{lemma 5.5}
	Let $Q\in \cal Q$. We have
	\begin{equation} \label{5.65}
		\bb E \cal M(Q) \prec N^{-\theta_1+\nu}f^{-\theta_2} \cdot (N^{-2/3+\delta})^{\nu_3+\nu_4} \cdot (N^{-1/3+6\xi})^{u} \cdot (N^{-1/3+6\xi})^{\nu_2/2} \cdot N^{-\zeta}\cdot N^{-1/2}\eqd \cal E_8(Q) \,.
	\end{equation}
\end{lemma}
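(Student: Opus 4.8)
The plan is to prove the lemma by a power count: bound every building block of the formal monomial $Q$ in \eqref{3.151} by its typical size, and multiply. First I would dispose of the elementary factors. The explicit prefactor contributes $N^{-\theta_1}f^{-\theta_2}$; the $\nu$ summations over $i_1,\dots,i_\nu$ in $\cal M(Q)$ contribute at most $N^\nu$; each of the $\zeta$ factors $\D^{n_k}_{\b i_k\b j_k}\cal M_*(\cal H_{T_k})$ is $O_\prec(N^{-1})$ by Lemma \ref{lemma 5.4}(i), giving $N^{-\zeta}$; each integration over $\cal I$ costs $|\cal I|\asymp N^{-2/3+\delta}$ and each integration over a $\cal J_{E_k}$ costs $|\cal J_{E_k}|=O(N^{-2/3+\delta})$ (using $E_k\in\cal I$ and $\eta_+\ll N^{-2/3}$, so both endpoints of $\cal J_{E_k}$ lie within $O(N^{-2/3+\delta})$ of $\cal L$), producing $(N^{-2/3+\delta})^{\nu_3+\nu_4}$; and the cutoff factors $q^{(m_a)}(X_{E'_a})$ and the phase $\exp(\ii tY)$ (with $Y$ real) have modulus $O(1)$. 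This reduces the task to showing that the $\nu$ index summations, the prefactor $\b v_i$, and the Green-function factors $\widetilde G_{w_ay_a},\widehat G_{w'_ay'_a}$ together yield $N^{\nu}\cdot N^{-1/2}\cdot(N^{-1/3+6\xi})^{u+\nu_2/2}$, which is the genuine content of the estimate.

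For the Green functions I would first record the edge input: on the random domain ${\bf S}$ one has $\eta\ge\eta_-=N^{-2/3-6\xi}$ and $|E-\cal L|=O(N^{-2/3+\delta})$, so combining Corollary \ref{cor1.5}, Proposition \ref{prop4.1}(i),(iv) and Corollary \ref{lem4.1} gives $\sup_{z\in{\bf S}}\im\ul G(z)\prec N^{-1/3+6\xi}$ (the $\im m\asymp\sqrt{\kappa+\eta}\lesssim N^{-1/3+\delta/2}$ contribution being dominated by the rigidity error $(N\eta_-)^{-1}=N^{-1/3+6\xi}$), hence $\im G_{\b x\b x}(z)\prec N^{-1/3+6\xi}$ for every $\b x\in\{\b v,\b w,\b e_1,\dots,\b e_N\}$. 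Cauchy--Schwarz then bounds each imaginary part with fixed endpoints by $N^{-1/3+6\xi}$; when such a factor carries a summation index I would absorb that index into the Cauchy--Schwarz step via the Ward identity $\sum_j|G_{\b xj}|^2=\im G_{\b x\b x}/\eta$, exactly as in the proof of Lemma \ref{lemma 3.4}, so that each unit of $N$ lost to an index summation is balanced against the $N^{-1/3+6\xi}$ factors it creates; this yields $(N^{-1/3+6\xi})^u$. The real parts $\widehat G_{w'_ay'_a}$ are only $O_\prec(1)$ (cf.\ \eqref{3.192}, \eqref{3.1}); for each of the $\nu_2$ summation indices — those appearing exactly twice off-diagonally with at least one occurrence in a real part — I would pair the two occurrences and use $\sum_j\widehat G_{aj}G_{jb}=\tfrac12(G^2)_{ab}+\tfrac1{2\eta}\im G_{ab}$ together with $(G^2)_{ab}\prec\eta^{-1}\im\ul G$ (as in \eqref{isotropic delocalization application}) and $\im G_{ab}\prec N^{-1/3+6\xi}$ to extract the factor $(N^{-1/3+6\xi})^{\nu_2/2}$. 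Finally $\b v_i$ summed over $i$ gives $\sum_i|\b v_i|\le N^{1/2}$ while the Green-function entries carrying $i$ are $O_\prec(1)$, so this summation is worth only $N^{1/2}=N\cdot N^{-1/2}$, which is the source of the overall correction $N^{-1/2}$; in the event the index $i$ lands inside some $\cal M_*(\cal H_{T_k})$ after differentiation I would instead invoke Lemma \ref{lemma 5.4}(ii), which absorbs the $i$-summation and that $\cal M_*$-factor into the single bound $O_\prec(N^{-1})$. Multiplying all contributions gives $\cal E_8(Q)$, uniformly over ${\bf S}$ and ${\bf E}$ by the uniformity of the quoted results.

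The main obstacle is not any individual estimate — each is either elementary or one of the standard inputs of Sections \ref{sec2}--\ref{sec 3} — but the bookkeeping: every summation index must be \emph{assigned} to exactly one absorbing mechanism (a Ward pairing of two Green functions, a Cauchy--Schwarz against $\b v$, or an application of Lemma \ref{lemma 5.4} when the index sits in a $\cal M_*$-factor whose derivative indices overlap the Green-function indices), and one must verify that the product of all resulting typical sizes telescopes exactly to $\cal E_8(Q)$ rather than leaving an unbalanced power of $N$. Once a consistent assignment is fixed the bound follows; and since the argument mirrors that of Lemma \ref{lemma 3.4} (with the extra integrals and the $H$-dependence of $\cal L$ handled as above and via Lemma \ref{lemma 5.4}), the details may be organized exactly as there.
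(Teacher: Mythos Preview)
Your proposal is correct and follows essentially the same route as the paper: a factor-by-factor power count, citing the Ward identity, Corollary \ref{cor1.5}, Proposition \ref{prop4.1}(iv), Corollary \ref{lem4.1}, Lemma \ref{lemma 5.4}(i), and $\sum_i|\b v_i|\le N^{1/2}$ for exactly the same contributions. Your extra care with Lemma \ref{lemma 5.4}(ii) when $i$ sits among the differentiation indices of some $\cal M_*(\cal H_{T_k})$ is unnecessary here---the paper simply bounds $\b v_i$ and the $\cal M_*$-factors independently---but harmless.
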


\begin{proof}
	The factor $N^{-\theta_1+\nu}f^{-\theta_2}$ on RHS of \eqref{5.65} is self-explanatory. Let us explain the rest contributions. The factor $(N^{-2/3+\delta})^{\nu_3+\nu_4}$ comes from the integrals w.r.t.\,$x_1,...,x_{\nu_3}$ and $E_1,...,E_{\nu_4}$. The factor $(N^{-1/3+6\xi})^{u}$ comes from Corollary \ref{cor1.5}, Proposition \ref{prop4.1} (iv), and Corollary \ref{lem4.1}. The factor $(N^{-1/3+6\xi})^{\nu_2/2}$ comes from Lemma \ref{ward}, Corollary \ref{cor1.5}, Proposition \ref{prop4.1} (iv), and Corollary \ref{lem4.1}. The factor $N^{-\zeta}$ comes from Lemma \ref{lemma 5.4}. Finally, the factor $N^{-1/2}$ comes from $\sum_i |\b v_i|\leq N^{1/2}$. This finishes the proof.
\end{proof}

We have the following improvement of Lemma \ref{lemma 5.5} for $Q \in \cal Q_o$, which we delay the proof to Section \ref{section 5.3.1}.

\begin{lemma}  \label{lemma 5.6}
	Let $Q\in \cal Q_o$. We have
	\[
\bb E \cal M(Q) \prec	N^{-\theta_1+\nu}f^{-\theta_2} \cdot (N^{-2/3+\delta})^{\nu_3+\nu_4} \cdot (N^{-1/3+6\xi})^{u}  \cdot N^{-\zeta}\cdot N^{-1+13\xi}\eqd \cal E_9(Q)\,.
	\]
\end{lemma}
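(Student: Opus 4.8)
The plan is to mimic exactly the structure of the proof of Lemma \ref{lemma 3.5}, i.e.\ to set up an iteration à la Lemma \ref{lemma 3.6}, using Lemma \ref{lemma 5.5} as the base estimate and improving it by a factor of $N^{-1/2+\epsilon}$ for monomials in $\cal Q_o$. Concretely, for $Q\in\cal Q_o$ write
\[
\cal E_{10}(Q)\deq N^{-\theta_1+\nu}f^{-\theta_2}\cdot(N^{-2/3+\delta})^{\nu_3+\nu_4}\cdot(N^{-1/3+6\xi})^{u}\cdot(N^{-1/3+6\xi})^{\nu_2/2}\cdot N^{-\zeta}\cdot N^{-1+13\xi}\,,
\]
so that Lemma \ref{lemma 5.5} already gives $\bb E\cal M(Q)\prec\cal E_{10}(Q)\cdot N^{1/2}\cdot(N^{-1/3+6\xi})^{-\nu_2/2}$ and the goal is to drop the spurious $N^{1/2}$ while never losing more than the harmless powers $(N^{-1/3+6\xi})^{\nu_2/2}$. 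The improvement comes from the usual mechanism: pick an index, say $x_1$ (or $z_1$, or $w_1$), that appears an odd number of times in the Green functions of $Q$ — such an index exists precisely because $Q\in\cal Q_o$ — and apply the identity $z\ul{G}G_{x_1y_1}=\ul{G}(AG)_{x_1y_1}-\ul{G}\delta_{x_1y_1}$, then expand the resulting $\ul{HG}G_{x_1y_1}$ and $\ul{G}(HG)_{x_1y_1}$ via Lemma \ref{lem:cumulant_expansion}, exactly as in the proof of Lemma \ref{lemma 3.6}.

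The steps, in order: (1) Fix $Q\in\cal Q_o$ and a distinguished odd index; split $\bb E\cal M(Q)$ into the three pieces (I), (II), (III) coming from $\ul{AG}G_{x_1y_1}$, $\ul{G}(AG)_{x_1y_1}$, $-\ul{G}\delta_{x_1y_1}$. The $A-H$ part of (I), (II) contributes $G_{\b e\,\bullet}$ factors which by \eqref{1.2}, \eqref{1.3} (equivalently Corollary \ref{cor1.5}) carry an extra $f^{-1}=O(N^{-\tau})$ and are absorbed into $\cal E_9(Q)$; the diagonal term (III) is elementary since $x_1\not\equiv y_1$ may be assumed WLOG. (2) Cumulant-expand (I$'$) and (II$'$); the $s=1$ terms produce a leading $\ul{G}^2G_{x_1y_1}X$ contribution that cancels between (I$'$) and (II$'$), exactly as in \eqref{3.25}--\eqref{3.26}, while all remaining terms are again of the form $\cal M(Q_k)$ with $Q_k\in\cal Q_o$ (the new indices $i,j$ created by $\partial_{ij}$ are fresh, so they cannot reduce $O(\cdot)$ to $\emptyset$, and $x_1$ — or, when an even number of derivatives lands on the relevant factor, the partner index $i$ — stays odd). (3) Track the bookkeeping: each expansion step either raises $\nu_2$ by at least one (gaining $(N\eta)^{-1/2}\le N^{-1/3+O(\xi)}$) or raises $\theta_2$ by $\mathfrak s-1\ge1$ (gaining $q^{-(\mathfrak s-1)}\asymp f^{-(\mathfrak s-1)}=O(N^{-\tau})$), exactly the trichotomy of Conditions 1, 2, 3 in Lemmas \ref{lemma 3.6} / \ref{lemma 3.9} / \ref{lemma 3.12}, adapted here so that $u$ plays the role $\omega$ played before; crucially $\theta_1-\nu$ never decreases. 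Hence $\cal E_{10}(Q_k)\le N^{-\tau/4}\cal E_{10}(Q)$ at every step, so finitely many iterations terminate with all terms bounded by $\cal E_9(Q)$. (4) The extra structure absent from Section \ref{sec 3} — the random edge $\cal L$ hidden in the domains $\cal I$, $\cal J_E$, and the factors $\D^{n}_{\b i\b j}\cal M_*(\cal H_T)$, $q^{(m)}(X_E)$ — is handled as in the proof of the leading-term estimates in Section \ref{sec5.2}: every $\partial_{ij}$ hitting a domain endpoint or an $X_E$ produces either $\partial_{ij}\cal L$ (bounded by $N^{-1}$, and with a $\b v_i$ weight bounded by $N^{-1}$ via Lemma \ref{lemma 5.4}(ii)) or $\partial_{ij}\D^{n}_{\b i\b j}\cal M_*(\cal H_T)\prec N^{-1}$; in all cases this is strictly better than hitting a Green function, so these terms land in $\cal E_9(Q)$ immediately and need no further iteration.

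The main obstacle I expect is purely combinatorial bookkeeping rather than any new idea: one has to verify that the parity argument ``some index stays odd after $\partial_{ij}^{s}$'' survives in the presence of the $\re/\im$ decompositions $\widetilde G$, $\widehat G$ (which is why $\nu_2$ in Definition of $\cal Q$ counts only indices appearing off-diagonal in a \emph{real part}), and that when derivatives fall on the $X_E$'s — themselves built from $\tr\widetilde G^{(x)}=\ul{\widetilde G}(x+\ii\eta_-)$ integrated over $\cal J_E$ — the resulting $((G^{(x)})^2)_{\bullet\,\bullet}$-type factors are controlled exactly as in \eqref{isotropic delocalization application}, so that no factor of $\eta_-^{-1}$ is lost beyond the permitted $N^{O(\xi)}$. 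Once this is checked, the estimate $\bb E\cal M(Q)\prec\cal E_9(Q)$ follows by the same "expand finitely often, each step gains $N^{-\tau/4}$'' argument used below \eqref{3.24}, and combining it with Lemma \ref{lemma 2.3}, \eqref{diff}, and the parameter count $\nu=2$, $\theta_1=3/2$ or $1$, $\theta_2=s$ or $s-1$ for the monomials arising from $\widetilde L^{(5)}_s,\widetilde L^{(6)}_s$ will yield \eqref{5.28}.
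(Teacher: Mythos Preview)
Your plan captures the overall architecture correctly --- pick an odd index, apply the resolvent identity, cumulant-expand, and iterate --- and steps (1), (2), (4) are fine. The gap is in step (3): the clean trichotomy ``each expansion step either raises $\nu_2$ by one or raises $\theta_2$'' does \emph{not} hold as stated, and this is precisely where the $\re/\im$ structure bites.

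Recall that $\nu_2(Q)$ counts only those twice-appearing indices that occur at least once in an off-diagonal \emph{real} part $\widehat G$. Now suppose the odd index $w_1$ you chose sits in an imaginary-part factor $\widetilde G_{w_1y_1}$. After the resolvent identity you produce, among other pieces, the term $\widetilde G_{wy_1}\widehat{\ul G}\,\mathring R$ with a fresh index $w$. When the $s=1$ derivative $\partial_{w_1w}$ falls on a $\widetilde G$-factor hidden inside $\exp(\ii tY)$ or $q(X_E)$ --- e.g.\ on $\tr\widetilde G$ inside $X_E$, producing a factor $\widetilde G_{yw}\widehat G_{w_1y}$ --- the new index $w$ appears twice, but \emph{both} times in imaginary parts, so $\nu_2$ is unchanged. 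Worse, by the bookkeeping of Lemma \ref{lemma 5.7} (case (viii)), the resulting monomial has $\cal E_8$ larger by a factor $N^{2\delta+12\xi}$, not smaller. So the iteration as you describe it does not terminate.

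What the paper does to repair this is a two-stage argument, splitting into two situations. \textbf{Situation 1:} the odd index lies in a real-part factor $\widehat G$; here your trichotomy is valid and each step gains $N^{-\tau/2}$ in $\cal E_8$, with error controlled by $\cal E_{10}(Q)\deq\cal E_8(Q)\cdot N^{-1/2}$ (note this is not your $\cal E_{10}$). \textbf{Situation 2:} the odd index lies in an imaginary-part factor; here the ``bad'' terms described above do occur, but one observes that in each such term the \emph{original} odd index $w_1$ now appears in the newly created real-part factor $\widehat G_{w_1y}$. Hence these bad terms lie in the domain of Situation 1, and one immediately re-expands them there. The net effect of the two steps is a loss of $N^{2\delta+12\xi}$ followed by a gain of $N^{-\tau/2}$, i.e.\ an overall gain of $N^{-\tau/3}$; and the extra $N^{13\xi}$ in the definition of $\cal E_9$ (versus $N^{-1/2}$) is there precisely to absorb the intermediate loss so that $\cal E_{10}(\text{bad term})\le\cal E_9(Q)$. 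You should rewrite step (3) to reflect this two-situation structure; the rest of your outline then goes through.
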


Given Lemma \ref{lemma 5.6}, we can now prove \eqref{5.28}. Let us look at the estimate of $L_s^{(6)}$ closely. By \eqref{diff}, we have
\begin{equation} \label{Q_k}
L_s^{(6)}=\sum_{k=1}^\ell\bb E\cal M(Q_k)
\end{equation}
for some fixed $\ell$, and each $Q_k$ is in the form of \eqref{3.151}. These $\cal M(Q_k)$ are obtained through applying $\partial^s_{ij}$ on 
\begin{align*}
&\,\ii\,\cal C_{s+1}(H_{ij})\frac{N}{\pi}\int_{\cal I}  \im(\b v_iG_{j\b w}\ul{G})q(X_E)\dd E\cdot \exp( \ii tY)\\
=&\,\ii\,\cal C_{s+1}(H_{ij})\frac{N}{\pi}\int_{\cal I}  \b v_i(\widetilde{G}_{j\b w}\widehat{\ul{G}}+\widehat{G}_{j\b w}\widetilde{\ul{G}})q(X_E)\dd E\cdot \exp( \ii tY)\eqd Q_*+Q_{**}
\end{align*}
and then apply $\sum_{ij}$. Clearly, $Q_*\in \cal Q_o$, $\nu(Q_*) = 2$, $\nu_3(Q_*) = 1$, $\nu_4(Q_*) = 0$, $u(Q_*) = 1$, $\zeta(Q_*)=0$, $\theta_1(Q_*) = 1-1=0$ and $\theta_2(Q_*)=s-1$. As a result, $\cal E_9(Q_*)=f^{1-s}N^{\delta+19\xi}$. The same thing can also be said for $Q_{**}$. To get from $Q_*$ or $Q_{**}$ to $Q_k$, we need the following result, whose proof follows directly from our construction.

\begin{lemma} \label{lemma 5.7}
	Let $Q\equiv Q_{i_1,...,i_\nu}\in \cal Q$ be in the form of \eqref{3.151}. Suppose $j,j' \in 
	\{i_1,...,i_\nu\}$, and let $\cal M(Q')$ be a term in $\cal M(\partial_{jj'} Q)$. The parameters $\nu,\nu_3,\nu_4,u,\zeta,\theta_1,\theta_2$ of $Q$ and $Q'$ are related basing on the following cases.
\begin{enumerate}
	\item If $Q'$ is generated through applying $\partial_{jj'}$ on 
	$$
	\widetilde{G}_{w_1y_1}\cdots \widetilde{G}_{w_uy_u}\widehat{G}_{w'_1y'_1}\cdots\widehat{G}_{w'_ry'_r}\D_{\b i^{(1)}\b j^{(1)}}^{n_1}\,\cal M_* (\cal H_{T_1}) \cdots  \D_{\b i^{(\zeta)}\b j^{(\zeta)}}^{n_\zeta}\cal M_* (\cal H_{T_\zeta})\,, 
	$$
	 then $u(Q')\geq u(Q)$, and other parameters are unchanged.
	
	\item If $Q'$ is generated through applying $\partial_{jj'}$ on $\int_\cal I$, then $\nu_3(Q')=\nu_3(Q)-1$, $\zeta(Q')=\zeta(Q)+1$, and other parameters are unchanged. 
	
		\item If $Q'$ is generated through applying $\partial_{jj'}$ on $\int_{\cal J_E}$, then $\nu_4(Q')=\nu_4(Q)-1$, $\zeta(Q')=\zeta(Q)+1$, and other parameters are unchanged.

	\item If $Q'$ is generated through applying $\partial_{jj'}$ on $q^{(m)}(X_E)$ and $\tr \widetilde{G}$, then $\nu(Q')=\nu(Q)+1$, $u(Q')=u(Q)+1$, $\nu_4(Q')=\nu_4(Q)+1$, and other parameters are unchanged.
	
		\item If $Q'$ is generated through applying $\partial_{jj'}$ on $q^{(m)}(X_E)$ and $\int_{\cal J_E}$, then $\nu(Q')=\nu(Q)+1$, $u(Q')=u(Q)+1$, $\zeta(Q')=\zeta(Q)+1$, and other parameters are unchanged.

	\item If $Q'$ is generated through applying $\partial_{jj'}$ on $\exp(\ii t Y)$ and $\widetilde{G}_{\b v\b w}$, then $u(Q')=u(Q)+1$, $\nu_3(Q')=\nu_3(Q)+1$, $\theta_1(Q')=\theta_1(Q)-1$, and other parameters are unchanged.
	
		\item If $Q'$ is generated through applying $\partial_{jj'}$ on $\exp(\ii t Y)$ and $\int _{\cal I}$, then $u(Q')=u(Q)+1$, $\zeta(Q')=\zeta(Q)+1$, $\theta_1(Q')=\theta_1(Q)-1$, and other parameters are unchanged. 	
		
			\item If $Q'$ is generated through applying $\partial_{jj'}$ on $\exp(\ii t Y)$, $q(X_E)$ and $\tr \widetilde{G}$, then $\nu(Q')=\nu(Q)+1$, $u(Q')=u(Q)+2$, $\nu_3(Q')=\nu_3(Q)+1$, $\nu_4(Q')=\nu_4(Q)+1$, $\theta_1(Q')=\theta_1(Q)-1$, and other parameters are unchanged.
			
			\item If $Q'$ is generated through applying $\partial_{jj'}$ on $\exp(\ii t Y)$, $q(X_E)$ and $\int_{\cal J_E}$, then $\nu(Q')=\nu(Q)+1$, $u(Q')=u(Q)+2$, $\nu_3(Q')=\nu_3(Q)+1$, $\zeta(Q')=\zeta(Q)+1$, $\theta_1(Q')=\theta_1(Q)-1$, and other parameters are unchanged. 	 				
\end{enumerate}
In all the above cases, we have
\[
\cal E_9(Q')\leq \cal E_9(Q)\cdot N^{2\delta+12\xi} \,.
\]
\end{lemma}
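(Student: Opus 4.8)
The plan is to expand $\partial_{ij}\cal M(Q)=\cal M(\partial_{ij}Q)$ by the Leibniz rule over every $H_{ij}$-dependent constituent of the formal monomial \eqref{3.151}, to check that each resulting term is again of the form \eqref{3.151}, and to read off which of the nine listed cases it belongs to; the bound on $\cal E_9$ then drops out by substituting the tabulated parameter shifts into
\[
\cal E_9(Q)=N^{-\theta_1+\nu}\,f^{-\theta_2}\,(N^{-2/3+\delta})^{\nu_3+\nu_4}\,(N^{-1/3+6\xi})^{u}\,N^{-\zeta}\,N^{-1+13\xi}\,.
\]
First I would list the $H_{ij}$-dependent pieces of $Q$: (a) the explicit Green functions $\widetilde G_{w_ay_a}$, $\widehat G_{w'_ay'_a}$, plus those hidden inside each $\D^{n_k}_{\b i_k\b j_k}\cal M_*(\cal H_{T_k})$; (b) the integration windows $\cal I$ and $\cal J_{E_a}$, whose endpoints involve $\cal L=L+\cal Z$; (c) the arguments $X_{E'_a}=\int_{\cal J_{E'_a}}\tr\widetilde G(x+\ii\eta_-)\,\dd x$ of the cut-offs $q^{(m_a)}$, depending on $H_{ij}$ both through $\tr\widetilde G$ and through the $\cal L$-endpoint of $\cal J_{E'_a}$; and (d) the factor $\exp(\ii tY)$ with $Y=\frac N\pi\int_{\cal I}\widetilde G_{\b v\b w}\,q(X_E)\,\dd E$, depending on $H_{ij}$ through $\widetilde G_{\b v\b w}$, through $q(X_E)$, and through the endpoints of $\cal I$.

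For the contributions from (a) I would invoke the differential rule \eqref{diff}: since $i,j\in\{i_1,\dots,i_\nu\}$, applying $\partial_{ij}$ to a $\widetilde G$ or $\widehat G$ produces a sum of products of real and imaginary parts of Green functions with no new formal index and with at least as many $\widetilde G$-factors, while a $\partial_{ij}$ landing on a $\D^{n_k}\cal M_*(\cal H_{T_k})$ merely raises its internal derivative order (and stays $O_\prec(N^{-1})$ by Lemma \ref{lemma 5.4}\,(i)); this is Case 1, in which only $u$ may grow. For (b), differentiating $\int_{\cal I}$ or $\int_{\cal J_E}$ gives a boundary term carrying a factor $\partial_{ij}\cal L=\partial_{ij}\cal Z$, which by Lemma \ref{lemma 5.3} is a bounded linear combination of $\D^{1}_{ij}\cal M_*(\cal H_T)$; this removes one integral (so $\nu_3$ or $\nu_4$ drops by one) and inserts one new $\D^1\cal M_*(\cal H_T)$ factor (so $\zeta$ rises by one), which are Cases 2 and 3.

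For (c), the chain rule gives $q^{(m_a+1)}(X_{E'_a})\,\partial_{ij}X_{E'_a}$, where $\partial_{ij}X_{E'_a}$ equals $\int_{\cal J_{E'_a}}\partial_{ij}\tr\widetilde G(x+\ii\eta_-)\,\dd x$ plus a boundary term proportional to $(\partial_{ij}\cal L)$ and to $\tr\widetilde G$ evaluated at the $\cal L$-endpoint of $\cal J_{E'_a}$: the first piece reinstates a $\cal J_{E'_a}$-integral and, via $\partial_{ij}\tr\widetilde G=-\tfrac{2}{1+\delta_{ij}}\sum_k\im(G_{ik}G_{kj})$, brings one new summation index $k$ and one new $\widetilde G$-factor (Case 4), while the boundary piece inserts a $\D^1\cal M_*(\cal H_T)$ factor and an endpoint $\tr\widetilde G=\sum_k\widetilde G_{kk}$, again one new index and one new $\widetilde G$-factor (Case 5). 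For (d), $\partial_{ij}\exp(\ii tY)=\ii t\,(\partial_{ij}Y)\exp(\ii tY)$: the sub-term where $\partial_{ij}$ hits $\widetilde G_{\b v\b w}$ reinstates the prefactor $\tfrac N\pi$ and an $\int_{\cal I}$ (lowering $\theta_1$ by one, Case 6), the one hitting the endpoints of $\cal I$ reinstates $\tfrac N\pi$ and a $\D^1\cal M_*$ factor (Case 7), and the one hitting $q(X_E)$ splits once more exactly as in (c), producing Cases 8 and 9. In each case the parameters $\nu,\theta_1,\nu_3,\nu_4,u,\zeta$ move exactly as stated, and $\theta_2$ is never affected.

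Finally I would feed these shifts into $\cal E_9$. Since $\theta_2$ is fixed, $\cal E_9(Q')/\cal E_9(Q)$ is a single power of $N$ whose exponent I would compute case by case: it is $\le 0$ in Cases 1, 2, 3, 5, 7, 9; it equals $1-\tfrac23-\tfrac13+\delta+6\xi=\delta+6\xi$ in Cases 4 and 6; and Case 8 is extremal, where the factor $N^{2}$ coming from $-\theta_1+\nu$ increasing by $2$, weighed against $(N^{-2/3+\delta})^{2}(N^{-1/3+6\xi})^{2}$, gives exactly $N^{2\delta+12\xi}$. This yields $\cal E_9(Q')\le\cal E_9(Q)\,N^{2\delta+12\xi}$ in all cases. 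The main obstacle will be purely combinatorial: making sure that \emph{every} $H_{ij}$-dependence of the random spectral window through $\cal L$ is tracked, so that no boundary term is dropped and the new summation indices created by $\partial_{ij}\tr\widetilde G$ and by the endpoint traces are counted correctly.
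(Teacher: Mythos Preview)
Your proposal is correct and follows exactly the approach the paper intends: the paper states only that the proof ``follows directly from our construction,'' and your case-by-case Leibniz-rule bookkeeping, together with the explicit substitution of the parameter shifts into $\cal E_9$, is precisely that construction carried out in detail. Your identification of Case~8 as the extremal one yielding the factor $N^{2\delta+12\xi}$ is correct, and your arithmetic in all nine cases checks out.
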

 Applying Lemma \ref{lemma 5.7} $s$ number of times, we see that each $Q_k$ in \eqref{Q_k} satisfies
\[
\cal E_9(Q_k)\leq \cal E_9(Q_*)\cdot N^{(2\delta+12\xi)s}=f^{1-s}N^{\delta+19\xi}\cdot N^{(2\delta+12\xi)s}\leq N^{-\tau/2}\,,
\]
where in the last step we used \eqref{parameters} and $s\geq 2$. Thus we have proved that $\sum_{s=2}^{\ell} L^{(6)}_s \prec N^{-\delta}$.

On the other hand, the estimate of $L_s^{(5)}$ is relatively easy: it does not contain the isotropic factor $\b v_i$. Applying \eqref{diff}, Corollary \ref{cor1.5}, Proposition \ref{prop4.1}, and Corollary \ref{lem4.1}, it is not hard to see that $\sum_{s=2}^{\ell} L^{(5)}_s \prec N^{-\delta}$. This finishes the proof of \eqref{5.28}.

\subsubsection{Proof of Lemma \ref{lemma 5.6}} \label{section 5.3.1}

Let $Q\in \cal Q_o$ in the form of \eqref{3.151}. Our task here is to improve Lemma \ref{lemma 5.5} by a factor of $N^{-1/2}$. We split the discussion into the following two situations.

\subsubsection*{Situation 1}
The odd index shows up as an off-diagonal index of the real part of the Green function. Let 
\begin{equation}\label{xi10}
\cal E_{10}(Q)\deq \cal E_8(Q)\cdot N^{-1/2}\leq \cal E_{9}(Q)\cdot N^{-13\xi}\,.
\end{equation} 
We shall prove that
\begin{equation} \label{5.32}
	\bb E \cal M(Q)\prec \sum_{k=1}^\ell \bb E \cal M(Q_k)+\cal E_{10}(Q)
\end{equation}
for some fixed $\ell$. Here $Q_k\in \cal Q_o$ and satisfy 
\begin{equation} \label{5.33}
	\cal E_8(Q_k)\leq \cal E_8(Q)\cdot N^{-\tau/2} \,.
\end{equation}
The reason that we need this stronger estimate will become apparent in Situation 2. 

W.L.O.G.\,we can assume $w'_1\in O(Q)$ and $w'_1\not\equiv y'_1$. Similar to \eqref{5/5}, we have
	\[
	\widehat{G}_{w'_1y'_1}-\langle \b e_{w'_1}, y'_1 \rangle \ul{\widehat{G}}=\re( \ul{AG}{G}_{w'_1y'_1}- \ul{G}(AG)_{w'_1y'_1})=\re( \ul{HG}{G}_{w'_1y'_1}- \ul{G}(HG)_{w'_1y'_1})+O_{\prec}(N^{-1})\,.
	\]
	We write $\langle \b e_{w'_1}, y'_1 \rangle$ in the above to include the case that $y'_1\in \{\b v,\b w\}$; when $y'_1$ is an index, $\langle \b e_{w'_1}, y'_1 \rangle=\delta_{w'_1y'_1}$.  For $Q \in \cal Q$, let us abbreviate 
	\[
	\int_{\bf Q} \deq \int_{\cal I^{\nu_3}}\int_{\cal J_{E_1}}\cdots \int_{\cal J_{E_{\nu_4}}} \dd x_1\cdots \dd x_{\nu_4}\dd E_1\cdots \dd E_{\nu_3}\,,
	\] 
	and the random variable $R\equiv R(Q)$ is defined such that $Q=\int_{\bf Q} R$. Set $\mathring{R}\deq R/\widehat{G}_{w'_1y'_1}$. We have
	\begin{equation} \label{5.34}
		\begin{aligned}
			\bb E \cal M(Q)=&\,		\sum_{i_1,...,i_\nu,w,y} N^{-1}\bb E H_{wy} \int_{\bf Q} \re (G_{yw}G_{w'_1y'_1}) \mathring{R} - \sum_{i_1,...,i_\nu,w} \bb E H_{w'_1w} \int_{\bf Q}\re (G_{wy'_1}\ul{G}) \mathring{R}\\
			&+\sum_{i_1,...,i_\nu} \bb E \langle \b e_{w'_1}, y'_1 \rangle \int_{\bf Q}\widehat{\ul{G}} \mathring{R}+O_\prec(N^{-1})\sum_{i_1,...,i_\nu} \bb E \Big| \int_{\bf Q} \mathring{R}\Big|\,.
		\end{aligned}
	\end{equation}

As $\sum_{w'_1} |\langle \b e_{w'_1}, y'_1 \rangle  |\leq N^{1/2}$, it is easy to follow the proof strategy of Lemma \ref{lemma 5.5} (i.e.\,applying Lemma \ref{ward}, Corollary \ref{cor1.5}, Proposition \ref{prop4.1}, and Corollary \ref{lem4.1}) and show that the last two terms on RHS of \eqref{5.34} are bounded by $O_{\prec}(\cal E_{10}(Q))$. Together with Lemma \ref{lem:cumulant_expansion}, we get
\begin{equation} \label{5.35}
	\begin{aligned}
		\bb E \cal M(Q)=&\,		\bb E\bigg[\sum_{s=1}^\ell\sum_{i_1,...,i_\nu,w,y} N^{-1}\cal C_{s+1}(H_{wy}) \partial_{wy}^s\bigg(\int_{\bf Q} \re (G_{yw}G_{w'_1y'_1}) \mathring{R}\bigg)\bigg] \\
		&- \bb E\bigg[\sum_{s=1}^\ell \sum_{i_1,...,i_\nu,w} \cal C_{s+1}(H_{w'_1w})\partial^s_{w'_1w}\bigg(\int_{\bf Q} \re (G_{wy'_1}\ul{G}) \mathring{R}\bigg)\bigg]+O_{\prec}(\cal E_{10}(Q))\\
		=&\,		\bb E\bigg[\sum_{s=1}^\ell\sum_{i_1,...,i_\nu,w,y} N^{-1}\cal C_{s+1}(H_{wy}) \partial_{wy}^s\bigg(\int_{\bf Q}(\widehat{G}_{yw}\widehat{G}_{w'_1y'_1}-\widetilde{G}_{yw}\widetilde{G}_{w'_1y'_1}) \mathring{R}\bigg)\bigg]\\
		-&\bb E\bigg[\sum_{s=1}^\ell \sum_{i_1,...,i_\nu,w} \cal C_{s+1}(H_{w'_1w})\partial^s_{w'_1w} \bigg(\int_{\bf Q}(\widehat{G}_{wy'_1}\widehat{\ul{G}}-\widetilde{G}_{wy'_1}\widetilde{\ul{G}}) \mathring{R}\bigg)\bigg]+O_{\prec}(\cal E_{10}(Q))\\
		\eqd&\,\sum_{s=1}^{\ell}L_{s}^{(7)}+\sum_{s=1}^{\ell}L_{s}^{(8)}+\sum_{s=1}^{\ell}L_{s}^{(9)}+\sum_{s=1}^{\ell}L_{s}^{(10)}+O_{\prec}(\cal E_{10}(Q))\,.
	\end{aligned}
\end{equation}
To streamline the formula, in the above we use $L^{(7)}_s, L^{(8)}_s, L^{(9)}_s$ and $L^{(10)}_s$ to denote the terms that contain the factors $\widehat{G}_{yw}\widehat{G}_{w'_1y'_1}$, $\widetilde{G}_{yw}\widetilde{G}_{w'_1y'_1}$,   $\widehat{G}_{wy'_1}\widehat{\ul{G}}$ and $\widetilde{G}_{wy'_1}\widetilde{\ul{G}}$ respectively. 

Let us look at $L_{s}^{(9)}$ carefully, which is the most tricky term. By \eqref{diff}, we have
	\begin{align}
		L_{1}^{(9)}=&\,\bb E\bigg[ \sum_{i_1,...,i_\nu,w} N^{-1} \int_{\bf Q}\re(G_{ww}{G}_{w'_1y'_1})\widehat{\ul{G}}\mathring{R}\bigg]+\bb E\bigg[ \sum_{i_1,...,i_\nu,w} N^{-1} \int_{\bf Q}\re(G_{ww'_1}{G}_{wy'_1})\widehat{\ul{G}}\mathring{R}\bigg]\nonumber\\
		&-\bb E\bigg[ \sum_{i_1,...,i_\nu,w} N^{-1} \int_{\bf Q}\widehat{G}_{wy'_1}\partial_{wy}(\widehat{\ul{G}}\mathring{R})\bigg]-\bb E\bigg[ \sum_{i_1,...,i_\nu,w} N^{-1} \partial_{wy}\bigg(\int_{\bf Q}\bigg)\widehat{G}_{wy'_1}\widehat{\ul{G}}\mathring{R}\bigg]+O_{\prec}(\cal E_{10}(Q))\nonumber\\
		&\eqd L_{1,1}^{(9)}+\cdots +L_{1,4}^{(9)}+O_{\prec}(\cal E_{10}(Q))\nonumber\,.
	\end{align}
It is easy to see that 
\[
L_{1,2}^{(9)}=\bb E\bigg[ \sum_{i_1,...,i_\nu,w} N^{-1} \int_{\bf Q}(\widehat{G}_{ww'_1}\widehat{G}_{wy'_1}-\widetilde{G}_{ww'_1}\widetilde{G}_{wy'_1})\widehat{\ul{G}}\mathring{R}\bigg]\eqd\sum_{k=1}^{2}\bb E\cal M(Q^{(9)}_{1,2,k})\,,
\]
where each $Q^{(9)}_{1,2,k}$ lies in $\cal Q_o$, and they satisfy either $\nu_2(Q^{(9)}_{1,2,1})=\nu_2(Q)+1$ and $u(Q^{(9)}_{1,2,2})=u(Q)+2$, while other parameters are unchanged. Hence each $Q^{(9)}_{1,2,k}$ satisfies \eqref{5.33}. For 
$$
L_{1,3}^{(9)}=\sum_{k=1}^{\ell}\bb E\cal M(Q^{(9)}_{1,3,k})\,,
$$ 
we can follow the strategy of Lemma \ref{lemma 5.7} to discuss the resulting terms. More precisely, let $Q^{(9)}_{1,3}\deq N^{-1} \int_{\bf Q}\widehat{G}_{wy'_1}\widehat{\ul{G}}\mathring{R}$. Note that
\[
\nu({Q})-\theta_1(Q)=\nu({Q^{(9)}_{1,3}})-\theta_1(Q^{(9)}_{1,3}),
\]
while other parameters of $Q$ and $Q^{(9)}_{1,3}$ are the same. Thus
\begin{equation} \label{5.37}
\cal E_8(Q_{1,3}^{(9)})= \cal E_8(Q)\,.
\end{equation}
We get each $Q_{1,3,k}^{(9)}$ from $Q_{1,3}^{(9)}$ through applying $\partial_{wy}$ on different factors of $\widehat{\ul{G}}\mathring{R}$. The changes of the parameters $\nu,\nu_3,\nu_4,u,\zeta,\theta_1,\theta_2$ between $Q_{1,3,k}^{(9)}$ and $Q_{1,3}^{(9)}$ are described by Lemma \ref{lemma 5.7}. In cases (i), (iv), (vi), (viii) of Lemma \ref{lemma 5.7}, i.e.\,the derivative $\partial_{wy}$ hits a Green function, the index $w$ appear twice, and as $\widehat{G}_{wy'_1}$ is a real part, we get $\nu_2(Q^{(9)}_{1,3,k})=\nu_2(Q_{1,3}^{(9)})+1$. In cases (v), (vii) we have $\nu_2(Q^{(9)}_{1,3,k})=\nu_2(Q_{1,3}^{(9)})$, but the value of $\zeta$ will increase. Hence 
\[
\cal E_8(Q_{1,3,k}^{(9)})\leq \cal E_8(Q_{1,3}^{(9)})\cdot N^{-\tau/2}\,.
\]
Combining \eqref{5.37} and the above shows that all $Q_{1,3,k}^{(9)}$ satisfy \eqref{5.33}. By the cases (ii), (iii) in Lemma \ref{lemma 5.7}, we can also estimate $L_{1,4}^{(9)}$ in a similar fashion. As a result, we have reached the relation
\begin{equation} \label{5.38}
		L_{1}^{(9)}=\bb E\bigg[ \sum_{i_1,...,i_\nu,w} N^{-1} \int_{\bf Q}\re(G_{ww}{G}_{w'_1y'_1})\widehat{\ul{G}}\mathring{R}\bigg]+\sum_{k=1}^{\ell}\bb E \cal M(Q_{1,k}^{(9)})+O_{\prec}(\cal E_{10}(Q))\,,
\end{equation}
where each $Q_{1,k}^{(9)}$ satisfy \eqref{5.33}. In addition, it is also easy to see from \eqref{diff} that $Q_{1,k}^{(9)}\in \cal Q_o$. 

When $s\geq 2$, let 
\[
Q^{(9)}\deq-\cal C_{s+1}(H_{w'_1w})\int_{\bf Q} \widehat{G}_{wy'_1}\widehat{\ul{G}} \mathring{R}\,,\quad \mbox{and} \quad L_s^{(9)}=\sum_{k=1}^\ell\bb E \cal M(Q_{s,k}^{(9)})\,.
\]
It is easy to see that
\[
\nu(Q^{(9)})=\nu(Q)+1\,, \quad  \theta_1(Q^{(9)})=\theta_1(Q)+1\,, \quad \theta_2(Q^{(9)})=\theta_2(Q)+s-1\,,
\]
while parameter $\nu_3,\nu_4,u,\zeta$ are the same between $Q$ and $Q^{(9)}$. Thus
\begin{equation*} 
	 \cal E_8(Q^{(9)}) \cdot (N^{-1/3+6\xi})^{-\nu_2(Q^{(9)})}\leq \cal E_8(Q)\cdot f^{1-s}\cdot (N^{-1/3+6\xi})^{-\nu_2(Q)}\,.
\end{equation*} 
We get each $Q_{s,k}^{(9)}$ from $Q^{(9)}$ through applying $\partial_{wy}^s$. Everytime we apply one $\partial_{wy}$, the changes of the parameters $\nu,\nu_3,\nu_4,u,\zeta,\theta_1,\theta_2$ are described by Lemma \ref{lemma 5.7}. Thus
\begin{equation*} 
	\cal E_8(Q^{(9)}_{s,k})\cdot (N^{-1/3+6\xi})^{-\nu_2(Q^{(9)}_{s,k})} \leq \cal E_8(Q^{(9)}) \cdot (N^{-1/3+6\xi})^{-\nu_2(Q^{(9)})}\cdot N^{(2\delta+12\xi)s}\,.
\end{equation*}
Note that $\nu_2(Q_{s,k}^{(9)})\geq \nu_2(Q)$. Combining the above two equations, we see that each $Q_{s,k}^{(9)}$ satisfies \eqref{5.33}. In addition, it is also easy to see from \eqref{diff} that $Q_{s,k}^{(9)}\in \cal Q_o$. Combining with \eqref{5.38}, we get
\begin{equation} \label{5.41}
	\sum_{s=1}^{\ell}L_{s}^{(9)}=\bb E\bigg[ \sum_{i_1,...,i_\nu,w} N^{-1}\int_{\bf Q} \re(G_{ww}{G}_{w'_1y'_1})\widehat{\ul{G}}\mathring{R}\bigg]+\sum_{k=1}^{\ell}\bb E \cal M(Q_{k}^{(9)})+O_{\prec}(\cal E_{10}(Q))\,,
\end{equation}
where each $Q_k^{(9)}\in \cal Q_o$ satisfy \eqref{5.33}. 

Similar computation can be done for other terms in \eqref{5.35}. We get
\begin{equation} \label{5.42}
	\sum_{s=1}^{\ell}L_{s}^{(7)}=-\bb E\bigg[ \sum_{i_1,...,i_\nu,w,y} N^{-2} \int_{\bf Q}\re(G_{ww}{G}_{w'_1y'_1}){\widehat{G}_{yy}}\mathring{R}\bigg]+\sum_{k=1}^{\ell}\bb E \cal M(Q_{k}^{(7)})+O_{\prec}(\cal E_{10}(Q))
\end{equation}
and
\begin{equation} \label{5.43}
	\sum_{s=1}^{\ell}L_{s}^{(8)}=\sum_{k=1}^{\ell}\bb E \cal M(Q_{k}^{(8)})+O_{\prec}(\cal E_{10}(Q))\,,\quad \sum_{s=1}^{\ell}L_{s}^{(10)}=\sum_{k=1}^{\ell}\bb E \cal M(Q_{k}^{(10)})+O_{\prec}(\cal E_{10}(Q))\,.
\end{equation}
Here each $Q_k^{(7)},Q_k^{(8)}, Q_k^{(10)}\in \cal Q_o$ satisfy \eqref{5.33}. Note the cancellation between $L_{s}^{(7)}$ and $L_s^{(9)}$. Inserting \eqref{5.41} -- \eqref{5.43} into \eqref{5.35} yields the desired result. 

\subsubsection*{Situation 2}
The odd index shows up as an off-diagonal index of the imaginary part of the Green function. We shall prove that
\begin{equation} \label{5.321}
	\bb E \cal M(Q)\prec \sum_{k=1}^\ell \bb E \cal M(Q_k)+\cal E_{9}(Q)
\end{equation}
for some fixed $\ell$. Here $Q_k\in \cal Q_o$ and satisfy 
\begin{equation} \label{5.331}
	\cal E_8(Q_k)\leq \cal E_8(Q)\cdot N^{-\tau/3} \,.
\end{equation}
Iterating \eqref{5.32} and \eqref{5.321} concludes the proof of Lemma \ref{lemma 5.6}. 

W.L.O.G.\,we can assume $w_1\in O(Q)$ and $w_1\not\equiv y_1$. Let $\mathring{Q}\deq Q/\widetilde{G}_{w_1y_1}$. Similar to \eqref{5.35}, we have
\begin{equation} \label{5.44}
	\begin{aligned}
		\bb E \cal M(Q)=&\,		\bb E\bigg[\sum_{s=1}^\ell\sum_{i_1,...,i_\nu,w,y} N^{-1}\cal C_{s+1}(H_{wy}) \partial_{wy}^s\bigg(\int_{\bf Q} (\widetilde{G}_{yw}\widehat{G}_{w_1y_1}+\widehat{G}_{yw}\widetilde{G}_{w_1y_1}) \mathring{R}\bigg)\bigg]\\
		-&\bb E\bigg[\sum_{s=1}^\ell \sum_{i_1,...,i_\nu,w} \cal C_{s+1}(H_{w_1w})\partial^s_{w_1w} \bigg(\int_{\bf Q}(\widetilde{G}_{wy_1}\widehat{\ul{G}}+\widehat{G}_{wy_1}\widetilde{\ul{G}}) \mathring{R}\bigg)\bigg]+O_{\prec}(\cal E_{10}(Q))\\
		\eqd&\,\sum_{s=1}^{\ell}L_{s}^{(11)}+\sum_{s=1}^{\ell}L_{s}^{(12)}+\sum_{s=1}^{\ell}L_{s}^{(13)}+\sum_{s=1}^{\ell}L_{s}^{(14)}+O_{\prec}(\cal E_{10}(Q))\,.
	\end{aligned}
\end{equation}
Let us look at $L_1^{(13)}$, which will reveal the reason of distinguishing the real and imaginary cases. Comparing with $L_1^{(9)}$, we see that now have the imaginary part of the Green function $\widetilde{G}_{wy_1}$, instead of $\widetilde{G}_{wy'_1}$. As a result, if the derivative $\partial_{w_1w}$ hits some $\widetilde{G}$, the parameter $\nu_2$ may not increase. For instance, if the derivative hits $\exp(\ii t Y)$, $q(X_E)$ and $\tr \widetilde{G}=\sum_y \widetilde{G}_{yy}$, one of the terms will be
\[
\bb E\cal M(Q_{1}^{(13)})\deq \sum_{i_1,...,i_\nu,w,y} \int_{\bf Q}\int_{\cal I}\int_{\cal J_E}\widetilde{G}_{wy_1}\widehat{\ul{G}} \mathring{R}\cdot \widetilde{G}_{\b v \b w}q'(X_E)\widetilde{G}_{y w}\widehat{G}_{w_1y}\,,
\]
where $Q_1^{(13)}\in\cal Q_o$. As $\widetilde{G}_{wy_1}$ and $\widetilde{G}_{y w}$ are both the imaginary part, $\nu_2(Q_1^{(13)})=\nu_2(Q)$. In addition, $\nu(Q_1^{(13)})=\nu(Q)+2,\nu_3(Q_1^{(13)})=\nu_3(Q)+1,\nu_4(Q_1^{(13)})=\nu_4(Q)+1,u(Q_1^{(13)})=u(Q)+2$, and other parameters are unchanged. As a result,
\begin{equation} \label{5.45}
\cal E_8(Q_{1}^{(13)})=\cal E_8(Q)\cdot N^{2\delta+12\xi}\quad \mbox{and} \quad \cal E_{10}(Q_{1}^{(13)})=\cal E_{10}(Q)\cdot N^{2\delta+12\xi}\leq \cal E_9(Q)\,.
\end{equation}
where in the last step we used \eqref{xi10}. However, $w_1$ is still an index that appear odd many times in $Q_1^{(13)}$, and now it appears in the real part of a Green function. Thus we can re-expand $\bb E\cal M(Q_{1}^{(13)})$ using \eqref{5.32}, which leads to 
\begin{equation*} 
	\bb E \cal M(Q_1^{(13)})\prec \sum_{k=1}^\ell \bb E \cal M(Q_{1,k}^{(13)})+\cal E_{10}(Q_{1}^{(13)})\prec \sum_{k=1}^\ell \bb E \cal M(Q_{1,k}^{(13)})+\cal E_{9}(Q)\,,
\end{equation*}
where each $Q_{1,k}^{(13)}\in \cal Q_o$ satisfy 
\begin{equation*} \label{5//33}
	\cal E_8(Q_{1,k}^{(13)})\leq \cal E_{8}(Q_1^{(13)})\cdot N^{-\tau/2}\leq \cal E_8(Q)\cdot N^{-\tau/3}\,.
\end{equation*}
Here in the above two estimates we used \eqref{5.45}. In other words, each $Q_{1,k}^{(13)}$ satisfies \eqref{5.331}. Aside from this, it can be checked that other terms on RHS of \eqref{5.44}, including the higher-order cumulant terms, can all be treated using the argument in Situation 1. This leads to
\begin{equation*}
	\sum_{s=1}^{\ell}L_{s}^{(11)}+\sum_{s=1}^{\ell}L_{s}^{(12)}=-\bb E\sum_{i_1,...,i_\nu,w,y} N^{-2}\int_{\bf Q} \im({G}_{yy}G_{ww}{G}_{w_1y_1}) \mathring{R}+\sum_{k=1}^\ell \bb E \cal M(Q^{(11,12)}_k)+O_{\prec}(\cal E_{10}(Q))
\end{equation*}
and 
\begin{equation*}
	\sum_{s=1}^{\ell}L_{s}^{(13)}+\sum_{s=1}^{\ell}L_{s}^{(14)}=\bb E\sum_{i_1,...,i_\nu,w} N^{-1}\int_{\bf Q} \im(\ul{G}G_{ww}{G}_{w_1y_1}) \mathring{R}+\sum_{k=1}^\ell \bb E \cal M(Q^{(13,14)}_k)+O_{\prec}(\cal E_{9}(Q))\,,
\end{equation*}
where each $Q^{(11,12)}_k, Q^{(13,14)}_k $ satisfies \eqref{5.331}. Combining the above two relations yields \eqref{5.321} as desired. This finishes the proof of Lemma \ref{lemma 5.6}.

	\subsection{Conclusion}
By \eqref{5.5}, \eqref{5.6}, \eqref{5.21}, and  \eqref{5.28}, we get
	\[
	g'(t)=\ii \bb E [Y \exp(\ii t Y) ]=-t \bb E (Y^{(1)}\exp(\ii t Y))+\ii t\langle \b v,\b w \rangle g'(t)+\ii \langle \b v ,\b w\rangle g(t)+O_{\prec}(N^{-\delta})\,.
	\]
	Similarly, we can also show that
	\[
	\bb E (Y^{(1)}\exp(\ii t Y))=\ii t \langle \b v,\b w \rangle \bb E(Y^{(1)}\exp(\ii t Y))+  t g'(t)+g(t)+O_{\prec}(N^{-\delta})\,.
	\]
The error term is uniform for all $t\in[-T,T]$. Combining the above two equations yields the ODE
\[
g'(t)=\frac{\ii \langle \b v,\b w\rangle+(\langle \b v,\b w\rangle^2-1)t}{1-2\ii \langle \b v ,\b w \rangle t+(1-\langle\b v,\b w \rangle^2)t^2}g(t)+O_{\prec}(N^{-\delta})\,.
\]
 As $g(0)=1$, we get
	\[
	g(t)=(1-2\ii \langle \b v ,\b w \rangle t+(1-\langle\b v,\b w \rangle^2)t^2)^{-1/2}+O_{\prec}(N^{-\delta})\,.
	\]
Note that $\langle \b  v,\b z\rangle \langle \b  w,\b z \rangle\overset{d}{=}Z_1Z_2$, where $Z_1$ and $Z_2$ are standard Gaussian random variables with $\mathrm{Cov}(Z_1,Z_2)=\langle \b v,\b w \rangle$. Thus 
\[
\bb E  \exp\big(\ii t\langle \b  v,\b z\rangle \langle \b  w,\b z\rangle\big)=(1-2\ii \langle \b v ,\b w \rangle t+(1-\langle\b v,\b w \rangle^2)t^2)^{-1/2}\,.
\]
This finished the proof of \eqref{5.1}.

\section{Proof of Theorem \ref{thm1.9}} \label{appA}

In this section, we prove Theorem \ref{thm1.9} for $k=1$; the general case follows in a similar fashion. More precisely, we shall show that there exists fixed $\varepsilon>0$ such that
\begin{equation} \label{A1}
\bb E \exp\bigg(\ii  t\frac{N}{\sqrt{2\tr B^2}} \langle {\b u^H_1}, B {\b u^H_1}\rangle \bigg)=\bb E  \exp(\ii t Z)+O(N^{-\varepsilon})	
\end{equation}
uniformly for all $t\in [-T,T]$. Here $B\equiv B_1$ and $Z\equiv Z_1$. The steps are very close to those in Section \ref{sec5}. In fact, as the higher order terms decay very fast in the dense case, we do not need to expand recursively as in Section \ref{sec5.3}, which makes the argument simpler. We will make use of the following priori estimates.

\begin{proposition} \label{propA1}
	(i) (Isotropic local law and delocalization, \cite{KY2}) Recall the definition of $\cal G$ from \eqref{2.1}. Fix $\b v,\b w \in \bb S^{N-1}$. We have
	\begin{equation*} 
	|\langle \b v,\cal{G}(z) \b w \rangle-\langle \b v, \b w \rangle m_{\mathrm{sc}}(z)| \prec \sqrt{\frac{\im m_{sc}(z)}{N\eta}}+\frac{1}{N\eta} \quad\mbox{and} \quad \ul{\cal{G}}(z)-m_{sc}(z) \prec  \frac{1}{N\eta}
	\end{equation*} 
	uniformly in $z  \in \{z=E+\ii \eta: E\in \bb R, \eta >0\}$. In addition,
	\[
	\max_{1\leq i\leq N}|\langle \b u^H_i,\b v \rangle| \prec N^{-1/2}\,,
	\]
	and
	\begin{equation} \label{rigidity}
\lambda^H_k-\gamma_{k}^{sc}\prec \frac{1}{N^{2/3}\min\{k,N-k\}^{1/3}}\,, \quad 1\leq k \leq N\,. 
	\end{equation}
Here $\gamma_{k}^{sc}$ is defined through 
$$
\frac{k-1/2}{N}=\int_{\gamma_k^{sc}}^{2}\rho_{sc}(x)\dd x\,.
$$
	(ii) (Level repulsion, \cite[Proposition 2.4]{KY1}) There exists fixed $\varepsilon_0>0$, such that the following holds. For any $\varepsilon\in (0,\varepsilon_0)$, there exists $\alpha>0$ such that
	\begin{equation} \label{A2}
			\mathbb P(\lambda^{H}_{1}-\lambda^{H}_{2} \leq N^{-2/3-\varepsilon})=O (N^{-\varepsilon-\alpha})\,.
	\end{equation}
(iii) (Eigenstate Thermalization Hypothesis, \cite[Theorem 1.2]{CEH1}) Let $B \in \bb R^{N\times N}$ be deterministic and traceless. Then
\[
\max_{1\leq i,j\leq N}|\langle {\b u^H_i}, B {\b u^H_j}\rangle | \prec \frac{\sqrt{\tr |B|^2}}{N}\,.
\]
\end{proposition}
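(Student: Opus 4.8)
\textbf{Proof proposal for Theorem \ref{thm1.9} (via Proposition \ref{propA1}).} The plan is to mirror the argument of Section \ref{sec5}, but in the much friendlier dense setting $q = N^{1/2}$, where the higher-order cumulants $\cal C_{s+1}(H_{ij}) = O(N^{-(s+1)/2})$ decay fast enough that \emph{no recursive expansion} is needed: each term produced by a cumulant expansion with $s \geq 2$ is directly negligible. I will prove \eqref{A1} for $k=1$; the joint case follows by the same argument with a multivariate characteristic function. Set $\eta_+ := N^{-2/3-\xi}$, $\eta_- := N^{-2/3-6\xi}$ for small $\xi > 0$, write $b := \sqrt{2\tr B^2}$, and let $g(t) := \bb E \exp(\ii t Y)$ where, by the analogue of Lemma \ref{lemma 5.1} built on Proposition \ref{propA1}(i)--(ii) (level repulsion and rigidity near the edge, exactly as in \cite[Lemmas 3.1, 3.2]{KY1}),
\[
\frac{N}{b}\langle \b u^H_1, B\b u^H_1\rangle = \frac{N}{\pi b}\int_{\cal I}\tr\!\big(B\,\widetilde{\cal G}(E+\ii\eta_+)\big)\, q\!\Big[\int_{\cal J_E}\tr\widetilde{\cal G}(x+\ii\eta_-)\,\dd x\Big]\dd E + O_\prec(N^{-\delta}) =: Y + O_\prec(N^{-\delta}),
\]
with $\cal I$ a window of width $N^{-2/3+\delta}$ around $2$ (here $\cal L = 2$ is deterministic, so the $H$-dependence of the integration domains that plagued Section \ref{sec5} disappears — a genuine simplification). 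Note $\tr(B\widetilde{\cal G}) = \sum_{ij} B_{ij}\widetilde{\cal G}_{ji}$ is the natural isotropic-type observable.

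\textbf{Key steps.} First, differentiate: $g'(t) = \ii\bb E[Y\exp(\ii t Y)]$, and using the resolvent identity $z\ul{\cal G}\cal G_{ij} = \ul{\cal G}(H\cal G)_{ij} - \ul{\cal G}\delta_{ij}$ together with $1 + zm_{sc} + m_{sc}^2 = 0$ and $\tr B = 0$ (which kills the $\langle \b e_i,\b e_j\rangle$ / diagonal constant terms after summing against $B_{ij}$), rewrite $\tr(B\widetilde{\cal G})$ in terms of $\im(\,\ul{H\cal G}\,\tr(B\cal G) - \ul{\cal G}\,\mathrm{Tr}(B H\cal G)\,)$ plus negligible errors, exactly as in \eqref{5/5}--\eqref{5.5}. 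Second, apply the cumulant expansion (Lemma \ref{lem:cumulant_expansion}) in each $H_{ij}$. The $s=1$ Gaussian term, together with the $m_{sc}^2$ term coming from $1+zm_{sc}+m_{sc}^2=0$, produces a cancellation of the leading pieces (as in \eqref{5.17}) and leaves the genuinely self-referential contributions; the $s\geq 2$ terms are $O_\prec(N^{-\delta})$ because each carries a factor $N^{-(s+1)/2}\cdot N^{2}$ from the double index sum, times bounded Green-function factors controlled by Proposition \ref{propA1}(i) and the Ward identity, times $\eta_+^{-O(1)}$ absorbed by the $N^{-(s-1)/2}$ surplus — this is where $q = N^{1/2}$ does all the work and the whole machinery of Definition \ref{def3.1} and Lemmas \ref{lemma 3.4}--\ref{lemma 3.6} is unnecessary. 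Third, in the surviving leading terms, use spectral decomposition plus Proposition \ref{propA1}(iii) (the ETH bound $|\langle \b u^H_i, B\b u^H_j\rangle| \prec \sqrt{\tr|B|^2}/N$) to collapse sums like $\sum_j \widetilde{\cal G}_{j\cdot}$-type objects — here replaced by $\sum_{ij,kl} B_{ij}B_{kl}(\cdots)$ — down to contributions localized at the top eigenvalue $\lambda^H_1$, then re-express these via Lemma \ref{lemma 5.1}-type identities back in terms of $Y$ and $g(t)$, $g'(t)$. The traceless ETH input is crucial: it is what makes $\frac{N}{b}\langle \b u^H_1,B\b u^H_1\rangle$ have variance $1$ rather than something depending on the fine structure of $B$, and it replaces the isotropic local law $\eqref{1.1}$ used in \eqref{5.13}.

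\textbf{Closing the ODE.} As in \eqref{5.21} and the Conclusion of Section \ref{sec5}, introduce the auxiliary quantity $Y^{(1)} := \bb E\big[\frac{N^2}{\pi b^2}\int_{\cal I} \tr(B\widetilde{\cal G}B\widetilde{\cal G}')\cdots\big]$ (morally $\frac{N^2}{b^2}\langle \b u^H_1, B\b u^H_1\rangle^2$ smeared), derive a coupled linear system
\[
g'(t) = -t\,\bb E[Y^{(1)}\exp(\ii t Y)] + O_\prec(N^{-\delta}), \qquad \bb E[Y^{(1)}\exp(\ii t Y)] = t\,g'(t) + g(t) + O_\prec(N^{-\delta}),
\]
whose normalization constants come out to match the complex-Gaussian computation $\bb E[\langle \b g, B\b g\rangle^m]$ (with the $\sqrt 2$ in the statement reflecting that $\tr B^2 = \tr|B|^2$ for real symmetric $B$ and that $\langle \b z, B\b z\rangle$ for real Gaussian $\b z$ has variance $2\tr B^2$). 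Solving with $g(0)=1$ gives $g(t) = e^{-t^2/2} + O_\prec(N^{-\delta})$, i.e.\ $Z \sim \cal N(0,1)$, uniformly in $t \in [-T,T]$.

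\textbf{Main obstacle.} The delicate point is Step 3: showing that after the cumulant expansion, the surviving leading terms really do reassemble into $O(1)$-many copies of $Y$, $Y^{(1)}$, $g$, $g'$ with the \emph{exact} coefficients, rather than leaving behind a residual quadratic form in $B$ that the ETH bound cannot control. One must track carefully how $B_{ij}$ pairs with Green-function entries; terms of the schematic form $\sum_{ij}B_{ij}\cal G_{i\cdot}\cal G_{j\cdot}$ are fine (they become $(\cal G B\cal G)_{\cdot\cdot}$ and then, near the edge, are dominated by the $\lambda^H_1$ pole), but one must verify no term of the form $\sum_{ij}B_{ij}^2(\cdots)$ or $\sum_{ijkl}B_{ij}B_{kl}(\cdots)$ with uncontrolled index structure appears at leading order — the only such survivor should be the one feeding $Y^{(1)}$, and it must come with the coefficient dictated by $\tr(B\widetilde{\cal G}B\widetilde{\cal G})$. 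A secondary technical nuisance is justifying the passage from $\langle \b u^H_1, B\b u^H_1\rangle$ to the smeared contour integral $Y$ when $B$ is not a projection; this requires the ETH bound uniformly in $i,j$ (not just $i=j$) to bound the off-diagonal eigenvector overlaps $\langle \b u^H_i, B\b u^H_j\rangle$ that appear when resolving $\widetilde{\cal G}$ spectrally, which is precisely what Proposition \ref{propA1}(iii) supplies.
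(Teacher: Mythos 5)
Your proposal does not prove the statement it was assigned. Proposition \ref{propA1} is itself the collection of external inputs for the Wigner case: the isotropic local law, complete delocalization and rigidity (quoted from \cite{KY2}), the edge level-repulsion estimate (quoted from \cite[Proposition 2.4]{KY1}), and the Eigenstate Thermalization Hypothesis bound (quoted from \cite[Theorem 1.2]{CEH1}). In the paper these are not derived at all; the ``proof'' is citation. What you wrote is instead a sketch of Theorem \ref{thm1.9} \emph{assuming} Proposition \ref{propA1} --- every one of its four ingredients is invoked as a black box (``by the analogue of Lemma \ref{lemma 5.1} built on Proposition \ref{propA1}(i)--(ii)'', ``use \ldots Proposition \ref{propA1}(iii)''), so as an argument for Proposition \ref{propA1} it is circular: nothing in it establishes the isotropic law $|\langle \b v,\cal G \b w\rangle-\langle\b v,\b w\rangle m_{sc}|\prec \sqrt{\im m_{sc}/(N\eta)}+1/(N\eta)$, the rigidity bound \eqref{rigidity}, the level-repulsion probability \eqref{A2}, or the ETH bound $\max_{i,j}|\langle \b u^H_i,B\b u^H_j\rangle|\prec \sqrt{\tr|B|^2}/N$. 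A blind proof of the proposition would have to either reproduce those arguments (self-consistent equations and fluctuation averaging for the isotropic law, the Knowles--Yin comparison argument for level repulsion, the multi-resolvent local laws of \cite{CEH1} for ETH) or, as the paper does, simply reduce to the cited references.

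As a secondary remark, even read as a sketch of Theorem \ref{thm1.9} (which is the content of Section \ref{appA}, not of Proposition \ref{propA1}), your outline is too optimistic in one place and deviates in another: it is not true that every $s\ge 2$ cumulant term is ``directly negligible'' --- the paper's Section \ref{sec7.2} isolates an $s=2$, $r_1=2$ term that survives the naive counting and requires the refined bound $\sum_i|(\widehat{\cal G}B\widehat{\cal G})_{ii}|^2\prec N^{2/3+\xi}\tr B^2$ of Lemma \ref{lemma 7.3}, proved by a separate moment/expansion argument; moreover the surviving leading term must be cumulant-expanded a second time (Section 7.3), after which the ETH input $\langle \b u^H_1,B^2\b u^H_1\rangle=\tr B^2/N+O_\prec(\tr B^2\,N^{-1-\tau/2})$ closes the equation directly as $\frak g'(t)=-t\,\frak g(t)+O_\prec(N^{-\delta})$, rather than through the coupled $(g,Y^{(1)})$ system you import from the sparse Section \ref{sec5}.
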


Now let us start the proof of \eqref{A1}. As  the statement is homogeneous in $B$, it suffices to assume $\|B\|=1$. Let $\varepsilon_0$ be as in Proposition \ref{propA1} (iii). Fix parameters $\xi \in (0,\varepsilon_0/100)$ and $\delta\in (0,\xi/100)$, and set 
$$
\eta_+\deq N^{-2/3-\xi}\,, \quad \eta_-\deq N^{-2/3-6\xi}\,.
$$
Similar to Lemma \ref{lemma 5.1}, we have the following result. Let $q: \bb R \to \bb R_+$ be a smooth cut-off function such that $q(x)=1$ for $|x|\leq 1/3$ and $q(x)=0$ for $x\geq 2/3$. We abbreviate
\begin{equation*} 
	\cal 	I\deq [2-N^{-2/3+\delta},2+N^{-2/3+\delta}] \,,\quad \mbox{and}\quad \cal J_E\deq [E-N^{\delta}\eta_+,2+N^{-2/3+\delta}]\,.
\end{equation*}
Similar to Lemma \ref{lemma 5.1}, we can use Proposition \ref{propA1} to show that
	\begin{equation*}
		\begin{aligned}
			&\bb E \exp\bigg(\frac{\ii  tN}{\sqrt{\tr B^2}} \langle {\b u^H_1}, B {\b u^H_1}\rangle \bigg)\\
			=\,&\bb E \exp\bigg(\frac{\ii tN^2}{\pi \sqrt{\tr B^2}}\int _{\cal I} \ul{B\widetilde{\cal G}}(E+\ii \eta_+)q(\cal X_E)\,\dd E\bigg)+O(N^{-\delta})
			\eqd\bb E \exp( \ii t\cal Y)+O(N^{-\delta})\,,
		\end{aligned}
	\end{equation*}
where
$
\cal X_E\deq \int_{\cal J_E} \tr \widetilde{\cal G}(x+\ii \eta_-)\dd x.
$
In the sequel, we abbreviate $\frak g(t)\deq \bb E \exp(\ii  t\cal Y)$. Similar to \eqref{5/5}, we can use resolvent identity and $\tr B=0$ to show that
\[
\underline{\widetilde{\cal G} B}=\im (\ul{H\cal G}\cdot\ul{\cal GB}-\ul{\cal G}\cdot\ul{H\cal GB})\,.
\]
Together with Lemma \ref{lem:cumulant_expansion}, we get
	\begin{align}
		{\frak g}'(t)
		&=\ii \bb E \bigg[\frac{N^2}{\pi\sqrt{2\tr B^2}}\int_{\cal I}\bigg( \frac{1}{N}
		\sum_{ij}H_{ij}\im( \cal G_{ji}\cdot \ul{\cal G B})-\frac{1}{N}\sum_{ij}H_{ij}\im ( (\cal GB)_{ji}\cdot \ul{\cal G})\bigg)q(\cal X_E)\dd E\cdot \exp( \ii t\cal Y)\bigg]\nonumber\\
		&=\ii \bb E \bigg[\sum_{s=1}^{\ell}\sum_{ij}\cal C_{s+1}(H_{ij})\partial_{ij}^{s}\bigg(\frac{N}{\pi\sqrt{2\tr B^2}}\int_{\cal I}  \im(\cal G_{ji}\cdot \ul{\cal GB})q(\cal X_E)\dd E\cdot \exp( \ii t\cal Y)\bigg)\bigg]\nonumber\\
		& \quad - \ii \bb E \bigg[\sum_{s=1}^{\ell}\sum_{ij}\cal C_{s+1}(H_{ij})\partial_{ij}^{s}\bigg(\frac{N}{\pi\sqrt{2\tr B^2}}\int_{\cal I}  \im((\cal GB)_{ji}\cdot \ul{\cal G})q(\cal X_E)\dd E\cdot \exp( \ii t\cal Y)\bigg)\bigg]+O_{\prec}(N^{-\delta})\nonumber\\
	&\eqd\sum_{s=1}^{\ell} L^{(15)}_s+ \sum_{s=1}^{\ell} L^{(16)}_s+O_{\prec}(N^{-\delta})\label{A3}\,.
	\end{align}
The rest of this section computes the RHS of \eqref{A3}.

\subsection{The leading terms, part I} \label{secA1}
Let us first compute $L_1^{(16)}$. Similar to the computation of \eqref{5.7}, together with \eqref{diff} and $\cal C_{2}(H_{ij})=(1+O(\delta_{ij}))/N$, it is not hard to see that 
\begin{align}
L_1^{(16)}=&\,\ii \bb E \bigg[\frac{N^2}{\pi \sqrt{2\tr B^2}}\int_\cal I \im (\ul{\cal GB}\cdot \ul{\cal G}^2)q(\cal X_E)\dd E\cdot \exp(\ii t\cal Y)\bigg]\nonumber\\
&+\ii \bb E \bigg[\frac{N}{\pi \sqrt{2\tr B^2}}\int_\cal I \im (\ul{\cal G^2B}\cdot \ul{\cal G})q(\cal X_E)\dd E\cdot \exp(\ii t\cal Y)\bigg]\nonumber\\
&+\ii \bb E \bigg[\frac{2}{\pi \sqrt{2\tr B^2}}\int_\cal I \im (\ul{\cal G^3B})q(\cal X_E)\dd E\cdot \exp(\ii t\cal Y)\bigg]\nonumber\\
&+\ii \bb E \bigg[\frac{2}{N\pi\sqrt{2\tr B^2} }\sum_{ij} \int_{\cal I} \im( (\cal GB)_{ji}\cdot\ul{\cal G}) q'(X_E) \bigg(\int_{\cal J_E} \im ((\cal G^{(x)})^2)_{j i}\dd x\bigg)\dd E\cdot \exp( \ii t\cal Y)\bigg]\nonumber\\
&- \bb E \bigg[\frac{tN}{\pi^2 \tr B^2}\sum_{ij}\int_{\cal I^2}\im( (\cal GB)_{ji}\cdot\ul{\cal G})\im((\cal G'B\cal G')_{ji}) q(\cal X_E) q(\cal X_{E'})\dd E \dd E'\exp(\ii t\cal Y)\bigg]\nonumber\\
&+ \bb E \bigg[\frac{tN}{\pi^2 \tr B^2}\sum_{ij}\int_{\cal I^2}\im( (\cal GB)_{ji}\cdot\ul{\cal G})\im(\ul{\cal G'B}) q(X_E) q'(\cal X_{E'})\nonumber\\
&\quad \quad \quad \cdot \bigg(\int_{\cal J_{E'}} \im ((\cal G^{(x)})^2)_{j i}\dd x\bigg)\dd E \dd E'\exp(\ii t\cal Y)\bigg]+O(N^{-\delta})\nonumber\\
\eqd&\,L_{1,1}^{(16)}+\cdots +L_{1,6}^{(16)}+O(N^{-\delta})\label{A4}\,.
	\end{align}
Here we used the abbreviation $\cal G'\deq\cal G(E'+\ii \eta_+)$ and $\cal G^{(x)}\deq\cal G(x+\ii \eta_-)$. $L_{1,1}^{(16)}$ is a leading term which will later be canceled by a term in $L_1^{(15)}$. To estimate $L_{1,2}^{(16)}$, note that Proposition \ref{propA1}  imply
\[
\ul{\cal G^2B} = \frac{1}{N}\sum_{ij\alpha}\frac{\b u^{H}_\alpha(i)\b u^{H}_{\alpha}(j)}{(\lambda_\alpha-E-\ii \eta_+)^2} B_{ji}=\frac{1}{N} \sum_{\alpha}\frac{\langle \b u^{H}_\alpha , B \b u^{H}_\alpha \rangle}{(\lambda_{\alpha}-E-\ii \eta_+)^2} \prec \eta^{-1}_+\ul{\widetilde{\cal G}}\cdot \frac{\sqrt{\tr B^2}}{N}\prec N^{-2/3+2\xi} \sqrt{\tr B^2}\,.
\]
It is not hard to deduce that 
$$
L_{1,2}^{(16)}\prec N\cdot N^{-2/3+\delta}\cdot N^{-2/3+2\xi}\prec N^{-\delta}\,.
$$
Similarly, we can also show that $L_{1,3}^{(16)}\prec N^{-2/3+6\xi}$\,. Moreover, as
\[
\frac{1}{N}\sum_{ij} (\cal G B)_{ji} ((\cal G^{(x)})^2)_{ji}=\ul{(\cal G^{(x)})^2\cal GB}
\]
and $|\cal J_E| \prec N^{-2/3+\delta}$, we can again use spectral decomposition and Proposition \ref{propA1} to show that $L_{1,4}^{(16)}\prec N^{-4/3+20\xi}$. Note that $L_{1,4}^{(16)}$ is even smaller than $L_{1,3}^{(16)}$: the derivative hits $ q(\cal X_E)$ and creates an integration over $\cal J_E$. The same kind of smallness also occurs in $L_{1,6}^{(16)}$, and one can deduce that $L^{(16)}_{1,6}\prec N^{-4/3+30\xi}$. For $L_{1,5}^{(16)}$, the leading contribution comes from taking the two imaginary parts on the first $\cal G$ and $\cal G'$. More precisely, by spectral decomposition and Proposition \ref{propA1}, we have
\begin{equation*}
	\begin{aligned}
&\frac{1}{N}\sum_{ij}(\widetilde{\cal G}B)_{ji} \cdot \ul{\widehat{\cal G}}\cdot (\widehat{\cal G'}B\widetilde{\cal G'})_{ji} = \ul{\widetilde{\cal G'}B\widehat{\cal G'}\widetilde{\cal G}B}\cdot \ul{\widehat{\cal G}} \prec |\ul{\widetilde{\cal G'}B\widehat{\cal G'}\widetilde{\cal G}B}|\\
\leq &\,\bigg|\frac{1}{N}\sum_{\alpha,\alpha'} \frac{\eta_+}{(\lambda_\alpha-E')+\eta_+^2}\langle \b u^{H}_\alpha , B \b u^H_{\alpha'}\rangle \frac{1}{|(\lambda_{\alpha'}-E'-\ii \eta_+)(\lambda_{\alpha'}-E-\ii \eta_+)|}\langle \b u^{H}_{\alpha'} , B \b u^H_{\alpha}\rangle\bigg|\\
\prec & \,\ul{\widetilde{\cal G'}}\cdot N\eta_+^{-1}(\widetilde{\ul{\cal G'}}+\widetilde{\ul{\cal G}})\cdot \frac{\tr B^2}{N^2}\prec \tr B^2 \cdot N^{-1+2\xi}\,.
	\end{aligned}
\end{equation*}
 In addition, note that by \eqref{rigidity} and a dyadic decomposition, we have
 \begin{equation} \label{sum}
 \frac{1}{N}\sum_\alpha \frac{1}{|\lambda_{\alpha}-E'-\ii \eta_+|} \prec 1\,.
 \end{equation}
 Thus
\begin{equation*}
	\begin{aligned}
		&\frac{1}{N}\sum_{ij}(\widehat{\cal G}B)_{ji} \cdot \ul{\widetilde{\cal G}}\cdot (\widetilde{\cal G'}B\widehat{\cal G'})_{ji}  \prec |\ul{\widehat{\cal G'}B\widetilde{\cal G'}\widehat{\cal G}B}|\cdot N^{-1/3+\xi}\\
		\leq &\,\bigg|\frac{1}{N}\sum_{\alpha,\alpha'} \frac{1}{|\lambda_{\alpha}-E'-\ii \eta_+|}\langle \b u^{H}_\alpha , B \b u^H_{\alpha'}\rangle \frac{1}{|(\lambda_{\alpha'}-E'-\ii \eta_+)(\lambda_{\alpha'}-E-\ii \eta_+)|}\langle \b u^{H}_{\alpha'} , B \b u^H_{\alpha}\rangle\bigg|\cdot N^{-1/3+\xi}\\
		\prec & \,\cdot N\eta_+^{-1}(\widetilde{\cal G'}+\widetilde{\cal G})\cdot \frac{\tr B^2}{N^2}\cdot N^{-1/3+\xi}\prec \tr B^2 \cdot N^{-1+3\xi}\,.
	\end{aligned}
\end{equation*}
Similarly, $\frac{1}{N}\sum_{ij}(\widehat{\cal G}B)_{ji} \cdot \ul{\widetilde{\cal G}}\cdot (\widehat{\cal G'}B\widetilde{\cal G'})_{ji} \prec \tr B^2 \cdot N^{-4/3+3\xi}$. Inserting the above estimates into $L_{1,5}^{(16)}$ yields
\begin{equation} \label{A5}
	L_{1,5}^{(16)}=\bb E \bigg[\frac{tN^2}{\pi^2 \tr B^2}\int_{\cal I^2} \ul{\widehat{\cal G'}B\widetilde{\cal G'}\widetilde{\cal G}B}\cdot \ul{\widehat{\cal G}}  \,q(\cal X_E) q(\cal X_{E'})\dd E \dd E'\exp(\ii t\cal Y)\bigg]+O(N^{-\delta})\,.
\end{equation}
Inserting the estimates of $L_{1,2}^{(16)}$, $L_{1,3}^{(16)}$, $L_{1,4}^{(16)}$, $L_{1,6}^{(16)}$ as well as \eqref{A5} into \eqref{A4}, we get
\begin{equation}  \label{whatever}
	\begin{aligned}
		L_1^{(16)}=&\,\ii \bb E \bigg[\frac{N^2}{\pi \sqrt{2\tr B^2}}\int_\cal I \im (\ul{\cal GB}\cdot \ul{\cal G}^2)q(\cal X_E)\dd E\cdot \exp(\ii t\cal Y)\bigg]\\
		&+\bb E \bigg[\frac{tN^2}{\pi^2 \tr B^2}\int_{\cal I^2} \ul{\widehat{\cal G'}B\widetilde{\cal G'}\widetilde{\cal G}B}\cdot \ul{\widehat{\cal G}}  \,q(\cal X_E) q(\cal X_{E'})\dd E \dd E'\exp(\ii t\cal Y)\bigg]\,.
	\end{aligned}
\end{equation}
We can estimate $L_{1}^{(15)}$ through very similar estimates, and show that
\begin{equation} \label{whatever2}
	L_1^{(15)}=-\ii \bb E \bigg[\frac{N^2}{\pi \sqrt{2\tr B^2}}\int_\cal I \im (\ul{\cal GB}\cdot \ul{\cal G}^2)q(\cal X_E)\dd E\cdot \exp(\ii t\cal Y)\bigg]+O(N^{-\delta})\,.
\end{equation}
Note the cancellation among $L_{1}^{(15)}$ and $L_{1}^{(16)}$. Combining the above two results, together with $\ul{\widehat{\G}}'=-1+O_{\prec}(N^{-1/3+\xi})$ and Proposition \ref{propA1} yield
\begin{equation} \label{7.8}
	\begin{aligned}
		L^{(15)}_1+L_1^{(16)}&=
		\,\bb E \bigg[\frac{tN^2}{\pi^2 \tr B^2}\int_{\cal I^2} \ul{\widehat{\cal G'}B\widetilde{\cal G'}\widetilde{\cal G}B} \,q(\cal X_E) q(\cal X_{E'})\dd E \dd E'\exp(\ii t\cal Y)\bigg]+O(N^{-\delta})\\
		&\eqd L_{1,5,1}^{(16)}+O(N^{-\delta})\,.
	\end{aligned}
\end{equation}

\subsection{Higher order terms} \label{sec7.2}
The following prior estimates will be handy for us. 
\begin{lemma} \label{lemmaA2}
	Let $\G\equiv G(E+\ii \eta_+)$ with $E \in \cal I$. We have the following results.
	\begin{enumerate}
		\item $\sum_{ij} |(\G B)_{ji}|^2 \prec N^{1/3+\xi} \tr B^2$.
		\item $\sum_i |(\G B)_{ii}-m_{sc}B_{ii}|^2 \prec N^{-2/3+2\xi}\tr B^2$ and $\sum_i |(\G B)_{ii}|^2 \prec \tr B^2$.
		\item $\sum_i |(\widetilde{\G} B)_{ii}|^2 \prec N^{-2/3+\xi}\tr B^2$.
		\item $(\widetilde{\G}B{\G})_{ij}  \prec  N^{-1/3+\xi}\sqrt{\tr B^2}$ uniformly in $i,j$.
		\item $(\widehat{\G}B\widehat{\G})_{ij}\prec \sqrt{\tr B^2}$ uniformly in $i,j$.
		\item $\sum_{i} |(\G B\widetilde{\G})_{ij}|^2\prec N^{-1/3+\xi}\tr B^2 $ and $\sum_{i} |(\widehat{\G} B\widehat{\G})_{ij}|^2\prec N^{1/3+\xi}\tr B^2 $ uniformly in $j$.
		\item $\partial_{ij}^{r}q(\cal X_E) \prec N^{-1/3+10\xi}$ for any fixed $r\geq 1$.
		\item $\ul{\widetilde{\G}B} \prec  N^{-4/3+\xi}\sqrt{\tr B^2} $.
	\end{enumerate}
\end{lemma}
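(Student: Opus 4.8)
The plan is to derive all eight bounds from a single toolkit: the spectral decomposition of $\G=\G(z)$, $\widehat{\G}=\re\G$ and $\widetilde{\G}=\im\G$ in the eigenbasis $(\lambda^H_\alpha,\b u^H_\alpha)$ of $H$, together with eigenvector delocalization $|\langle\b u^H_\alpha,\b v\rangle|\prec N^{-1/2}$ and the isotropic local law of Proposition~\ref{propA1}(i) (valid at all scales $\eta>0$), the eigenstate thermalization bound $|\langle\b u^H_\alpha,B\b u^H_\beta\rangle|\prec\sqrt{\tr B^2}/N$ of Proposition~\ref{propA1}(iii), the Ward identity $\G^*\G=\widetilde{\G}/\eta$ (Lemma~\ref{ward}), the differentiation rule \eqref{diff}, and the bound \eqref{sum}. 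Throughout I use that for $z=E+\ii\eta_+$ with $E\in\cal I$ one has $\im m_{sc}(z)\prec N^{-1/3+\xi}$, hence $\im\ul{\G}(z)\prec\im m_{sc}(z)+(N\eta_+)^{-1}\prec N^{-1/3+\xi}$, $\tr\widetilde{\G}=N\im\ul{\G}\prec N^{2/3+\xi}$, $\sum_\alpha|\lambda^H_\alpha-z|^{-1}\prec N$ by \eqref{sum}, and $\sum_\alpha|\lambda^H_\alpha-z|^{-2}=\eta_+^{-1}\tr\widetilde{\G}\prec N^{4/3+\xi}$, together with the analogous facts at scale $\eta_-$.

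Estimates (i), (ii), (iii), (viii) are short. For (i), the Ward identity gives $\sum_{ij}|(\G B)_{ji}|^2=\tr(B\G^*\G B)=\eta_+^{-1}\tr(\widetilde{\G}B^2)$, and spectral-decomposing $\widetilde{\G}$ and bounding $\langle\b u^H_\alpha,B^2\b u^H_\alpha\rangle=\|B\b u^H_\alpha\|^2\prec\tr B^2/N$ (ETH) reduces this to $\eta_+^{-1}(\tr B^2/N)\tr\widetilde{\G}\prec\eta_+^{-1}\im\ul{\G}\,\tr B^2$. For (ii) the key point is that an entrywise estimate of $(\G B)_{ii}-m_{sc}B_{ii}=\langle\b e_i,(\G-m_{sc})B\b e_i\rangle$ loses too much once the $N$ diagonal terms are summed; instead one applies the \emph{isotropic} local law with the deterministic unit test vectors $\b e_i$ and $B\b e_i/\|B\b e_i\|$, obtaining $|(\G B)_{ii}-m_{sc}B_{ii}|\prec N^{-1/3+\xi}(B^2)_{ii}^{1/2}$ uniformly in $i$, and then squares and sums using $\sum_i(B^2)_{ii}=\tr B^2$; the second bound of (ii) follows from the triangle inequality and $\sum_iB_{ii}^2\le\tr B^2$. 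Estimate (iii) then follows since $(\widetilde{\G}B)_{ii}=\im\big((\G B)_{ii}\big)=(\im m_{sc})B_{ii}+O_\prec(N^{-1/3+\xi}(B^2)_{ii}^{1/2})$ with $\im m_{sc}\prec N^{-1/3+\xi}$, and (viii) follows by spectral-decomposing $\widetilde{\G}$ and using ETH: $\ul{\widetilde{\G}B}\prec N^{-1}(\sqrt{\tr B^2}/N)\tr\widetilde{\G}=N^{-1}\im\ul{\G}\sqrt{\tr B^2}$.

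For (iv)--(vi) I spectral-decompose the product in question---e.g. $(\widetilde{\G}B\G)_{ij}=\sum_{\alpha\beta}\frac{\eta_+}{(\lambda^H_\alpha-E)^2+\eta_+^2}\,\frac{1}{\lambda^H_\beta-z}\,\langle\b u^H_\alpha,B\b u^H_\beta\rangle\,\b u^H_\alpha(i)\,\b u^H_\beta(j)$---bound every factor $\langle\b u^H_\alpha,B\b u^H_\beta\rangle$ by ETH, every isolated coordinate $\b u^H_\alpha(i)$ by delocalization, and, for the squared-sum estimates in (vi), write $\sum_i|M_{ij}|^2=(M^*M)_{jj}$ and use the Ward identity to collapse $\G^*\G$ to $\eta_+^{-1}\widetilde{\G}$. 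The remaining eigenvalue sums are then controlled by $\tr\widetilde{\G}\prec N^{2/3+\xi}$, by $\sum_\alpha|\lambda^H_\alpha-z|^{-1}\prec N$, and by $\sum_\alpha|\lambda^H_\alpha-z|^{-2}\prec N^{4/3+\xi}$; for the $\widehat{\G}$ factors one also uses $\big|\tfrac{\lambda^H_\alpha-E}{(\lambda^H_\alpha-E)^2+\eta_+^2}\big|\le|\lambda^H_\alpha-z|^{-1}$. Collecting exponents yields (iv)--(vi).

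For (vii), the rule \eqref{diff} shows that for every fixed $m\ge1$ the quantity $\partial^m_{ij}\tr\G(x+\ii\eta_-)$ is a bounded linear combination of terms of the form (a product of at most $m-1$ entries $\G_{ab}$ with $a,b\in\{i,j\}$, each $\prec1$) $\times(\G(x+\ii\eta_-)^2)_{cd}$: the summed index producing a factor $(\G^2)$ is created only by the first derivative, and each subsequent $\partial_{ij}$ merely spawns a short bridge entry rather than lengthening the resolvent chain. Since $|(\G^2)_{cd}|\le(\im\G_{cc}\,\im\G_{dd})^{1/2}/\eta_-\prec(\im m_{sc}+(N\eta_-)^{-1})/\eta_-\prec N^{1/3+O(\xi)}$ by Ward, Cauchy--Schwarz and the entrywise law at scale $\eta_-$, we get $\partial^m_{ij}\tr\widetilde{\G}(x+\ii\eta_-)\prec N^{1/3+O(\xi)}$; since $\cal J_E$ is deterministic here and has length $\prec N^{-2/3+\delta}$, integrating gives $\partial^m_{ij}\cal X_E\prec N^{-1/3+O(\xi)}$, and the chain rule with $q$ and all its derivatives bounded yields $\partial^r_{ij}q(\cal X_E)\prec N^{-1/3+O(\xi)}$. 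The genuinely delicate steps are thus (a) the use of the isotropic (rather than entrywise) law in the direction $B\b e_i$ in (ii), hence in (iii), and (b) the structural observation in (vii) that iterating $\partial_{ij}$ on $\tr\G$ never accumulates more than one power of $\eta_-^{-1}$; all other steps are straightforward bookkeeping with ETH, delocalization, the Ward identity and \eqref{sum}.
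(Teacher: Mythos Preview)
Your proposal is correct and follows essentially the same approach as the paper: spectral decomposition combined with ETH, delocalization, the isotropic local law, the Ward identity, and the dyadic sum \eqref{sum}. The only minor variations are that you derive (iii) from (ii) rather than by a direct spectral argument (this costs an extra harmless factor $N^{\xi}$), and in (i) you bound $\|B\b u^H_\alpha\|^2=\sum_\beta|\langle\b u^H_\beta,B\b u^H_\alpha\rangle|^2$ via Parseval and ETH, whereas the paper instead applies ETH to the traceless part $B^2-\tfrac{\tr B^2}{N}I$; both routes are valid.
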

\begin{proof}
	\begin{enumerate}
		\item By spectral decomposition, we get
		\begin{equation*}
			\sum_{ij} |(\G B)_{ji}|^2=\sum_{i} (B\G^* \G B)_{ii}\prec N\eta_+^{-1}\im \ul{\G}\max_{\alpha}\sum_{ik} (B_{ik}\b u_\alpha(k))^2=N\eta_+^{-1}\im\ul{\G} \max_{\alpha}\langle \b u^{H}_\alpha, B^2\b u^H_{\alpha}\rangle\,.
		\end{equation*}
		Note that
		\begin{equation} \label{A11}
			\begin{aligned}
				\langle \b u^{H}_\alpha, B^2\b u^H_{\alpha}\rangle-\frac{\tr B^2}{N}&=\langle \b u^{H}_\alpha, (B^2-\tr B^2/N\cdot I)\b u^H_{\alpha}\rangle\\
				&\prec \frac{\sqrt{\tr  (B^2-\tr B^2/N\cdot I)^2} }{N} \prec \frac{\tr B^2}{N^{1+\tau/2}}\,,
			\end{aligned}
		\end{equation}
		where in the second step we used Proposition \ref{propA1} (iii), and in the last step we used $\tr B^2\geq N^{\tau}$ and $\|B\|=1$. Combining the above two estimates as well as $ \ul{\widetilde{\G}}\prec N^{-1/3+\xi/2}$ yield the desired result.
		
		\item By Proposition \ref{propA1}\,(i), $(\G B)_{ii}= m_{sc} B_{ii}+O_{\prec}((N\eta_+)^{-1}) \|B_{\cdot i}\|$, which implies
		\[
		\sum_{i} |(\G B)_{ii}-m_{sc}B_{ii}|^2 \prec (N\eta_+)^{-2}\sum_{ij} B^2_{ij}\prec N^{-2/3+\xi}\tr B^2\,.
		\]
	This proves the first estimate. The second estimate follows from $\sum_{i} B_{ii}^2 \leq \tr B^2$ and a triangle inequality.
		\item By spectral decomposition and Proposition \ref{propA1}, we have  
\begin{align*}
	\sum_i |(\widetilde{\G} B)_{ii}|^2 &\prec \sum_i\bigg(\frac{1}{\sqrt{N}} \sum_{\alpha} \frac{\eta_+}{(\lambda_\alpha-E)^2+\eta^2_+} \Big|\sum_{k}\b u^H_\alpha(k)B_{ki}\Big| \bigg)^2\\
	&\prec N \ul{\widetilde{\G}}^2 \max_{\alpha}\langle \b u^{H}_\alpha, B^2\b u^H_{\alpha}\rangle \prec N^{-2/3+\xi}\tr B^2\,.
\end{align*}
Here in the last step we used \eqref{A11}.

\item By spectral decomposition, Proposition \ref{propA1}, \eqref{sum} and $\ul{\widetilde{\G}}\prec N^{-1/3+\xi/2}$, we get
\begin{equation*}
	\begin{aligned}
		(\widetilde{\G}B{\G})_{ij}\prec  \ul{\widetilde{\G}}\cdot  \frac{1}{N}\sum_\alpha \frac{1}{|\lambda_{\alpha}-E'-\ii \eta_+|}\cdot \sqrt{\tr B^2} \prec N^{-1/3+\xi}\sqrt{\tr B^2}
	\end{aligned}
\end{equation*}

\item The proof is very similar to (v), by replacing the estimate $\ul{\widetilde{\G}} \prec N^{-1/3+\xi/2}$ with \eqref{sum}.

\item Once again, by spectral decomposition, Proposition \ref{propA1} and $\im \ul{\G}\prec N^{-1/3+\xi/2}$, we have
\begin{equation*}
	\begin{aligned}
		\sum_{i} |(\G B\widetilde{\G})_{ij}|^2= (\widetilde{\G}B\G^*\G B \widetilde{\G})_{jj}\prec \eta^{-1}_+ \ul{\widetilde{\G}B\widetilde{\G} B \widetilde{\G}} \prec \eta_+^{-2} \ul{\widetilde{\G}B\widetilde{\G} B }\prec N^{-1/3}\tr B^2\,.
	\end{aligned}
\end{equation*}
This proves the first relation. The second estimate follows in a similar fashion by using \eqref{sum}.

\item By \eqref{diff} and Proposition \ref{propA1}, it is not hard to see that
\begin{equation*}
	\partial_{ij}^{r}q(\cal X_E) \prec |\cal J_E| \cdot \max_{i_1i_2}|\im (\G^2)_{i_1i_2}|\prec |\cal J_E| \cdot \eta_-^{-1} |\im \ul{\G}|\prec N^{-1/3+10\xi}\,.
\end{equation*}

\item By spectral decomposition and Proposition \ref{propA1}, we see that
\[
\ul{\widetilde{\G}B} \prec \ul{\widetilde{\G}}\cdot \frac{\sqrt{\tr B^2}}{N}\prec N^{-4/3+\xi}\sqrt{\tr B^2}.
\]
\end{enumerate}
This finishes the proof.
\end{proof}

As a consequence of Lemma \ref{lemmaA2}, we have the following bounds. By Proposition \ref{propA1} and Lemma \ref{lemmaA2} (iv), (vii), (viii), it is not hard to see that
\begin{equation}  \label{7.10}
	\begin{aligned}
	\partial_{ij} \cal Y&\prec  \frac{N}{\sqrt{\tr B^2}}\int_{\cal I}|  \im ({\G}B{\G})_{ij}+\im ({\G}B{\G})_{ji}|q(\cal X_E)+ |\ul{\widetilde{\G}B}\cdot \partial_{ij}q(\cal X_E)|\dd E\prec  N^{2\xi} 
	\end{aligned}
\end{equation}
Similarly, for $r\geq 2$, we can use Proposition \ref{propA1} and Lemma \ref{lemmaA2} (iv), (v), (vii), (viii) to show that
\begin{equation} \label{7.11}
	\partial^{r}\cal Y \prec N^{2\xi}+\frac{N}{\sqrt{\tr B^2}}\int_{\cal I}\max_{i_1,...,i_4}| (\widehat{\G}B\widehat{\G})_{i_1i_2}\widetilde{G}_{i_3i_4}|q(\cal X_E)\dd E \prec N^{2\xi}\,.
\end{equation}
In addition, by Proposition \ref{propA1} (i), (iii) and Lemma \ref{lemmaA2} (vii), (viii), we can also show that
\begin{equation*}
\begin{aligned}
	\partial_{ij} \cal Y&\prec \frac{N}{\sqrt{\tr B^2}}\int_{\cal I}|  \im ({\G}B{\G})_{ij}+\im ({\G}B{\G})_{ji}|q(\cal X_E)\dd E+N^{-1/3+10\xi}\\
&\eqd \cal Y_1^{(ij)}+N^{-1/3+10\xi}.
\end{aligned}
\end{equation*}
As Lemma \ref{lemmaA2} (vi) implies $\sum_{ij}|\cal Y_1^{(ij)}|^2 \prec N^2\cdot |\cal I|^2 \cdot N^{2/3+2\xi}  \prec N^{4/3+3\xi} $, we have
\begin{equation} \label{7.12}
	\sum_{ij}|\partial _{ij}\cal Y|^2 \prec N^{4/3+20\xi} \,.
\end{equation}
Now let 
\[
\cal Y_2^{(ij)}\deq \frac{N}{\sqrt{\tr B^2}} \int_{\cal I} \im ((\cal G B)_{ji}\cdot \ul{\G}) q(\cal X_E)\dd E\,.
\]
By Lemma \ref{lemmaA2} (i), we have $\sum_{ij} |\cal Y_2^{(ij)}|^2\prec N^2\cdot |\cal I|^2\cdot N^{1/3+2\xi} \prec N^{1+3\xi}$. In addition, by Lemma \ref{lemmaA2} (i) -- (iii), together with $ \max_i\widetilde{\G}_{ii}\prec N^{-1/3+\xi}$, we see that 
$
\sum_{ij} |\partial^r_{ij}\cal Y_2^{(ij)}|^2 \prec N^{1+3\xi} 
$
for all fixed $r\geq 1$. Hence
\begin{equation} \label{A14}
	\sum_{ij} |\partial^r_{ij}\cal Y_2^{(ij)}|^2 \prec N^{1+3\xi} 
\end{equation}
for all fixed $r\geq 0$. 

In the sequel, we shall estimate $L_{s}^{(16)}$ for fixed $s \geq 2$; the estimates of $L_{s}^{(15)}$ follow in a similar fashion. 

\subsubsection*{The estimate of $L_{s}^{(16)}, s\geq 2$}Note $L_s^{(16)}$ is bounded by a finite linear combination of the terms in the form 
\begin{equation} \label{A15}
\bb E N^{-(s+1)/2}\sum_{ij} |\partial^{r_0}_{ij}\cal Y_2^{(ij)} \partial^{r_1}_{ij}\cal Y\cdots \partial^{r_m}\cal Y |\,,
\end{equation}
where $r_0,m\geq 0$, $r_1,...,r_m\geq 1$ and $r_0+r_1+...+r_m=s$. 

\textit{Case 1.} Suppose $m=0$, then $r_0=s$, and by \eqref{A14} we get
\begin{equation*}
	\eqref{A15} \prec N^{-(s+1)/2} \cdot N \cdot \bigg(\sum_{ij}\big|\partial^r_{ij}\cal Y_2^{(ij)}\big|^2\bigg)^{1/2} \prec N^{(2-s)/2+3\xi}\prec N^{-\delta}\,.
\end{equation*}

\textit{Case 2.} Suppose $m\geq 1$ and $r_1=1$. Then by \eqref{7.11} -- \eqref{A14} we have
\begin{equation*} 
\eqref{A15} \prec N^{-(s+1)/2}  \bigg(\sum_{ij}\big|\partial^r_{ij}\cal Y_2^{(ij)}\big|^2\bigg)^{1/2}  \Big(\sum_{ij}|\partial _{ij}\cal Y|^2\Big)^{1/2}(N^{2\xi})^{m-1} \prec N^{(4-3s)/6+20s\xi} \prec N^{-\delta}\,.
\end{equation*}

\textit{Case 3.} The remaining case is $m\geq 1$ and $r_1,...,r_m\geq 2$. By \eqref{7.11} and \eqref{A14}, we get
\begin{equation} \label{7.15}
	\begin{aligned}
\eqref{A15} &\prec  N^{-(s+1)/2} \cdot N \cdot  \bigg(\sum_{ij}\big|\partial^r_{ij}\cal Y_2^{(ij)}\big|^2\bigg)^{1/2}  (N^{2\xi})^{m}\prec N^{(6-3s)/6+3s\xi}\,.
	\end{aligned}
\end{equation}
Clearly, $\eqref{7.15}\prec N^{-\delta}$ when $s\geq 3$. However, when $s=2$, \eqref{7.15} is not enough for us.

Summarizing the above four cases, we see that the only term that cannot be directly estimated appears when $s=2$, $m=1$ and $r_1=2$. Together with Lemma \ref{lemmaA2} (vii), we have
\begin{equation*}
	\sum_{s=2}^\ell L_{s}^{(16)}\prec \bigg|\bb E\bigg[ \frac{N^{3/2}}{ \tr B^2}\sum_{ij} c_{ij}\int_{\cal I^2} \im ((\G B)_{ij}\ul{\G})\cdot(\partial^2_{ij} \ul{\widetilde{\G}'B}) )q(\cal X_E)q(\cal X_{E'}) \dd E \dd E'\exp(\ii t\cal Y)\bigg] \bigg|+N^{-\delta}
\end{equation*}
for some uniformly bounded constants $c_{ij}$. 

\subsubsection*{The last error term} By \eqref{diff}, $\ul{\widetilde{\G}} \prec N^{-1/3+\xi}$, and Lemma \ref{lemmaA2} (in particular part (vi)), we see that whenever there is the off-diagonal term $(\G' B\G')_{ij}$ or $(\G' B \G')_{ji}$ coming from $\partial^2_{ij} \ul{\widetilde{\G}'B}$, the corresponding terms can always be bounded by $O_{\prec}(N^{-\delta})$. As a result, we have
	\begin{align}
		\sum_{s=2}^\ell L_{s}^{(16)}&\prec \bigg|\bb E\bigg[ \frac{N^{1/2}}{ \tr B^2}\sum_{ij} c_{ij}\int_{\cal I^2} \im ((\G B)_{ij}\ul{\G})\im ((\G' B \G')_{jj}\G'_{ii} )q(\cal X_E)q(\cal X_{E'}) \dd E \dd E'\exp(\ii t\cal Y)\bigg] \bigg|\nonumber\\
		&\prec \bigg|\bb E\bigg[ \frac{N^{1/2}}{ \tr B^2}\sum_{ij} c_{ij}\int_{\cal I^2} \im ((\G B)_{ij}\ul{\G})(\widehat{\G}' B \widehat{\G}')_{jj}\widetilde{\G}'_{ii} q(\cal X_E)q(\cal X_{E'}) \dd E \dd E'\exp(\ii t\cal Y)\bigg] \bigg|\nonumber+N^{-\delta}\nonumber\\
	&\eqd L_{2,1}^{(16)}+N^{-\delta}\label{7.17}\,.
\end{align}
Here in the second step we used Proposition \ref{propA1} and Lemma \ref{lemmaA2} to bound the other terms by $O_{\prec}(N^{-\delta})$. For instance,we have
\begin{align*}
	&\bb E\bigg[ \frac{N^{1/2}}{\tr B^2}\sum_{ij} c_{ij}\int_{\cal I^2} \im ((\G B)_{ij}\ul{\G})(\widehat{\G}' B \widetilde{\G}')_{jj}\widehat{\G}'_{ii} q(\cal X_E)q(\cal X_{E'}) \dd E \dd E'\exp(\ii t \cal Y)\bigg]\\
	\prec &\,\bigg|\bb E\bigg[ \frac{N^{1/2}}{ \tr B^2}\sum_{ij} c_{ij}\int_{\cal I^2} \im ((\G B)_{ij}\ul{\G})(\widehat{\G}' B \widetilde{\G}')_{jj} q(\cal X_E)q(\cal X_{E'}) \dd E \dd E'\exp(\ii t \cal Y)\bigg]\bigg|\\
	&+\bb E\bigg[ \frac{N^{1/2}}{\tr B^2}\sum_{ij} c_{ij}\int_{\cal I^2} \big|\im ((\G B)_{ij}\ul{\G})(\widehat{\G}' B \widetilde{\G}')_{jj}(\widehat{\G}'_{ii}-\re m_{sc}(z'))q(\cal X_E)q(\cal X_{E'}) \big| \dd E \dd E'\bigg]\\
	\prec &\,\bigg|\bb E\bigg[ \frac{N}{ \tr B^2}\sum_{j} \int_{\cal I^2} \im ((\G B)_{\b c^jj}\ul{\G})(\widehat{\G}' B \widetilde{\G}')_{jj} q(\cal X_E)q(\cal X_{E'}) \dd E \dd E'\exp(\ii t \cal Y)\bigg]\bigg|+N^{-\delta} \prec N^{-\delta}\,,
\end{align*}
where $\b c^j_i\deq N^{-1/2}c_{ij}$ and $\|\b c^j\|=O(1)$. In \eqref{7.17}, the estimates of $L_{2,1}^{(16)}$ and $L_{2,2}^{(16)}$ both rely on the following improvement of Lemma \ref{lemmaA2} (v), which we prove in the next section.

\begin{lemma} \label{lemma 7.3}
	Let $\G\equiv G(E+\ii \eta_+)$ with $E \in \cal I$. We have
	\[
\cal K\deq 	\sum_{i}|(\widehat{\G}B\widehat{\G})_{ii}|^2 \prec N^{2/3+\xi}\tr B^2\eqd \cal E_{11}\,.
	\]
\end{lemma}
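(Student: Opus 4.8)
The goal is to improve the trivial bound $\sum_i |(\widehat{\G}B\widehat{\G})_{ii}|^2 \prec N^{1+\xi}\tr B^2$ (which follows from Lemma \ref{lemmaA2} (v)) to the sharper $N^{2/3+\xi}\tr B^2$. My plan is to expand one of the two Green functions via the resolvent identity and the cumulant expansion, treating $\cal K$ itself as the quantity to be bootstrapped. Writing $\widehat{\G}=\frac12(\G+\G^*)$ and $\cal K$ as (a linear combination of) $\sum_i (\G B \G)_{ii}\overline{(\G B\G)_{ii}}$-type sums, I would use $z\G=\G H - I$ (more precisely $z\ul{G}G_{ii}$-type identities in the style of \eqref{5/5}, \eqref{qqq}) to replace one factor of $z$ by $H\G$, producing a sum $\sum_{ij}H_{ij}(\cdots)$ to which Lemma \ref{lem:cumulant_expansion} applies. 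Since $H$ is a standard Wigner matrix here, only the $s=1$ cumulant term (of order $N^{-1}$) contributes at leading order, and higher cumulants ($s\geq 2$) are suppressed by powers of $N^{-1/2}$ and are negligible, exactly as in the dense computations of Section \ref{secA1} and Section \ref{sec7.2}.

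The main steps, in order, are: (1) write $\cal K$ explicitly through spectral decomposition or directly as $\sum_i |(\widehat\G B\widehat\G)_{ii}|^2$, and set up the self-improving inequality $\cal K \prec N^{-1/2}\cdot(\text{stuff}) + (\text{lower order})$; (2) apply the resolvent identity to peel off one Green function, turning a diagonal entry $(\widehat{\G}B\widehat{\G})_{ii}$ into terms like $\frac1N\sum_j(\widehat\G B\widehat\G)_{jj}$ (giving back $\ul{G}$-type averages, which are $O(1)$) plus off-diagonal remainders $(\widehat\G B \widehat\G)_{ij}$ with $i\neq j$; (3) use the cumulant expansion on the resulting $\sum_{ij}H_{ij}(\cdots)$ and apply \eqref{diff}; the $s=1$ term contributes, after Ward's identity (Lemma \ref{ward}), factors of $\eta_+^{-1}\im\ul\G \prec N^{-1/3+\xi}$ from summations over the new index, together with sums like $\sum_{ij}|(\G B\widehat\G)_{ij}|^2$, which by Lemma \ref{lemmaA2} (vi) is $\prec N^{1/3+\xi}\tr B^2$; (4) estimate the higher-cumulant terms $s\geq 2$ crudely using the entrywise bounds of Lemma \ref{lemmaA2} (iv), (v) and the fact that each extra derivative costs at most $N^{2\xi}$ while each extra cumulant saves $N^{-1/2}$; (5) collect: one obtains $\cal K \prec N^{-1/2}\cdot N^{1/2}\cal K^{1/2}\cdot(N^{-1/3+\xi}\sqrt{\tr B^2})\cdot\sqrt{\tr B^2}+\cdots$, which after solving the quadratic in $\cal K^{1/2}$ yields $\cal K\prec N^{2/3+\xi}\tr B^2$.

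Concretely, I expect the structure to be: $\cal K \prec \big(\sum_i |(\widehat\G B\widehat\G)_{ii}|^2\big)^{1/2}\cdot \big(\sum_i |\text{new off-diagonal terms}|^2\big)^{1/2}+\cal E$ where the new off-diagonal sum is controlled by Ward's identity as $\eta_+^{-1}\im\ul\G\cdot\ul{\widehat\G B\widehat\G B \widehat\G}$-type averages, contributing $N^{-1/3+\xi}\tr B^2$, so that $\cal K\prec \cal K^{1/2}(N^{-1/3+\xi}\tr B^2)^{1/2}\cdot\text{const}+\cal E_{\text{l.o.}}$, giving $\cal K^{1/2}\prec N^{-1/6+\xi/2}(\tr B^2)^{1/2}\cdot\text{const}$; squaring I would need to be slightly careful with the constants, but the scaling $N^{2/3+\xi}\tr B^2$ drops out once I track the $N$-powers faithfully. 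Alternatively — and perhaps cleaner — one can avoid the self-consistent loop: bound $\cal K = \sum_i (\widehat\G B\widehat\G \widehat\G B\widehat\G)_{ii}$-free by inserting $\sum_i |(\widehat\G B\widehat\G)_{ii}|\le \big(\sum_i|(\widehat\G B\widehat\G)_{ij}|^2\big)^{1/2}N^{1/2}$ won't quite work, so the cumulant route is the honest one.

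The main obstacle is getting the cancellation/smallness in step (3) right: a naive application of Lemma \ref{lemmaA2} (v) alone gives only the trivial bound, so the improvement must come precisely from the identity-term in the resolvent expansion producing \emph{averaged} quantities $\ul{\widehat\G B\widehat\G}$-type traces (which are $O(\sqrt{\tr B^2})$-size rather than entrywise $O(\sqrt{\tr B^2})$ but summed over $N$ indices), combined with Ward's identity to convert one free summation into the delocalization gain $\eta_+^{-1}\im\ul\G\prec N^{-1/3+\xi}$. Keeping track of which Green function is being differentiated, and ensuring the ``dangerous'' terms where $\partial_{ij}$ hits $\widehat\G B\widehat\G$ twice on the same index pair are the ones controlled by Lemma \ref{lemmaA2} (vi) rather than (v), is the delicate bookkeeping; but structurally it is a strict simplification of the arguments already carried out for $L_{1,5}^{(16)}$ in Section \ref{secA1}, so no new ideas beyond careful power-counting should be needed.
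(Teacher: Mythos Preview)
Your framework is right—resolvent identity plus cumulant expansion is exactly what the paper does—but you misidentify the mechanism that produces the gain in the $s=1$ term, and you omit the moment method that makes the stochastic domination rigorous.

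The paper fixes $n\in\bb N_+$ and bounds $\cal B^n\deq\bb E\cal K^n$. Writing $z(\widehat{\G}B\widehat{\G})_{ii}=(H\widehat{\G}B\widehat{\G})_{ii}-(B\widehat{\G})_{ii}$ and expanding the first term via Lemma~\ref{lem:cumulant_expansion}, the $(B\widehat{\G})_{ii}$ piece is handled by Lemma~\ref{lemmaA2}(ii) and Cauchy--Schwarz, contributing directly to $\cal E_{11}^{1/2}\cal B^{n-1/2}$. The crucial point is that the $s=1$ cumulant term is \emph{not} small: its leading part is $-\ul{\widehat{\G}}\cdot(\widehat{\G}B\widehat{\G})_{ii}$ times the rest, i.e.\ $-m_{sc}\cal B^n$ up to error. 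Combining with the left side gives $(z+m_{sc})\cal B^n=O_\prec(\cal E_{12})$, and since $(z+m_{sc})^{-1}=-m_{sc}$ is bounded, the result follows. This is the self-consistent equation structure, not a Ward-identity gain.

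Your step~(3) instead claims the $s=1$ term is directly small via $\eta_+^{-1}\im\ul{\G}\prec N^{-1/3+\xi}$, and your step~(2) says the resolvent identity turns $(\widehat{\G}B\widehat{\G})_{ii}$ into $\frac1N\sum_j(\widehat{\G}B\widehat{\G})_{jj}$. Neither is what happens: the $\ul{\G}$ arises as a \emph{prefactor} multiplying the original diagonal entry, not as a new averaged quantity, and it is $O(1)$, not small. The quadratic-in-$\cal K^{1/2}$ inequality you sketch in step~(5) does not emerge from this computation; what emerges is a linear equation in $\cal B^n$ with invertible coefficient $z+m_{sc}$. The subleading $s=1$ terms (where $\partial_{ij}$ hits $\cal K^{n-1}$ or produces off-diagonal factors) and the $s\geq 2$ terms are then bounded using Lemma~\ref{lemmaA2}(v),(vi) and \eqref{7.21}, yielding $\cal E_{11}^{1/2}\cal B^{n-1/2}+\cal E_{11}^n$.
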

Indeed, by Lemma \ref{lemmaA2} (i), Lemma \ref{lemma 7.3} and $\widetilde{\G}'_{ii}\prec N^{-1/3+\xi}$, we have
$
L_{2,1}^{(16)} +L_{2,2}^{(16)} \prec N^{-\delta}.
$
Together with \eqref{7.17}, we get $\sum_{s=2}^\ell L_{s}^{(16)}\prec N^{-\delta}$. Estimating $L_{s}^{(15)}$ in a similar manner, we arrive at
\begin{equation} \label{7error}
	\sum_{s=2}^\ell L_{s}^{(16)}+\sum_{s=2}^\ell L_{s}^{(16)} \prec N^{-\delta}\,.
\end{equation}

\subsubsection{Proof of Lemma \ref{lemma 7.3}}
Note that $(\widehat{\G}B\widehat{\G})_{ii}$ is real and $(\widehat{\G}B\widehat{\G})_{ii}^2\geq 0$. Fix $n\in \bb N_+$. We aim to show that
\begin{equation}
	\cal B^{n} \deq \bb E \Big[\Big( \sum_{i}(\widehat{\G}B\widehat{\G})_{ii}^2\Big)^{n}\Big]\prec N^{-c}\cal B^n+ \cal E_{11}^{1/2}\cal B^{n-1/2}+\cal E_{11}^{n}\eqd \cal E_{12}
\end{equation}
for some fixed $c>0$, which trivially implies the desired result. By resolvent identity and Lemma \ref{lem:cumulant_expansion}, we have
\begin{equation} \label{7.20}
	\begin{aligned}
	z\cal B^{n}&= \bb E \sum_{i} (H\widehat{\G}B\widehat{\G})_{ii} (\widehat{\G}B\widehat{\G})_{ii}\cal K^{n-1}-\bb E \sum_{i} (B\widehat{\G})_{ii} (\widehat{\G}B\widehat{\G})_{ii}\cal K^{n-1}\\
	&=\sum_{s=1}^{\ell}\sum_{ij} \cal C_{s+1}(H_{ij})\bb E \partial^s_{ij}((\widehat{\G}B\widehat{\G})_{ji} (\widehat{\G}B\widehat{\G})_{ii}\cal K^{n-1})+O_{\prec}(\cal E_{12})\eqd\sum_{s=2}^{\ell}L_{s}^{(17)}+O_{\prec}(\cal E_{12})\,,
	\end{aligned}
\end{equation}
where in the second step we used Lemma \ref{lemmaA2} (ii). Note that by Lemma \ref{lemmaA2} (vi) we always have
\begin{equation} \label{7.21}
	\partial_{ij}^r \cal K \prec \cal E_{11}^{1/2}\cal K^{1/2}+\cal E_{11}
\end{equation}
for all fixed $r\geq 1$. In addition, Lemma \ref{lemmaA2} (v), (vi) imply
\begin{equation*}
	\sum_{ij}\Big|\partial^r_{ij}((\widehat{\G}B\widehat{\G})_{ji} (\widehat{\G}B\widehat{\G})_{ii})\Big| \prec N (\cal K+\cal E_{11})
\end{equation*}
for all $r\geq 0$. Thus when $s\geq 2$, we have
\begin{equation}
	L_{s}^{(17)}\prec N^{-(s+1)/2}\cdot N \bb E \bigg[(\cal K+\cal E_{11})\cdot  \sum_{a=0}^{n-1}(\cal E_{11}^{1/2}\cal K^{1/2}+\cal E_{11})^a \cal K^{n-1-a}\bigg] \prec \cal E_{12}\,.
\end{equation}
When $s=1$, by Lemma \ref{lemmaA2} (iv), (vi) and \eqref{7.21}, it is not hard to see that
\begin{equation} \label{7.22}
	L_{1}^{(17)}= -\bb E \ul{\widehat{G}} \cal K^{n}-\bb E \bigg[\sum_{i} \widehat{\G}_{ii}(\widehat{\G}B\widehat{\G})_{ii} \ul{\widehat{G}B\widehat{G}}\cal K^{n-1}\bigg]+O_{\prec}(\cal E_{12})=-m_{sc}\cal B^{n}+O_\prec(\cal E_{12})\,,
\end{equation}
Here in the second step we used Proposition \ref{propA1} to show that $\ul{\widehat{\G}B\widehat{\G}}\prec N^{-2/3+\xi}\cdot \sqrt{\tr B^2}$. Inserting \eqref{7.21} and \eqref{7.22} into \eqref{7.20}, we get
\[
(z+m_{sc})\cal B^n\prec \cal E_{12}\,.
\]
As $(z+m_{sc})^{-1}=-m_{sc}$ is bounded by 1, we obtain the desired result.

\subsection{The leading term, part II} Combining \eqref{A3}, \eqref{7.8} and \eqref{7error}, we obtain
\begin{equation} \label{7.26}
	{\frak g}'(t)=L_{1,5,1}^{(16)}+O_{\prec}(N^{-\delta})\,,
\end{equation}
where the RHS is defines in \eqref{7.8}. Unlike the isotropic case, here we need to expand again. Fortunately, it is very close to what we have seen in \eqref{A3}. By resolvent identity, we see that
\begin{equation*}
\begin{aligned}
L_{1,5,1}^{(16)}&=		\bb E \bigg[\frac{tN^2}{\pi^2 \tr B^2}\int_{\cal I^2}\Big(\ul{\widehat{\cal G'}}\cdot \ul{B\widetilde{\cal G'}\widetilde{\cal G}B} +\ul{H\widehat{\G'}}\cdot \ul{\widehat{\G'}B\widetilde{\cal G'}\widetilde{\cal G}B}-\ul{H\widetilde{\G'}}\cdot \ul{\widetilde{\G'}B\widetilde{\cal G'}\widetilde{\cal G}B}-\ul{\widehat{\G}}'\cdot \ul{H\widehat{\G}'B\widetilde{\cal G'}\widetilde{\cal G}B}\\
&\quad \quad +\ul{\widetilde{\G}}'\cdot \ul{H\widetilde{\G}'B\widetilde{\cal G'}\widetilde{\cal G}B}\Big) \,q(\cal X_E) q(\cal X_{E'})\dd E \dd E'\exp(\ii t\cal Y)\bigg]\eqd L^{(16)}_{1,5,1,1}+\cdots+L^{(16)}_{1,5,1,5}\,.
	\end{aligned}
\end{equation*}
Then we expand $L^{(16)}_{1,5,1,2},...,L^{(16)}_{1,5,1,5}$ via Lemma \ref{lem:cumulant_expansion}. By applying \eqref{diff}, as in \eqref{whatever} and \eqref{whatever2}, the leading terms of $L^{(16)}_{1,5,1,2}$ and $L^{(16)}_{1,5,1,4}$ will cancel each other, and so will the leading terms of $L^{(16)}_{1,5,1,3}$ and $L^{(16)}_{1,5,1,5}$. Moreover, similar to Sections \ref{secA1} and \ref{sec7.2}, we can use Proposition \ref{propA1} and Lemma \ref{lemmaA2} and show that the rest terms in $L^{(16)}_{1,5,1,2},...,L^{(16)}_{1,5,1,5}$ are bounded by $O_{\prec}(N^{-\delta})$. As a result, we have
\begin{equation*}
	L_{1,5,1}^{(16)}=L_{1,5,1,1}^{(16)}+O_\prec(N^{-\delta})=-\bb E \bigg[\frac{tN^2}{\pi^2 \tr B^2}\int_{\cal I^2} \ul{B\widetilde{\cal G'}\widetilde{\cal G}B}q(\cal X_E) q(\cal X_{E'})\dd E \dd E'\exp(\ii t\cal Y)\bigg]+O_\prec(N^{-\delta})\,,
\end{equation*}
where in the last step we used $\ul{\widetilde{\G}}'=-1+O_{\prec}(N^{-1/3+\xi})$ and Proposition \ref{propA1}. Now we proceed as in \eqref{5.19}. By \eqref{rigidity}, \eqref{A2} and \eqref{A11}, we can get
	\begin{align*}
	\hspace{-0.3cm}	L_{1,5,1}^{(16)}&=-\bb E \bigg[\frac{tN}{\pi^2 \tr B^2}\int_{\cal I^2}\sum_{\alpha}\frac{\langle \b u^{H}_\alpha, B^2\b u^H_{\alpha}\rangle \eta_+^2}{(\lambda_{a}-E)^2+\eta_+^2)((\lambda_{a}-E')^2+\eta_+^2)}q(\cal X_E) q(\cal X_{E'})\dd E \dd E'\exp(\ii t\cal Y)\bigg]+O_\prec(N^{-\delta})\nonumber\\
		&=-\bb E \bigg[\frac{tN}{\pi^2 \tr B^2}\int_{\cal I^2}\frac{\langle \b u^{H}_1, B^2\b u^H_{1}\rangle \eta_+^2}{(\lambda_{1}-E)^2+\eta_+^2)((\lambda_{1}-E')^2+\eta_+^2)}q(\cal X_E) q(\cal X_{E'})\dd E \dd E'\exp(\ii t\cal Y)\bigg]+O_\prec(N^{-\delta})\nonumber\\
		&=-\bb E \bigg[\frac{tN}{\tr B^2}\langle \b u^{H}_1, B^2\b u^H_{1}\rangle \exp(\ii t \cal Y)\bigg]+O_\prec(N^{-\delta})=-\bb E \big[t \exp(\ii t \cal Y)\big]+O_\prec(N^{-\delta})\,.
	\end{align*}
Inserting the above relation into \eqref{7.26} yields
\[
\frak g'(t)=-\bb E \big[t \exp(\ii t \cal Y)\big]+O_\prec(N^{-\delta})=-t\frak g(t)+O_\prec(N^{-\delta})
\]
uniformly for all $t \in [-T,T]$. As $\frak g(0)=1$, we obtain \eqref{A1} as desired. This finishes the proof. 

	{\small
	
	\bibliography{bibliography} 

\providecommand{\bysame}{\leavevmode\hbox to3em{\hrulefill}\thinspace}
\providecommand{\MR}{\relax\ifhmode\unskip\space\fi MR }
\providecommand{\MRhref}[2]{%
  \href{http://www.ams.org/mathscinet-getitem?mr=#1}{#2}
}
\providecommand{\href}[2]{#2}
\begin{thebibliography}{10}

\bibitem{alt2021delocalization}
J.~Alt, R.~Ducatez, and A.~Knowles, \emph{Delocalization transition for
  critical {E}rd{\H{o}}s--{R}{\'e}nyi graphs}, Communications in Mathematical
  Physics \textbf{388} (2021), no.~1, 507--579.

\bibitem{alt2021extremal}
\bysame, \emph{{Extremal eigenvalues of critical Erdős–Rényi graphs}}, The
  Annals of Probability \textbf{49} (2021), no.~3, 1347 -- 1401.

\bibitem{alt2022completely}
\bysame, \emph{The completely delocalized region of the
  {E}rd{\H{o}}s-{R}{\'e}nyi graph}, Electronic Communications in Probability
  \textbf{27} (2022), 1--9.

\bibitem{alt2023poisson}
\bysame, \emph{Poisson statistics and localization at the spectral edge of
  sparse {E}rd{\H{o}}s--{R}{\'e}nyi graphs}, The Annals of Probability
  \textbf{51} (2023), no.~1, 277--358.

\bibitem{ADK23}
\bysame, \emph{Localized phase for the erdős–rényi graph}, Comm.\ Math.\
  Phys.\ \textbf{405} (2024).

\bibitem{BHKY15}
R.~Bauerschmidt, J.~Huang, A.~Knowles, and H.T. Yau, \emph{Bulk eigenvalue
  statistics for random regular graphs}, Ann.\ Prob. \textbf{45} (2017),
  3626--3663.

\bibitem{BHKY19}
\bysame, \emph{Edge rigidity and universality of random regular graphs of
  intermediate degree}, Geom. Funct. Anal.\ \textbf{30} (2020), 693–769.

\bibitem{BHY19}
R.~Bauerschmidt, J.~Huang, and H.T. Yau, \emph{Local kesten–mckay law for
  random regular graphs}, Comm.\ Math.\ Phys.\ \textbf{369} (2019), 523–636.

\bibitem{BKY15}
R.~Bauerschmidt, A.~Knowles, and H.T. Yau, \emph{Local semicircle law for
  random regular graphs}, Comm.\ Pure Appl.\ Math.\ \textbf{70} (2017),
  1898--1960.

\bibitem{benaych2019largest}
F.~Benaych-Georges, C.~Bordenave, and A.~Knowles, \emph{Largest eigenvalues of
  sparse inhomogeneous {E}rd{\H{o}}s--{R}{\'e}nyi graphs}, The Annals of
  Probability \textbf{47} (2019), no.~3, 1653--1676.

\bibitem{benaych2020spectral}
\bysame, \emph{{Spectral radii of sparse random matrices}}, Annales de
  l'Institut Henri Poincaré, Probabilités et Statistiques \textbf{56} (2020),
  no.~3, 2141 -- 2161.

\bibitem{BCLX23}
L.~Benigni, N.~Chen, L.~Lopatto, and X.~Xie, \emph{Fluctuations in {Q}uantum
  {U}nique {E}rgodicity at the {S}pectral {E}dge}, Electron. J. of Probab
  \textbf{30} (2025), 1--55.

\bibitem{BC24}
L.~Benigni and G.~Cipolloni, \emph{Fluctuations of eigenvector overlaps and the
  {B}erry conjecture for {W}igner matrices}, Electron. J. Probab. \textbf{29}
  (2024), no.~150.

\bibitem{BL22}
L.~Benigni and L.~Lopatto, \emph{Fluctuations in local quantum unique
  ergodicity for generalized {W}igner matrices}, Comm.\ Math.\ Phys.\
  \textbf{391} (2022), 401--454.

\bibitem{BEKYY}
A.~Bloemendal, L.~Erd{\H{o}}s, A.~Knowles, H.T. Yau, and J.~Yin,
  \emph{Isotropic local laws for sample covariance and generalized {W}igner
  matrices}, Electron. J. Probab \textbf{19} (2014), 1--53.

\bibitem{BHY}
P.~Bourgade, J.~Huang, and H.T. Yau, \emph{Eigenvector statistics of sparse
  random matrices}, Electr.\ J.\ Prob. \textbf{22} (2017), no.~64.

\bibitem{BoY1}
P.~Bourgade and H.T. Yau, \emph{The eigenvector moment flow and local quantum
  unique ergodicity}, Comm.\ Math.\ Phys.\ \textbf{150} (2017), 231--278.

\bibitem{Kho2}
A.~Boutet~de Monvel and A.~Khorunzhy, \emph{Asymptotic distribution of smoothed
  eigenvalue density. {II}. {W}igner random matrices}, Random Oper. and Stoch.
  Equ. \textbf{7} (1999), 149--168.

\bibitem{BSX25}
T.~Bucht, K.~Schnelli, and Y.~Xu, \emph{Quantitative tracy–widom laws for
  sparse random matrices}, preprint arXiv: 2507.19340.

\bibitem{CEH1}
G.~Cipolloni, L.~Erd\H{o}s, and J.~Henheik, \emph{Eigenstate {T}hermalisation
  at the {E}dge for {W}igner {M}atrices}, preprint arXiv: 2309.05488.

\bibitem{CEK2}
G.~Cipolloni, L.~Erd\H{o}s, and D.~Schr\"{o}der, \emph{Normal {F}luctuation in
  {Q}uantum {E}rgodicity for {W}igner {M}atrices}, Ann. Prob. \textbf{50}
  (2022), 984--1012.

\bibitem{CEK5}
\bysame, \emph{Rank-uniform local law for wigner matrices}, Forum Math. Sigma.
  \textbf{10} (2022).

\bibitem{EKYY2}
L.~Erd{\H{o}}s, A.~Knowles, H.T. Yau, and J.~Yin, \emph{Spectral statistics of
  {E}rd{\H{o}}s-{R}\'enyi graphs {II}: Eigenvalue spacing and the extreme
  eigenvalues}, Comm.\ Math.\ Phys.\ \textbf{314} (2012), 587--640.

\bibitem{EKYY1}
\bysame, \emph{Spectral statistics of {E}rd{\H{o}}s-{R}\'enyi graphs {I}: Local
  semicircle law}, Ann. Prob. \textbf{41} (2013), 2279--2375.

\bibitem{EYY3}
L.~Erd{\H{o}}s, H.T. Yau, and J.~Yin, \emph{Rigidity of eigenvalues of
  generalized {W}igner matrices}, Adv. Math \textbf{229} (2012), 1435--1515.

\bibitem{H22}
Y.~He, \emph{Spectral gap and edge universality of dense random regular
  graphs}, Comm.\ Math.\ Phys.\ \textbf{405}, 1--40.

\bibitem{H19}
\bysame, \emph{Bulk eigenvalue fluctuations of sparse random matrices}, Ann.
  Appl. Prob. \textbf{30} (2020), 2846 -- 2879.

\bibitem{H23}
\bysame, \emph{Edge universality of sparse {E}rd{\H{o}}s-{R}\'enyi digraphs},
  Prob.\ Theor.\ Rel.\ Fields (2025), 1--53.

\bibitem{HHY}
Y.~He, J.~Huang, and H.T. Yau, \emph{Gaussian waves and edge eigenvectors of
  random regular graphs}, preprint arXiv: 2502.08897.

\bibitem{HK}
Y.~He and A.~Knowles, \emph{Mesoscopic eigenvalue statistics of {W}igner
  matrices}, Ann. Appl. Prob. \textbf{27} (2017), 1510--1550.

\bibitem{HK2}
\bysame, \emph{Mesoscopic eigenvalue density correlations of {W}igner
  matrices}, Prob.\ Theor.\ Rel.\ Fields \textbf{177} (2020), 147--216.

\bibitem{HK20}
\bysame, \emph{Fluctuations of extreme eigenvalues of sparse {E}rd{\H
  o}s-{R}\'enyi graphs}, Prob.\ Theor.\ Rel.\ Fields \textbf{180} (2021), 985
  -- 1056.

\bibitem{HKR}
Y.~He, A.~Knowles, and R.~Rosenthal, \emph{Isotropic self-consistent equations
  for mean-field random matrices}, Prob.\ Theor.\ Rel.\ Fields \textbf{171}
  (2018), 203--249.

\bibitem{HLY15}
J.~Huang, B.~Landon, and H.T. Yau, \emph{Bulk universality of sparse random
  matrices}, J.\ Math.\ Phys.\ \textbf{56} (2015).

\bibitem{HLY}
\bysame, \emph{Transition from {T}racy-{W}idom to {G}aussian fluctuations of
  extremal eigenvalues of sparse {E}rd{\H o}s--{R}\'enyi graphs}, Ann.\ Prob.
  \textbf{48} (2020), 916--962.

\bibitem{huang2024ramanujan}
J.~Huang, T.~Mckenzie, and H.T. Yau, \emph{Ramanujan property and edge
  universality of random regular graphs}, preprint arXiv:2412.20263 (2024).

\bibitem{HY22}
J.~Huang and H.T. Yau, \emph{Edge universality of sparse random matrices},
  Preprint arXiv: 2206.06580.

\bibitem{HY23}
\bysame, \emph{Edge universality of random regular graphs of growing degrees},
  Preprint arXiv: 2305.01428 (2023).

\bibitem{huang2024spectrum}
\bysame, \emph{Spectrum of random d-regular graphs up to the edge},
  Communications on Pure and Applied Mathematics \textbf{77} (2024), no.~3,
  1635--1723.

\bibitem{KKP}
A.M. Khorunzhy, B.A. Khoruzhenko, and L.A. Pastur, \emph{Asymptotic properties
  of large random matrices with independent entries}, J. Math. Phys.
  \textbf{37} (1996), 5033--5060.

\bibitem{KY1}
A.~Knowles and J.~Yin, \emph{Eigenvector distribution of {W}igner matrices},
  Prob.\ Theor.\ Rel.\ Fields \textbf{155} (2013), 543--582.

\bibitem{KY2}
\bysame, \emph{The isotropic semicircle law and deformation of {W}igner
  matrices}, Comm. Pure Appl. Math. \textbf{66} (2013), 1663--1749.

\bibitem{KY4}
A.~Knowles and J.~Yin, \emph{Anisotropic local laws for random matrices},
  Prob.\ Theor.\ Rel.\ Fields \textbf{169} (2017), 257--352.

\bibitem{Lee21}
J.~Lee, \emph{Higher order fluctuations of extremal eigenvalues of sparse
  random matrices}, Ann.\ Inst.\ Henri Poincar{\'e} (B) \textbf{60},
  2694--2735.

\bibitem{LS1}
J.O. Lee and K.~Schnelli, \emph{Local law and {T}racy-{W}idom limit for sparse
  random matrices}, Prob.\ Theor.\ Rel.\ Fields \textbf{171} (2018), 543--616.

\bibitem{SX22}
K.~Schnelli and Y.~Xu, \emph{Convergence rate to the tracy–widom laws for the
  largest eigenvalue of wigner matrices}, Comm.\ Math.\ Phys.\ \textbf{393}
  (2022), 839–907.

\bibitem{TV3}
T.~Tao and V.~Vu, \emph{Random matrices: {U}niversal properties of
  eigenvectors}, Rand. Mat.: Th. and Appl. \textbf{1} (2012), 1150001.

\bibitem{tikhomirov2021outliers}
K.~Tikhomirov and P.~Youssef, \emph{Outliers in spectrum of sparse {W}igner
  matrices}, Random Structures \& Algorithms \textbf{58} (2021), no.~3,
  517--605.

\end{thebibliography}
	
	\bibliographystyle{amsplain}
}

\end{document}